\newcommand{\TITLE}{The Apollonian structure of Bianchi groups}
\newcommand{\TITLERUNNING}{The Apollonian structure of Bianchi groups}
\newcommand{\DATE}{\today}
\theoremstyle{plain} 
\newtheorem{theorem}{Theorem} 
\newtheorem{conjecture}[theorem]{Conjecture}
\newtheorem{proposition}[theorem]{Proposition}
\newtheorem{lemma}[theorem]{Lemma}
\theoremstyle{definition}
\newtheorem{definition}[theorem]{Definition}
\theoremstyle{remark}
\numberwithin{theorem}{section}
\newcommand{\tightoverset}[2]{%
  \mathop{#2}\limits^{\vbox to -.5ex{\kern-1.05ex\hbox{$#1$}\vss}}}
\def\Acal{{\mathcal A}}
\def\Bcal{{\mathcal B}}
\def\Ccal{{\mathcal C}}
\def\Dcal{{\mathcal D}}
\def\Gcal{{\mathcal G}}
\def\Pcal{{\mathcal P}}
\def\Scal{{\mathcal S}}
\newcommand{\CC}{\mathbb{C}}
\newcommand{\HH}{\mathbb{H}}
\newcommand{\PP}{\mathbb{P}}
\newcommand{\QQ}{\mathbb{Q}}
\newcommand{\RR}{\mathbb{R}}
\newcommand{\MM}{\mathbb{M}}
\newcommand{\ZZ}{\mathbb{Z}}
\def \bfv{{\mathbf v}}
\renewcommand{\gcd}{{\operatorname{gcd}}}
\newcommand{\MOD}[1]{~(\textup{mod}~#1)}
\renewcommand{\pmod}{\MOD}
\newcommand{\Norm}{\operatorname{N}}
\newcommand\PSL{\operatorname{PSL}}
\newcommand\PGL{\operatorname{PGL}}
\newcommand\OK{\mathcal{O}_K}
\newcommand{\OSK}{\widehat{\Scal}_K}
\newcommand{\SK}{{\Scal}_K}
\newcommand{\Mob}{\text{M\"ob}}
\newcommand{\Mobplus}{\text{M\"ob}_+}
\newcommand{\GM}{\text{GM}^*}
\newcommand{\Circ}{\mathcal{C}irc}
\newcommand{\Zcl}{\operatorname{Zcl}}
\newcommand\sensual{AppPrevious}
\newcommand\ntone{MR1971245}
\newcommand\gttwo{MR2183489}
\newcommand\gtone{MR2173929}
\newcommand\SarnakMonthly{MR2800340}
\newcommand\Conway{MR1478672}
\newcommand\Oh{MR3173439}
\newcommand\Soddy{Soddy}
\newcommand\circlesummer{circlesummer}
\newcommand\Senia{Senia}
\newcommand\KontSphere{KontSphere}
\newcommand\BFuchs{MR2813334}
\newcommand\GuMa{MR2675919}
\newcommand\Glowing{Glowing}
\newcommand\Indra{MR1913879}
\newcommand\Xin{Xin}
\newcommand\BuGrGuMa{MR2679050}
\newcommand\REUG{circlesummer}
\newcommand\BishopJones{MR1484767}
\newcommand\BourgainKontorovich{MR3211042}
\newcommand\FuchsBulletin{MR3020827}
\newcommand\FuchsSanden{MR2859897}
\newcommand\Kocik{kocikminkowski}
\newcommand\Lewis{MR656449}
\newcommand\KontBulletin{MR3020826}
\newcommand\Schmidt{MR0422168}
\newcommand\SchmidtEisenstein{MR698164}
\newcommand\SchmidtEleven{MR0485715}
\newcommand\SchmidtTwo{MR2811562}
\newcommand\SchmidtFarey{MR0245525}
\newcommand\EGM{MR1483315}
\title[\TITLERUNNING]{\vspace*{-1.3cm} \TITLE}
\author{Katherine E. Stange}
\date{\DATE}
\address{%
Department of Mathematics, University of Colorado,
Campux Box 395, Boulder, Colorado 80309-0395}
\email{kstange@math.colorado.edu}
\keywords{Apollonian circle packings, projective linear group, M\"obius transformation, thin groups, Bianchi group, imaginary quadratic fields}
\subjclass[2010]{Primary: 52C26, 20G30, 11F06, 11R11, 11E57, Secondary: 20E08, 20F65, 51F25, 11E39, 11E16}
\thanks{
The author's work was supported by the National Security Agency H98230-14-1-0106. 
}
\begin{document}


\begin{abstract}
        We study the orbit of $\widehat{\RR}$ under the M\"obius action of the Bianchi group $\PSL_2(\OK)$ on $\widehat{\CC}$, where $\OK$ is the ring of integers of an imaginary quadratic field $K$.  The orbit $\SK$, called a Schmidt arrangement, is a geometric realisation, as an intricate circle packing, of the arithmetic of $K$.  We give a simple geometric characterisation of certain subsets of $\SK$ generalizing Apollonian circle packings, and show that $\SK$, considered with orientations, is a disjoint union of all primitive integral such \emph{$K$-Apollonian packings}.  These packings are described by a new class of thin groups of arithmetic interest called \emph{$K$-Apollonian groups}.  We make a conjecture on the curvatures of these packings, generalizing the local-to-global conjecture for Apollonian circle packings.
\end{abstract}

\maketitle

\section{Introduction}

Let $K$ be an imaginary quadratic field with ring of integers $\OK$.  The Bianchi group $\PSL_2(\OK)$ acts on the extended complex plane $\widehat{\CC} = \CC \cup \{\infty \}$ via M\"obius transformations.  Its action permutes the collection of circles of $\widehat{\CC}$ (where straight lines are considered circles through $\infty$).  The orbit of the extended real line $\widehat{\RR}$ under $\PSL_2(\OK)$ forms a delicately intertwined collection of circles, some examples of which are shown in Figures \ref{fig:exampleSK}, \ref{fig:exampleSK2} and \ref{fig:exampleSK3}.  This is the \emph{Schmidt arrangement} of $K$, denoted $\SK$, named, in \cite{VisOne}, in honour of the work of Asmus Schmidt generalizing continued fractions to the complex setting \cite{\SchmidtFarey,\Schmidt,\SchmidtEleven,\SchmidtEisenstein,\SchmidtTwo}.  The individual images of $\widehat{\RR}$ are called $K$-Bianchi circles.

The Schmidt arrangement naturally exhibits various aspects of the number theory of $\OK$, and this was the topic of study of an earlier paper of the author \cite{VisOne}.  Most vividly, $\SK$ is connected if and only if $\OK$ is Euclidean.

\begin{figure}
        \includegraphics[height=3in]{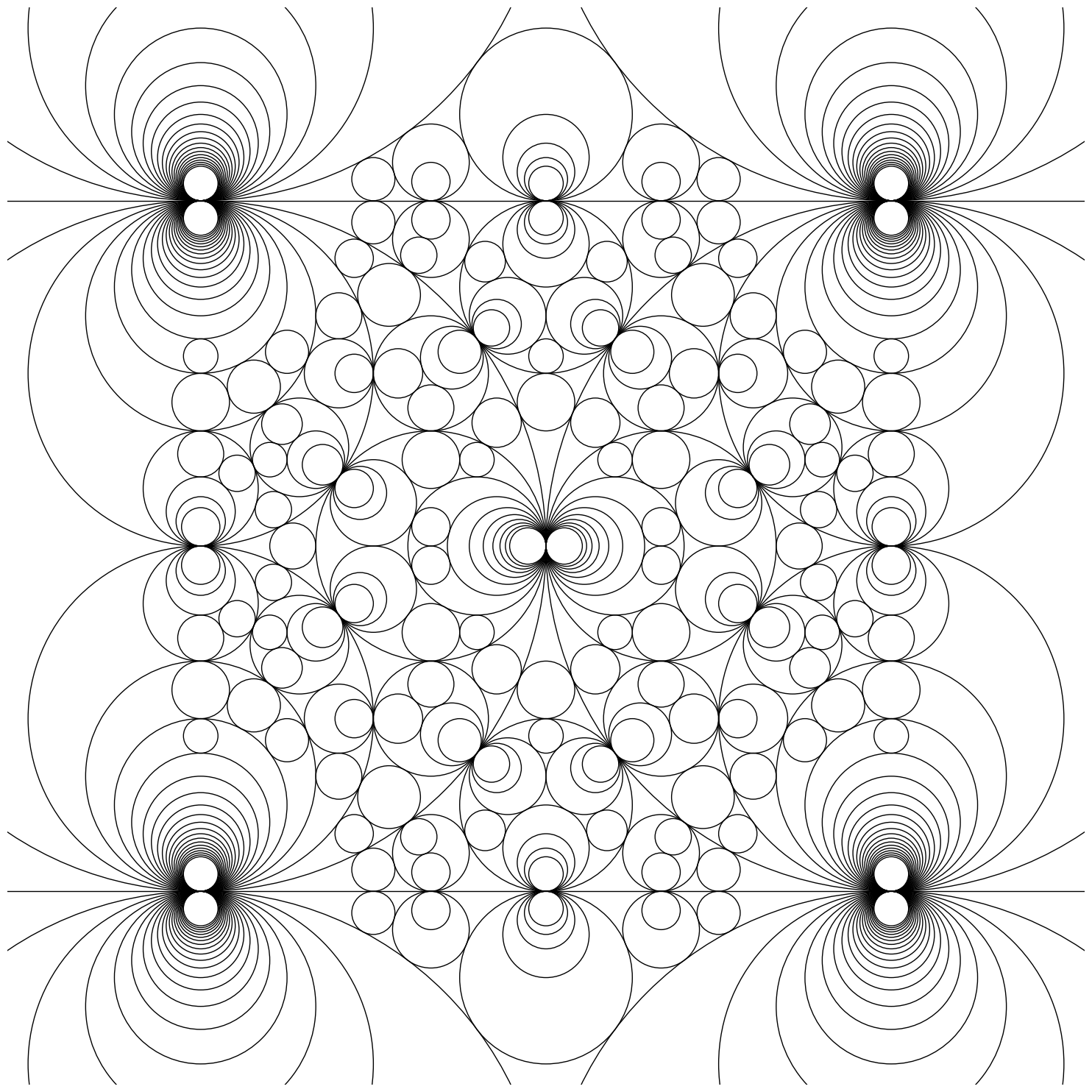}
        \caption{The Schmidt arrangement of ${\Scal}_{\QQ(i)}$.  
        The image includes those circles of curvature $\le 20$ intersecting the closure of the fundamental parallelogram of the ring of integers.}
\label{fig:exampleSK}
\end{figure}

The author's interest arose from $\Scal_{\QQ(i)}$ (Figure \ref{fig:exampleSK}), which made its appearance in \cite{\gttwo} as an \emph{Apollonian superpacking} in the study of Apollonian circle packings (apparently the authors of \cite{\gttwo} were unaware of the relation to Schmidt's work; the picture is the same but the definition is different).  This paper gives a new description of the relationship between $\Scal_{\QQ(i)}$ and Apollonian circle packings, and introduces a family of new circle packings arising from the Schmidt arrangements of imaginary quadratic fields in an analogous way.  These packings, which share many of the remarkable arithmetic properties of Apollonian circle packings, are isolated by a simple geometric characterisation, and give rise to associated thin subgroups of $\PSL_2(\OK)$; we refer to this as the Apollonian structure of $\PSL_2(\OK)$.  

\begin{figure}
        \raisebox{-.49\height}{\includegraphics[width=1in]{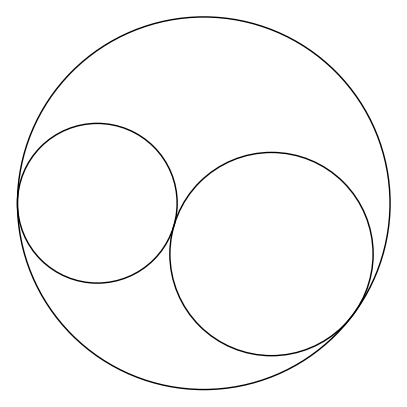}
\includegraphics[width=1in]{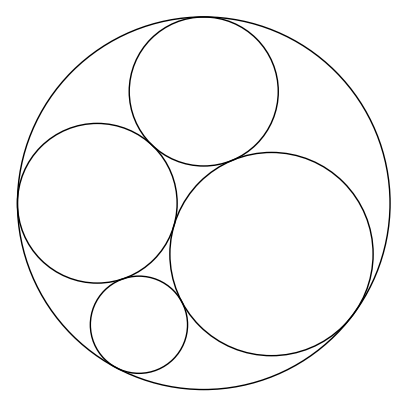}
\includegraphics[width=1in]{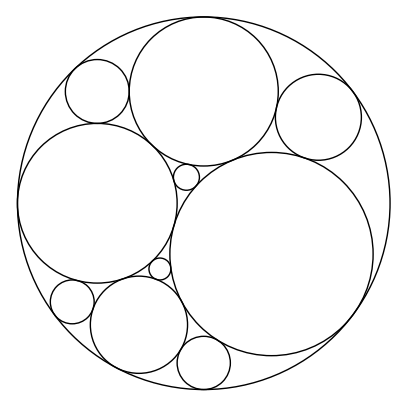}}
$\;\ldots\;$
\raisebox{-.49\height}{\includegraphics[width=1in]{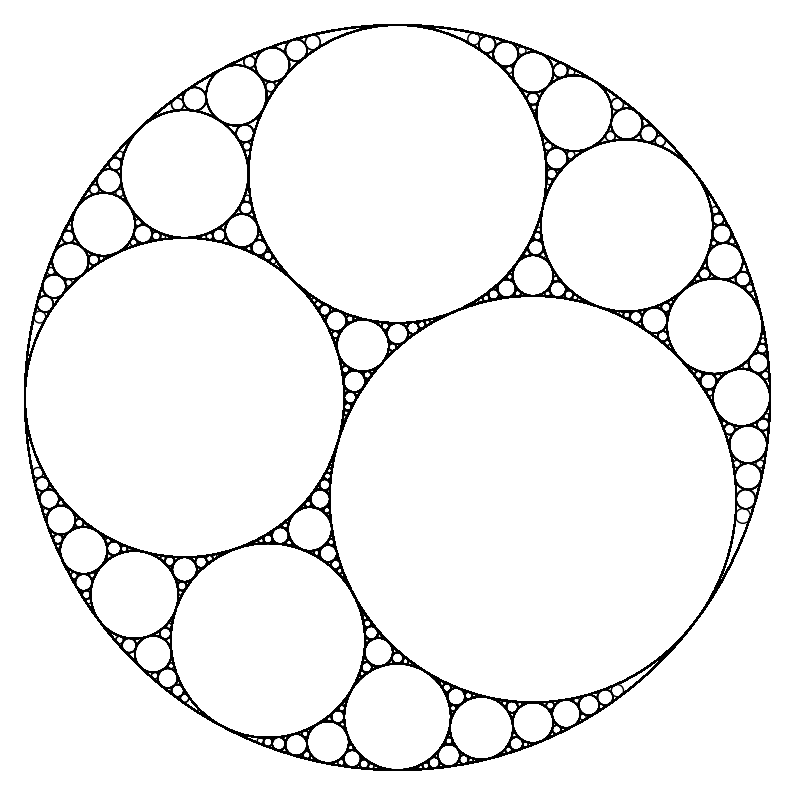}}
\caption{The iteration process generating an Apollonian circle packing.} 
\label{fig:app}
\end{figure}

A \emph{Descartes quadruple} of circles in $\widehat{\CC}$ is a quadruple such that every pair of circles is tangent.  The curvatures (inverse radii) $a,b,c,d$ of such a quadruple satisfy the \emph{Descartes quadratic relation}, famously stated by Descartes in a 1643 letter to Princess Elizabeth of Bohemia:
\begin{equation}
        \label{eqn:descquad}
        (a+b+c+d)^2 = 2(a^2 + b^2 + c^2 + d^2).
\end{equation}
The study of Descartes quadruples has a lively history and there are several excellent expositions; see for example \cite{\FuchsBulletin, \SarnakMonthly}.  If one begins with three mutually tangent circles with curvatues $a,b,c$, then there are exactly two circles, called \emph{Soddy circles}, which complete the triple to a Descartes quadruple, with curvatures $d$ and $d'$ solving \eqref{eqn:descquad}; these satisfy
\[
        d + d' = 2(a+b+c).
\]
Adding these two new circles to our original triple, we have a set of 5 circles.  For each mutually tangent triple in the set, we can again find two Soddy circles which complete it to a quadruple.  Any of these which are not already included in our collection are now added, thereby expanding the set of circles.  If we continue this process ad infinitum, we obtain an infinite collection of circles which is called an \emph{Apollonian circle packing}.  See Figure \ref{fig:app}.  The remarkable fact for the number theorist is that if the first four circles had integer curvatures, then the generation rule entails that every circle in the packing has an integral curvature.  For an example, see Figure \ref{fig:example6}.  The natural question is to determine which integers occur as curvatures.

As it happens, $\mathcal{S}_{\QQ(i)}$ is the union of all possible primitive, integral Apollonian circle packings, considered up to suitable symmetries and an appropriate scaling (integral refers to integer curvatures; primitive means they share no common factor).  This is a result of \cite[Theorem 6.3]{\gttwo} (using the definition of the Schmidt arrangement as an Apollonian superpacking), further studied in \cite{\sensual}.  In particular, the curvatures of $\QQ(i)$-Bianchi circles are all integral.  

Apollonian circle packings have generated great interest recently, in large part because of their connection to thin groups.  The central conjecture is a local-global principle for the curvatures of a packing.  

\begin{conjecture}[Graham, Lagarias, Mallows, Wilks and Yan \cite{\ntone}, Fuchs and Sanden \cite{\FuchsSanden}]
        \label{conj:main}
        Let $\Pcal$ be a primitive integral Apollonian circle packing, and let $S$ be the set of residue classes modulo $24$ of the curvatures in $\Pcal$.  Then all sufficiently large integers with residues in $S$ occur as curvatures in $\Pcal$.
\end{conjecture}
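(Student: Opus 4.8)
The plan is to attack the statement head on by the Hardy--Littlewood circle method applied to the thin orbit underlying $\Pcal$, arranging the main term to be strictly positive for \emph{every} admissible $n$ and pushing the minor arcs below it for all sufficiently large $n$. First I would set up the Apollonian group model of Graham--Lagarias--Mallows--Wilks--Yan: fixing a Descartes quadruple $v_0 \in \ZZ^4$ of curvatures in $\Pcal$, the Apollonian group $A = \langle s_1, s_2, s_3, s_4 \rangle \le O_Q(\ZZ)$ preserving the Descartes form $Q(x) = 2(x_1^2 + x_2^2 + x_3^2 + x_4^2) - (x_1 + x_2 + x_3 + x_4)^2$ from \eqref{eqn:descquad} is a thin, Zariski-dense subgroup of infinite index, and the set of curvatures of $\Pcal$ is exactly $\{ \langle e_i, \gamma v_0 \rangle : \gamma \in A, \ 1 \le i \le 4\}$, where each orbit vector $\gamma v_0$ lies on the cone $Q = 0$. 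Thus proving that a given $n$ is a curvature reduces to showing that the smoothly weighted representation count
\[
\mathcal{R}_T(n) = \sum_{\gamma \in A} f\!\left( \frac{\gamma v_0}{T} \right) \sum_{i=1}^{4} \mathbf{1}\bigl[ \langle e_i, \gamma v_0 \rangle = n \bigr]
\]
is strictly positive for an appropriate cutoff $f$ and $T \asymp n$.

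I would then expand $\mathcal{R}_T(n)$ via the identity $\mathbf{1}[m = n] = \int_0^1 e(\theta(m-n))\, d\theta$, with $e(x) = e^{2\pi i x}$, giving $\mathcal{R}_T(n) = \int_0^1 \widehat{\mathcal{R}}_T(\theta)\, e(-\theta n)\, d\theta$ for the exponential sum $\widehat{\mathcal{R}}_T(\theta) = \sum_{\gamma} f(\gamma v_0 / T) \sum_i e(\theta \langle e_i, \gamma v_0\rangle)$, and split $[0,1]$ into major arcs around rationals $a/q$ with $q \le Q_0$ and the complementary minor arcs $\mathfrak{m}$. On the major arcs the main term factors, via strong approximation for $A$, into a singular series $\mathfrak{S}(n)$ of local densities times an archimedean density coming from the equidistribution of the orbit $A \cdot v_0$ on $\{Q = 0\}$ at scale $T$. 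The reduction of $A$ modulo $q$ is controlled, with only $2$ and $3$ ramified, so $\mathfrak{S}(n)$ is positive precisely when $n$ avoids the congruence obstructions modulo $24$ --- this is exactly the role of the residue set $S$ in the statement --- and a lower-bound sieve yields $\mathfrak{S}(n) \gg (\log n)^{-\epsilon}$ for every admissible $n$. The equidistribution input, and the positivity and size of the archimedean factor, rest on the spectral gap for the congruence quotients of $A$ supplied by the affine-sieve machinery of Bourgain--Gamburd--Sarnak, together with the base eigenvalue $\lambda_0 = \delta(2-\delta)$, where $\delta \approx 1.3057$ is the critical exponent of $A$, lying below the bottom of the continuous spectrum.

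The heart of the argument, and the step I expect to be the genuine obstacle, is the minor-arc bound: one must show that $\int_{\mathfrak{m}} \lvert \widehat{\mathcal{R}}_T(\theta) \rvert \, d\theta$ is $o$ of the major-arc main term, \emph{uniformly over all admissible} $n \le T$, so that $\mathcal{R}_T(n) > 0$ for each such $n$. This demands power-saving cancellation in $\widehat{\mathcal{R}}_T(\theta)$ across the entire minor-arc range of moduli. The spectral gap delivers the required savings when $q$ is very small (then absorbed into the major arcs) or comparable to $T$, but for moduli in an intermediate window the available bounds on the associated Kloosterman- and shifted-convolution-type sums are not strong enough to beat the main term for \emph{every individual} $n$; this is precisely the gap that confines the current state of the art to an exceptional set of size $O(T^{1-\eta})$ rather than a finite one. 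Closing it would require either a substantially improved level of distribution for the orbit $A \cdot v_0$, beyond what the present spectral gap yields, or a genuinely new treatment of the intermediate moduli --- for example an extra layer of cancellation extracted from averaging over the four coordinates $i$ or from the Descartes relation itself. Securing such a uniform-in-$n$ minor-arc estimate is the crux separating the full conjecture from its density-one approximation, and it is where I would concentrate the real effort.
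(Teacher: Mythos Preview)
The statement you are trying to prove is a \emph{conjecture}, not a theorem: the paper does not prove it, and neither does anyone else. The paper simply records Conjecture~\ref{conj:main} as motivation, cites the density-one theorem of Bourgain--Kontorovich \cite{\BourgainKontorovich} as the current state of the art, and moves on. So there is no ``paper's own proof'' to compare against.

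Your proposal is an accurate and well-informed sketch of the Bourgain--Kontorovich approach, and you yourself correctly locate the genuine gap: the minor-arc estimate. As you note, the spectral gap coming from the affine sieve handles moduli that are either very small or comparable to $T$, but for moduli in the intermediate range the available cancellation in the relevant exponential sums is not strong enough to beat the main term for \emph{every} individual admissible $n$. That is exactly why the existing argument yields only an exceptional set of size $O(T^{1-\eta})$ rather than a finite one. You have not supplied a new idea to close this window --- you have only described what such an idea would have to accomplish (an improved level of distribution, or extra cancellation from averaging over the four coordinates or from the Descartes relation). Without that, the proposal is a correct summary of a known partial result, not a proof of the conjecture. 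The step that would fail is precisely the one you flagged: obtaining a uniform-in-$n$ minor-arc bound of the form $\int_{\mathfrak{m}} \lvert \widehat{\mathcal{R}}_T(\theta) \rvert\, d\theta = o(\text{main term})$ for all admissible $n \asymp T$. This remains open.
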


\begin{figure}
        \includegraphics[height=5in]{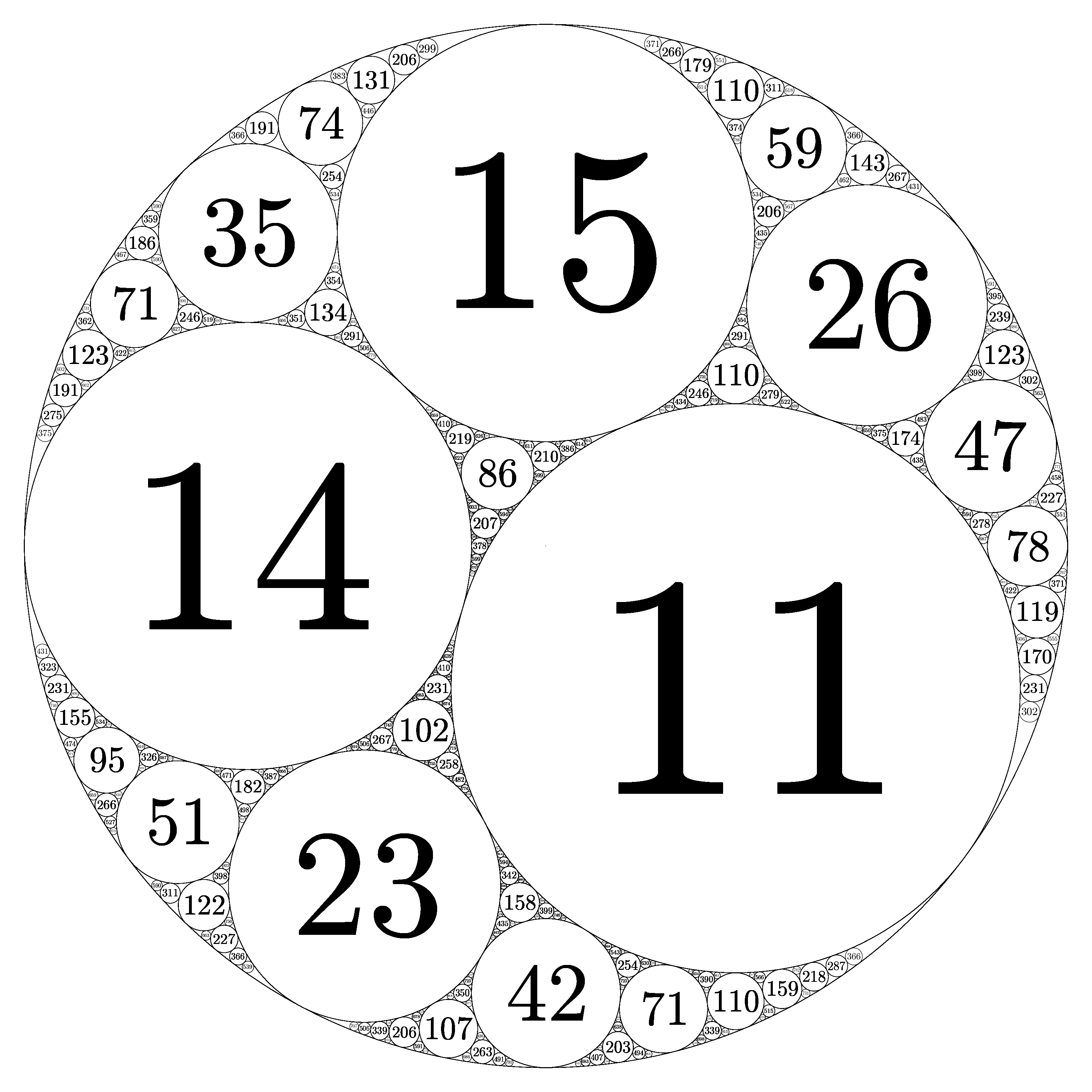}
        \caption[Apollonian packing with curvatures]{A primitive integral Apollonian packing with curvatures shown.  The outer circle has curvature $-6$ (indicating that its interior is outside).}
\label{fig:example6}
\end{figure}

Significant progress has been made toward this conjecture, most notably that it holds for a set of integers of density one \cite{\BourgainKontorovich} (positive density was first shown in \cite{\BFuchs}).  For an excellent overview and further references, see \cite{\FuchsBulletin}; see also the series of papers \cite{\ntone, \gtone,\gttwo} which are central to the field, and the exposition \cite{\SarnakMonthly}.  For the related question of the multi-set of integral curvatures appearing in a packing, a gateway to the literature is the survey \cite{\Oh}.   

These results depend on an analysis of the Apollonian group, a matrix group which describes the relations between curvatures of tangent circles.  The Apollonian group is of infinite index in its own Zariski closure, in other words, it is a \emph{thin group}.  Thin groups are not as accessible as arithmetic groups, but, remarkably, still share some of their properties, most notably a version of strong approximation.  The Apollonian group has garnered so much interest in part because of its position as a `naturally occurring' thin group of arithmetic interest.  For an overview of the arithmetic of thin groups, and the discoveries rapidly unfolding in recent years, see \cite{\KontBulletin}.  It is one of the principal goals of this paper to place the Apollonian group in a new `naturally ocurring' infinite family of thin groups of arithmetic interest.

\begin{figure}
        \includegraphics[height=2.7in]{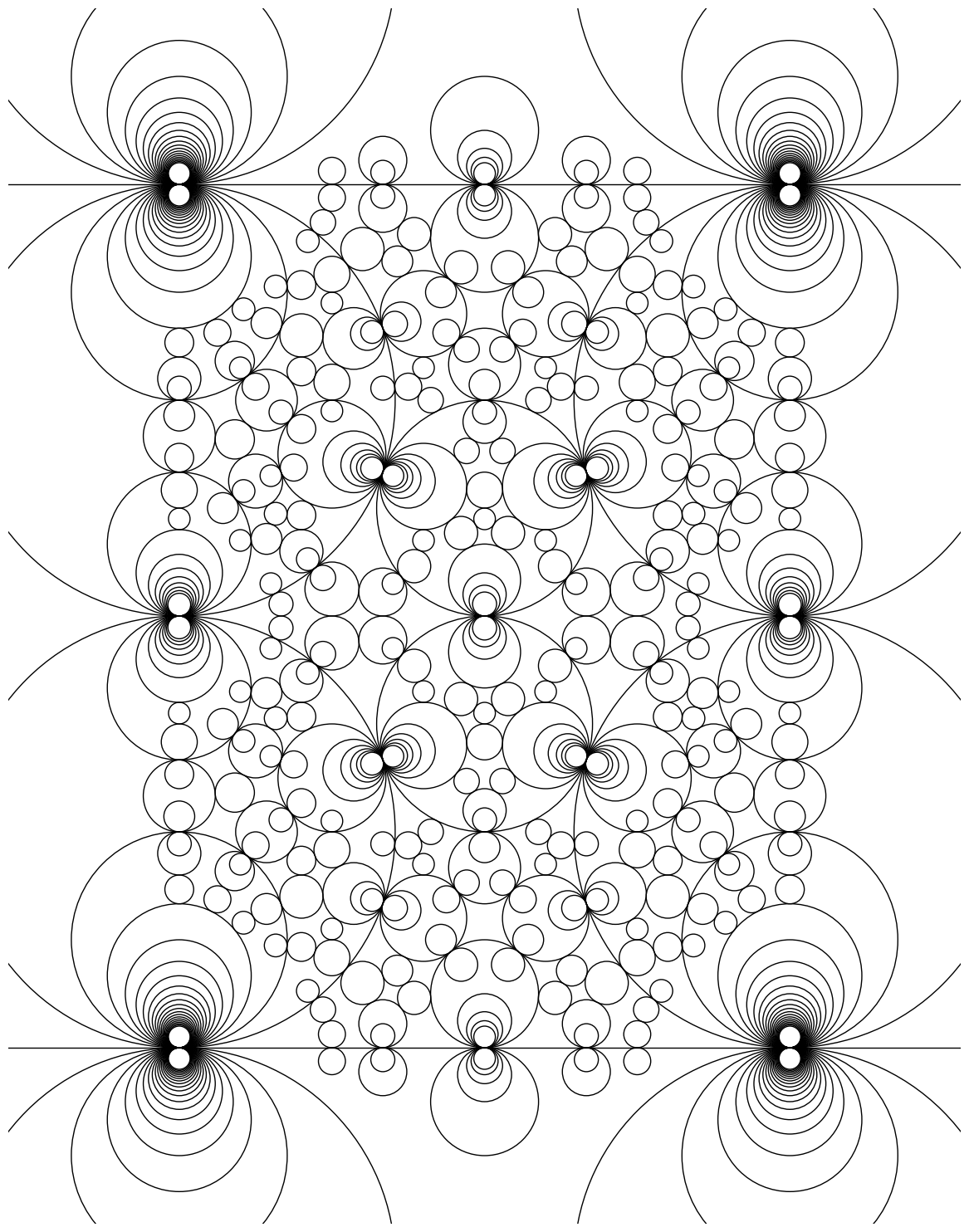} \quad
        \includegraphics[height=2.7in]{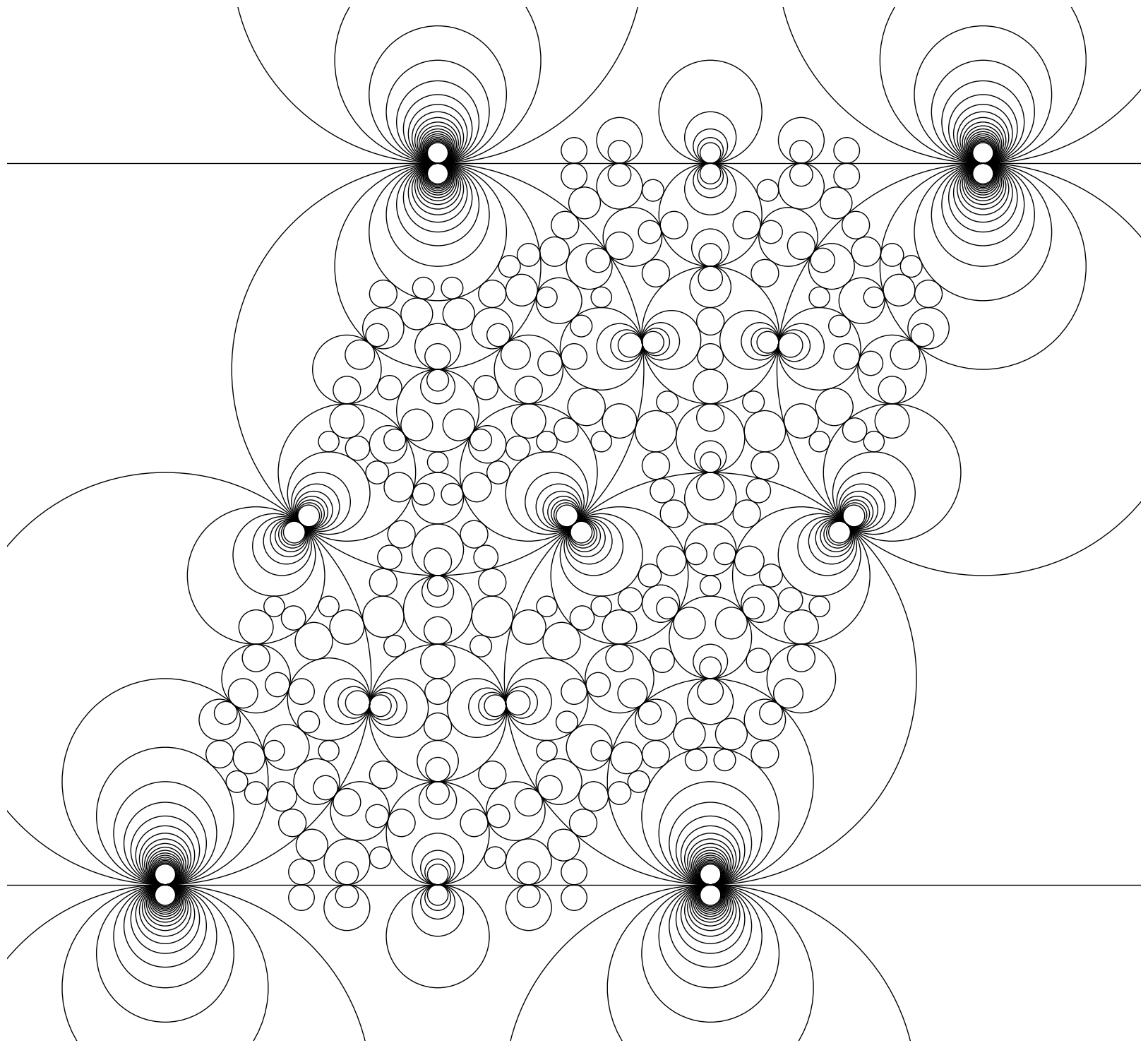} \\
        \includegraphics[height=2.9in]{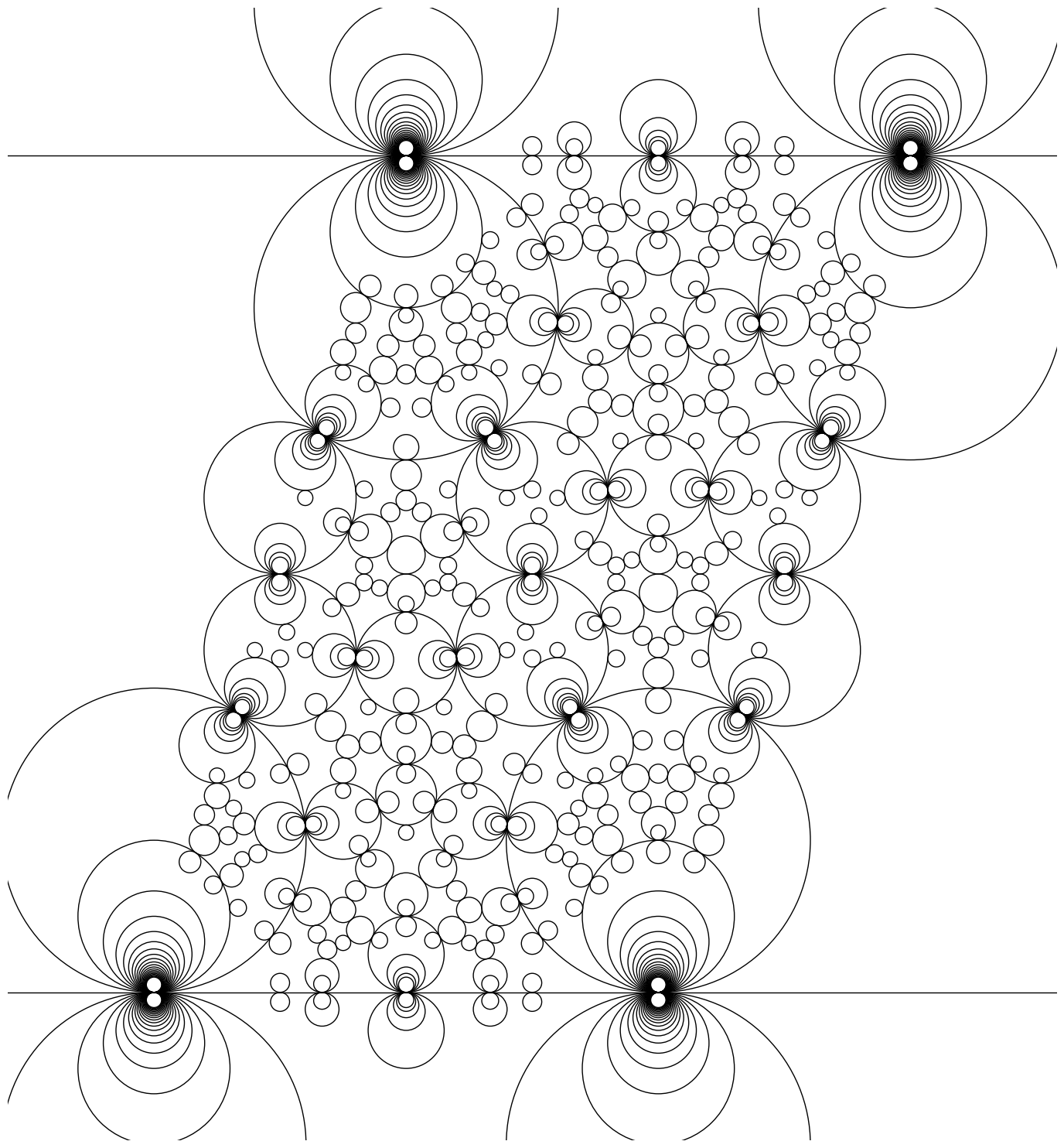} \quad
        \includegraphics[height=2.9in]{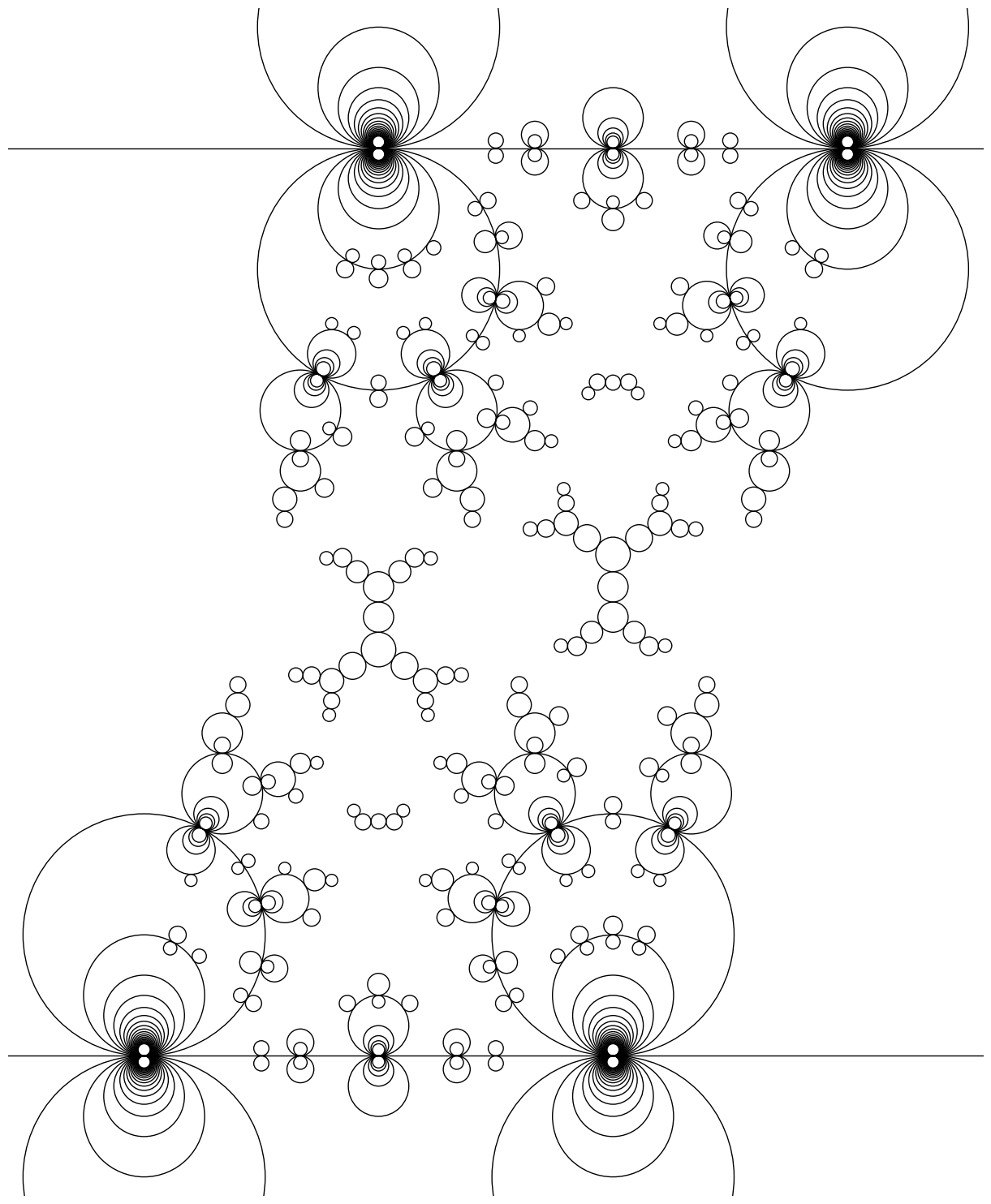} \\
        \caption{Schmidt arrangements $\SK$ of imaginary quadratic fields $K$.  Clockwise from top left:  $\QQ(\sqrt{-2})$, $\QQ(\sqrt{-7})$, $\QQ(\sqrt{-15})$, $\QQ(\sqrt{-11})$.  In each case, the image includes those circles of curvature $\le 20$ intersecting the closure of the fundamental parallelogram of $\OK$.}
\label{fig:exampleSK2}
\end{figure}

We begin by defining $K$-Apollonian packings for imaginary quadratic fields $K \neq \QQ(\sqrt{-3})$.

\begin{definition}
        \label{def:apppack}
        One says that a collection of circles $\mathcal{P}$ \emph{straddles} a circle $C$ if it intersects both the interior and exterior of $C$ nontrivially. 
        We say that a collection of circles $\mathcal{P}$ is \emph{tangency-connected} if the graph whose vertices are circles and whose edges indicate tangencies is a connected graph.
        We define a \emph{$K$-Apollonian packing} to be any maximal tangency-connected subset $\mathcal{P}$ of circles in $\mathcal{S}_K$ under the condition that $\mathcal{P}$ does not straddle any circle of $\Scal_K$.
\end{definition}

For an example, see Figure \ref{fig:app-in-sk}. 

\begin{figure}
        \includegraphics[height=3in]{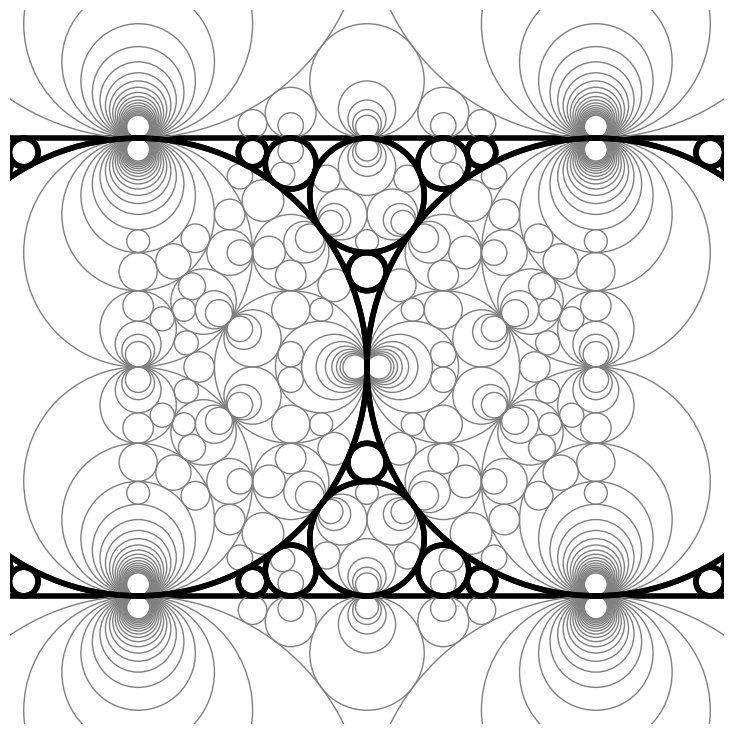}
        \caption[Apollonian packing (shown in darker lines) lying in the Schmidt arrangement of $\QQ(i)$]{An Apollonian packing (shown in darker lines) lying in the Schmidt arrangement of $\QQ(i)$ according to Definition \ref{def:apppack}.}
\label{fig:app-in-sk}
\end{figure}

\begin{figure}
        \includegraphics[height=5in]{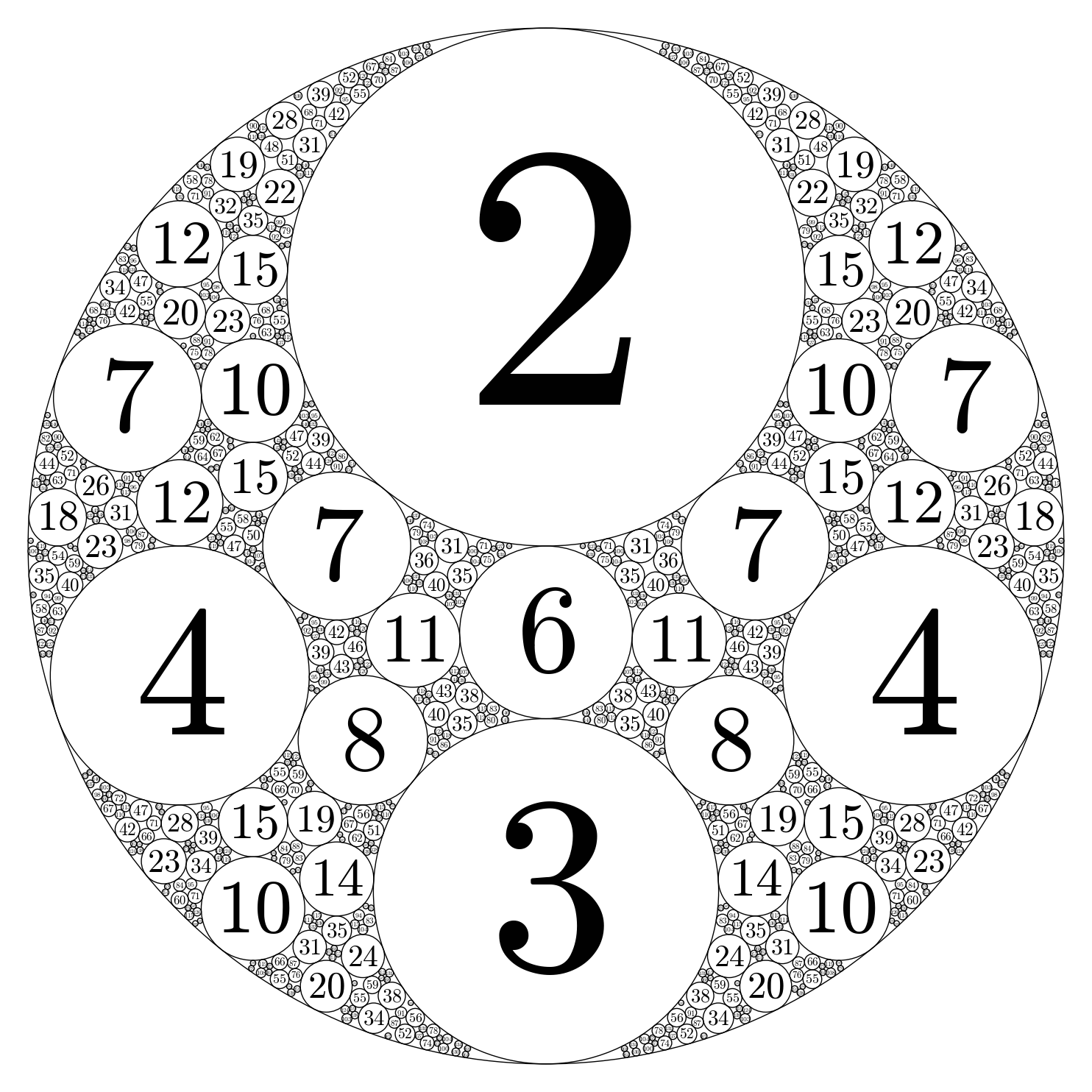} 
        \caption[$K$-Apollonian packing with curvatures]{An example $\QQ(\sqrt{-2})$-Apollonian packing, shown with reduced curvatures (the outer circle has reduced curvature $-1$).  In this case, it is conjectured that all sufficiently large integers that are not congruent to $1$ modulo $4$ appear as reduced curvatures.}
\label{fig:kapp-curvs}
\end{figure}

Note that, in particular, this implies that the circles in $\mathcal{P}$, oriented appropriately, can have disjoint interiors.  An equivalent definition of a $K$-Apollonian packing is that it corresponds to a connected component of the \emph{immediate tangency graph}, the graph whose vertices are oriented circles, and whose edges indicate \emph{immediate tangency}, that is, tangency between two circles of disjoint interiors which straddle no other $K$-Bianchi circle (Proposition \ref{prop:twodefs}).

Now we can state the fundamental relationship between Schmidt arrangements and $K$-Apollonian packings.

\begin{theorem}
        \label{thm:disjointunion}
        For an imaginary quadratic field $K \neq \QQ(\sqrt{-3})$, each circle of $\SK$ participates in exactly two $K$-Apollonian packings:  one disjoint with its interior and one disjoint with its exterior.  In particular, $\Scal_K$ is equal to the union of all $K$-Apollonian packings.
\end{theorem}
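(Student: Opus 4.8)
The ``in particular'' clause is immediate: each $K$-Apollonian packing lies in $\SK$ by definition, so the union of all of them lies in $\SK$, while the first assertion puts every circle of $\SK$ into one of them; so the real task is the first assertion. \emph{Reductions.} I would first exploit homogeneity. Since $\SK$ is by construction the $\PSL_2(\OK)$-orbit of $\widehat{\RR}$, and since $\PSL_2(\OK)$ carries $K$-Apollonian packings to $K$-Apollonian packings (every clause of Definition~\ref{def:apppack} --- being a subset of $\SK$, tangency-connectedness, and not straddling any circle of $\SK$ --- is preserved by the M\"obius action, which permutes $\SK$ and respects tangency and the interior/exterior partition), it suffices to prove the statement for $C=\widehat{\RR}$. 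Complex conjugation $z\mapsto\bar z$ likewise normalizes $\PSL_2(\OK)$, fixes $\widehat{\RR}$, and swaps the two open half-planes it bounds, so it permutes $K$-Apollonian packings and interchanges the two sides of $\widehat{\RR}$. Finally, any $K$-Apollonian packing $\mathcal P\ni\widehat{\RR}$ cannot straddle $\widehat{\RR}\in\SK$, hence is contained in one of the two closed half-planes bounded by $\widehat{\RR}$. So it is enough to show there is \emph{exactly one} $K$-Apollonian packing containing $\widehat{\RR}$ and contained in the closed upper half-plane $\overline{\HH}$.

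\emph{Existence.} It helps to unwind the non-straddling condition: a family $\mathcal S\subseteq\SK$ fails to straddle $D\in\SK$ precisely when no circle of $\mathcal S$ meets the interior of $D$, or no circle of $\mathcal S$ meets the exterior of $D$. I would use one structural input from earlier in the paper: for $K\neq\QQ(\sqrt{-3})$ no two $K$-Bianchi circles cross, i.e.\ any two are tangent, disjoint, or equal --- this is the only point at which $K\neq\QQ(\sqrt{-3})$ is needed (without it $\widehat{\RR}$ is crossed by a $K$-Bianchi line and lies in no $K$-Apollonian packing whatsoever). In particular $\{\widehat{\RR}\}$ is non-straddling, so the poset $\mathcal F$ of tangency-connected, non-straddling subsets of $\SK$ containing $\widehat{\RR}$ is nonempty; and the union of a chain in $\mathcal F$ lies in $\mathcal F$, since tangency-connectedness passes to such unions and any straddling of the union is already witnessed inside a single member of the chain. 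By Zorn's Lemma $\mathcal F$ has a maximal element $\mathcal P$, which is then a $K$-Apollonian packing and, not straddling $\widehat{\RR}$, lies in $\overline{\HH}$ or in the closed lower half-plane. It is strictly larger than $\{\widehat{\RR}\}$: since no $K$-Bianchi line can cross $\widehat{\RR}$, every $K$-Bianchi line is horizontal, so there is a smallest-height one $\Lambda=\widehat{\RR}+i\tau$ with $\tau>0$ (using that $\OK$ is a lattice), and $\{\widehat{\RR},\Lambda\}$ is tangency-connected (parallel lines are tangent at $\infty$) and non-straddling --- a bounded $D\in\SK$ has both lines missing the bounded disc it bounds, while a horizontal line $D=\{\operatorname{Im}z=c\}$ is straddled by $\{\widehat{\RR},\Lambda\}$ only if $0<c<\tau$, impossible by minimality of $\tau$. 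Conjugating a maximal element lying in $\overline{\HH}$ yields one lying in the closed lower half-plane, so there is at least one $K$-Apollonian packing through $\widehat{\RR}$ on each side.

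\emph{Uniqueness and conclusion.} Let $\mathcal P,\mathcal Q$ be $K$-Apollonian packings, both containing $\widehat{\RR}$ and contained in $\overline{\HH}$. To see $\mathcal P=\mathcal Q$, take $C'\in\mathcal P$, pick a tangency path $\widehat{\RR}=C_0,C_1,\dots,C_n=C'$ inside $\mathcal P$, and show $C_k\in\mathcal Q$ by induction on $k$; the base case is $C_0=\widehat{\RR}\in\mathcal Q$. In the inductive step, $C_{k+1}$ is tangent to $C_k\in\mathcal Q$, so $\mathcal Q\cup\{C_{k+1}\}$ is tangency-connected; and it is non-straddling, because if it straddled some $D\in\SK$ then --- $\mathcal Q$ itself not straddling $D$ --- the extra circle $C_{k+1}$ would meet a side of $D$ avoided by all of $\mathcal Q$, forcing $C_k\in\mathcal Q\cap\mathcal P$ onto (and, as $C_k\not\subseteq D$, into) the opposite side, so that $C_k,C_{k+1}\in\mathcal P$ would exhibit $\mathcal P$ as straddling $D$, contradicting that $\mathcal P$ is a $K$-Apollonian packing; the degenerate cases ($C_k=D$, or $\mathcal Q$ lying in the closed disc bounded by $D$) are ruled out using $\widehat{\RR}\in\mathcal Q$, $\mathcal Q\neq\{\widehat{\RR}\}$, and $\mathcal Q\subseteq\overline{\HH}$. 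Maximality of $\mathcal Q$ then gives $C_{k+1}\in\mathcal Q$, so $\mathcal P\subseteq\mathcal Q$, and by symmetry $\mathcal P=\mathcal Q$. Hence $\widehat{\RR}$ lies in exactly one $K$-Apollonian packing on each side --- these are distinct, e.g.\ because the one in $\overline{\HH}$ contains $\Lambda$ and the other cannot --- so in exactly two, one avoiding its interior and one its exterior; transporting by $\PSL_2(\OK)$ gives the same conclusion for every circle of $\SK$. The crux of the whole argument is the non-crossing input invoked at the start of the existence step; granting that, the one remaining subtlety lies inside the induction --- ruling out a $K$-Bianchi circle wedged at a tangency point of the packing --- which is exactly what the packing's own non-straddling property forbids. (One could also organize the uniqueness via the immediate-tangency-graph description of Proposition~\ref{prop:twodefs}, showing that the two orientations of $\widehat{\RR}$ lie in different connected components.)
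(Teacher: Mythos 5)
Your proof is correct, but it takes a genuinely different route from the paper's. The paper never argues with maximality directly: it first passes to oriented circles (via the unnumbered lemma matching oriented and unoriented packings), proves in Proposition \ref{prop:twodefs} that $K$-Apollonian packings of oriented circles are exactly the connected components of the immediate tangency graph on $\OSK$ --- this rests on the existence and uniqueness of the immediately tangent circle at each $K$-rational point, Proposition \ref{prop:immtang} --- and then Theorem \ref{thm:disjointunion-oriented} is just the statement that components partition $\OSK$; Theorem \ref{thm:disjointunion} follows by forgetting orientation, the two orientations of a circle accounting precisely for the packing avoiding its interior and the one avoiding its exterior. You instead stay with unoriented circles throughout: existence of a packing through $\widehat{\RR}$ on each side comes from Zorn's lemma (the paper gets existence for free from graph components), and uniqueness comes from an induction along tangency paths using only the non-straddling condition together with the non-crossing property of Proposition \ref{prop:Kintersect}. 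What your route buys is economy: no orientations, no immediate-tangency machinery, and a clear isolation of where $K\neq\QQ(\sqrt{-3})$ enters (only through non-crossing); your inductive step in effect reproves the ``no three circles wedged at a tangency point'' phenomenon directly from maximality. What the paper's route buys is the immediate-tangency characterisation itself, which is the workhorse for everything that follows (tangency trees, Theorem \ref{thm:16noloops}, the construction of the Apollonian groups), plus a cleaner account of ``interior versus exterior'' via the two orientations. Two places in your write-up deserve one more line each, though both are fillable from results you already have: the existence of a smallest positive height among the lines of $\SK$ is not quite automatic from ``$\OK$ is a lattice,'' but it follows from Proposition \ref{prop:tangentfamilies} applied at $\infty$ (the lines of $\SK$ are exactly the translates $\widehat{\RR}+k\tau$) or from Proposition \ref{prop:immtang}; and the assertion that the packing in $\overline{\HH}$ contains $\Lambda$ needs a short maximality check (alternatively, distinctness of the two packings is immediate because if they coincided they would be contained in $\widehat{\RR}$ itself, i.e.\ equal $\{\widehat{\RR}\}$, which you have shown is not maximal).
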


The Eisenstein case ($K = \QQ(\sqrt{-3})$) presents the difficulty that $\Scal_K$ has circles intersecting other than tangently, as in Figure \ref{fig:exampleSK3}.  {\bf For this reason, throughout the paper, we will assume that $K \neq \QQ(\sqrt{-3})$.}  This special case is being further investigated in \cite{\REUG}.

Let $\Delta < 0$ be the discriminant of $K$.  The curvature of a $K$-Bianchi circle is of the form $n \sqrt{-\Delta}$, for some $n \in \ZZ$; we may call $n$ the \emph{reduced curvature} of the circle.  With this definition, we may extend Conjecture \ref{conj:main} for Apollonian circle packings.

\begin{conjecture}
        \label{conj:new}
        Let $K$ be a imaginary quadratic field with $\Delta \neq -3$.
        Let $\Pcal$ be a $K$-Apollonian packing, and for any $M \in \ZZ$, let $S_M$ be the set of residue classes modulo $M$ of the reduced curvatures in $\Pcal$.  There exists an $M$ dividing $24$ such that all sufficiently large integers whose residues modulo $M$ lie in $S_M$ occur as curvatures in $\Pcal$.

        Furthermore, the minimal sufficient $M$ is determined by the following formulae for its valuations with respect to $2$ and $3$:
        \[
                v_2(M) = \left\{ 
                        \begin{array}{ll}
                        3 & \Delta \equiv 28 \pmod{32} \\
                        2 & \Delta \equiv 8,12,20,24 \pmod{32} \\
                        1 & \Delta \equiv 0,4,16 \pmod{32} \\
                        0 & \mbox{otherwise} \\
                \end{array} \right. , \quad
                v_3(M) = \left\{ 
                        \begin{array}{ll}
                        1 & \Delta \equiv 5,8 \pmod{12} \\
                        0 & \mbox{otherwise} \\
                \end{array} \right. .
        \]
\end{conjecture}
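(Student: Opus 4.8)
The plan is to mirror, in the $K$-arithmetic setting, the archimedean-plus-spectral circle method underlying the known partial results toward Conjecture \ref{conj:main}, and to push it as far as current technology allows toward the full local-to-global statement. First I would coordinatize the reduced curvatures. By Theorem \ref{thm:disjointunion} each $K$-Apollonian packing $\Pcal$ is the orbit of a fixed oriented ``root'' quadruple under the associated $K$-Apollonian group $\G_K$, which I would realize as a thin subgroup of the integral orthogonal group of a Descartes-type quadratic form $Q_\Delta$ on $\ZZ^4$ (the analogue for $\OK$ of the form underlying \eqref{eqn:descquad}). Writing $v_0$ for the root vector of reduced curvatures and $e$ for the coordinate functional picking out one entry, the set of reduced curvatures of $\Pcal$ is exactly $\{\, e(\gamma v_0) : \gamma \in \G_K \,\}$. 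The input that makes sieve- and spectral-theoretic machinery applicable is that $\G_K$ is thin with $\Zcl(\G_K) = \mathrm{O}_{Q_\Delta}$, which is part of the structural description of the Apollonian structure of $\PSL_2(\OK)$.

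The second step is the purely local determination of $M$ and $S_M$. Strong approximation for $\G_K$ gives that the reduction of the orbit modulo $p^k$ fills out the corresponding orbit of $\mathrm{O}_{Q_\Delta}(\ZZ/p^k)$ for every prime $p$ away from the ramified primes $2$ and $3$, so at those primes no congruence obstruction survives and the entire obstruction is $2$-adic and $3$-adic. To extract the stated valuations I would compute directly the stabilized image of the orbit in $\mathrm{O}_{Q_\Delta}(\ZZ_2)$ and $\mathrm{O}_{Q_\Delta}(\ZZ_3)$: the integral structure of $Q_\Delta$, and in particular the $p$-adic valuation of the factor $\sqrt{-\Delta}$ relating curvatures to reduced curvatures, depends on $\Delta \bmod 32$ at $p=2$ and on $\Delta\bmod 12$ at $p=3$. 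Tabulating these finitely many cases should reproduce exactly the formulas for $v_2(M)$ and $v_3(M)$ and identify $S_M$ as the projection of the orbit to $\ZZ/M$. This is a finite but delicate computation, and making it uniform across the infinite family of fields is the first genuine difficulty.

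With the local picture fixed, the global step is the circle method. The key analytic input is super-strong approximation: a spectral gap for the congruence quotients of $\G_K$ that is uniform in the modulus, in the style of Bourgain--Gamburd--Sarnak and as exploited in \cite{\BourgainKontorovich}. Granting this, I would form the representation function $N_T(n) = \#\{\gamma \in \G_K : \|\gamma\| \le T,\ e(\gamma v_0) = n\}$, expand its Fourier transform over $\ZZ/q$, and separate major from minor arcs. The spectral gap controls the major-arc term, producing a count proportional to $n$ with a singular series positive exactly on $S_M$; bilinear and Kloosterman-type bounds, together with the gap, dominate the minor arcs. Carried out as in the Apollonian case, this would yield that a density-one subset of the admissible integers in each residue class of $S_M$ occurs as a reduced curvature, following \cite{\BourgainKontorovich}, with the positive-density estimate of \cite{\BFuchs} as a technically simpler first target.

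The main obstacle is precisely the passage from ``density one'' to ``all sufficiently large,'' i.e. the removal of the exceptional set, and here the proposal must be candid: this final step is open even for classical Apollonian packings, so no unconditional argument is currently available and the statement must remain a conjecture. The natural route is to replace the $L^2$ density-one input by a pointwise, individually checkable bound on the number of unrepresented admissible integers below $x$ --- an explicit power saving on the exceptional set, combined with an effective treatment of the minor arcs and of the ramified places --- but driving the exceptional set to zero appears to require genuinely new ideas beyond the present circle-method technology. A secondary obstacle, specific to the family, is that both the uniform spectral gap and the $2$- and $3$-adic ramification analysis must be controlled simultaneously as $K$ varies, which is what is needed to make the stated valuation formulas hold across all discriminants at once.
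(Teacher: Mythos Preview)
The statement is a \emph{conjecture}, and the paper does not prove it. What the paper offers in lieu of a proof is purely empirical: Section~\ref{sec:curvatures} reports computer experiments (in Sage) computing the set of reduced curvatures modulo various $n$ for many packings, and Tables~\ref{table2} and~\ref{table3} record the observed residue sets at $2$ and $3$. The formulas for $v_2(M)$ and $v_3(M)$ are read off from those tables, not derived. There is no spectral, sieve, or circle-method argument in the paper at all.

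Your proposal is not a proof either, and to your credit you say so: you correctly flag that the passage from density one to ``all sufficiently large'' is open already for classical Apollonian packings, so the full statement must remain conjectural. As a \emph{programme} your outline is the natural one, in the spirit of \cite{\BFuchs} and \cite{\BourgainKontorovich}, and it goes well beyond anything the paper attempts. A few points deserve caution, though. First, your local analysis asserts that strong approximation leaves obstructions only at $2$ and $3$; calling these the ``ramified primes'' is misleading (they are not the primes ramified in $K$ in general), and the claim itself would require proof for the specific thin groups $\Acal_K$ constructed here --- the paper establishes thinness (Theorem~\ref{thm:thin}) but not strong approximation or a spectral gap. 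Second, the packing is the orbit of a base \emph{cluster}, which is a quadruple only for $K=\QQ(i)$; for other $K$ the cluster types vary (cubes, tents, three-prongs), so the ``root quadruple'' language and the form $Q_\Delta$ would need to be set up field by field. Third, the paper's valuation formulas are experimental observations, so ``tabulating these finitely many cases should reproduce exactly the formulas'' is optimistic: even the local part of the conjecture (that $S_M$ stabilizes at a divisor of $24$, with the stated dependence on $\Delta$) is not proved here and would itself be new.
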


Further detailed predictions and supporting evidence is given in Section \ref{sec:curvatures}.  See Figure \ref{fig:kapp-curvs} for an example packing with curvatures shown.

The bulk of the paper is devoted to the study of $K$-Apollonian packings.
In particular, we define \emph{$K$-Apollonian groups} for every $K \neq \QQ(\sqrt{-3})$.  Let $\Mob$ denote the group of M\"obius transformations, including $\mathfrak{c}$, complex conjugation.  Then a \emph{$K$-Apollonian group} is one satisfying the conclusions of the following theorem.

\begin{theorem}[Summary of results of Sections \ref{sec:app}--\ref{sec:11}]
        Let $K \neq \QQ(\sqrt{-3})$ be an imaginary quadratic field.  Then there exists a finitely generated Kleinian group ${\Acal} < \langle \PSL_2(\OK), \mathfrak{c} \rangle < \Mob$ with the following properties:
        \begin{enumerate}
                \item The limit set of $\Acal$ is the closure of the $K$-Apollonian packing containing $\widehat{\RR}$.
                \item Any $K$-Apollonian packing is the orbit under ${\Acal}$ of some finite collection of circles.
                \item ${\Acal}$ is of infinite index in its Zariski closure (which is either $SO_{3,1}$ or $O_{3,1}$, under the isomorphism $\Mob \cong O^+_{3,1}(\RR)$).  In other words, it is thin.
                \item It is possible to define \emph{clusters} of circles, being unordered collections of $4 \le n < \infty$ circles in a certain geometric arrangement, so that the set of clusters in any $K$-Apollonian packing is a principal homogeneous space\footnote{By a \emph{prinicipal homogeneous space for a group $G$} we mean a non-empty set on which the group $G$ acts freely and transitively.} for $\Acal$.
        \end{enumerate}
\end{theorem}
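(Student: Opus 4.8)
The plan is to realize $\Acal$ explicitly by generators and to control it via Poincar\'e's polyhedron theorem, working in the upper half-space model of $\HH^3$ under the identification $\Mob \cong \mathrm{Isom}(\HH^3) \cong O^+_{3,1}(\RR)$, in which each $K$-Bianchi circle bounds a totally geodesic plane and tangency of circles becomes asymptoticity of planes. The first step is to define, for each $K \neq \QQ(\sqrt{-3})$, the \emph{cluster} attached to an oriented $K$-Bianchi circle: the finite configuration consisting of the circles of $\SK$ immediately tangent to it (in the sense of the discussion following Definition~\ref{def:apppack}) together with the finitely many $K$-Bianchi circles straddled by none of them --- that is, the local picture of $\SK$ in a neighbourhood of the circle. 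Taking the base circle to be $\widehat{\RR}$, this configuration is finite because the stabilizer of $\widehat{\RR}$ in $\langle \PSL_2(\OK), \mathfrak{c}\rangle$ is an explicit arithmetic group and the circles tangent to $\widehat{\RR}$ form a discrete, periodic family; the number $n$ of circles and the combinatorial type depend on the splitting of $2$ and $3$ in $\OK$, the class group, and the units, which is exactly what forces the case division of Sections~\ref{sec:app}--\ref{sec:11}. For $K = \QQ(i)$ the cluster should be a Descartes quadruple, recovering the classical Apollonian group.

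Next I would write down the generators. For each circle $C$ of the base cluster there should be a \emph{swap} $g_C \in \langle \PSL_2(\OK), \mathfrak{c}\rangle$ that fixes the geodesic planes of the remaining cluster circles and exchanges $C$ for the next immediately tangent circle on the same side; concretely $g_C$ is an inversion in a suitable bounding circle (hence an anti-M\"obius map, which is why $\mathfrak{c}$ is needed), or a short product of such, with entries forced into $\OK$ by the tangency constraints. Set $\Acal := \langle g_C \rangle$. I would then exhibit the polyhedron $P \subset \HH^3$ cut out by the half-spaces bounded by the cluster circles and their immediate neighbours, check that the $g_C$ pair its faces, and verify the face-pairing and edge-cycle hypotheses of Poincar\'e's polyhedron theorem (which permits $P$ to have finitely many faces but infinite volume). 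This gives at once that $\Acal$ is discrete and finitely presented with $P$ a fundamental domain, that the $\Acal$-translates of $P$ tile $\HH^3$, and hence that the $\Acal$-orbit of the base cluster is exactly the set of clusters of the packing $\Pcal$ containing $\widehat{\RR}$; this action is free, since each $g_C$ visibly moves one of the cluster circles so no nontrivial word fixes the base cluster, and transitive, since adjacent clusters differ by a single $g_C$ and tangency-connectedness of $\Pcal$ propagates this --- which is property~(4).

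Properties~(1) and~(3) are then formal. The $g_C$ preserve $\SK$, tangency, interiors and exteriors, and the property of straddling no circle, hence preserve the maximal non-straddling tangency-connected set $\Pcal$ containing $\widehat{\RR}$, so $\Lambda(\Acal) \subseteq \overline{\Pcal}$; conversely the $\Acal$-orbit of a point on a base-cluster circle is dense in $\Pcal$, since the orbit of the cluster exhausts $\Pcal$, giving $\Lambda(\Acal) = \overline{\Pcal}$. Because $\overline{\Pcal}$ is a fractal of empty interior, $\Acal$ is geometrically finite, is not a lattice (its limit set is a proper subset of $\widehat{\CC}$), and --- since that limit set lies in no round circle --- is Zariski-dense in $O^+_{3,1}(\RR)$; hence it is \emph{thin}, its Zariski closure being $O_{3,1}$ or $SO_{3,1}$ according as the $g_C$ include an orientation-reversing map. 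For property~(2), complex conjugation provides orientation reversal at every $K$-Bianchi circle, so $\langle \PSL_2(\OK), \mathfrak{c}\rangle$ acts transitively on oriented $K$-Bianchi circles and hence, by Theorem~\ref{thm:disjointunion}, transitively on $K$-Apollonian packings; combined with~(4) this reduces property~(2) to checking that the $g_C$ act compatibly on the clusters of every $K$-Apollonian packing, so that each packing is a single $\Acal$-orbit of a cluster --- part of the explicit analysis of Sections~\ref{sec:app}--\ref{sec:11}.

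The main obstacle is the middle step: guessing the correct cluster configuration and the swap generators for each $K \neq \QQ(\sqrt{-3})$, and then verifying Poincar\'e's hypotheses for the resulting polyhedron. This is genuinely arithmetic --- the shape of $\SK$ near a circle, and therefore $n$ and the face-pairing combinatorics, varies with the discriminant, and both the Euclidean/non-Euclidean dichotomy of \cite{VisOne} and the ramification of $2$ and $3$ enter --- and it is what forces the long case analysis. Once a bona fide fundamental polyhedron is in hand, discreteness, finite generation, the principal homogeneous space structure, the identification of the limit set, and thinness all follow formally.
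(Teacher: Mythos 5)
There is a genuine gap, and it begins with your definition of a cluster. You take the cluster attached to a circle to be ``the circles of $\SK$ immediately tangent to it'' (plus some further circles), and assert this is finite. It is not: by Proposition~\ref{prop:immtang} (see also Proposition~\ref{prop:tangentfamilies}) there is exactly one circle immediately tangent to a given circle at \emph{each} of its $K$-rational points, so the set of circles immediately tangent to $\widehat{\RR}$ is infinite; discreteness and periodicity of the family do not rescue finiteness, and the splitting of $2$ and $3$ is irrelevant to this (it only enters the curvature congruences of Conjecture~\ref{conj:new}). The paper's clusters are instead hand-chosen finite configurations cut out by Gram-matrix (Pedoe product) conditions: three-prongs/$K$-clusters of $4$ circles for general $\Delta\le -15$, and Descartes quadruples, cubes of $8$, and tents of $5$ or $10$ circles in the Euclidean cases. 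Your generator scheme inherits the problem: you posit, for each circle of the cluster, an involutive swap fixing the geodesic planes of the remaining circles and realized by an inversion. The paper points out explicitly that this fails outside $\QQ(i)$ and $\QQ(\sqrt{-2})$: for $\QQ(\sqrt{-7})$ and $\QQ(\sqrt{-11})$ the swaps are not inversions in circles and do not fix the other cluster circles (they permute them), and for general $\Delta\le -15$ the group $\Acal_K^0$ is not a free product of such involutions at all but an amalgam with presentation $\langle s_1,s_2,s_3,r : s_i^2=r^2=1,\ rs_1s_2s_3=s_3s_2s_1r\rangle$ (Theorem~\ref{thm:general-free}). So the uniform ``one swap per cluster circle'' picture cannot be carried out.

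The second gap is that the steps you expect Poincar\'e's polyhedron theorem to deliver are exactly the ones it does not. Discreteness is free (any $\Acal<\langle\PSL_2(\OK),\mathfrak{c}\rangle$ is Kleinian), so the theorem buys nothing there; what must be proved is that the action on clusters of a packing is simply transitive, and a fundamental polyhedron for the action on $\HH^3$ does not ``at once'' identify the orbit of the base cluster with the set of all clusters in $\Pcal_K$ --- that identification is the content of the paper's Theorem~\ref{thm:sufficient-clusters}, whose hypotheses are verified case by case using the topograph/superbasis combinatorics, the tree structure of the immediate tangency graph for non-Euclidean $\OK$ (Theorem~\ref{thm:16noloops}), and planarity arguments for the cube and tent graphs in the Euclidean cases. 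Your freeness argument (``each $g_C$ visibly moves one of the cluster circles so no nontrivial word fixes the base cluster'') is a non sequitur: a long word can move circles at every step and still return the base cluster to itself; ruling this out is precisely what Lemma~\ref{lemma:stab} and Theorems~\ref{thm:qtwotree}, \ref{thm:7free}, \ref{thm:11free} accomplish. Two smaller points: your claim that $\Acal$ is geometrically finite because $\overline{\Pcal_K}$ is a fractal of empty interior is unjustified (and unneeded --- the paper's thinness proof, Theorem~\ref{thm:thin}, instead rules out small Zariski closures by showing the limit set has Hausdorff dimension $>1$ via Theorem~\ref{thm:limit}, and gets infinite index from the infinitude of packings); and your reduction of property (2) to ``checking the $g_C$ act compatibly on the clusters of every packing'' simply defers the same unproved transitivity statement. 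In short, the architecture (clusters, swaps, limit set, thinness) resembles the paper's, but the cluster and generator definitions are wrong in general, and the simple-transitivity and limit-set identifications --- the real work of Sections~\ref{sec:app}--\ref{sec:11} --- are asserted rather than proved.
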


The last property deserves some further explanation.  In the study of traditional Apollonian circle packings, the `clusters' are Descartes quadruples (any four circles which are mutually tangent).  We have seen that, given three mutually tangent circles, there are exactly two Descartes quadruples containing these three.  Therefore, given one Descartes quadruple and one circle of that quadruple, there is a \emph{swap} that replaces that circle with the unique choice that gives a new Descartes quadruple.  For each quadruple, there are four such swaps.  It turns out that the space of Descartes quadruples is a principal homogeneous space for the traditional Apollonian group, whose four generators encode the four swaps.  This group has two manifestations, sometimes called \emph{algebraic} and \emph{geometric}.  The geometric manifestation is the one discussed in the theorem above, but both aspects are discussed in this paper.

There are many $K$-Apollonian groups (even for the traditional Apollonian circle packing), and we give some particularly simple and symmetric examples with pleasing presentations.  In particular, in cases where $\OK$ is Euclidean, we give an example which is a free product of finitely many copies of $\ZZ/2\ZZ$.
In each case, we give generators explicitly, and describe their interpretation as `{swaps}' on `{clusters}'.

Various Apollonian-like circle packings have appeared in the literature.  Guettler and Mallows studied \emph{generalized Apollonian packings}, which have also come to be called \emph{Apollonian 3-packings} or \emph{octahedral Apollonian packings}, circle packings in which each curvilinear triangle is packed with three (not just one) new circles \cite{\GuMa}.  They generalize much of the general theory of Apollonian packings, including the analogue of the Apollonian super-packing \cite[Figure 4]{\GuMa}.  Zhang later extended some of the local-to-global theory to Apollonian 3-packings, most notably a density-one result on curvatures \cite{\Xin}; his paper contains an Apollonian-type group for the $3$-packings.  It appears that the Apollonian-$3$-packings are dual to $\QQ(\sqrt{-2})$-Apollonian packings, in the following sense:  in $\Scal_{\QQ(\sqrt{-2})}$, every loop of four tangent circles has its tangency points along a new circle; this collection of new circles is the super-$3$-packing suitably rotated and scaled\footnote{The inspiration for this observation arises from the beautiful \emph{Glowing Limit} poster \cite{\Glowing} advertising \emph{Indra's Pearls} \cite{\Indra}, which happens to depict this duality, albeit without reference to $\QQ(\sqrt{-2})$.}.

Butler, Graham, Guettler and Mallows study circle packings given by certain clusters (which they call \emph{configurations}) and recursive \emph{rules} for filling curvilinear triangles \cite{\BuGrGuMa}.  They are particularly interested in which fields are needed to define the curvatures and centres of such packings.  Some of the $K$-Apollonian packings in the present paper come under their rubric, but not all.  For example, some $K$-Apollonian packings are formed recursively by filling curvilinear $4$- or $6$-gons.

The study of a three-dimensional analogue of Apollonian circle packings with integer curvatures began with Soddy's \emph{Nature} article of 1937 entitled \emph{The Bowl of Integers and the Hexlet} \cite{\Soddy}.  The three-dimensional case has the distinction that the associated local-global conjecture is known to hold \cite{\KontSphere}.  Nakamura extends this result to \emph{orthoplicial Apollonian sphere packings} \cite{Nak}, which generalise the Apollonian-3-packing to three dimensions in the same way Soddy's bowl of integers generalises the classical Apollonian circle packing.

The methods of the paper give rise to a few results that may be of independent interest.  The following theorem strengthens the results of \cite{VisOne}.

\begin{theorem}[Theorem \ref{thm:16noloops}]
        \label{thm:forest}
        Suppose $\OK$ is non-Euclidean.  Then the immediate tangency graph of the Schmidt arrangement for $K$ is an infinite forest of trees of infinite valence.  In particular, the tangency graph of a $K$-Apollonian packing is a tree.
\end{theorem}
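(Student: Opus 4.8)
The plan is to show that the immediate tangency graph $\Gamma$ of $\Scal_K$ contains no cycles when $\OK$ is non-Euclidean; since acyclicity plus the fact that every $K$-Bianchi circle meets infinitely many others tangentially (each $\widehat{\RR}$-translate under the parabolic subgroup of $\PSL_2(\OK)$ furnishes infinitely many immediate tangencies, which one checks persists under the group action) will give an infinite forest of trees of infinite valence, and the final sentence about a $K$-Apollonian packing being a tree is then immediate from the equivalent description of such a packing as a connected component of the immediate tangency graph (the remark following Definition \ref{def:apppack}). So the heart of the matter is: \emph{no cycles}.

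I would argue by contradiction. Suppose $C_0, C_1, \ldots, C_{n-1}, C_0$ is a cycle of immediately tangent circles in $\Scal_K$ with $n$ minimal, $n \ge 3$. The key geometric input is that immediate tangency forbids straddling: consecutive circles in the cycle are tangent with disjoint interiors and straddle no other $K$-Bianchi circle. First I would use the M\"obius action of $\langle \PSL_2(\OK), \mathfrak{c}\rangle$ to normalize: send the tangency point of $C_0$ and $C_1$ to $\infty$, so that $C_0$ and $C_1$ become two parallel lines (horizontal, say), and all other circles of the cycle lie in the closed strip between them — here is where ``straddles no circle of $\Scal_K$'' is crucial, since any circle poking out of the strip would be straddled. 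Now the curvatures (in the sense of signed inverse radii with respect to a consistent orientation) of $C_1, \ldots, C_{n-1}$ form a chain of tangent circles across a strip, and I would extract an arithmetic invariant — essentially tracking the real parts (or the relevant coordinate) of the tangency points as one walks along the chain, which live in $K$ and transform by the chain of parabolic/tangency relations. The contradiction should come from the fact that closing up the cycle forces a nontrivial relation in $\OK$ that can only be satisfied when the ring admits a Euclidean-type division step, i.e., when $\OK$ is Euclidean; equivalently, a cycle would exhibit a bounded ``staircase'' between two parallel lines realizing a continued-fraction-like algorithm that terminates, which is exactly the Euclidean property of $\OK$. This is precisely the mechanism by which \cite{VisOne} detects (non-)Euclideanness geometrically in $\Scal_K$, and I would cite and sharpen that analysis: the strengthening over \cite{VisOne} is in passing from ``connectedness'' statements to the full ``forest'' statement, which requires ruling out \emph{all} cycles, not merely showing the tangency graph is disconnected.

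The main obstacle, and the step that needs genuine care, is the normalization and the bookkeeping of the chain between the two parallel lines: one must verify that after sending a tangency point to $\infty$, every other circle of a minimal immediate-tangency cycle genuinely lies inside the strip (using minimality of $n$ to exclude degenerate returns and using the no-straddle hypothesis to confine the circles), and then that the arithmetic relation forced by closing the cycle is governed exactly by whether a certain norm can be made $< 1$ by subtracting an element of $\OK$ — i.e., the Euclidean function. A secondary point requiring attention is orientations: ``immediate tangency'' is defined on oriented circles with disjoint interiors, so I must track orientations consistently around the cycle and confirm the cycle is orientation-coherent (otherwise it is not a cycle in $\Gamma$ at all), which also feeds into the claim in Theorem \ref{thm:disjointunion} that each circle lies in exactly two $K$-Apollonian packings. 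Once acyclicity is in hand, infinite valence is the comparatively easy part: it follows from exhibiting, for a fixed circle, infinitely many distinct immediately tangent neighbors, e.g.\ by applying powers of a parabolic element of $\PSL_2(\OK)$ fixing a tangency point on that circle and checking the no-straddle condition is preserved, so no vertex of $\Gamma$ has finite degree.
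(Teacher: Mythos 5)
Your reduction of the theorem to acyclicity, and your treatment of infinite valence (parabolic images give infinitely many immediate tangencies) are fine; the problem is the step that is supposed to produce the contradiction. After normalizing so that $C_0,C_1$ become the parallel lines $\widehat{\RR}$ and $\widehat{\RR}+\tau$ and confining the rest of the cycle to the closed strip (which can indeed be done using the no-straddle property, as you indicate), you assert that closing up the chain ``forces a nontrivial relation in $\OK$ that can only be satisfied when the ring admits a Euclidean-type division step,'' i.e.\ that a bounded staircase across the strip is ``exactly the Euclidean property of $\OK$.'' This is asserted, not proved, and it conflates two different statements from \cite{VisOne}: the Euclidean property is equivalent to \emph{tangency-connectedness of all of} $\SK$ (can every circle be reached from $\widehat{\RR}$?), whereas a cycle is a closed chain inside a single component. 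In the non-Euclidean case the components are still infinite trees containing arbitrarily long finite chains, and the two lines $\widehat{\RR}$, $\widehat{\RR}+\tau$ are already tangent at $\infty$, so exhibiting one finite chain across the strip does not by itself manufacture a division chain for an arbitrary pair of elements of $\OK$; no mechanism is given for extracting the universal ``for all $\alpha,\beta$ there is a quotient with small remainder'' statement from a single closed configuration. Without that, the heart of the proof is missing.

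The paper's own argument shows why some genuine quantitative input is needed and why a single mechanism is unlikely to cover all cases: for $\Delta<-15$ acyclicity is proved by a curvature-monotonicity argument (orient edges by increasing absolute curvature; a covolume-versus-norm estimate for the lattice $\beta\ZZ+\delta\ZZ$ of denominators shows each circle has at most one tangency point where it meets a circle of no larger absolute curvature, so every cycle would have to be strictly increasing), and this estimate \emph{fails} at $\Delta=-15$, where the paper instead uses the normalization you describe but derives the contradiction from an explicit ``ghost chain'' of circles in the complement of $\Scal_{\QQ(\sqrt{-15})}$ separating the strip -- an input your sketch does not have. So your proposal is not a repackaging of the paper's proof by other means; at the decisive step it appeals to an equivalence (``cycle $\Leftrightarrow$ Euclidean division step'') that is exactly what would need to be established, and the $\Delta=-15$ case is evidence that it cannot be established by the soft bookkeeping you describe. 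To repair the argument you would need either a monotone quantity along the chain (as in the paper's curvature argument, together with an analysis of the boundary case $\Delta=-15$) or an explicit separating object in the strip.
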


The following strengthens results of \cite{MR2805033}.

\begin{theorem}[Theorem \ref{thm:Ethin}]
        Whenever $\OK$ is non-Euclidean, the subgroup of $\PSL_2(\OK)$ generated by elementary matrices is a thin group.
\end{theorem}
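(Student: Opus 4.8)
The plan is to reduce the statement to a counting question about the connected components of $\SK$. Write $\Gamma=\PSL_2(\OK)$ and let $E\le\Gamma$ be the image of the subgroup generated by elementary matrices. The group $E$ contains the two unipotent subgroups $U^{+}=\{z\mapsto z+\alpha:\alpha\in\OK\}$ and $U^{-}=\{z\mapsto z/(\alpha z+1):\alpha\in\OK\}$, each isomorphic to $(\OK,+)$ and fixing $\infty$, respectively $0$; these are ``opposite'' and already generate a Zariski-dense subgroup, so $\Zcl(E)=\Zcl(\Gamma)$. Since $E$ is contained in the lattice $\Gamma$, this means $E$ is thin exactly when $[\Gamma:E]=\infty$; so it suffices to establish the latter when $\OK$ is non-Euclidean.

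Next I would locate $E$ inside the geometry of $\SK$. As $E=\langle U^{+},U^{-}\rangle$, and $U^{+}\widehat{\RR}$ is a chain of lines parallel to $\widehat{\RR}$ with consecutive members tangent at $\infty$, while $U^{-}\widehat{\RR}$ is a chain of circles through $0$ with consecutive members tangent at $0$, both $U^{\pm}\widehat{\RR}$ lie in the tangency-connected component $X_{0}$ of $\widehat{\RR}$ in $\SK$. Because every element of $\Gamma$ preserves $\SK$ and the tangency relation, an induction on word length in $U^{+}\cup U^{-}$ gives $E\cdot\widehat{\RR}\subseteq X_{0}$, hence $E$ fixes $X_{0}$ setwise; that is, $E\le\Gamma_{0}:=\operatorname{Stab}_{\Gamma}(X_{0})$. (In fact equality $E=\Gamma_0$ holds: a $K$-Bianchi circle tangency-joined to $\widehat{\RR}$ arises from $\widehat{\RR}$ by a chain of ``tangency moves'', and since the tangency point of two $K$-Bianchi circles lies in $\PP^{1}(K)$ — on $\widehat{\RR}$, in $\PP^{1}(\QQ)$ — while $\PSL_{2}(\ZZ)\subset E$ carries $\PP^{1}(\QQ)$ to $\{0,\infty\}$, where the tangent $K$-Bianchi circles are precisely the $U^{\pm}$-translates, every such move lies in $E$. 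Only the inclusion $E\le\Gamma_{0}$ is needed below.) Now $\Gamma$ acts transitively on $K$-Bianchi circles by the definition of $\SK$, and preserves tangency, hence permutes the tangency-connected components of $\SK$ transitively; therefore $[\Gamma:\Gamma_{0}]$ equals the number of components of $\SK$, and $[\Gamma:E]\ge[\Gamma:\Gamma_{0}]$. It remains to show that $\SK$ has \emph{infinitely many} tangency-connected components whenever $\OK$ is non-Euclidean.

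This last point is the heart of the matter and the only place the non-Euclidean hypothesis is genuinely used; it is a quantitative sharpening of the result of \cite{VisOne} that $\SK$ is disconnected exactly when $\OK$ is non-Euclidean. The approach I would take is to work inside a single $K$-Apollonian packing $\Pcal\subseteq X_{0}$ — for instance the one between $\widehat{\RR}$ and its translate by a $\ZZ$-module generator of $\OK$, which by Definition \ref{def:apppack} is confined to the corresponding strip: the closure of $\Pcal$ is nowhere dense, so its complement in $\widehat{\CC}$ is a countable disjoint union of open disks, each bounded by a circle of $\Pcal$; inside each such disk there are further $K$-Bianchi circles, and one shows, using the forest structure of Theorem \ref{thm:forest}, that for infinitely many of these disks the circles they contain cannot be tangency-joined back to $\widehat{\RR}$, yielding infinitely many distinct components (alternatively one may invoke here the explicit obstructions to $E_{2}$-generation behind \cite{MR2805033}). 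Granting this, $[\Gamma:E]\ge[\Gamma:\Gamma_{0}]=\infty$, and by the first paragraph $E$ is thin. The main obstacle is precisely this final step: the soft facts available — transitivity of $\Gamma$ on components, and that a finite-index subgroup of the lattice $\Gamma$ is again a lattice and hence has every $\widehat{\CC}$-orbit dense — do not by themselves rule out finitely many components, so one must exploit the self-similar, gasket-like structure of $\SK$; everything else is routine bookkeeping with the $\Gamma$-action on circles.
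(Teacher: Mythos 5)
Your reduction is sound up to the last step, and in fact your route to the Zariski closure (the two opposite unipotent subgroups $U^{\pm}$ over $\OK$ are Zariski dense in $\PSL_2(\CC)$, viewed as a real group, so $\Zcl(E_2(\OK))$ is the full group and thinness is equivalent to infinite index in the Bianchi group) is a legitimate and arguably cleaner alternative to the paper's argument, which instead pins down the Zariski closure by showing the limit set has Hausdorff dimension exceeding $1$ (Theorems \ref{thm:limit} and \ref{thm:thin}, via Bishop--Jones). Likewise your identification of $E_2(\OK)$ inside the stabilizer of the tangency-connected component $X_0$ of $\widehat{\RR}$, and the bound $[\Gamma:E]\ge[\Gamma:\Gamma_0]=\#\{\text{components}\}$, matches the paper, which shows via Proposition \ref{prop:tangentfamilies} that the $E_2(\OK)$-orbit of $\widehat{\RR}$ is exactly that component. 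The genuine gap is the step you yourself flag and then assume: that $\SK$ has infinitely many tangency-connected components whenever $\OK$ is non-Euclidean. You never prove it; ``granting this'' is where the proof stops, and it is precisely the input that carries the non-Euclidean hypothesis. The paper does not prove it here either --- it quotes it from the earlier paper \cite{VisOne} (the connectivity/Euclidean dichotomy of Theorem 7.1 there, with the infinitude of components for $\Delta\le -15$, i.e.\ exactly the non-Euclidean discriminants, recorded in Theorem 7.5 of that paper and recalled at the start of Section \ref{sec:loops}).

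Your sketch for filling that step does not work as stated. Theorem \ref{thm:forest} (= Theorem \ref{thm:16noloops}) concerns the \emph{immediate} tangency graph, whose connected components are the $K$-Apollonian packings; it gives infinitely many packings, but a single tangency-connected component of $\SK$ typically contains infinitely many packings (e.g.\ $\Scal_{\QQ(i)}$ is tangency-connected yet is a union of all integral packings), so counting packings, or arguing about disks in the complement of one packing, does not bound the number of tangency components from below. What is actually needed is a separation device in the complement of $\SK$ --- the ghost circles of \cite{VisOne}, used in this paper only in Proposition \ref{prop:15noloops} --- or an appeal to the known infinite-index results for $E_2$ over non-Euclidean imaginary quadratic rings (\cite{MR2805033}), which you mention only parenthetically and do not develop. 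As written, then, the proposal reproduces the paper's architecture but leaves its essential arithmetic-geometric ingredient unproved and uncited, so it is incomplete.
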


And finally, the paper contains in Section \ref{sec:topographical} a discussion of \emph{topographical groups}.  These are subgroups of $\PGL_2(\ZZ)$ for which unordered superbases are a principal homogeneous space.  There are only two normal such subgroups, which are isomorphic under the outer automorphism of $\PGL_2(\ZZ)$.  They have a Cayley graph isomorphic to the \emph{topograph} of Conway and Fung \cite{\Conway}.  

It is natural to ask about extending the results of the present paper to Schmidt arrangements for congruence subgroups of the Bianchi group, which is work in progress \cite{\circlesummer}, and Bianchi groups for non-maximal orders, for which Schmidt arrangements have been studied by Sheydvasser \cite{\Senia}.

{\bf Acknowledgements.} The author is indebted to Arthur Baragar, Elena Fuchs, Jerzy Kocik, Alex Kontorovich, Hee Oh, and Peter Sarnak for helpful discussions in the course of this work.  Special thanks to Jonathan Wise for \LaTeX\, help with Proposition \ref{prop:algebraic}.

{\bf A note on the figures.} The figures in this paper were produced with Sage Mathematics Software \cite{Sage}.  Figures \ref{fig:app}, \ref{fig:exampleSK2}, \ref{fig:exampleSK3} and \ref{fig:ghostchain} appeared previously in \cite{AppPrevious, VisOne}.

\begin{figure}
        \includegraphics[height=3in]{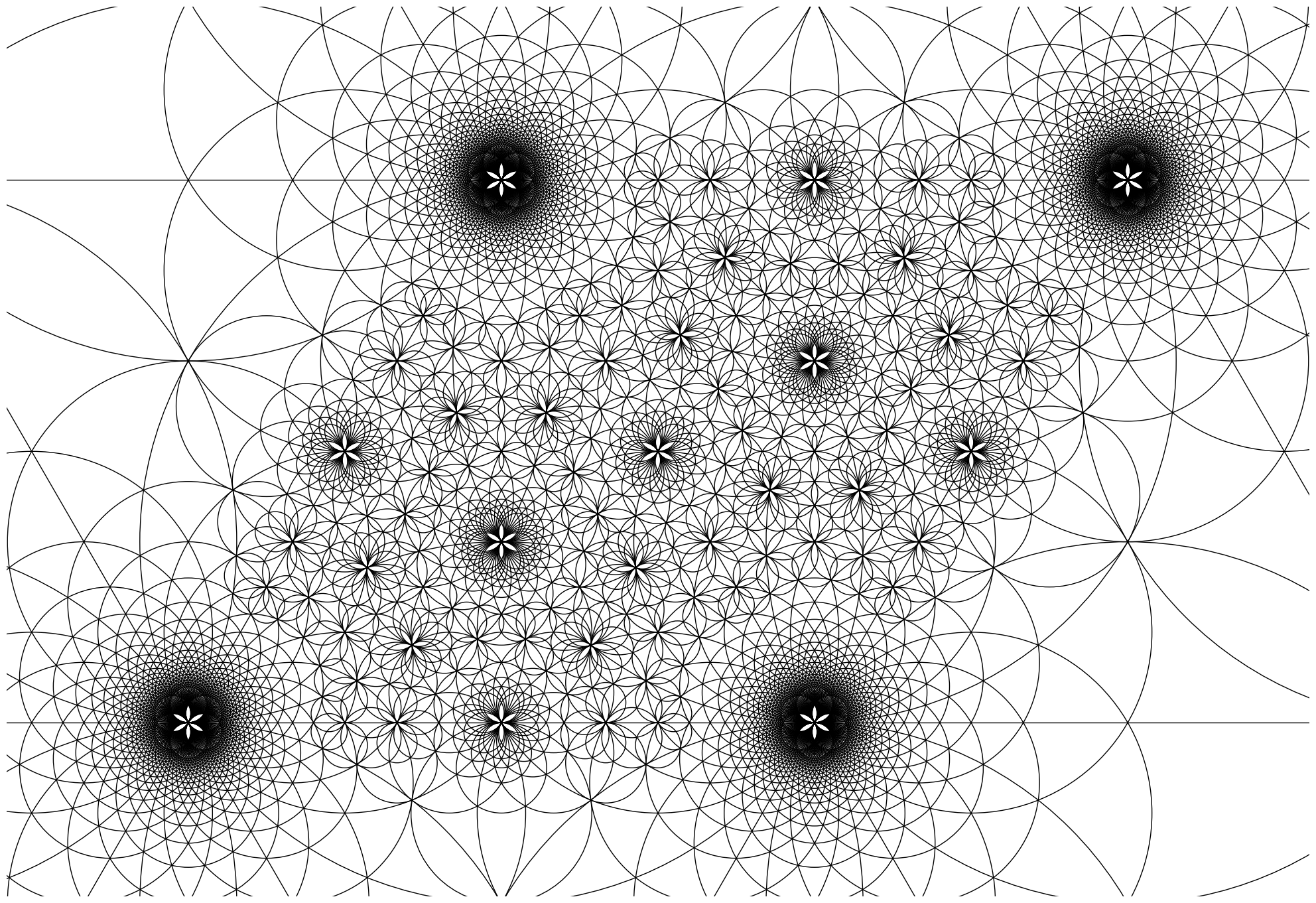} 
        \caption{Schmidt arrangement $\Scal_{\QQ(\sqrt{-3})}$.
        The image includes those circles of curvature bounded by $20$ intersecting the fundamental parallelogram of the ring of integers or its boundary.}
\label{fig:exampleSK3}
\end{figure}

\section{Notations}
\label{sec:notation}

Throughout the paper, $K$ is a imaginary quadratic field with discriminant $-3 \neq \Delta<0$ and ring of integers $\OK$.  The ring $\OK$ has an integral basis $1, \tau$, where
\[
        \tau ^2 = \left\{\begin{array}{ll}
                \Delta/4 & \Delta \equiv 0 \pmod 4 \\
                \tau + (\Delta-1)/4 & \Delta \equiv 1 \pmod 4 \\
        \end{array}
        \right. .
\]
It is convenient to write
\[
        \epsilon = Tr(\tau) = \left\{\begin{array}{ll}
                        0 & \Delta \equiv 0 \pmod 4 \\ 
                        1 & \Delta \equiv 1 \pmod 4 \\ 
                \end{array}
                \right. .
\]
In other words,
\[
        \tau,\overline{\tau} = \frac{\epsilon}{2} \pm \frac{\sqrt{\Delta}}{2} .
\]
Write $N:K \rightarrow \QQ$ for the norm map $\alpha \mapsto \alpha\overline{\alpha}$.  In particular,
\[
        N(\tau) = -\frac{\Delta - \epsilon}{4}.
\]

We follow \cite{\gtone} in writing $\Mob$ for the \emph{conformal group}, the group of conformal maps of $\widehat{\CC}$, including M\"obius transformations and reflections (i.e. allowing complex conjugation); and writing $\Mobplus$ for the group of M\"obius transformations without reflections, which is isomorphic to $\PSL_2(\CC)$ via the usual matrix representation of a M\"obius transformation.  We also write $\GM:= \Mob \times \{ \pm I \}$ for the \emph{extended M\"obius group}.  The group $\Mob$ is isomorphic to the isometry group of three-dimensional hyperbolic space, $\HH^3$, via its action on the boundary of the upper-half-space model.  

{\bf Warning:  For the remainder of the paper, $K \neq \QQ(\sqrt{-3})$.}

\section{Preliminaries}

We recall some basic facts about Schmidt arrangements from \cite{VisOne}, to which the reader is referred for further details.  Let $K$ be an imaginary quadratic field.  Recall that throughout the paper, $K \neq \QQ(\sqrt{-3})$.  Among the remaining cases, $K=\QQ(i)$ is special in several regards, as we will describe.  

\begin{definition}
        If $M \in \PSL_2(\OK)$, then $M(\widehat{\RR})$ is called a \emph{$K$-Bianchi circle}.  The collection of all $K$-Bianchi circles is called the \emph{Schmidt arrangement}, denoted $\SK$.  We will also refer to the union of these circles as $\SK$, without fear of confusion.
\end{definition}

It is convenient to consider oriented circles.

\begin{definition}[{\cite[Definitions 3.1, 3.2, 3.3]{VisOne}}]
        An \emph{oriented circle} is a circle together with an \emph{orientation}, which is a direction of travel, specified as either \emph{positive/counterclockwise} or \emph{negative/clockwise}.   The \emph{interior} of an oriented circle is the area to your left as you travel along the circle according to its orientation (in other words, the region not containing $\infty$ for a positively oriented circle).  For lines (circles through $\infty$) besides $\widehat{\RR}$, positive orientation indicates travel in the direction of increasing imaginary part.  For $\widehat{\RR}$, positive orientation indicates travel to the right.  
        An \emph{oriented $K$-Bianchi circle} is an oriented circle whose underlying circle is a $K$-Bianchi circle.  Write $\OSK$ for the collection of oriented $K$-Bianchi circles.
\end{definition}

The map $\OSK \rightarrow \SK$ which forgets orientation is two-to-one.

        Let us also set the convention that $\widehat{\RR}$, when considered an oriented circle, denotes the positively oriented circle (whose interior is the upper half plane).  
The group $\PGL_2(\CC)$ has a natural action on oriented circles via M\"obius transformation, so that, with this convention, $M(\widehat{\RR})$ is naturally endowed with an orientation.  The stabilizer of $\widehat{\RR}$ is $\PSL_2(\RR)$.  Furthermore, by Proposition 3.4 of \cite{VisOne}, this action restricts, in the case $K \neq \QQ(i)$, to a transitive action of $\PGL_2(\OK)$ on oriented $K$-Bianchi circles, with $\operatorname{Stab}(\widehat{\RR}) = \PSL_2(\ZZ)$.  
In the case of $K = \QQ(i)$, $\PSL_2(\OK)$ is already transitive on oriented $K$-Bianchi circles with the same stabilizer.  If one considers the orbit of $\widehat{\RR}$ under $\PGL_2(\ZZ[i])$, one obtains a strictly larger collection of circles than the Schmidt arrangement.  However, the new circles are not interesting:  the set consists of two copies of the Schmidt arrangement at right angles.  See \cite[Section 3]{VisOne}.

Oriented circles are best described as parameterized by four real parameters, as follows.

\begin{proposition}[{\cite[Proposition 3.5]{VisOne}}]
        \label{prop:gencirc}
        Let $C$ be an oriented circle in $\widehat{\CC}$ (including those through $\infty$).  Then the circle $C$ can be given uniquely in the form
        \[
                \left\{ X/Y \in \widehat{\CC} : bX\overline{X} - a Y \overline{X} - \overline{a} X \overline{Y} + b' Y \overline{Y} = 0 \right\}.
        \]
        where the following hold:
        \begin{enumerate}
                \item \label{item1}$b,b' \in \RR$, $a\in \CC$,
                \item we have
                        \begin{equation}
                \label{eqn:cocurv}
                b'b = a\overline{a}-1,
        \end{equation}
\item\label{item3} $b$ has sign equal to the orientation of $C$, and;
\item\label{item4} if $b=0$, in which case $C$ must be a line, then $a$, as a vector, is a unit vector pointing from exterior to interior orthogonal to $C$.
        \end{enumerate}
        Furthermore, if $C'$ is the image of $C$ under $z \mapsto 1/z$, and $C$ has parameters $(b,b',a)$ according to the requirements above, then $C'$ has parameters $(b',b,\overline{a})$ according to the requirements above.
        Finally, \begin{enumerate}
                \item $b=0$ if and only if $\infty \in C$,
                \item $b'=0$ if and only if $0 \in C$,
                \item if $\infty \notin C$, then $C$ has radius $1/|b|$,
                \item if $\infty \notin C$, then $C$ has centre $a/b$.
        \end{enumerate}
\end{proposition}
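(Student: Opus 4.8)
The plan is to pass to the affine chart $Y=1$, write $z=X/Y$, and observe that the displayed homogeneous equation becomes the Hermitian equation
\[
        b\,z\overline z - a\,\overline z - \overline a\,z + b' = 0 ,\qquad b,b'\in\RR,\ a\in\CC ,
\]
which I would then match against the classical equation of a Euclidean circle or of a line. For \textbf{existence}, together with the centre/radius formulas, I would split into two cases. If $\infty\notin C$, then $C$ is an ordinary Euclidean circle with centre $p\in\CC$ and radius $r>0$: set $b=\pm 1/r$, taking the sign equal to the orientation of $C$, and set $a=bp$ and $b'=b(|p|^2-r^2)$; a one-line check shows that $b\bigl(z\overline z-\overline p\,z-p\,\overline z+|p|^2-r^2\bigr)$ equals the Hermitian expression above, so its zero set is $C$, while $bb'=b^2|p|^2-b^2r^2=|a|^2-1$ is exactly \eqref{eqn:cocurv}; reading off the data gives $p=a/b$ and $r=1/|b|$. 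If $\infty\in C$, then $C$ is a line, which I write as $\{z:2\Re(\overline a\,z)=b'\}$ with $a$ the unit normal pointing from exterior to interior and with $b=0$; the Hermitian expression is then $b'-2\Re(\overline a\,z)$, cutting out $C$, and $|a|=1$ makes \eqref{eqn:cocurv} hold. In passing, evaluating the homogeneous form at $(X{:}Y)=(1{:}0)$ and $(0{:}1)$ yields $\infty\in C\iff b=0$ and $0\in C\iff b'=0$, and I would note that \eqref{eqn:cocurv} is precisely what prevents the locus from degenerating --- it forces $r^2=1/b^2>0$ when $b\neq0$, and $a\neq0$ when $b=0$.

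For \textbf{uniqueness}, I would run this computation in reverse. A real rescaling $(b,a,b')\mapsto(\lambda b,\lambda a,\lambda b')$, $\lambda\neq0$, does not change the zero set, but it preserves \eqref{eqn:cocurv} only for $\lambda=\pm1$, and the remaining sign is pinned down by the orientation. Concretely: given the oriented circle $C$ with $b\neq0$, the underlying set determines its centre and radius, hence $a/b$ and $|b|=1/r$; the orientation fixes $\operatorname{sign}(b)$, hence $b$, hence $a=b\cdot(\text{centre})$, hence $b'=(|a|^2-1)/b$. If $b=0$, the oriented line has a unique interior-pointing unit normal, which must be $a$, and then $b'=2\Re(\overline a\,z_0)$ for any $z_0\in C$. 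This establishes uniqueness together with conditions \ref{item1}--\ref{item4}.

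Finally, for the transformation under $z\mapsto1/z$: this map acts on homogeneous coordinates by swapping $X$ and $Y$, so substituting into the defining equation and reading off the coefficients shows the image $C'$ is cut out by $b'\,X\overline X-\overline a\,Y\overline X-a\,X\overline Y+b\,Y\overline Y=0$, i.e.\ has candidate parameters $(b',b,\overline a)$, which satisfy \eqref{eqn:cocurv} since $b'b=|a|^2-1=|\overline a|^2-1$. What remains is to confirm these are the parameters \emph{in the sense of the proposition}, i.e.\ that the orientation and unit-normal conventions are respected by the substitution; by the uniqueness just proved it suffices to check this on one circle of each type --- say the unit circle and a circle through $0$ --- which is a direct verification (alternatively, one can invoke compatibility with the $\PGL_2(\CC)$-action on oriented circles). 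I expect this last piece --- the bookkeeping of orientations, namely reconciling the sign convention on $b$ with the normal-direction convention on $a$ and with the paper's convention that $\widehat\RR$ is positively oriented with interior the upper half plane, and then checking this survives $z\mapsto1/z$ --- to be the only part that is more than a routine calculation; everything else is the single affine-chart identity above, read in both directions.
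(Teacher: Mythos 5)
A preliminary remark: this paper does not prove Proposition \ref{prop:gencirc} at all --- it is imported verbatim from \cite[Proposition 3.5]{VisOne} --- so there is no internal proof to compare against, and your write-up has to stand on its own. Most of it does. The affine-chart identification with the Hermitian equation of a circle or line, the existence construction $a=bp$, $b'=b(|p|^2-r^2)$ with $\operatorname{sign}(b)$ chosen by the orientation, the observation that \eqref{eqn:cocurv} both rules out degenerate loci and rigidifies the triple up to the sign $\pm1$, the concrete uniqueness argument (the set determines centre and radius, hence $a/b$ and $|b|$; orientation fixes the sign; \eqref{eqn:cocurv} fixes $b'$; in the line case $|a|=1$ is forced and the interior-normal convention fixes $a$, hence $b'$), and the evaluations at $(1{:}0)$ and $(0{:}1)$ giving the $b=0$ and $b'=0$ criteria are all correct and complete, and this is surely the intended argument.

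The one step that does not hold up as written is the orientation bookkeeping for $z\mapsto 1/z$. Uniqueness tells you the parameters of $C'$ are $\pm(b',b,\overline a)$, but it does not by itself reduce the determination of the sign to checking one circle of each type: a priori the sign could depend on $C$ (say, on whether $0$ lies inside or outside $C$), and you give no continuity or connectedness argument to exclude this (one exists: the locus \eqref{eqn:cocurv} is a connected hyperboloid and the sign discrepancy between the two continuous, nonvanishing parameter assignments is locally constant --- but that has to be said). The parenthetical appeal to ``compatibility with the $\PGL_2(\CC)$-action'' is also risky, since equivariance of this parametrization is exactly the kind of statement that is derived from this proposition (cf.\ Proposition \ref{prop:curvature}). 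A clean repair avoids sample circles altogether: first check that conditions \ref{item3}--\ref{item4} are jointly equivalent to the statement that the interior of $C$ is precisely the region where $f(z)=bz\overline z-a\overline z-\overline a z+b'$ is negative (three easy cases: $b>0$, $b<0$, $b=0$). Then, writing $f'(w)=b'w\overline w-\overline a\,\overline w-aw+b$ for the candidate form of $C'$, one has $f'(w)=|w|^{2}f(1/w)$ for $w\neq 0,\infty$, so $\{f'<0\}$ is the image under $z\mapsto 1/z$ of $\{f<0\}$; since M\"obius maps carry interiors to interiors, this image is $\operatorname{int}(C')$, which is exactly conditions \ref{item3}--\ref{item4} for $(b',b,\overline a)$, including the $b'=0$ line case. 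With that substitution (or with the connectedness argument made explicit) your proof is complete.
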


\begin{definition}[{\cite[Definition 3.6]{VisOne}}]
        \label{def:curv}
        For any circle $C$ in $\widehat{\CC}$, expressing it as in Proposition \ref{prop:gencirc}, we call $b$ the \emph{curvature} (elsewhere sometimes called a \emph{bend}), $b'$ the \emph{co-curvature}, and $a$ the \emph{curvature-centre}.
\end{definition}

\begin{proposition}[{\cite[Proposition 3.7]{VisOne}}]
\label{prop:curvature}
Consider an oriented circle expressed as the image of $\widehat{\RR}$ under a transformation of the form
\[
M =
      \begin{pmatrix} \alpha & \gamma \\ \beta & \delta \end{pmatrix}, \quad \alpha, \beta, \gamma, \delta \in \CC, \;\; |\alpha\delta - \beta\gamma|=1.
\]
The curvature of the circle is given by
\[
        i(  \beta \overline{\delta} - \overline{\beta}\delta),
\]
the co-curvature of the circle is given by
\[
        i( \alpha \overline{\gamma} - \overline{\alpha}\gamma ),
\]
and the curvature-centre is given by
\[
        i(  \alpha \overline{\delta} - \gamma \overline{\beta} ).
\]
\end{proposition}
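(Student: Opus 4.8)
The plan is to encode oriented circles as Hermitian $2\times2$ matrices and then to compute how this encoding transforms under a M\"obius transformation.

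First I would rewrite the defining equation of Proposition~\ref{prop:gencirc} in matrix form: the oriented circle with parameters $(b,b',a)$ consists of those $X/Y$ for which $\mathbf{v}^{*}H\mathbf{v}=0$, where $\mathbf{v}=(X,Y)^{\mathsf{T}}$, $\mathbf{v}^{*}=(\overline{X},\overline{Y})$, and
\[
        H=\begin{pmatrix} b & -a \\ -\overline{a} & b'\end{pmatrix}.
\]
This $H$ is Hermitian, and the co-curvature relation \eqref{eqn:cocurv} says exactly that $\det H=-1$. Moreover the orientation conditions \ref{item3} and \ref{item4} of Proposition~\ref{prop:gencirc} translate into the statement that the interior of the oriented circle is the region $\mathbf{v}^{*}H\mathbf{v}<0$; this is immediate to check for $\widehat{\RR}$, which, positively oriented, has $b=b'=0$, $a=i$, hence $H_0=\begin{pmatrix}0&-i\\ i&0\end{pmatrix}$, and indeed $\mathbf{v}^{*}H_0\mathbf{v}<0$ precisely on the upper half plane.

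Next I would observe that under the action $[\mathbf{v}]\mapsto[M\mathbf{v}]$ of $M\in\PGL_2(\CC)$ the circle with matrix $H$ is carried to the circle with matrix $(M^{*})^{-1}HM^{-1}$: if $\mathbf{v}^{*}H\mathbf{v}=0$ and $\mathbf{w}=M\mathbf{v}$ then $\mathbf{w}^{*}(M^{*})^{-1}HM^{-1}\mathbf{w}=\mathbf{v}^{*}H\mathbf{v}=0$. This conjugation preserves Hermitian-ness, and it multiplies the determinant by $|\det M|^{-2}=1$ under the normalisation $|\alpha\delta-\beta\gamma|=1$, so \eqref{eqn:cocurv} is preserved. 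Applying the same substitution to a $\mathbf{v}$ with $\mathbf{v}^{*}H_0\mathbf{v}<0$ shows that the negativity region of $(M^{*})^{-1}H_0M^{-1}$ is $M(\text{upper half plane})$, which is by definition the interior of $M(\widehat{\RR})$ with its natural orientation. Hence, by the uniqueness clause of Proposition~\ref{prop:gencirc}, $(M^{*})^{-1}H_0M^{-1}$ is precisely the matrix $\begin{pmatrix} b & -a \\ -\overline{a} & b'\end{pmatrix}$ attached to $M(\widehat{\RR})$, including condition \ref{item4} in the degenerate case where $M(\widehat{\RR})$ is a line.

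Finally the proof reduces to one explicit computation. Writing $M^{-1}=(\alpha\delta-\beta\gamma)^{-1}\begin{pmatrix}\delta&-\gamma\\ -\beta&\alpha\end{pmatrix}$ and $(M^{*})^{-1}=\overline{(\alpha\delta-\beta\gamma)}^{-1}\begin{pmatrix}\overline{\delta}&-\overline{\beta}\\ -\overline{\gamma}&\overline{\alpha}\end{pmatrix}$, the two scalar prefactors multiply to $|\alpha\delta-\beta\gamma|^{-2}=1$, and multiplying out the three matrices gives
\[
        (M^{*})^{-1}H_0M^{-1}=\begin{pmatrix} i(\beta\overline{\delta}-\overline{\beta}\delta) & -i(\alpha\overline{\delta}-\gamma\overline{\beta}) \\ i(\overline{\alpha}\delta-\overline{\gamma}\beta) & i(\alpha\overline{\gamma}-\overline{\alpha}\gamma)\end{pmatrix}.
\]
Comparing entries with $\begin{pmatrix} b & -a \\ -\overline{a} & b'\end{pmatrix}$ reads off the curvature $b=i(\beta\overline{\delta}-\overline{\beta}\delta)$, the co-curvature $b'=i(\alpha\overline{\gamma}-\overline{\alpha}\gamma)$, and the curvature-centre $a=i(\alpha\overline{\delta}-\gamma\overline{\beta})$, as claimed; the coincidence of the $(2,1)$-entry with $-\overline{a}$ is an automatic Hermiticity check. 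The only place where care is needed is the orientation bookkeeping --- making sure no stray sign of $-1$ enters $b$ --- and that is exactly what the translation of conditions \ref{item3}--\ref{item4} into ``interior $=$ negativity region'' in the second step is designed to handle; everything else is routine $2\times2$ matrix arithmetic.
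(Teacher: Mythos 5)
The paper does not actually prove this proposition here --- it is quoted from \cite[Proposition 3.7]{VisOne} --- so there is no in-text argument to compare yours against; judged on its own, your Hermitian-matrix proof is correct. Encoding the circle of Proposition~\ref{prop:gencirc} as $H=\begin{pmatrix} b & -a\\ -\overline{a} & b'\end{pmatrix}$ with $\det H=-1$, the transformation rule $H\mapsto (M^{*})^{-1}HM^{-1}$, the cancellation of the scalar prefactors using $|\alpha\delta-\beta\gamma|=1$, and the final matrix product are all right (multiplying out gives exactly the entries you state), and reading off $b$, $b'$, $a$ yields the three formulas of the proposition, with the $(2,1)$ entry confirming Hermiticity.

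One step you should write out rather than assert: you need the equivalence of conditions \ref{item3} and \ref{item4} of Proposition~\ref{prop:gencirc} with ``interior $=$ region where $\mathbf{v}^{*}H\mathbf{v}<0$'' for an \emph{arbitrary} oriented circle, not only for $\widehat{\RR}$, because it is applied to $M(\widehat{\RR})$ when you invoke uniqueness to identify $(M^{*})^{-1}H_0M^{-1}$ with the parameter matrix of the image circle (the two determinant-$(-1)$ Hermitian matrices with that zero set are $\pm H$, and the sign is exactly what this equivalence pins down). The check is routine but should appear: for $b\neq 0$ one has $\mathbf{v}^{*}H\mathbf{v}=b\,|z-a/b|^{2}-1/b$ at $z=X/Y$, so the negativity region is the disk when $b>0$ and its complement when $b<0$, matching the sign-of-$b$ convention; for $b=0$ it is $-2\operatorname{Re}(\overline{a}z)+b'<0$, the half-plane into which $a$ points. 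Likewise, ``the negativity region of $(M^{*})^{-1}H_0M^{-1}$ is by definition the interior of $M(\widehat{\RR})$'' silently uses that a holomorphic M\"obius map is orientation-preserving on $\widehat{\CC}$, hence carries the region to the left of a directed curve to the region to the left of its image; one sentence noting this closes the argument. With those additions the proof is complete.
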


The curvature and co-curvature of an oriented $K$-Bianchi circle are integer multiples of $\sqrt{-\Delta}$; the integer alone will be referred to as the \emph{reduced curvature} or \emph{reduced co-curvature}, respectively.

\begin{proposition}[{\cite[Propositions 4.1 and 4.4]{VisOne}}]
        \label{prop:Kintersect}
        Two $K$-Bianchi circles may intersect only at $K$-points, and only tangently\footnote{This does not hold for $K = \QQ(\sqrt{-3})$.  The reader is reminded, one final time, that $K \neq \QQ(\sqrt{-3})$ throughout the paper.}.
\end{proposition}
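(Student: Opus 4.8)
The plan is to reduce to the case where one of the two circles is $\widehat{\RR}$ itself and then to analyse the ``conjugation cocycle'' of the other. Write the two $K$-Bianchi circles as $C_i = M_i(\widehat{\RR})$ with $M_i \in \SL_2(\OK)$ (lifts of elements of $\PSL_2(\OK)$), put $M = M_1^{-1}M_2$, and set $N = \overline{M}\,M^{-1} \in \SL_2(\OK)$, the bar denoting entrywise complex conjugation. The first step is the elementary observation that for $w \in \widehat{\RR}$ one has $w \in M(\widehat{\RR})$ if and only if $N(w) = w$ (indeed $M^{-1}(w)$ is real iff $M^{-1}(w) = \overline{M}^{-1}(w)$ iff $\overline{M}M^{-1}(w) = w$); hence $C_1 \cap C_2 = M_1\bigl(\widehat{\RR} \cap \Fix(N)\bigr)$. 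Since $N\overline{N} = \overline{M}M^{-1}M\overline{M}^{-1} = I$ we have $\overline{N} = N^{-1}$, so $\Fix(N) = \Fix(\overline{N}) = \overline{\Fix(N)}$ is stable under complex conjugation, and therefore $\widehat{\RR}\cap\Fix(N)$ is either all of $\widehat{\RR}$ (precisely when $N = I$, i.e. $C_1 = C_2$), or a single real point, or a pair of real points, or empty.

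Assume $C_1 \ne C_2$. If the intersection consists of a single point, then $N$ is parabolic, and its fixed point is $\infty$ or $(N_{11}-N_{22})/(2N_{21}) \in K$; being real, it lies in $\PP^1(\QQ)$, so (as $M_1$ has entries in $K$) the point $C_1 \cap C_2$ lies in $\PP^1(K)$, and two distinct circles meeting at one point are tangent there. Thus the proposition reduces to showing that the two-point (transverse) case is impossible for $K \ne \QQ(\sqrt{-3})$. In that case $N$ has two distinct real fixed points, so conjugating by a real matrix $L$ carrying $\{0,\infty\}$ to those points gives $L^{-1}NL = \operatorname{diag}(\mu,\mu^{-1})$; applying complex conjugation (and using that $L$ is real) together with $\overline{N} = N^{-1}$ forces $\overline{\mu} = \mu^{-1}$, so $|\mu| = 1$ and $\Tr N = \mu+\mu^{-1} \in \OK \cap \RR = \ZZ$ with $|\Tr N| \le 2$. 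As $\Tr N = \pm 2$ would make $N$ central or parabolic, contradicting the two distinct fixed points, we are left with $\Tr N \in \{-1,0,1\}$.

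These three values I would dispatch from the single identity $N + \overline{N} = (\Tr N)\,I$, which follows from Cayley--Hamilton ($N^2 - (\Tr N)N + I = 0$, hence $N^{-1} = (\Tr N)I - N$) and $\overline{N} = N^{-1}$; equivalently $2\Re N_{jk} = (\Tr N)\delta_{jk}$. If $\Tr N = 0$, every entry of $N$ is purely imaginary; writing $\OK \cap i\RR = \ZZ\omega_0$ (so $\omega_0^2 = \Delta$ or $\Delta/4$) gives $N = \omega_0 N_0$ with $N_0 \in M_2(\ZZ)$, and $1 = \det N = \omega_0^2\det N_0$ forces $\omega_0^2 = -1$, i.e. $K = \QQ(i)$; for $\QQ(i)$ one finishes by a short $2$-adic analysis of the integral relation $\overline{M} = NM$, using crucially that $\det M = 1$ exactly --- the place where working in $\PSL_2$ rather than $\PGL_2$ matters --- or by invoking the identification of $\Scal_{\QQ(i)}$ with the Apollonian superpacking \cite{\gttwo}. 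If $\Tr N = \pm 1$, then $\Re N_{11} = \Re N_{22} = \pm\tfrac12$ while $N_{12}, N_{21}$ are purely imaginary; this is already impossible when $\Delta \equiv 0 \pmod 4$ (no element of $\OK$ has half-integral real part), and when $\Delta \equiv 1 \pmod 4$ one expands the entries in the $\QQ$-basis $\{1,\sqrt{\Delta}\}$ of $K$ and imposes $\det N = 1 \in \QQ$, which after a brief computation becomes $(n_1^2 + 4n_2n_3)\Delta = -3$ for integers $n_i$ with $n_1$ odd, forcing $|\Delta| = 3$ and hence $K = \QQ(\sqrt{-3})$, the excluded field. I expect this case analysis --- and in particular the $\QQ(i)$ subcase of $\Tr N = 0$ --- to be the crux: the structural part of the argument only pins down the field, and closing it needs the extra $2$-adic input, which is unavoidable, since $\PGL_2(\ZZ[i])$ genuinely creates right-angle crossings in the larger orbit of $\widehat{\RR}$.
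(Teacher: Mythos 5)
Your argument is sound, and it is worth noting at the outset that the paper itself offers no proof of this proposition: it is imported from \cite[Propositions 4.1 and 4.4]{VisOne}, where the argument goes through the explicit circle coordinates (curvature, co-curvature, curvature-centre) recalled in Proposition \ref{prop:gencirc}. Your route is a genuinely different, self-contained one: you package the second circle into the cocycle $N=\overline{M}M^{-1}\in\SL_2(\OK)$ (which is exactly the composite of the reflections in the two circles), use $\overline{N}=N^{-1}$ to make $\Fix(N)$ conjugation-stable, and convert ``transverse intersection'' into ``elliptic with real fixed points,'' so that $\Tr N\in\{0,\pm1\}$ and Cayley--Hamilton gives $N+\overline{N}=(\Tr N)I$; integrality in $\OK$ then eliminates $\Tr N=\pm1$ except when $\Delta=-3$ and $\Tr N=0$ except when $K=\QQ(i)$, while the parabolic (one-point) case lands the tangency point in $\PP^1(K)$ as required. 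I checked the two steps you only summarise, and both are right: the $\Tr N=\pm1$ case does reduce to $(n_1^2+4n_2n_3)\Delta=-3$ with $n_1$ odd, and the residual $\QQ(i)$ subcase closes by exactly the parity argument you gesture at. Concretely, with $M=\begin{pmatrix}\alpha&\gamma\\ \beta&\delta\end{pmatrix}\in\SL_2(\ZZ[i])$ one has $N_{11}=\bar\alpha\delta-\bar\gamma\beta$ and $N_{22}=\overline{N_{11}}$, while the off-diagonal entries of $N$ are automatically purely imaginary, so a right-angle crossing forces $\Re(\bar\alpha\delta)=\Re(\bar\gamma\beta)$; comparing with the real part of $\alpha\delta-\beta\gamma=1$ and writing $\alpha=a_1+ia_2$, etc., yields $2(b_2c_2-a_2d_2)=1$, a contradiction modulo $2$. (And indeed the contradiction disappears if $\det M=i$ is permitted, matching the right-angle crossings present in the larger $\PGL_2(\ZZ[i])$-orbit, so your emphasis on $\det M=1$ is exactly right.) Thus the only incomplete point in your write-up is this last one-line computation (or the alternative appeal to the superpacking identification in \cite{\gttwo}), a fillable omission rather than a flaw. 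What your approach buys is a uniform, coordinate-free explanation of why only $\QQ(\sqrt{-3})$ (crossing angles $\pi/3,2\pi/3$) and, marginally, $\QQ(i)$ (right angles, excluded only by the determinant condition) could ever produce non-tangential intersections; what the coordinate route of \cite{VisOne} buys is that the same computation simultaneously produces the quantitative tangency data (as in Proposition \ref{prop:tangentfamilies}) that the rest of the paper relies on.
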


It is useful to be very explicit about which circles are tangent.  We say that $\alpha, \beta \in \OK$ are coprime if the ideals they generate are coprime, i.e. $(\alpha)+(\beta)=(1)$.

\begin{proposition}[{\cite[Proposition 4.3]{VisOne}}]
        \label{prop:tangentfamilies}
        Let $\alpha/\beta \in K$ be such that $\alpha$ and $\beta$ are coprime.  Suppose $|\OK^*| = n$.  Then the collection of oriented $K$-Bianchi circles passing through $\alpha/\beta$ is a union of $n$ generically different $\ZZ$-families, one for each $u \in \OK^*$.  The family associated to $u$ consists of the images of $\widehat{\RR}$ under the transformations
\[
        \begin{pmatrix}
                \alpha & u \gamma + k \tau  \alpha\\
                 \beta & u \delta + k \tau \beta 
        \end{pmatrix}, \quad k \in \ZZ,
\]
where $\gamma$, $\delta$ is a particular solution to $\alpha \delta - \beta\gamma = 1$.  Furthermore,
\begin{enumerate}
        \item The curvatures of the circles in one family form an equivalence class modulo $\sqrt{-\Delta}N(\beta)$.  
        \item The centres of the circles in a given family lie on a single line through $\alpha/\beta$.  
        \item The family given by unit $u$ contains the same circles as the family given by $-u$, but with opposite orientations.
\end{enumerate}
\end{proposition}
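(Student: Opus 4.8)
The plan is to parametrize the oriented $K$-Bianchi circles through $p:=\alpha/\beta$ by factoring every candidate matrix through the fixed matrix $N=\begin{pmatrix}\alpha&\gamma\\\beta&\delta\end{pmatrix}$, and then to read off (1)--(3) from the explicit formulae of Propositions \ref{prop:gencirc} and \ref{prop:curvature}.

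For the parametrization, I would start from the transitivity recalled above: the group $G$ acting transitively on oriented $K$-Bianchi circles (which is $\PGL_2(\OK)$, or $\PSL_2(\OK)$ when $K=\QQ(i)$) has $\operatorname{Stab}(\widehat{\RR})=\PSL_2(\ZZ)$, so any oriented $K$-Bianchi circle through $p$ is $M(\widehat{\RR})$ for a unit-determinant matrix $M$ over $\OK$. The first normalization is to arrange $M(\infty)=p$: since $p\in M(\widehat{\RR})$ the point $r:=M^{-1}(p)$ lies in $\widehat{\RR}$, and as $M^{-1}$ has entries in $K$ in fact $r\in\widehat{\RR}\cap K=\widehat{\QQ}$, so post-composing $M$ with a suitable element of $\operatorname{Stab}(\widehat{\RR})=\PSL_2(\ZZ)$ (transitive on $\widehat{\QQ}$) gives $M(\infty)=p$ without changing the circle. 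Then the first column of $M$ is a coprime pair representing $p\in\PP^1(K)$, hence a unit times $(\alpha,\beta)$; rescaling $M$ by that unit (harmless projectively) puts it in the shape $\begin{pmatrix}\alpha&c\\\beta&d\end{pmatrix}$ with $\alpha d-\beta c=v\in\OK^*$, and then $N^{-1}M$ is upper triangular with diagonal $(1,v)$, i.e.\ $M=N\begin{pmatrix}1&e\\0&v\end{pmatrix}$ for a unique $e\in\OK$. Conversely every such $M$ yields a circle through $p$, and I would check that $(e,v)$ and $(e',v')$ give the same oriented circle iff $\begin{pmatrix}1&e\\0&v\end{pmatrix}^{-1}\begin{pmatrix}1&e'\\0&v'\end{pmatrix}\in\PSL_2(\ZZ)$, which (this matrix is upper triangular with diagonal $(1,\overline v v')$, and a diagonal unit lying in $\PSL_2(\ZZ)$ must be $\pm1$) happens precisely when $v'=v$ and $e'-e\in\OK\cap\RR=\ZZ$. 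So the oriented $K$-Bianchi circles through $p$ are indexed by $\OK^*\times(\OK/\ZZ)$; since $1,\tau$ is a $\ZZ$-basis, $\OK/\ZZ$ is free on the class of $\tau$, and setting $e=k\tau$, $v=u$ and multiplying out gives exactly $N\begin{pmatrix}1&k\tau\\0&u\end{pmatrix}=\begin{pmatrix}\alpha&u\gamma+k\tau\alpha\\\beta&u\delta+k\tau\beta\end{pmatrix}$, using $\alpha\delta-\beta\gamma=1$. This is the asserted decomposition into $n=|\OK^*|$ $\ZZ$-families.

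For (1)--(3), write $M_k$ for that matrix (its determinant $u$ has absolute value $1$). By Proposition \ref{prop:curvature} its curvature is $i\big(\beta\,\overline{u\delta+k\tau\beta}-\overline\beta(u\delta+k\tau\beta)\big)$; expanding and using $\overline\tau-\tau=-i\sqrt{-\Delta}$ and $N(\beta)=\beta\overline\beta$, this equals $i(\beta\overline u\,\overline\delta-\overline\beta u\delta)+kN(\beta)\sqrt{-\Delta}$, which over $k\in\ZZ$ runs through a single residue class modulo $\sqrt{-\Delta}N(\beta)$, giving (1). For (2), Proposition \ref{prop:curvature} gives the curvature-centre of $M_k$ as $i(\alpha\overline u\,\overline\delta-u\gamma\overline\beta)+k\alpha\overline\beta\sqrt{-\Delta}$; forming the centre and subtracting $\alpha/\beta$, the numerator $\beta\cdot(\text{curvature-centre})-\alpha\cdot(\text{curvature})$ has its $k$-terms cancel and collapses, via $\alpha\delta-\beta\gamma=1$, to $iu\overline\beta$, so with $n_k\in\ZZ$ the reduced curvature
\[
\operatorname{centre}(M_k)-\frac{\alpha}{\beta}=\frac{iu\overline\beta}{\beta\sqrt{-\Delta}}\cdot\frac{1}{n_k},
\]
whence all these centres lie on the line through $\alpha/\beta$ of fixed direction $iu\overline\beta/(\beta\sqrt{-\Delta})$ (the case $\beta=0$, i.e.\ $p=\infty$, being the degenerate one in which the whole family consists of lines). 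For (3), replacing $u$ by $-u$ in $M_k$ gives $M_{-k}\cdot\operatorname{diag}(1,-1)$, and $\operatorname{diag}(1,-1)$ acts on $\widehat{\RR}$ as $z\mapsto -z$, fixing $\widehat{\RR}$ but reversing its orientation, so the $u$- and $(-u)$-families sweep out the same circles with opposite orientations, which via Proposition \ref{prop:gencirc} (the sign of the curvature encoding the orientation) makes the statement precise.

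The step I expect to be the main obstacle is the bookkeeping in the parametrization: getting the stabilizer coset exactly right — in particular the observation that $r=M^{-1}(p)$ is \emph{rational}, so that transitivity of $\PSL_2(\ZZ)$ on $\widehat{\QQ}$ can be used — and, for $K=\QQ(i)$, being careful that the four families involve the $\PGL_2(\OK)$-orbit rather than the $\PSL_2(\OK)$-orbit. By contrast, parts (1)--(3) reduce to short computations in which, pleasantly, the $k$-dependence drops out.
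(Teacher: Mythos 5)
You should note at the outset that the paper itself contains no proof of this statement: it is imported verbatim from \cite[Proposition 4.3]{VisOne}, so there is no internal argument to compare yours against, and I can only judge your proof on its own terms. For $K\neq\QQ(i)$ it is correct and complete. Using transitivity of $\operatorname{Stab}(\widehat{\RR})=\PSL_2(\ZZ)$ on $\widehat{\QQ}$ to arrange $M(\infty)=\alpha/\beta$, rescaling by a unit to force the first column to be $(\alpha,\beta)$, and factoring $M=N\begin{pmatrix}1&e\\0&v\end{pmatrix}$ does give the asserted families, and I have checked your computations for (1)--(3): the curvature of $M_k$ is $i(\beta\overline{u}\,\overline{\delta}-\overline{\beta}u\delta)+kN(\beta)\sqrt{-\Delta}$, the combination $\beta\cdot(\text{curvature-centre})-\alpha\cdot(\text{curvature})$ does collapse to $iu\overline{\beta}$, and $\operatorname{diag}(1,-1)$ fixes $\widehat{\RR}$ while reversing its orientation. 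Two small blemishes: you mean \emph{pre}-composing (right multiplication) by the stabilizer, not post-composing; and in the identification step the ``diagonal unit must be real'' observation only forces $\overline{v}v'=\pm1$, so you must also invoke the determinant (equivalently, the orientation reversal of $\operatorname{diag}(1,-1)$, which you use later in (3)) to exclude $-1$.

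The one genuine gap is $K=\QQ(i)$, exactly the case you defer to a parenthetical remark. Running your normalization inside $\PSL_2(\ZZ[i])$ (representatives of determinant $1$), the rescaling by a unit $\lambda$ changes the determinant only by $\lambda^{-2}\in\{\pm1\}$, so your parametrization yields precisely the two families $u=\pm1$, not the $n=|\OK^*|=4$ families asserted. Your suggested fix --- using the $\PGL_2(\ZZ[i])$-orbit instead --- does not repair this within the conventions of the present paper: as recalled in the Preliminaries, that orbit is strictly larger than the Schmidt arrangement (two copies at right angles), and indeed the determinant-$\pm i$ matrices of the statement, e.g.\ $\begin{pmatrix}1&k\tau\\0&i\end{pmatrix}$ at $\alpha/\beta=\infty$, send $\widehat{\RR}$ to vertical lines. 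By Proposition \ref{prop:curvature} these have curvature-centre $\pm1$, whereas any $M\in\SL_2(\ZZ[i])$ gives curvature-centre $i(\alpha\overline{\delta}-\gamma\overline{\beta})\equiv i\pmod{2}$ (conjugation is trivial modulo $2$, so $\alpha\overline{\delta}-\gamma\overline{\beta}\equiv\det M=1$), and $\pm1\not\equiv i\pmod{2}$; so those circles are not oriented $\QQ(i)$-Bianchi circles in the sense used here. Thus, as written, your argument proves a two-family statement for $\QQ(i)$ and cannot deliver the quoted four-family one; before claiming the proposition in the stated generality you would need to check which convention \cite{VisOne} actually uses for the oriented arrangement in the Gaussian case, or restrict the unit count accordingly.
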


\section{$K$-Apollonian packings and immediate tangency}
\label{sec:immtang}

In this section we give two equivalent definitions of a $K$-Apollonian packing.  This allows us to prove Theorem \ref{thm:disjointunion} of the introduction, describing the Schmidt arrangement as a union of its $K$-Apollonian packings.  

\begin{definition}
        The \emph{tangency graph} of $\mathcal{P} \subset \SK$ is the graph whose vertices are the circles of $\mathcal{P}$ and whose edges indicate tangencies.  We say that $\mathcal{P}$ is \emph{tangency-connected} if its tangency graph is connected.  We say that $\mathcal{P}' \subset \OSK$ is \emph{tangency-connected} if the underlying set of unoriented circles is so.
        We say $\mathcal{P}$ \emph{straddles} a circle $C$ if it intersects both the interior and exterior of $C$ nontrivially.
\end{definition}

This definition of straddling coincides with that of \cite[Definition 4.5]{VisOne} in the cases we are considering ($K \neq \QQ(\sqrt{-3})$, where circles intersect only tangently), and is simpler to state.

We repeat Definition \ref{def:apppack} of the introduction here:

\begin{definition}
        We define a \emph{$K$-Apollonian packing of unoriented circles} to be any maximal tangency-connected subset $\mathcal{P}$ of circles of $\mathcal{S}_K$ under the condition that $\mathcal{P}$ does not straddle any circle of $\Scal_K$.
\end{definition}

We will now extend the definition of a $K$-Apollonian packing to oriented circles.

\begin{definition}\label{def:apppack2} We define a \emph{$K$-Apollonian packing of oriented circles} to be any maximal tangency-connected subset $\mathcal{P}$ of oriented circles of $\OSK$ with disjoint interiors under the condition that $\mathcal{P}$ does not straddle any circle of $\OSK$.  
       \end{definition}

       The following lemma verifies that these two definitions correspond nicely under the orientation-forgetting map.

       \begin{lemma}
               A collection of circles $\mathcal{P} \subset \SK$ is a $K$-Apollonian packing of unoriented circles if and only if it can be obtained from a $K$-Apollonian packing of oriented circles by forgetting orientation.
\end{lemma}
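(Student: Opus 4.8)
The plan is to prove both directions of the equivalence by showing that the conditions defining the two kinds of packings (tangency-connectedness, maximality, and the no-straddling condition) are compatible with the orientation-forgetting map $\OSK \to \SK$ in both directions. The key input I would use is Proposition~\ref{prop:Kintersect}: since $K \neq \QQ(\sqrt{-3})$, any two $K$-Bianchi circles meet only tangently, so two tangent $K$-Bianchi circles have disjoint interiors for \emph{exactly one} of the two pairs of orientation choices (the external-tangency choice), and the other three choices produce nested or equal-interior configurations. This rigidity is what forces the orientation on a tangency-connected packing to be essentially determined.

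First I would prove the ``if'' direction. Let $\mathcal{P}' \subset \OSK$ be a $K$-Apollonian packing of oriented circles, and let $\mathcal{P}$ be its image under forgetting orientation. I need: (i) the map is injective on $\mathcal{P}'$, so $\mathcal{P}$ really is ``$\mathcal{P}'$ with orientations erased'' and tangency-connectedness transfers; for this I observe that if $C$ and its orientation-reverse $\bar C$ both lay in $\mathcal{P}'$ they would have the same interior, contradicting the disjoint-interiors hypothesis once $|\mathcal{P}'|\ge 2$ (the singleton case is trivial and can be noted separately or excluded by maximality). (ii) $\mathcal{P}$ does not straddle any circle of $\SK$: straddling is a statement about the underlying point sets, so it is invariant under orientation, and straddling a circle $C\in\SK$ is the same as straddling either orientation of $C$ in $\OSK$. (iii) Maximality of $\mathcal{P}$: if $\mathcal{P}\cup\{D\}$ were still tangency-connected and non-straddling for some $D\in\SK$, I would choose the orientation of $D$ making its interior disjoint from the circle of $\mathcal{P}'$ it is tangent to (possible and unique by the tangency rigidity above), check that this orientation keeps all interiors disjoint — here I would invoke Theorem~\ref{thm:disjointunion}/the non-straddling condition to rule out any nesting being forced elsewhere in the packing, since a nesting would produce a straddled circle — and conclude $\mathcal{P}'\cup\{\vec D\}$ contradicts maximality of $\mathcal{P}'$.

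Then the ``only if'' direction: given a $K$-Apollonian packing of unoriented circles $\mathcal{P}$, I want to orient each circle so that the result is a $K$-Apollonian packing of oriented circles. Fix one circle $C_0\in\mathcal{P}$ and one of its two orientations; since the tangency graph of $\mathcal{P}$ is connected, propagate the orientation outward along tangencies using the ``external tangency'' rule at each step. The main thing to verify is that this propagation is well-defined, i.e.\ independent of the path in the tangency graph — equivalently, consistent around every cycle. By Theorem~\ref{thm:forest} (in the non-Euclidean case the tangency graph of a $K$-Apollonian packing is already a tree, so there is nothing to check) the only real work is in the Euclidean cases, where I would argue that a cycle of circles all externally tangent to their neighbours — a ``ring'' — forces a consistent choice because the alternative would require some circle in the ring to have its interior containing another, i.e.\ the packing would straddle a circle, contradicting the defining condition. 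Once the orientation is consistently defined, disjoint interiors holds by construction (pairwise for tangent circles, and for non-tangent circles because a circle whose interior met another's interior without tangency would, together with the chain connecting them in the tree/graph, exhibit straddling), non-straddling transfers trivially, and maximality transfers by the same argument as in the first direction run in reverse. Finally, by Theorem~\ref{thm:disjointunion} each unoriented circle lies in exactly two $K$-Apollonian packings (one per orientation), which confirms that the two-to-one orientation-forgetting map restricts to a bijection between the set of oriented $K$-Apollonian packings and (two copies of) the set of unoriented ones, matching the statement.

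The main obstacle I anticipate is the well-definedness of the orientation under propagation around cycles in the Euclidean cases — i.e.\ ruling out a ``Möbius band'' obstruction where going around a loop of tangent circles reverses the orientation. The clean way to dispatch this is to show that any such orientation inconsistency around a minimal cycle forces one of the circles in the cycle to nest inside, or be nested inside, a circle of $\SK$, violating the no-straddling hypothesis; I would package this as a short separate lemma about rings of mutually externally tangent circles, possibly already available from \cite{VisOne}. Everything else — injectivity on the packing, invariance of straddling under orientation, and transfer of maximality — is essentially bookkeeping once the tangency rigidity of Proposition~\ref{prop:Kintersect} is in hand.
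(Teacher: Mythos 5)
Your overall two-direction structure matches the paper, and the ``oriented $\Rightarrow$ unoriented'' direction plus the uniqueness observation (a packing with disjoint interiors and at least two circles cannot contain both orientations of one circle) is exactly the paper's bookkeeping. Where you genuinely diverge is in producing the lift: you fix an orientation on one circle and \emph{propagate} it along the tangency graph, which forces you to worry about path-independence around cycles, to invoke Theorem \ref{thm:forest} in the non-Euclidean cases, and to promise a separate ``ring lemma'' in the Euclidean cases. The paper avoids all of this with one observation: since $\mathcal{P}$ straddles no circle of $\Scal_K$, it in particular straddles no circle $C$ of $\mathcal{P}$ itself, so all of $\mathcal{P}\setminus\{C\}$ lies on one closed side of $C$; orient each $C$ so that its \emph{interior} is the empty side. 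This choice is canonical and global, so there is no consistency question, no Euclidean/non-Euclidean case split, and no appeal to later results; disjointness of interiors (for all pairs, tangent or not) and uniqueness of the lift fall out immediately.

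Two points in your sketch are genuine soft spots rather than mere inefficiency. First, the consistency argument as stated does not work: an orientation mismatch around a ring would force a nesting of circles, but nesting by itself does \emph{not} violate the no-straddling hypothesis (a packing may lie entirely inside the closed disk bounded by one of its members, e.g.\ a bounded packing with its outer circle); turning ``nested'' into ``straddled'' needs a third circle on the other side, and the cleanest way to supply the missing step is precisely the paper's canonical-orientation observation, after which propagation is moot. Relatedly, your propagation can stall at the very first step if the ``wrong'' orientation of $C_0$ is chosen (the neighbour then lies in the closed interior of $C_0$ and \emph{no} orientation of it gives disjoint interiors), so you must argue that the other initial choice succeeds. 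Second, you invoke Theorem \ref{thm:disjointunion} inside this proof, but in the paper that theorem is deduced \emph{from} this lemma (via the oriented partition statement), so that appeal is circular and should be replaced by a direct use of the non-straddling hypothesis, as above. With the canonical-orientation device substituted for the propagation-plus-ring-lemma machinery, your argument collapses to the paper's proof.
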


\begin{proof}
        First, we show that any tangency-connected, non-straddling $\mathcal{P} \subset \SK$ lifts (under the orientation-forgetting map) to a unique $\mathcal{P}' \subset \OSK$ which is non-straddling and tangency-connected with disjoint interiors.  For, suppose $\mathcal{P} \subset \SK$ does not straddle any circle of $\SK$.  Then, in particular, it cannot intersect both the interior and exterior of any $C \in \mathcal{P}$.  Therefore, $\mathcal{P}$ can be lifted to $\mathcal{P}' \subset \OSK$ by choosing the orientation of each circle $C \in \mathcal{P}$ in such a way that $\mathcal{P}$ is disjoint from the interior of $C$.  This lift is unique if $\mathcal{P}$ contains at least two circles (an assumption we are safe in making; for example, by Proposition \ref{prop:tangentfamilies}, $K$-Apollonian packings contain more than one circle).  The resulting collection has disjoint interiors and the non-straddling property.  Furthermore, $\mathcal{P}'$ is tangency-connected if and only if $\mathcal{P}$ is.

        Conversely, any $\mathcal{P}' \subset \OSK$ which is non-straddling and tangency-connected with disjoint interiors is surely non-straddling and tangency-connected when orientation is forgotten, and this operation is inverse to the lifting just described.

        What remains then, is to show that such pairs $(\mathcal{P}, \mathcal{P}')$ have the property that $\mathcal{P}$ is maximal with respect to being tangency-connected and non-straddling if and only if $\mathcal{P}'$ is maximal with respect to being tangency-connected, non-straddling and having disjoint interiors.

        A $K$-Apollonian packing in $\OSK$ cannot contain both orientations of the same unoriented circle without violating the disjoint interiors requirement (unless it contained only those two circles, but then it would not be maximal).
        It follows then, that if there is a way to add another circle to $\mathcal{P}'$ in $\OSK$ under the stipulated conditions it corresponds to a way to add another circle to $\mathcal{P}$ in $\SK$, and vice versa.
\end{proof}

With this result in place, we may henceforth consider oriented circles.  We wish to give a different characterisation of being a $K$-Apollonian packing of oriented circles, and show that it is equivalent.  We will need the following notion.

\begin{definition}Two oriented $K$-Bianchi circles $C_1, C_2 \in \OSK$ are \emph{immediately tangent} if they are externally tangent in such a way that the pair straddles no circles of $\OSK$.
\end{definition}

In other words, $C_1$ and $C_2$ are \emph{immediately tangent} if they are consecutive in an unoriented family of Proposition \ref{prop:tangentfamilies}, and taken with disjoint interiors.

We recall the result that for any $K$-Bianchi circle, there is exactly one $K$-Bianchi circle immediately tangent to it at a fixed $K$-rational point.  More precisely:

\begin{proposition}[{\cite[Proposition 6.2]{VisOne}}]
        \label{prop:immtang}
        Let $C \in \OSK$ be an oriented $K$-Bianchi circle with $K$-rational point $x$.  Then there exists 
\[
        M_C = \begin{pmatrix} \alpha & \gamma \\ \beta & \delta \end{pmatrix} \in \PGL_2(\OK)
\]
such that $C = M_C(\RR)$ and $x = \alpha/\beta$.  Furthermore, there exists exactly one oriented $K$-Bianchi circle $C' \in \OSK$ immediately tangent to $C$ at $x$, and it is given by $C' = M_{C'}(\RR)$ where
\[
        M_{C'} = 
         \begin{pmatrix} \alpha & -\gamma + \tau \alpha \\ \beta & -\delta + \tau\beta \end{pmatrix}
        = M_C \begin{pmatrix} 1 & \tau \\ 0 & -1 \end{pmatrix} \in \PGL_2(\OK).
\]
\end{proposition}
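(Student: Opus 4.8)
The plan is to normalise the picture so that $C=\widehat{\RR}$ carries its standard orientation (interior the upper half-plane) and $x=\infty$, then classify the oriented $K$-Bianchi circles tangent to $\widehat{\RR}$ at $\infty$ and pick out the immediately tangent one. First I would produce $M_C$: by the transitivity recalled above (\cite[Proposition 3.4]{VisOne}), $\PGL_2(\OK)$ — or $\PSL_2(\OK)$ when $K=\QQ(i)$ — acts transitively on $\OSK$ with $\operatorname{Stab}(\widehat{\RR})=\PSL_2(\ZZ)$, so I would take $N$ in this group with $N(\widehat{\RR})=C$. Since $x$ is a $K$-point of $C$, the point $N^{-1}(x)$ is a $K$-point of $\widehat{\RR}$, hence lies in $\widehat{\QQ}$ because $K\cap\RR=\QQ$; writing it as $p/q$ with $\gcd(p,q)=1$ and choosing $T\in\SL_2(\ZZ)$ with $T(\infty)=p/q$, the matrix $M_C:=NT$ still carries the oriented circle $\widehat{\RR}$ to $C$ and sends $\infty$ to $x$, giving $x=\alpha/\beta$ for $M_C=\begin{pmatrix}\alpha&\gamma\\\beta&\delta\end{pmatrix}$.

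Next I would apply $M_C^{-1}$ to reduce the remaining assertions to the case $C=\widehat{\RR}$, $x=\infty$. Since $M_C^{-1}$ permutes $\OSK$ and, being conformal, is interior-preserving, it carries the immediate-tangency relation to itself; it sends $C$ to $\widehat{\RR}$, $x$ to $\infty$, and $M_{C'}(\widehat{\RR})=M_C\begin{pmatrix}1&\tau\\0&-1\end{pmatrix}(\widehat{\RR})$ to $\begin{pmatrix}1&\tau\\0&-1\end{pmatrix}(\widehat{\RR})$. So it is enough to show that $\begin{pmatrix}1&\tau\\0&-1\end{pmatrix}(\widehat{\RR})$ is the unique oriented $K$-Bianchi circle immediately tangent to $\widehat{\RR}$ at $\infty$. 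Any such circle passes through $\infty$, hence is a line, and by Proposition \ref{prop:Kintersect} it meets $\widehat{\RR}$ only tangently, hence is horizontal. By Proposition \ref{prop:tangentfamilies} at the point $\infty=1/0$, the oriented $K$-Bianchi lines through $\infty$ are exactly the horizontal lines $\RR+k\tau$, $k\in\ZZ$, each with both orientations; since $\operatorname{Im}\tau=\tfrac12\sqrt{-\Delta}$, the line $\RR+k\tau$ has imaginary part $k\cdot\tfrac12\sqrt{-\Delta}$. Demanding disjoint interior from $\widehat{\RR}$ forces the orientation whose interior is below the line and forces $k\le-1$; demanding that the pair straddle no circle of $\OSK$ then forces $k=-1$, since for $k\le-2$ the pair $\{\widehat{\RR},\RR+k\tau\}$ straddles the intervening line $\RR-\tau$, whereas for $k=-1$ any straddled $K$-Bianchi circle would have to separate two parallel lines and thus be a horizontal $K$-Bianchi line at imaginary part strictly between $0$ and $-\tfrac12\sqrt{-\Delta}$, of which there is none. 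A short computation with Proposition \ref{prop:curvature} then identifies $\begin{pmatrix}1&\tau\\0&-1\end{pmatrix}(\widehat{\RR})$ with exactly this line $\RR-\tau$ oriented with interior below, and $M_{C'}=M_C\begin{pmatrix}1&\tau\\0&-1\end{pmatrix}$ has entries in $\OK$ with $\det M_{C'}=-\det M_C$, so lies in $\PGL_2(\OK)$. Undoing $M_C^{-1}$ finishes the argument.

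The hard part will be the classification in the last step: pinning down exactly which $K$-Bianchi lines pass through $\infty$ and their heights, and keeping the orientations (equivalently, the interiors) straight through the matrix bookkeeping, so that ``disjoint interiors'' together with ``non-straddling'' single out the one value $k=-1$ and the specific matrix $\begin{pmatrix}1&\tau\\0&-1\end{pmatrix}$ rather than, say, $\begin{pmatrix}1&-\tau\\0&-1\end{pmatrix}$. The case $K=\QQ(i)$ will also need the additional remark that the extra unit-families in Proposition \ref{prop:tangentfamilies} contribute only non-horizontal lines through $\infty$, which are irrelevant to tangency at $\infty$ among horizontal lines.
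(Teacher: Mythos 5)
Your argument is correct, and since the paper itself imports this statement from \cite{VisOne} without reproving it, there is no in-paper proof to diverge from; your normalization to $C=\widehat{\RR}$, $x=\infty$ followed by identifying the unique adjacent horizontal line with disjoint interior is exactly the intended argument (it is the ``consecutive in a family of Proposition \ref{prop:tangentfamilies}, taken with disjoint interiors'' characterization the paper states right after defining immediate tangency), and your non-straddling/separation analysis and the computation that $\begin{pmatrix}1&\tau\\0&-1\end{pmatrix}$ sends $\widehat{\RR}$ to $\RR-\tau$ with interior below are both sound. The only point to patch is the citation of Proposition \ref{prop:tangentfamilies} at the point $\infty=1/0$: as quoted it is stated for finite $\alpha/\beta\in K$, so either invoke the source's version at $\infty$ or argue directly that any circle of $\SK$ through $\infty$ is $M(\widehat{\RR})$ with $M$ upper triangular after precomposing by an element of $\PSL_2(\ZZ)$, hence (for $K\neq\QQ(i)$ a translation by $\OK$, and for $\QQ(i)$ possibly composed with $z\mapsto -z$) a horizontal line $\RR+k\tau$; this also disposes of the $\QQ(i)$ unit-family remark you flagged, since such lines are the only ones tangent to $\widehat{\RR}$ at $\infty$.
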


\begin{definition}
        \label{def:immtang}
        Let $\Pcal$ be a subset of $\OSK$.  We say that $\Pcal$ is \emph{closed under immediate tangency} if it has the property that
for any circle $C \in \Pcal$, any circle $C' \in \OSK$ that is immediately tangent to $C$ is also contained in $\Pcal$.  
We say that $\Pcal$ is a \emph{$K$-Apollonian packing defined by tangency} if it is minimal among non-empty subsets of $\OSK$ which are closed under immediate tangency.
\end{definition}

There are infinitely many circles immediately tangent to any one circle, by Proposition \ref{prop:immtang}, so $K$-Apollonian packings defined by tangency are infinite sets of circles.

This definition can be rephrased slightly, in the language of graph theory.

\begin{definition}
        \label{def:imm-tang-graph}
        The \emph{immediate tangency graph} of $\OSK$ is the graph whose vertices are the circles of $\OSK$ and whose edges are given by the symmetric relation of immediate tangency.  
\end{definition}

With this definition, the following proposition is immediate.

\begin{proposition}
        \label{prop:component}
        A subset $\mathcal{P}$ of $\OSK$ is a $K$-Apollonian packing defined by tangency if and only if it is a connected component of the immediate tangency graph.
\end{proposition}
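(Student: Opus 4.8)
The plan is to unwind the two definitions involved --- "closed under immediate tangency" (Definition \ref{def:immtang}) and "connected component of the immediate tangency graph" (Definition \ref{def:imm-tang-graph}) --- and observe that they match up in the only reasonable way: a subset of $\OSK$ is closed under immediate tangency exactly when it is a union of connected components of the immediate tangency graph. The minimality clause in the definition of a $K$-Apollonian packing defined by tangency then isolates the individual components.

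First I would prove the key equivalence: $\Pcal \subseteq \OSK$ is closed under immediate tangency if and only if $\Pcal$ contains, in full, every connected component of the immediate tangency graph that it meets. The forward implication is an induction along paths. If $C \in \Pcal$ and $C = C_0, C_1, \ldots, C_n$ is a path in the immediate tangency graph, then applying the closure property $n$ times in succession places $C_1, \ldots, C_n$ in $\Pcal$; since every vertex in the component of $C$ is joined to $C$ by such a path, the entire component lies in $\Pcal$. The reverse implication is immediate: if $\Pcal$ is a union of components and $C \in \Pcal$, then any $C'$ immediately tangent to $C$ is adjacent to $C$ and hence lies in the same component as $C$, so $C' \in \Pcal$. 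Thus the subsets of $\OSK$ closed under immediate tangency are precisely the unions of connected components.

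Next I would argue about minimality. Each connected component is non-empty (every circle of $\OSK$ lies in exactly one component) and, by the above, closed under immediate tangency; moreover it cannot properly contain a non-empty subset closed under immediate tangency, because any such subset is again a union of components and hence, being contained in a single component, equal to that whole component. Conversely, a union of two or more components properly contains one of its components and so fails to be minimal. Hence the minimal non-empty subsets of $\OSK$ closed under immediate tangency --- i.e. the $K$-Apollonian packings defined by tangency --- are exactly the connected components of the immediate tangency graph, which is the assertion of Proposition \ref{prop:component}.

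I do not anticipate a genuine obstacle here: the argument is purely set-theoretic/graph-theoretic and uses no special structure of $\OSK$ beyond what is packaged into Definition \ref{def:imm-tang-graph}. The only point requiring a moment's care is that connected components are non-empty by construction, so that the "non-empty" requirement in Definition \ref{def:immtang} is automatically met and the correspondence between minimal closed subsets and components is exact rather than merely up to the empty set.
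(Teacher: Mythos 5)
Your proposal is correct and is simply the careful unwinding of the two definitions that the paper treats as immediate (it gives no written proof, stating the proposition follows at once from Definition \ref{def:imm-tang-graph}). Your identification of the closed-under-immediate-tangency sets with unions of connected components, followed by the minimality argument, is exactly the intended reasoning.
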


We now show that Definitions \ref{def:apppack2} and \ref{def:immtang} for a $K$-Apollonian packing coincide.

\begin{proposition}
        \label{prop:twodefs}
        A subset $\mathcal{P}$ of $\OSK$ is a $K$-Apollonian packing if and only if it is a $K$-Apollonian packing defined by tangency.
\end{proposition}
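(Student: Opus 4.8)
The plan is to show that both definitions describe precisely the connected components of the immediate tangency graph (Definition~\ref{def:imm-tang-graph}), which by Proposition~\ref{prop:component} is exactly what ``$K$-Apollonian packing defined by tangency'' means. The single elementary observation that makes this work is: if $\mathcal{Q}\subseteq\OSK$ is non-straddling and has disjoint interiors, then any two tangent circles of $\mathcal{Q}$ are in fact \emph{immediately} tangent — disjointness of the oriented interiors forces external tangency, and the pair, being contained in $\mathcal{Q}$, straddles no circle of $\OSK$, so the defining condition of immediate tangency holds. Hence for such a $\mathcal{Q}$ the tangency graph and the immediate tangency graph coincide.

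The real content is three properties of a component $\mathcal{P}_0$ of the immediate tangency graph, of which the crucial one is that $\mathcal{P}_0$ straddles no circle $E\in\OSK$. I would prove this by contradiction. By Proposition~\ref{prop:Kintersect} every circle other than $E$ lies strictly inside or strictly outside $E$; if $\mathcal{P}_0$ straddled $E$ it would contain a circle strictly inside and one strictly outside, and since $\mathcal{P}_0$ is connected these are joined by a path $D_0,\dots,D_n$ of immediately tangent circles in $\mathcal{P}_0$, along which there is a first edge $\{D_i,D_{i+1}\}$ with $D_i$ strictly inside $E$ and $D_{i+1}$ not strictly inside $E$ — so $D_{i+1}$ is either strictly outside $E$ or equal to $E$. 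If $D_{i+1}$ is strictly outside, then $\{D_i,D_{i+1}\}$ itself meets both the interior and the exterior of $E$, contradicting immediate tangency; if $D_{i+1}=E$, then $D_i$, being externally tangent to $E$, lies in the closed exterior of $E$, contradicting that it is strictly inside.

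The remaining two properties follow easily. $\mathcal{P}_0$ has disjoint interiors: every $C_1\in\mathcal{P}_0$ has an immediately tangent neighbour in $\mathcal{P}_0$ (Proposition~\ref{prop:immtang}), which lies strictly outside $C_1$, so by the non-straddling property just proved no circle of $\mathcal{P}_0$ meets the interior of $C_1$; hence the interior of any other $C_2\in\mathcal{P}_0$ lies in the exterior of $C_1$. And $\mathcal{P}_0$ is maximal among tangency-connected, non-straddling subsets with disjoint interiors: a strictly larger such set would, by connectedness, contain a circle tangent to some circle of $\mathcal{P}_0$, which by the elementary observation would be immediately tangent to it, hence already in $\mathcal{P}_0$ since $\mathcal{P}_0$ is closed under immediate tangency. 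The proposition now follows. A $K$-Apollonian packing $\mathcal{P}$ in the sense of Definition~\ref{def:apppack2} is nonempty with at least two circles (both $\emptyset$ and any singleton can be enlarged using Proposition~\ref{prop:immtang}), so by the elementary observation it is connected in the immediate tangency graph, hence lies in a unique component $\mathcal{P}_0$; since $\mathcal{P}_0$ has the three properties above and contains $\mathcal{P}$, maximality of $\mathcal{P}$ gives $\mathcal{P}=\mathcal{P}_0$. Conversely, by those same three properties a component $\mathcal{P}_0$ satisfies Definition~\ref{def:apppack2}, and it is a $K$-Apollonian packing defined by tangency by Proposition~\ref{prop:component}.

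I expect the main obstacle to be the non-straddling property above: the notion of immediate tangency only constrains \emph{pairs}, and one has to promote this to a global statement about the whole component. The delicate point is precisely the case where the path hits the circle $E$ that is being straddled; what makes the argument close is that external tangency forbids an edge incident to $E$ from crossing $E$ — a circle externally tangent to $E$ lies in the closed exterior of $E$. A more pedestrian point, which I would settle once and for all in a short preliminary lemma rather than re-derive inline, is the point-set topology of ``interior'', ``exterior'' and ``external tangency'' for oriented circles, bearing in mind that these include lines through $\infty$ and circles oriented so that their interior is unbounded.
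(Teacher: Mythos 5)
Your overall strategy — identify both definitions with the connected components of the immediate tangency graph, using the observation that inside a non-straddling family with disjoint interiors every tangency is automatically an immediate tangency — is viable and organized genuinely differently from the paper's proof (which checks the two definitions against each other directly: closure under immediate tangency plus minimality in one direction, and non-straddling, disjoint interiors and maximality in the other, each by a short local argument at a tangency point or a removal argument). However, your central step, the non-straddling of a component $\mathcal{P}_0$, has a missing case. Your ``first crossing edge'' argument treats only $D_{i+1}$ strictly outside $E$ or $D_{i+1}=E$ \emph{with the same orientation as} $E$. But $\OSK$ contains both orientations of every circle, so the remaining possibility is that $D_{i+1}$ is $E$ with its orientation reversed, say $-E$. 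In that case $D_i$ is externally tangent to $-E$, i.e.\ its interior is disjoint from the \emph{exterior} of $E$, so $D_i$ lies in the closed \emph{interior} of $E$ — perfectly consistent with $D_i$ being strictly inside, and no contradiction arises at that edge. This scenario is not vacuous a priori: nothing proved so far prevents the component from containing $-E$ together with circles inside $E$; indeed all immediate-tangency neighbours of $-E$ lie in the closed interior of $E$, so a path can legitimately step from the inside onto $-E$, and your ``first edge leaving the inside'' then yields nothing.

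The gap is repairable, but it needs a walk argument rather than a single-edge argument: split the circles into the inside family (those meeting the interior of $E$, together with $-E$) and the outside family (those meeting the exterior, together with $E$); an edge between a strictly inside and a strictly outside circle would straddle $E$; every neighbour of $E$ lies in the closed exterior and every neighbour of $-E$ in the closed interior (your deferred point-set lemma); and $E$, $-E$ are not adjacent since they are not tangent. Hence no edge of the immediate tangency graph joins the two families and the path from $D_0$ to $D_n$ cannot exist. A smaller but related slip occurs in your disjoint-interiors step: knowing that no circle of $\mathcal{P}_0$ meets the interior of $C_1$ gives only $C_2\subseteq$ closed exterior of $C_1$, which does not by itself rule out that the interior of $C_2$ is the region containing $C_1$; you must also apply non-straddling to $C_2$ (so $C_1$ avoids the interior of $C_2$) and then invoke the point-set lemma to conclude the interiors are disjoint. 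With these two repairs your proof goes through and gives a correct alternative to the paper's argument.
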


\begin{proof}
        Let $\mathcal{P}$ be a $K$-Apollonian packing.  Let $C$ be a circle of $\mathcal{P}$ and let $z \in C$ be a $K$-rational point.  Let $C'$ be the unique circle immediately tangent to $C$ at $z$.  Our first goal is to show that $\mathcal{P}$ contains $C'$.   This demonstrates that $\mathcal{P}$ is closed under immediate tangency.
        
        If not, since $C$ is in the exterior of $C'$, then all of $\mathcal{P}$ is exterior to $C'$.  Suppose we add $C'$ to $\mathcal{P}$.  This creates a larger tangency-connected set with disjoint interiors.  We show that this larger set is not straddling any circle of $\OSK$, which is a contradiction to maximality.  To see this, suppose it did straddle a circle.  Since $\mathcal{P}$ did not, it must be that $C'$ is inside some circle $D$ that $\mathcal{P}$ is exterior to.  Since they touch at $z$, $D$ is tangent to $C$ and $C'$ at $z$ and separates them (one is interior to $D$, the other exterior).  In other words, $C$ and $C'$ straddle $D$, but this is a contradiction to immediate tangency.  

        Hence $C' \in \mathcal{P}$, and we have demonstrated that $\mathcal{P}$ is closed under immediate tangency.  
        
        To show that $\mathcal{P}$ is minimal as a set closed under immediate tangency, let us imagine removing a collection of circles $\mathcal{S}$ from $\mathcal{P}$, to leave behind a subset $\mathcal{S}' \subset \mathcal{P}$ which is closed under immediate tangency.  Then it is impossible that any circle of $\mathcal{S}$ touches any circle of $\mathcal{S}'$ since any two circles of $\mathcal{P}$ that are touching are immediately tangent (otherwise they would violate the non-straddling property).  Hence $\mathcal{P}$ is tangency-disconnected, a contradiction.  Therefore $\mathcal{P}$ is a $K$-Apollonian packing defined by tangency.

        Conversely, suppose $\mathcal{P}$ is a $K$-Apollonian packing defined by tangency.  First, we show that $\mathcal{P}$ straddles no circle $D \in \OSK$.
       By definition, $D$ is not tangent immediately to any of the circles in its interior.  No circle of $\mathcal{P}$ exterior to $D$ is immediately tangent to a circle interior to $D$, as such a tangency would straddle $D$.  Hence $\mathcal{P}$ is not minimal; its circles inside $D$ could be removed without violating closure under immediate tangency.  This is a contradiction. 
       Therefore we have shown that $\mathcal{P}$ straddles no circles.
       
       We have already observed (Proposition \ref{prop:component}) that $\mathcal{P}$ is tangency-connected.  Next we show that $\mathcal{P}$ has disjoint interiors.  Suppose not:  then some circle $C$ lies inside another, $C'$.  Since $\mathcal{P}$ is tangency-connected, we can assume without loss of generality that $C$ is tangent to $C'$ at some point $z$.  But since $\mathcal{P}$ also includes the circle immediately tangent to $C'$ at $z$, this gives a collection of three distinct circles tangent at a point.  This is disallowed by the non-straddling property just shown.

      Finally, we must show $\mathcal{P}$ is maximal.  Suppose there were another circle $C \notin \mathcal{P}$ such that $\Pcal \cup \{C\}$ is tangency-connected.  We will show $\Pcal \cup \{C\}$ straddles some circle of $\Scal_K$.  Let $z \in C$ be a point of tangency with a circle $D \in \Pcal$.  This is not an immediate tangency, since $C \notin \mathcal{P}$.  Then there is some $C' \neq D$, $C' \in \Pcal$, immediately tangent to $C$.  Thus we have three distinct circles of $\Pcal \cup \{C\}$ sharing a single tangency point:  as a set they must straddle one of their members.
\end{proof}

The following statement is an immediate consequence of the two equivalent definitions.

\begin{theorem}
        \label{thm:disjointunion-oriented}
        The $K$-Apollonian packings of $\OSK$ form a collection of disjoint subsets of $\OSK$ whose union is all of $\OSK$.  
\end{theorem}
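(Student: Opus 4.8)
The plan is to deduce this immediately from the two characterizations already established, using the elementary fact that the connected components of any graph partition its vertex set. By Proposition \ref{prop:twodefs}, a subset of $\OSK$ is a $K$-Apollonian packing if and only if it is a $K$-Apollonian packing defined by tangency, and by Proposition \ref{prop:component} the latter are precisely the connected components of the immediate tangency graph of $\OSK$. So the whole statement reduces to the observation that connected components of a graph are pairwise disjoint and cover all vertices.

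Concretely, I would spell this out as follows. First, every $C \in \OSK$ lies in some $K$-Apollonian packing defined by tangency: take the set of all circles of $\OSK$ reachable from $C$ by a finite chain of immediate tangencies; this is the connected component of $C$, hence a $K$-Apollonian packing by Proposition \ref{prop:component}. Therefore the union of all $K$-Apollonian packings of $\OSK$ is $\OSK$. Second, two distinct $K$-Apollonian packings are disjoint: if $\mathcal{P}_1$ and $\mathcal{P}_2$ shared a circle $C$, then every circle of $\mathcal{P}_1$ is joined to $C$ by a chain of immediate tangencies and hence lies in $\mathcal{P}_2$ (which is closed under immediate tangency), and symmetrically, forcing $\mathcal{P}_1 = \mathcal{P}_2$. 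Alternatively one can argue purely from Definition \ref{def:immtang}: the intersection of two subsets of $\OSK$ closed under immediate tangency is again closed under immediate tangency, so if it were non-empty it would contradict the minimality of both sets unless they coincide. Applying Proposition \ref{prop:twodefs} one more time transfers these conclusions back to $K$-Apollonian packings as originally defined.

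I do not expect any real obstacle; the mathematical content has already been packed into Proposition \ref{prop:twodefs} and Proposition \ref{prop:component}, and what remains is a formal remark about partitions into connected components. The single point worth stating explicitly is that each $K$-Apollonian packing is non-empty — in fact infinite, by Proposition \ref{prop:immtang}, since every circle has infinitely many circles immediately tangent to it — so that the collection really is a partition of $\OSK$ into (infinite) blocks rather than a degenerate one.
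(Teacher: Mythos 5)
Your proposal is correct and follows essentially the same route as the paper, which likewise dismisses the theorem as immediate from Proposition \ref{prop:twodefs} together with the identification of packings defined by tangency with connected components of the immediate tangency graph (Proposition \ref{prop:component}). Your extra remarks (non-emptiness via Proposition \ref{prop:immtang}, the minimality/closure argument) are fine but just elaborate on the same one-line observation.
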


\begin{proof}
        This is evident from Definition \ref{def:immtang}, as the $K$-Apollonian packings are exactly the connected components of the immediate tangency graph on $\OSK$.
\end{proof}

\begin{proof}[Proof of Theorem \ref{thm:disjointunion}]
        Forget orientations in Theorem \ref{thm:disjointunion-oriented}.
\end{proof}

Note that the packings are not disjoint as collections of \emph{unoriented} circles, however.  A circle generally belongs to two packings, depending on the orientation assigned:  one living in its interior and another in its exterior.

It is an open question whether tangency-connectedness corresponds exactly to the topological notions of connectedness or path-connectedness for subsets of Schmidt arrangements (it is clearly stronger for arbitrary unions of circles in $\widehat{\CC}$).  See \cite[Section 7]{VisOne} for more on this distinction.  In particular, it is shown in \cite[Theorem 7.1]{VisOne} that $\SK$ is connected if and only if it is tangency-connected if and only if $\OK$ is Euclidean.

\begin{definition}
        \label{defn:fund}
        The $K$-Apollonian packing containing $\widehat{\RR}$ is called the \emph{fundamental packing}, and is denoted $\Pcal_K$.
\end{definition}

See Figure \ref{fig:app-in-sk} for the fundamental packing of $\QQ(i)$.  Other fundamental packings are shown in Figures \ref{fig:2strip}, \ref{fig:7strip} and \ref{fig:11strip}.

\section{Loops in the tangency graph}
\label{sec:loops}

The purpose of this section is to prove the following theorem, from which Theorem \ref{thm:forest} of the introduction follows.  Note that \cite[Theorem 7.5]{VisOne} already guarantees that $\SK$ has infinitely many components when $\Delta \le -15$.

\begin{theorem}
        \label{thm:16noloops}
        Let $K$ be such that $\Delta \le -15$.  Then the immediate tangency graph of $\OSK$ contains no loops.
\end{theorem}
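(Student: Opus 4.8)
The plan is to dispose of loops of length $3$ by an explicit computation with curvatures and centres, and to reduce longer loops to the non-Euclidean hypothesis through the amalgam structure of the Bianchi group; the latter is where I expect the real difficulty. Throughout I will use that, since $K \neq \QQ(\sqrt{-3})$, $\PGL_2(\OK)$ acts transitively on oriented $K$-Bianchi circles, so I may send one circle of a putative loop to $\widehat{\RR}$ and, composing with $z \mapsto -z$, take its orientation standard with interior the open upper half-plane; composing further with a suitable element of $\PGL_2(\ZZ) < \PGL_2(\OK)$ I will also arrange that no other circle of the loop passes through $\infty$, which is possible because by Proposition \ref{prop:Kintersect} each such circle meets $\QQ \cup \{\infty\}$ in at most two points. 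So I am reduced to a loop $C_1, \dots, C_n$ ($n \ge 3$), $C_1 = \widehat{\RR}$, with $C_i$ immediately tangent to $C_{i+1}$ at a $K$-point $x_i$ (indices mod $n$).

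The first step records the geometry. Since the interiors of the $C_i$ are pairwise disjoint and that of $C_1$ is the whole open upper half-plane, each $C_i$ with $i \neq 1$ is a round disk contained in the closed lower half-plane, hence of positive curvature, and where it is tangent to $\widehat{\RR}$ it is tangent along a vertical radius at a point of $\QQ$. By the uniqueness in Proposition \ref{prop:immtang} together with the fact (see the proof of Proposition \ref{prop:twodefs}) that every tangency in a $K$-Apollonian packing is immediate, no three circles of the loop share a point; hence $x_1 = p_1/q_1$ and $x_n = p_n/q_n$ are distinct reduced fractions, $p_1 q_n - p_n q_1 \neq 0$, and $x_2, \dots, x_{n-1}$ lie in the open lower half-plane. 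Finally, by Proposition \ref{prop:tangentfamilies} the $K$-Bianchi circles through $p/q$ form one $\ZZ$-family with reduced curvatures in arithmetic progression of common difference $N(q) = q^2$, and $\widehat{\RR}$ lies in it with reduced curvature $0$; so a $K$-Bianchi disk in the lower half-plane tangent to $\widehat{\RR}$ at $p/q$ has reduced curvature $kq^2$ for some integer $k \ge 1$.

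For $n = 3$ this already suffices. The disks $C_2, C_3$ are tangent to $\widehat{\RR}$ at $p_1/q_1, p_3/q_3$, with radii $r_2 = 1/(k_2 q_1^2 \sqrt{-\Delta})$ and $r_3 = 1/(k_3 q_3^2 \sqrt{-\Delta})$ ($k_2, k_3 \ge 1$) and centres $p_1/q_1 - i r_2$, $p_3/q_3 - i r_3$; their external tangency forces the distance between centres to be $r_2 + r_3$, and squaring and cancelling $(r_2 - r_3)^2$ gives
\[
        \left(\frac{p_1 q_3 - p_3 q_1}{q_1 q_3}\right)^2 \;=\; 4 r_2 r_3 \;=\; \frac{4}{k_2 k_3 \, q_1^2 q_3^2\,(-\Delta)}, \qquad\text{hence}\qquad (p_1 q_3 - p_3 q_1)^2 \;=\; \frac{4}{k_2 k_3\,(-\Delta)}.
\]
The left side is a positive integer, the right side at most $4/(-\Delta) \le 4/15 < 1$: contradiction. (Equivalently: for $\Delta \le -15$ there are no three $K$-Bianchi circles that are pairwise externally tangent with disjoint interiors.)

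For $n \ge 4$ the non-Euclidean hypothesis must genuinely be used: for $\Delta \in \{-7,-8,-11\}$ the inequality above still forbids triples, yet the immediate tangency graph has loops (consistently with $\SK$ being connected there), so no bound on $n$ can come from metric data alone. Here my plan is algebraic. By Proposition \ref{prop:immtang}, traversing an edge of the graph out of a marked circle $M(\widehat{\RR})$ is right-multiplication by $\left(\begin{smallmatrix}1 & \tau\\ 0 & -1\end{smallmatrix}\right)$, while re-marking a circle to a different tangency point is right-multiplication by an element of $\PGL_2(\ZZ)$ with nonzero lower-left entry; so a loop based at $\widehat{\RR}$ produces an identity $S_1 T S_2 T \cdots S_n T = 1$ in $\PGL_2(\OK)$ with $T = \left(\begin{smallmatrix}1&\tau\\0&-1\end{smallmatrix}\right)$ and each $S_i \in \PGL_2(\ZZ)$ not fixing $\infty$. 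I would then derive a contradiction from the structure of $\PGL_2(\OK)$ as a non-trivial amalgam (in the vein of Serre, and of Fine's decompositions of Bianchi groups), a structure available precisely when $\OK$ is non-Euclidean — the same phenomenon underlying the connectivity criterion of \cite{VisOne} and the thinness of the elementary subgroup (Theorem \ref{thm:Ethin}). Isolating the exact amalgam/length statement that rules out such a word, and checking that a loop produces one, is the step I expect to be the main obstacle; the length-$3$ computation above is the self-contained part, and I would present it first both as warm-up and because it is reused if one instead tries to shorten a longer loop via a Descartes completion.
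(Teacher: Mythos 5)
Your length-$3$ computation is fine (it is essentially a special case of what the paper extracts from Lemma \ref{lemma:onebigger}), but the theorem is really about loops of arbitrary length, and for $n \ge 4$ you have not given a proof: you have given a plan whose decisive step --- ``the exact amalgam/length statement that rules out such a word'' --- is exactly the point at issue, and I do not see how you would obtain it from known structure theory without circularity. The statement you need is that no nontrivial alternating word $S_1 T S_2 T \cdots S_n T S_{n+1}$, with $T = \left(\begin{smallmatrix}1&\tau\\0&-1\end{smallmatrix}\right)$ and the intermediate $S_i \in \PGL_2(\ZZ)$ not fixing $\infty$, can equal $1$ in $\PGL_2(\OK)$. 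That assertion is essentially \emph{equivalent} to the graph of immediate tangencies being a forest; indeed, in this paper the logical flow is the reverse of the one you propose: the tree property (Theorem \ref{thm:16noloops}) is proved first and is then what feeds Bass--Serre theory (Theorem \ref{thm:bass-serre}) to produce the amalgam presentation of $\Acal_K^0$ in Theorem \ref{thm:general-free}. The decompositions of Bianchi groups in the literature (Fine, Serre) are over subgroups unrelated to $\langle \PSL_2(\ZZ), T\rangle$, so they do not hand you a normal form in which your word is visibly nontrivial, and ``amalgam structure available precisely when $\OK$ is non-Euclidean'' is not a fact you can lean on as stated. Even Theorem \ref{thm:Ethin} (thinness of $E_2(\OK)$), which you invoke as the same phenomenon, is itself proved in the paper \emph{using} connectivity/limit-set arguments, not a word-length argument, so it gives no independent leverage here.

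For contrast, the paper's actual mechanism is metric, not combinatorial-group-theoretic, and it is worth knowing because it is short: for $\Delta < -15$ one shows (Lemma \ref{lemma:onebigger}) that a circle of reduced curvature $b$ can be tangent to a circle of absolute curvature $\le$ its own at \emph{at most one} point, by comparing the covolume $|b\sqrt{-\Delta}/2|$ of the lattice $\beta\ZZ + \delta\ZZ$ of denominators with the bound $N(x) \le 2|b|$ forced by Proposition \ref{prop:tangentfamilies}; orienting edges by increasing absolute curvature, the elementary Lemma \ref{lemma:graphtheory} then kills all loops of every length at once. This covolume-versus-norm inequality fails exactly at $\Delta = -15$, and that single case is handled by a separate separation argument (Proposition \ref{prop:15noloops}): a periodic chain of ``ghost circles'' lying in the complement of $\SK$ separates $\widehat{\RR}$ from $\widehat{\RR} + \tfrac{\sqrt{-15}}{2}$, so no finite chain of Bianchi circles can close up a cycle through both. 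If you want to salvage your approach, you would need to prove the word-nontriviality statement directly (for instance by exhibiting a ping-pong/monotonicity argument on some quantity attached to partial words), but any such monotone quantity is likely to be curvature in disguise --- at which point you have rediscovered the paper's proof. As written, the $n \ge 4$ case is a gap, and it is the whole theorem.
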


The proof for $\Delta < -15$ will rely on a simple graph theory principle.

\begin{lemma}
        \label{lemma:graphtheory}
        Consider a graph $G$ with vertex set $V$ and a function $f: V \rightarrow \RR$.  Direct each edge of $G$ according to the direction of increase of $f$ (whenever it is not constant).  Suppose that at any vertex $v \in V$, all but at most one edge is directed outward.  Then $G$ contains no loops.  
\end{lemma}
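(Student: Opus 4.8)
The plan is to argue by contradiction: suppose $G$ contains a loop (a cycle) $v_0, v_1, \ldots, v_{n-1}, v_n = v_0$ with $n \ge 1$ and all $v_i$ distinct for $0 \le i \le n-1$. The key observation is a counting argument on the edges of the cycle together with the hypothesis that at each vertex at most one incident edge points inward. First I would dispose of the degenerate possibility that $f$ is constant along the whole cycle; in that case every edge of the cycle is undirected, so each of its $n$ vertices has $\ge 2$ undirected cycle-edges, hence certainly has two edges that are not directed outward (an undirected edge is, in particular, not directed outward), contradicting the hypothesis. So we may assume $f$ is nonconstant on the cycle.

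Next, since $f$ is nonconstant on the finite cyclically-ordered set $\{v_0, \ldots, v_{n-1}\}$, the function $f$ attains a strict local structure somewhere: in particular there is an index $i$ at which $v_i$ is a strict local maximum among its two cycle-neighbours, i.e. $f(v_{i-1}) < f(v_i)$ and $f(v_i) > f(v_{i+1})$ (indices mod $n$). Indeed, take $i$ with $f(v_i)$ maximal; if $f(v_{i-1}) = f(v_i)$ one can walk backwards along the cycle through vertices of equal $f$-value until the value strictly drops, which must happen since $f$ is nonconstant on the cycle, producing the desired strict local maximum. At such a vertex $v_i$, both cycle-edges $\{v_{i-1}, v_i\}$ and $\{v_i, v_{i+1}\}$ are directed \emph{into} $v_i$ (by the rule that edges point in the direction of increasing $f$). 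That gives two inward-directed edges at $v_i$, contradicting the hypothesis that all but at most one edge at each vertex is directed outward. Hence $G$ has no loops.

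I do not expect a serious obstacle here; the only point requiring a little care is the handling of plateaus of $f$ along the cycle (edges where $f$ is constant and hence left undirected), which is why the argument is phrased in terms of finding a \emph{strict} local maximum rather than just any vertex maximizing $f$. Once one notes that an undirected edge also counts as "not directed outward", the constant-plateau and constant-cycle cases are both covered cleanly, and the rest is the elementary extremal principle above.
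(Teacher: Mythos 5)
Your route differs from the paper's: the paper follows the loop edge-by-edge, using the hypothesis at each successive vertex to show that the traversal always leaves along an outward edge, so that $f$ strictly increases around the loop and one can never return to the start; you instead argue by an extremal principle at a vertex maximizing $f$ on the cycle. The extremal approach is perfectly viable, but your execution has a concrete flaw: the strict local maximum you claim need not exist, and your backwards-walk does not produce one. Take a triangle with $f$-values $1,1,0$: $f$ is nonconstant on the cycle, yet every maximal vertex has one neighbour of equal value, so no vertex has \emph{both} cycle-neighbours strictly smaller. Your construction only controls the backward neighbour (in this example it does not even trigger the walk), and at the vertex it produces the forward cycle-edge may be an undirected plateau edge, so the asserted contradiction ``both cycle-edges are directed into $v_i$'' does not hold at that vertex.

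The repair is easy and in fact removes the need for any case split: at \emph{any} vertex $v$ where $f$ attains its maximum over the cycle, neither of the two cycle-edges at $v$ can be directed outward --- each is either undirected (equal neighbour) or directed inward (strictly smaller neighbour) --- so $v$ has at least two edges that are not directed outward, contradicting the hypothesis; this subsumes the constant-cycle case as well. You already observe that an undirected edge counts as ``not directed outward,'' so all the ingredients are in your proposal, but as written the central step leans on a strictness it cannot have. Comparing the two arguments: the repaired extremal argument is a one-step contradiction and arguably shorter, while the paper's traversal argument never has to discuss plateaus at all, since it only ever moves along outward, hence strictly increasing, edges.
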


\begin{proof}
 Suppose there is a loop.  Let $v$ be a vertex of that loop.  By assumption, at least one of the two edges of the loop adjacent to $v$ is directed outward.  Moving to the next vertex $w$ along this edge, we enter $w$ along an inward directed edge, and therefore, continuing along the loop, leave $w$ along an outward directed edge.  Hence, as we travel around the loop in this direction, the value of $f$ is increasing, so we can never return to $v$.
\end{proof}

\begin{lemma}
        \label{lemma:onebigger}
        Let $K$ be such that $\Delta < -15$.  Let $C$ be a $K$-Bianchi circle.  Then there is at most one point $x \in C$ at which $C$ is tangent to a $K$-Bianchi circle of absolute curvature less than or equal to the absolute curvature of $C$.
\end{lemma}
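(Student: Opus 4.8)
The plan is to convert the statement into a question about small values of a positive-definite binary quadratic form attached to $C$. The case where $C$ is a line (reduced curvature $0$) is separate and easy: by Proposition~\ref{prop:Kintersect} two $K$-Bianchi circles can meet only tangentially, so two distinct $K$-Bianchi lines are tangent only at $\infty$, a line is never tangent to a non-line at $\infty$, and hence every tangency of $C$ with a circle of absolute curvature $\le 0$ occurs at the single point $\infty$. So from now on assume $C$ has reduced curvature $n_C \neq 0$; set $N_C = |n_C| \ge 1$ and write $C = M(\widehat{\RR})$ with $M = \begin{pmatrix}\alpha_0 & \gamma_0\\ \beta_0 & \delta_0\end{pmatrix} \in \PGL_2(\OK)$, $|\det M| = 1$, $\beta_0 \neq 0$.

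Next I would pin down the $K$-rational points of $C$ and their denominators. Since $\widehat{\RR} \cap \PP^1(K) = \PP^1(\QQ)$ and $M$ is a bijection of $\widehat{\CC}$ preserving $\PP^1(K)$, these points are precisely the $M(p/q)$, $(p,q) \in \ZZ^2$ primitive, and they correspond bijectively to primitive $(p,q)$ up to sign. Writing $M(p/q) = \alpha/\beta$ with $\alpha = \alpha_0 p + \gamma_0 q$ and $\beta = \beta_0 p + \delta_0 q$, a determinant identity shows $(\alpha) + (\beta) = (1)$, so this is a coprime representative as in Proposition~\ref{prop:tangentfamilies}, and $N(\beta) = Q(p,q)$ where $Q(p,q) = N(\beta_0)p^2 + \Tr(\beta_0\overline{\delta_0})\,pq + N(\delta_0)q^2$. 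Expanding gives $\operatorname{disc} Q = (\beta_0\overline{\delta_0} - \overline{\beta_0}\delta_0)^2$, which by the curvature formula of Proposition~\ref{prop:curvature} equals $(-i\,n_C\sqrt{-\Delta})^2 = n_C^2\Delta$; so $Q$ is positive definite of discriminant $n_C^2\Delta$.

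Then I invoke Proposition~\ref{prop:tangentfamilies}. Since $\Delta < -15$ forces $|\OK^*| = 2$, the $K$-Bianchi circles through a $K$-rational point $x = \alpha/\beta$ of $C$ form a single $\ZZ$-indexed family of mutually tangent circles; normalizing so that $C$ has index $0$, the circle of index $k$ has reduced curvature $n_C + kN(\beta)$ (the increment $N(\beta)\sqrt{-\Delta}$ follows from Proposition~\ref{prop:curvature} together with $\overline{\tau} - \tau = -i\sqrt{-\Delta}$, consistently with item~(1) of the proposition). Hence $C$ is tangent at $x$ to a $K$-Bianchi circle of absolute curvature $\le N_C$ exactly when $|n_C + kN(\beta)| \le N_C$ for some $k \neq 0$, and an elementary check shows that this happens if and only if $N(\beta) \le 2N_C$. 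So the lemma reduces to: at most one primitive $(p,q)$, up to sign, satisfies $Q(p,q) \le 2N_C$.

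The final step is reduction theory for real positive-definite binary quadratic forms. After an $\SL_2(\ZZ)$-change of variables, $Q$ becomes a reduced form $ax^2 + bxy + cy^2$ with $|b| \le a \le c$ and $b^2 - 4ac = n_C^2\Delta$; a short case split (separating $|q| = 1$ from $|q| \ge 2$ and completing the square) shows $Q(p,q) \ge c$ for every primitive vector with $q \neq 0$, while the only primitive vectors parallel to $(1,0)$ are $\pm(1,0)$. Finally $4c^2 \ge 4ac \ge 4ac - b^2 = N_C^2|\Delta|$ gives $c \ge N_C\sqrt{|\Delta|}/2$, and because $\Delta$ is a fundamental discriminant with $\Delta < -15$ we have $\Delta \le -19$, so $c \ge N_C\sqrt{19}/2 > 2N_C$. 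Thus $\pm(1,0)$ are the only primitive vectors that can satisfy $Q \le 2N_C$, proving the lemma. I expect the main obstacles to be the middle steps --- establishing the exact curvature increment inside a tangency family and cleanly reducing ``a tangent circle of small curvature exists at $x$'' to the inequality $N(\beta) \le 2N_C$ --- and making the reduction-theory bound sharp enough; the numerology ($\sqrt{19} > 4$ but $\sqrt{15} < 4$) is precisely why the hypothesis is $\Delta < -15$ rather than $\Delta \le -15$.
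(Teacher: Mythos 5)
Your proof is correct and takes essentially the same route as the paper's: both use Proposition \ref{prop:tangentfamilies} to reduce ``$C$ has a tangent circle of absolute curvature $\le |{\rm curv}(C)|$ at $x$'' to the bound $N(\beta)\le 2|n_C|$ on the denominator of $x$, and then a geometry-of-numbers estimate to show at most one denominator direction can be that small once $\sqrt{-\Delta}>4$. The only differences are cosmetic: the paper phrases the last step as ``at most one lattice direction of norm below the covolume $|n_C|\sqrt{-\Delta}/2$'' rather than via reduced binary quadratic forms (the bounds are identical), and you additionally spell out the curvature-zero (line) case, which the paper leaves implicit.
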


By \emph{absolute curvature} we mean the absolute value of the curvature.  Note that, at the point $x$, the circle may be tangent to several circles of absolute curvature less than its own.

\begin{proof}
        Suppose $C = M(\widehat{\RR})$ for
        \[
                M = \begin{pmatrix} \alpha & \gamma \\ \beta & \delta \end{pmatrix},
        \]
        and let $\Lambda = \beta \ZZ + \delta \ZZ$.  Let $b$ be the reduced curvature of $C$.  Then the covolume of $\Lambda$ as a lattice in $\CC$ is $| b\sqrt{-\Delta}/2 |$.  There is at most one element of $\Lambda$, up to multiples, whose norm is less than its covolume, so for all but one $x \in \Lambda$, we have
        \begin{equation}
                \label{eqn:norm-covolume}
                N(x) \ge | b\sqrt{-\Delta}/2 |.
        \end{equation}
        On the other hand, if $C'$ is tangent to $C$ at the unique point with denominator $x$ (here we use that $b \neq 0$), and has curvature $b'$ where $|b'| \le |b|$, then it follows from Proposition \ref{prop:tangentfamilies} that
        \[
                |kN(x) - b| = |b'| \le |b|
        \]
        for some $k \in \ZZ^{>0}$, 
        whence
        \begin{equation}
                \label{eqn:norm-curvature}
                N(x) \le 2|b|.
        \end{equation}
        Comparing the inequalities \eqref{eqn:norm-covolume} and \eqref{eqn:norm-curvature} gives the desired result. 
\end{proof}

The proof of Theorem \ref{thm:16noloops} will rely on this lemma for $\Delta < -15$.  However, the lemma fails for $\Delta = -15$, where we will require an entirely different method.

\begin{proposition}
        \label{prop:15noloops}
        The tangency graph of $K = \QQ(\sqrt{-15})$ contains no loops.
\end{proposition}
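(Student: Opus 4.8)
The route I would take is to re-examine the proof of Theorem~\ref{thm:16noloops}, pin down the single way in which Lemma~\ref{lemma:onebigger} fails at $\Delta=-15$, and repair the input to Lemma~\ref{lemma:graphtheory}. In the proof of Lemma~\ref{lemma:onebigger} one plays the covolume bound $N(x)\ge|b\sqrt{-\Delta}/2|$ (valid for all but one $x$, up to multiples, in the denominator lattice $\Lambda=\beta\ZZ+\delta\ZZ$ of $C$) against $N(x)\le 2|b|$ (forced whenever $C$ is tangent at the point of denominator $x$ to a circle of absolute curvature $\le|b|$). For $\Delta=-15$ the covolume is only $|b|\sqrt{15}/2<2|b|$, so a \emph{second} such point can arise, with $\tfrac{\sqrt{15}}{2}|b|\le N(x)\le 2|b|$. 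So I would first classify the \emph{exceptional} $K$-Bianchi circles, i.e.\ those for which the conclusion of Lemma~\ref{lemma:onebigger} fails: more than one point of $C$ carries a tangent $K$-Bianchi circle of absolute curvature $\le\curv(C)$.

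For the classification, work with the orientation for which the reduced curvature $b=[\OK:\Lambda]$ is positive. If $q_1,q_2\in\Lambda$ are primitive, non-proportional, with $N(q_i)\le 2b$, then $\operatorname{covol}(\ZZ q_1+\ZZ q_2)\le\sqrt{N(q_1)N(q_2)}\le 2b$ while $\operatorname{covol}(\Lambda)=b\sqrt{15}/2$; as $4/\sqrt{15}<2$, the index $[\Lambda:\ZZ q_1+\ZZ q_2]$ is $1$, so $\{q_1,q_2\}$ is a $\ZZ$-basis of $\Lambda$ and $\bar q_1q_2=u+b\tau$ for some $u\in\ZZ$. Then $N(q_1)N(q_2)=u^2+ub+4b^2\le 4b^2$ forces $-b\le u\le 0$ and $N(q_1),N(q_2)\in[\tfrac{15}{8}b,\,2b]$, and a short estimate shows a third primitive vector of $\Lambda$ would have norm $\ge 3b$, so there is never a third. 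Now $2b$ is the only integer in $[\tfrac{15}{8}b,2b]$ once $b<8$, and for $b\ge 9$ the values of $x^2+xy+4y^2$ falling in this window are sparse; in every admissible case one of the following holds: $2b$ is not a norm (no exceptional circle); or $N(q_1)=N(q_2)=2b$ is forced, whence $q_2$ is $\pm\tau q_1/2$ or $\pm\bar\tau q_1/2$ and $\Lambda=(q_1)\mathfrak{p}^{-1}$ for a prime $\mathfrak p$ above $2$, a proper $\OK$-ideal — hence not a denominator lattice, as $\beta,\delta$ are coprime; or $b=8$ with $\{N(q_1),N(q_2)\}=\{15,16\}=\{N(\sqrt{-15}),N(4)\}$, $u=-4$, and $\Lambda=\ZZ\cdot 4+\ZZ\cdot\sqrt{-15}$. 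So I expect a single family of exceptional circles, all of reduced curvature $8$. For such a $C$, Proposition~\ref{prop:immtang} together with the curvature formula (Proposition~\ref{prop:curvature}) gives the circle immediately tangent at the point of denominator $x$ the reduced curvature $N(x)-8$; hence $C$'s only non-outward neighbours are a circle of reduced curvature $15-8=7$ and a circle $C'$ of reduced curvature $16-8=8$, and a direct check of $C'$'s denominator lattice $\ZZ\cdot 4+\ZZ(\sqrt{-15}+2)$ shows its only vectors of norm $\le 16$ are $\pm 4$, so $C'$ is \emph{not} exceptional.

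With this I would run Lemma~\ref{lemma:graphtheory} once more with $f=\curv$. Suppose the immediate tangency graph of $\OSK$ had a loop $L$; put $m=\max_{v\in L}f(v)$ and $V_m=\{v\in L:f(v)=m\}$. At each $v\in V_m$ both loop-edges lead to vertices with $f\le m$, so both are directed into $v$ or are constant; hence $v$ has at least two non-outward edges, so by definition of exceptional (the conclusion of Lemma~\ref{lemma:onebigger} holds for every non-exceptional circle) $v$ is exceptional and $m=8$. But then $v$ has exactly two non-outward edges — one toward a reduced-curvature-$7$ circle, one toward the equal-curvature circle $C'$ — which are therefore its two loop-edges, so $C'\in L$ and $C'\in V_m$ is exceptional, contradicting its non-exceptionality. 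Hence no loop exists, which is Proposition~\ref{prop:15noloops}.

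The step I expect to be the real work is the classification: one must genuinely rule out all $b\ge 9$, which comes down to listing the small values of $x^2+xy+4y^2$, deciding in each admissible pair $(b,u)$ whether $\ZZ q_1+\ZZ q_2$ is an $\OK$-ideal (hence not a denominator lattice) or not, and confirming that no lattice carries three short vectors. This is elementary but bookkeeping-heavy; everything after it is immediate. (If one wished to avoid the case analysis, an alternative would be to give an explicit recursive gap-filling description of the fundamental packing $\Pcal_{\QQ(\sqrt{-15})}$ and read off that its tangency graph is a tree, but that does not look shorter.)
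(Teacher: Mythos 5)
Your route is genuinely different from the paper's: the paper makes no attempt to salvage the curvature-monotonicity argument at $\Delta=-15$; instead it uses transitivity of $\PSL_2(\OK)$ on pairs (circle, tangency point) to reduce to showing that $\widehat{\RR}$ and a horizontal line above it cannot be adjacent vertices of a cycle, and then exhibits an explicit \emph{ghost circle} in the complement of $\SK$ whose translates form a chain separating $\widehat{\RR}$ from $\widehat{\RR}+\tfrac{\sqrt{-15}}{2}$, so no finite chain of tangent Bianchi circles can close up. Unfortunately your argument has a genuine gap, and it sits exactly at the step you flagged as ``the real work'': the classification of exceptional circles is false. Exceptional circles are not confined to reduced curvature $8$. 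With $\tau=(1+\sqrt{-15})/2$ (so $\tau^2=\tau-4$), take $q_1=5-2\tau$ and $q_2=3+2\tau$, the two conjugate elements of norm $31$. They are coprime (they generate the two distinct prime ideals above the split prime $31$), and $\bar q_1q_2=(3+2\tau)^2=-7+16\tau$, so $\Lambda=\ZZ q_1+\ZZ q_2$ has covolume $8\sqrt{15}$, i.e.\ index $b=16$ in $\OK$. Since $q_1,q_2$ are coprime they can be taken as the bottom row $(\beta,\delta)$ of a determinant-one matrix, so $\Lambda$ \emph{is} a denominator lattice, and the corresponding $K$-Bianchi circle of reduced curvature $16$ carries two non-proportional primitive denominators of norm $31\le 2b=32$: at those two distinct points its immediate neighbours have reduced curvature $31-16=15<16$. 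Thus your trichotomy for $b\ge 9$ (``$2b$ not a norm, or both norms equal $2b$ and the lattice is a proper ideal'') already fails at $b=16$: here $32$ is not a norm, yet $31$ lies in the window $[30,32]$ and is realised twice by a coprime pair. There are further exceptional curvatures, e.g.\ $b=32$ with $q_1=3-4\tau$, $q_2=8$ (norms $61$ and $64$), so the deferred ``bookkeeping'' cannot terminate in the single family you describe.

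This is not a repairable omission within your scheme, because the endgame used the specific shape of the $b=8$ exception: one strictly descending edge plus one \emph{level} edge to a non-exceptional circle of equal curvature, which forced a second exceptional vertex on the loop. The $b=16$ circle above has two strictly descending non-outward edges (both neighbours have curvature $15$), so a loop whose unique maximum is such a circle, descending on both sides to a unique minimum, is perfectly consistent with your local conditions; Lemma~\ref{lemma:graphtheory} gives nothing once a vertex may have two inward edges, and no level-partner contradiction is available. To push this approach through you would need the full list of exceptional curvature classes (the $b=16$ and $b=32$ examples suggest a nontrivial, possibly infinite, family governed by representations of integers in $(\tfrac{15}{8}b,\,2b]$ by $x^2+xy+4y^2$ together with a coprimality condition) \emph{and} a new argument excluding loops through the strictly-descending type --- at which point the ghost-circle separation argument actually used in the paper is much shorter.
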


\begin{proof}
        Let $K = \QQ(\sqrt{-15})$.  First, we remark that $\PSL_2(\OK)$ is transitive on pairs $(C,z)$ where $C \in \OSK$ and $z \in C$ is a tangency point between circles of $\OSK$.  Thus, we need only show that $\widehat{\RR}$ (with interior below) and any $C \in \SK$ of the form $ki+\widehat{\RR}$, $k \in \ZZ^{>0}$ (with interior above) do not participate as adjacent vertices in any cycle of the graph. Suppose for a contradiction, that they do.   We will call this the \emph{postulated cycle}.  
        
        Note that $k \ge \frac{\sqrt{15}}{2}$ (equality occurs if $C$ is the circle immediately tangent to $\widehat{\RR}$ (oriented negatively) at $\infty$).  Then the finitely many other circles making up the cycle must form a chain of tangent circles reaching from some tangency point $x$ of $\widehat{\RR}$ up to some tangency point $y$ of $C$.  In particular, the chain consists of finitely many circles.  Each is of curvature at least $\sqrt{15}$ or else equal to $0$.  Therefore, the portion of the chain that lies below $\widehat{\RR}+\frac{\sqrt{-15}}{2}$ is bounded away from $\infty$.  

In \cite[Section 7]{VisOne}, it was shown that $\SK$ is disconnected by demonstrating the existence of a \emph{ghost circle} which is contained in the complement of $\SK$.  The ghost circle $G$ is the circle of radius $1/\sqrt{15}$ centred at 
        \[
                \frac12 - \frac{7\sqrt{-15}}{30}.
        \]
        Its existence immediately implies the existence of infinitely many other ghost circles formed by $\PSL_2(\OK)$ images of $G$.  The union of these circles is contained in the complement of $\SK$.  These images include an infinite `ghost chain' of tangent circles extending horizontally to $\infty$ in both directions, and separating $\widehat{\RR}$ and $\widehat{\RR}+\frac{\sqrt{-15}}{2}$.  To see this, illustrated in Figure \ref{fig:ghostchain}, let $G'$ be the reflection of $G$ in $\widehat{\RR}$, and let $G''$ be the translation of $G$ by $\frac{-1+\sqrt{-15}}{2}$.  The circles $G'$ and $G''$ are tangent at $\frac{1+\sqrt{-15}}{4}$.  The union of translates of the pair $G'$ and $G''$ by $\ZZ$ is the `ghost chain' of tangent circles which contradicts the existence of the postulated cycle, and the theorem is proved.
\end{proof}

\begin{figure}
        \includegraphics[height=4in]{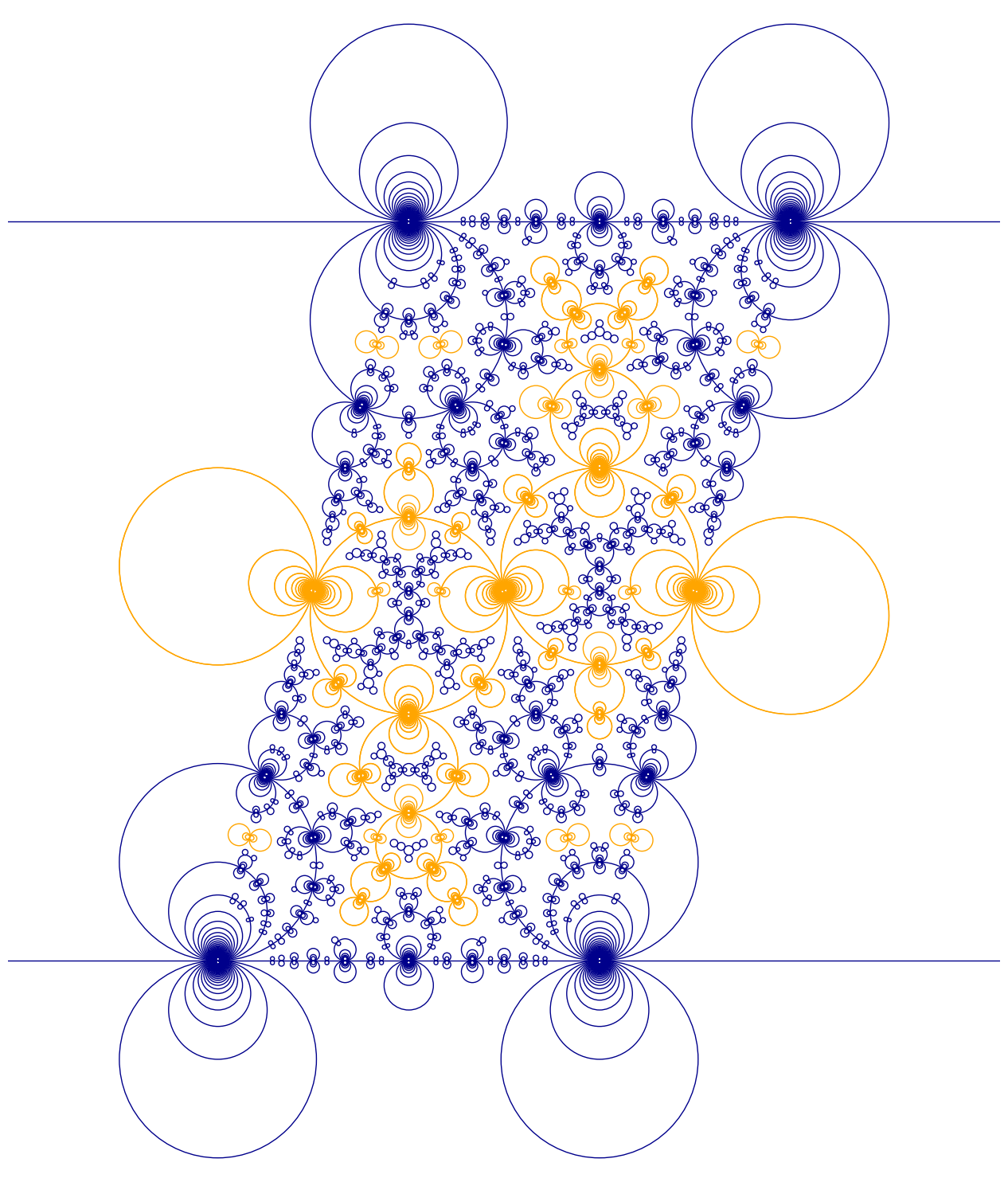} 
        \caption{The Schmidt arrangement of $\QQ(\sqrt{-15})$, in blue, with ghost circles, shown in yellow.}
\label{fig:ghostchain}
\end{figure}

\begin{proof}[Proof of Theorem \ref{thm:16noloops}]
        Suppose first that $\Delta < -15$.  Let $G$ be the immediate tangency graph of $\OSK$.  Let $f$ be the function of absolute curvatures on the vertices of the graph.  The edges of $G$ may be labelled by the points of tangency.  If one does so, then at any vertex, there is at most one adjacent edge with any given label, by Proposition \ref{prop:immtang}.  Combining this observation with Lemma \ref{lemma:onebigger} verifies the hypotheses of Lemma \ref{lemma:graphtheory}, and we are done.  
        
        The case of $\Delta = -15$ is completed by Proposition \ref{prop:15noloops}.
\end{proof}

In the Euclidean cases, we will see that the immediate tangency graph does contain loops.

\section{The space of circles}
\label{sec:space}

We introduce an embedding of the space of circles in Minkowski space which is described in \cite{\gtone} and elaborated upon beautifully by Kocik \cite{\Kocik}; it is a natural viewpoint from the perspective of the spin homomorphism and hyperbolic space as the space of Hermitian forms; see, for example, \cite{\EGM}.  Associating a circle to a Hermitian form goes back to Bianchi \cite[Chapter XV, p. 272]{MR0245501}.  See also \cite{MR2361159}.

Let $\MM$ be the vector space $\RR^4$ endowed with an inner product of signature $3,1$ given by the Gram matrix
\[
        G_M = \begin{pmatrix}
                0 & -\frac{1}{2} & 0 & 0 \\
     -\frac{1}{2} & 0 & 0 & 0 \\
                0 & 0 & 1 & 0 \\
                0 & 0 & 0 & 1
        \end{pmatrix}.
\]
The associated quadratic form will be denoted $M$, for Minkowski.  This space is identified with the collection of Hermitian matrices via
\[
\begin{pmatrix} b' \\ b \\ r \\ m \end{pmatrix} \leftrightarrow \begin{pmatrix} b' & r+mi \\
                                          r - mi & b \end{pmatrix}.
\]
Under this identification, the self-product of a vector under $G_M$ is equal to the determinant of the corresponding matrix.  In what follows, we will use this identification implicitly.

\newcommand{\OMp}{\operatorname{O}_M^+(\RR)}
\newcommand{\SOMp}{\operatorname{SO}_M^+(\RR)}
\newcommand{\OM}{\operatorname{O}_M(\RR)}

The space $\MM$ (considered as Hermitian matrices $T$) comes with a metric-preserving action of $\PGL_2(\CC)$ via
\[
        \gamma \cdot T := \frac{\gamma T \gamma^\dagger}{\Norm(\det \gamma)}, \quad \gamma \in \PGL_2(\CC),
\]
where $\dagger$ indicates the conjugate transpose.  There is also an action by $\Mob$, where complex conjugation $\mathfrak{c}$ acts by $\mathfrak{c} \cdot T = \overline{T}$.  This allows us to define a map 
        \[
                \rho: \Mob \rightarrow \OMp
        \]
        where $\OMp$ is the collection of time-preserving isometries of $\MM$ (in our description, this is equivalent to preserving the sign of $b+b'$), also known as the orthochronous Lorentz group.  This map is an isomorphism, which we call the \emph{exceptional isomorphism}, closely related to the famous {spin homomorphism}.  It can also be extended to $\GM$, so that we have the following isomorphisms and subgroup relations:
\[
        \xymatrix{
                \Mobplus \ar@{^{(}->}[r] \ar_\rho^{\cong}[d] &  \Mob  \ar@{^{(}->}[r] \ar_\rho^{\cong}[d] & \GM \ar_\rho^{\cong}[d] \\
                SO_M^+(\RR) \ar@{^{(}->}[r] & O_M^+(\RR)  \ar@{^{(}->}[r]  & O_M(\RR) \\
        }
\]
For more on this, see \cite{\gtone}.

Two Hermitian forms $H_i(u,v)$, $i=1,2$, on a $\CC$-vector space $V$ are called \emph{equivalent} or \emph{isometric} over $\CC$ if $H_1( \phi(u), \phi(v) ) = H_2(u,v)$ for some $\CC$-isomorphism $\phi: V \rightarrow V$.  The equivalence class of a Hermitian form and its determinant form a complete set of invariants for the orbits of the $\PGL_2(\CC)$-action on Hermitian forms described above.  All Hermitian forms of positive determinant are equivalent over $\CC$ (see, for example, \cite{\Lewis} and \cite[Chapter 9]{\EGM}).  Therefore the action of $\PGL_2(\CC)$ on the locus $M=1$ is transitive.

\newcommand{\OCirc}{\widehat{\Circ}}

Let $\Circ$ be the collection of circles in $\widehat{\CC}$.  This is a set with a $\Mob$ action (where $\mathfrak{c}$ acts by conjugation on $\widehat{\CC}$).  Denote by $\OCirc$ the collection of oriented circles, which is also a set with an action by $\Mob$.

We define a Pedoe map
\[
        \pi: \OCirc \rightarrow \MM
\]
which will endow the space of circles with the structure of a hypersufrace in an inner product space (see \cite{\Kocik}).  The map is defined as follows:
\[
\pi(C) = \begin{pmatrix} b' & r + mi \\ r-mi & b \end{pmatrix} \leftrightarrow \begin{pmatrix} b' \\ b \\ r \\ m \end{pmatrix},
\]
where $b$ is the curvature, $b'$ the co-curvature, and $r+mi$ the curvature-centre of $C$.  The image of $\pi$ is a Hermitian matrix of determinant $1$ (by Proposition \ref{prop:gencirc}), hence the image lies on the hyperboloid $M=1$ in $\MM$. 

\begin{proposition}
        The map $\pi: \OCirc \longrightarrow \{ v \in \MM : M(v) = 1 \}$ is a $\Mob$-equivariant bijection via $\rho$.  
\end{proposition}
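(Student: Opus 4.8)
The plan is to check the three required properties of $\pi$ in turn: well-definedness landing on the hyperboloid, bijectivity, and $\Mob$-equivariance via $\rho$. The first of these is essentially already in hand: Proposition \ref{prop:gencirc} tells us that to every oriented circle $C$ we may attach unique parameters $(b', b, r, m)$ (with $r + mi$ the curvature-centre) satisfying $b'b = a\overline{a} - 1$, i.e. $\det \begin{pmatrix} b' & r+mi \\ r-mi & b\end{pmatrix} = b'b - r^2 - m^2 = 1$, which under the identification $\MM \leftrightarrow$ Hermitian matrices says exactly $M(\pi(C)) = 1$. So $\pi$ is a well-defined map into $\{v \in \MM : M(v) = 1\}$.

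Next I would establish equivariance. Given $\gamma \in \Mobplus$ represented by a matrix of determinant of norm $1$, the oriented circle $\gamma(C)$ has parameters obtained from those of $C$ by the formulas in Proposition \ref{prop:curvature} (applied to $\gamma \cdot M_C$, where $M_C$ is any matrix with $M_C(\widehat{\RR}) = C$). The content of the ``exceptional isomorphism'' $\rho: \Mob \to \OMp$ as developed in \cite{\gtone} is precisely that the action $T \mapsto \gamma T \gamma^\dagger / \Norm(\det\gamma)$ on Hermitian matrices intertwines the M\"obius action on oriented circles with a linear action preserving $M$; so $\pi(\gamma(C)) = \rho(\gamma)\cdot\pi(C)$ is really a restatement of the compatibility between the matrix description of circles (Proposition \ref{prop:gencirc}) and the Hermitian-matrix action. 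The complex conjugation generator $\mathfrak{c}$ acts on an oriented circle by reflecting in $\widehat{\RR}$, which negates $m$ and fixes $b, b', r$ — exactly $T \mapsto \overline{T}$ — so equivariance extends to all of $\Mob$. This reduces to a direct comparison of the two formula-sets rather than any conceptual difficulty.

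Finally, bijectivity. Injectivity is immediate from the uniqueness clause of Proposition \ref{prop:gencirc}: distinct oriented circles have distinct parameter tuples, hence distinct images. For surjectivity, I would use the transitivity already proved: $\PGL_2(\CC)$ (hence $\Mobplus$, hence $\Mob$) acts transitively on the locus $M = 1$ in $\MM$, since all Hermitian forms of positive determinant are $\CC$-equivalent. On the other hand $\Mob$ acts transitively on $\OCirc$ (every oriented circle is a M\"obius image of $\widehat{\RR}$, with its induced orientation). Since $\pi$ is $\Mob$-equivariant and sends the single point $\widehat{\RR}$ into the $\Mob$-orbit that is all of $\{M = 1\}$, its image is a full $\Mob$-orbit, namely the whole hyperboloid; thus $\pi$ is onto.

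The only place demanding genuine care — and what I expect to be the main obstacle — is pinning down the equivariance statement on the nose, i.e. verifying that the intertwining constant is exactly $1$ (no stray sign or factor of $\Norm(\det\gamma)$), and checking that the \emph{orientation} bookkeeping is consistent: that the orientation $\gamma$ induces on $\gamma(C)$ (via the convention fixing $\widehat{\RR}$ positively oriented and the $\PGL_2(\CC)$-action on oriented circles) matches the sign of the curvature $b$ of $\pi(\gamma(C))$, for both orientation-preserving and orientation-reversing $\gamma$. This is a finite check using Proposition \ref{prop:curvature} together with the sign conventions of Proposition \ref{prop:gencirc}, but it is where the substance lies; everything else is formal once transitivity and the uniqueness of circle parameters are invoked.
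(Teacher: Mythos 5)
Your proposal is correct, and apart from one step it follows the paper's (very terse) proof: well-definedness and injectivity are read off from Proposition \ref{prop:gencirc} together with the observation that $M(v)=1$ is precisely \eqref{eqn:cocurv}, and equivariance is left as a direct computation, which is exactly the level of detail of the paper's own argument. The one genuine divergence is surjectivity: the paper takes it straight from Proposition \ref{prop:gencirc} as well (every tuple $(b',b,a)$ satisfying \eqref{eqn:cocurv} is the parameter vector of a unique oriented circle, so circles biject with points of the hyperboloid), whereas you deduce it from equivariance combined with transitivity of $\Mob$ on oriented circles and transitivity of the Hermitian-matrix action on the locus $M=1$, the statement established just before the proposition. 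Both routes are sound: yours gets surjectivity for free from facts already on the table, at the cost of needing equivariance in hand first and leaning on the classification of Hermitian forms, while the paper's is more self-contained, requiring only the elementary converse of the circle-parameter description. Your closing list of where the direct computation actually needs care — the normalization by $\Norm(\det\gamma)$, the orientation/sign bookkeeping in Propositions \ref{prop:gencirc} and \ref{prop:curvature}, and the fact that $\mathfrak{c}$ fixes $b,b',r$ and negates $m$ — identifies the right checkpoints.
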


\begin{proof}
        Bijectivity follows from Proposition \ref{prop:gencirc} (note that $M(v)=1$ is exactly \eqref{eqn:cocurv}).
        That $\pi$ respects the $\Mob$-action is a direct computation.  
\end{proof}

In particular, the oriented $K$-Bianchi circles are in bijection with the orbit of $(0,0,0,1)^t \in \MM$ under $\rho(\Mob(\OK))$.

One may verify that $\pi$ may also be computed in the following way.  Express a circle $C$ as $C = M_C(\RR)$ for 
\[
        M_C = \begin{pmatrix} \alpha & \gamma \\ \beta & \delta \end{pmatrix} \in \PGL_2(\CC), \quad \Norm(\det(M_C)) = 1.
\]
Then
\[
        \pi(C) = 
        i(N - N^\dagger), \text{ where }
        N = \begin{pmatrix}
                \alpha \overline{\gamma} & \alpha\overline{\delta} \\
                \beta \overline{\gamma} & \beta\overline{\delta} \\
        \end{pmatrix}.
\]
It is a brief computation that this map agrees with $\pi$.
Note that, if we write $-C$ for the circle $C$ with opposite orientation, then $\pi(-C) = -\pi(C)$.

The inner product on Minkowski space now gives us an inner product on circles.  It carries geometric information about the circles in question.  Following Kocik, we will call this the \emph{Pedoe product}.

\begin{figure}
        \includegraphics[height=1in]{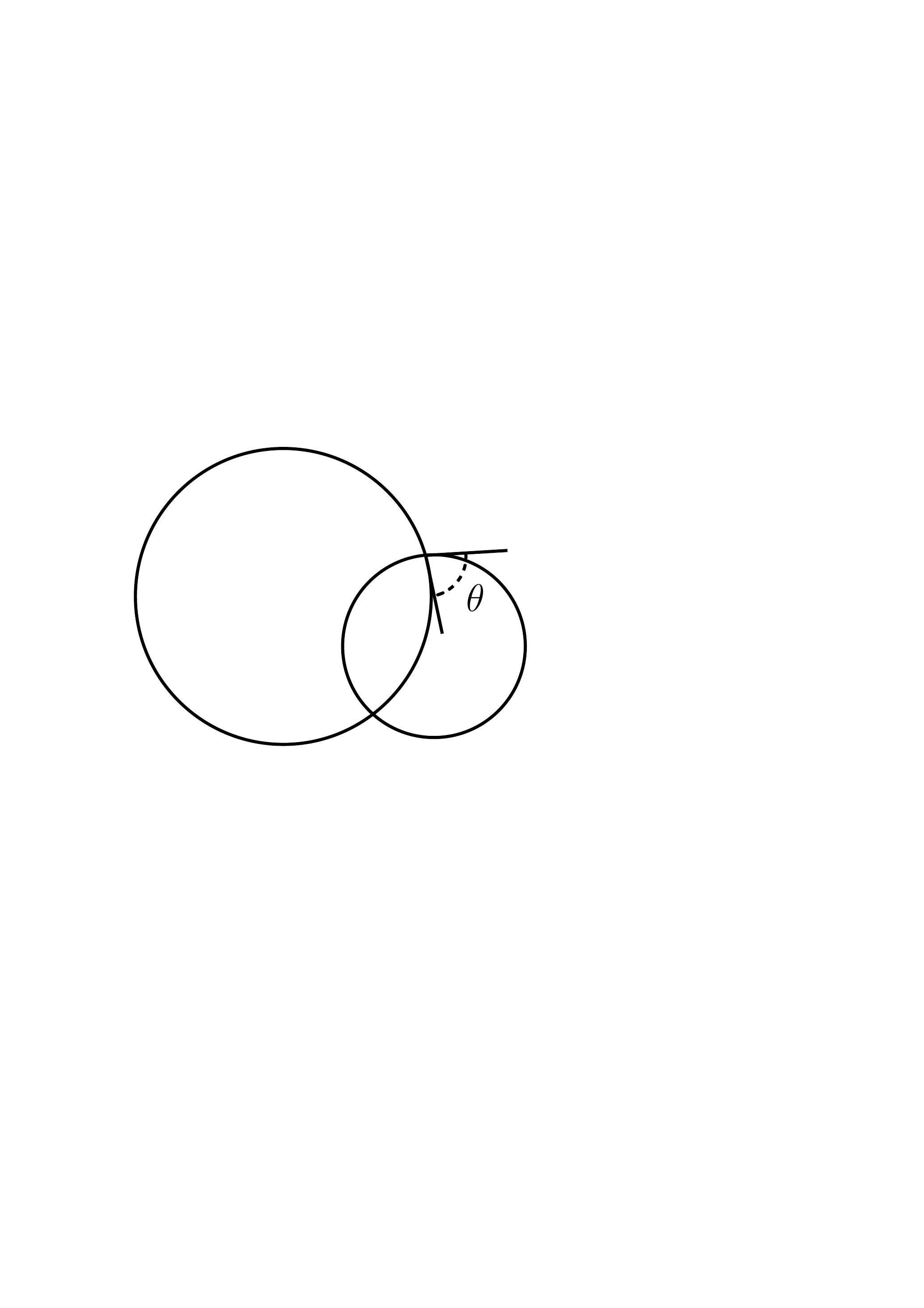} 
        \caption{The angle of intersection of two circles.}
\label{fig:anglecircles}
\end{figure}

\begin{proposition}[Proposition 2.4 of \cite{\Kocik}]
        \label{prop:pedoeproduct}
        Let $v_i = \pi(C_i)$ for two circles $C_1, C_2$ which are not disjoint.  Then $\langle v_1, v_2 \rangle = \cos \theta$, where $\theta$ is the angle between the two circles as in Figure \ref{fig:anglecircles}.  In particular,
                \begin{enumerate}
                        \item $\langle v_1, v_2 \rangle = -1$ if and only if the circles are tangent externally \includegraphics[width=0.5in]{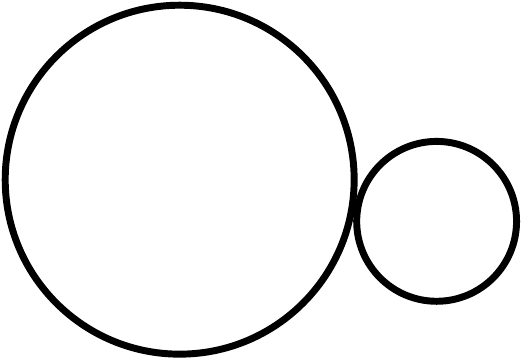}
                        \item $\langle v_1, v_2 \rangle = 1$ if and only if the circles are tangent internally \includegraphics[width=0.38in]{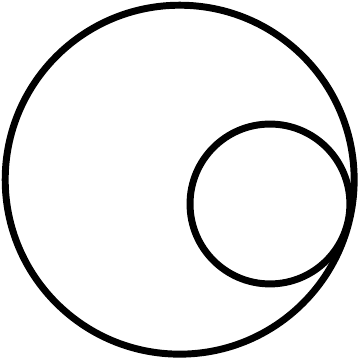}
                        \item $\langle v_1, v_2 \rangle = 0$ if and only if the circles are mutually orthogonal \includegraphics[width=0.47in]{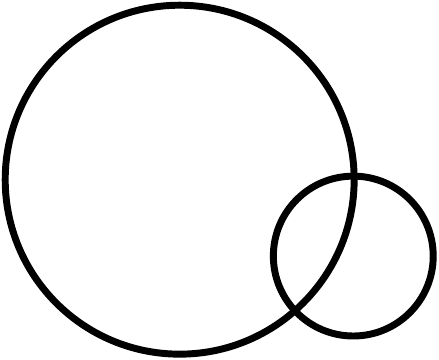}
                \end{enumerate}

This extends via the Law of Cosines, so that the inner product of disjoint circles is
\begin{equation}
        \label{eqn:lawcosines}
        \langle v_1, v_2 \rangle = \frac12\left( {d^2b_1b_2} - b_2/b_1 - b_1/b_2 \right),
\end{equation}
where $b_1$ and $b_2$ are the curvatures and $d$ is the distance between centres.
\end{proposition}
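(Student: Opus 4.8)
The proof is essentially a coordinate computation married to the classical law of cosines, with the $\Mob$-equivariance of $\pi$ used only to reduce to circles that miss $\infty$.

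First I would make the Pedoe product explicit. For $v_i = \pi(C_i)$ with curvature $b_i$, co-curvature $b'_i$, and curvature-centre $a_i = r_i + m_i i$, the Gram matrix $G_M$ gives $\langle v_1, v_2\rangle = -\tfrac12(b'_1 b_2 + b_1 b'_2) + r_1 r_2 + m_1 m_2$, and since $r_1 r_2 + m_1 m_2 = \operatorname{Re}(a_1\overline{a_2})$ this reads
\[
\langle v_1, v_2\rangle \;=\; -\tfrac12\bigl(b'_1 b_2 + b_1 b'_2\bigr) + \operatorname{Re}\bigl(a_1\overline{a_2}\bigr).
\]
Assume first that neither circle passes through $\infty$. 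By Proposition \ref{prop:gencirc}, $C_i$ has radius $1/|b_i|$, centre $z_i = a_i/b_i$, and, from \eqref{eqn:cocurv}, co-curvature $b'_i = b_i|z_i|^2 - 1/b_i$. Substituting $a_i = b_i z_i$ and this value of $b'_i$, the terms carrying $|z_i|^2$ combine into $-\tfrac12 b_1 b_2 |z_1 - z_2|^2$, leaving
\[
\langle v_1, v_2\rangle \;=\; \tfrac12\Bigl(\tfrac{b_1}{b_2} + \tfrac{b_2}{b_1}\Bigr) - \tfrac12\, b_1 b_2\, |z_1 - z_2|^2 .
\]
With $d = |z_1 - z_2|$ the distance between centres, this is the law-of-cosines identity \eqref{eqn:lawcosines}, and in particular it proves the last displayed formula for disjoint circles.

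Next I would extract the geometry. When $C_1$ and $C_2$ are not disjoint, choose a point $P$ in their intersection and apply the classical law of cosines to the triangle with vertices $z_1, z_2, P$, whose sides are $1/|b_1|$, $1/|b_2|$, and $d$: this rewrites the expression above as $\cos\theta$, where $\theta$ is the angle at $P$ between the two radii. Since the tangent line to $C_i$ at $P$ is perpendicular to the segment $P z_i$, this $\theta$ is the angle between the circles of Figure \ref{fig:anglecircles}, and it does not depend on which intersection point is used. Items (1), (2), (3) then fall out by taking $d = 1/|b_1| + 1/|b_2|$, $d = \bigl|\,1/|b_1| - 1/|b_2|\,\bigr|$, and $d^2 = 1/|b_1|^2 + 1/|b_2|^2$, with the orientation (sign of $b_i$) appropriate to external tangency, internal tangency, and orthogonality. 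Finally, if $C_1$ or $C_2$ does pass through $\infty$, pick $p \in \widehat{\CC}$ lying on neither circle and $g \in \Mobplus$ with $g(p) = \infty$; then $gC_1, gC_2$ are ordinary circles, $\rho(g) \in \OMp$ preserves $M$ and hence the form $\langle\,,\,\rangle$, while $g$ is conformal and carries interiors to interiors, so both $\theta$ and the tangency type are unchanged, and the case already treated suffices.

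The one delicate point is sign bookkeeping: the curvature $b_i$ records the orientation of $C_i$, so one must verify that the orientations intended in items (1) and (2), together with the particular angle $\theta$ drawn in Figure \ref{fig:anglecircles} (rather than its supplement), produce exactly $-1$ and $+1$ and not the reverse. None of the individual computations is hard; the only real work is keeping every sign consistent with the conventions of Proposition \ref{prop:gencirc}.
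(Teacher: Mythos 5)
The paper itself contains no proof of this statement---it is imported verbatim as Proposition 2.4 of Kocik---so a direct verification like yours is the natural thing to attempt, and the core computation is correct: with $a_i=b_iz_i$ and $b_i'=b_i|z_i|^2-1/b_i$, expanding $v_1^tG_Mv_2$ does give $\langle v_1,v_2\rangle=\tfrac12\bigl(b_1/b_2+b_2/b_1\bigr)-\tfrac12\,b_1b_2\,d^2$, and the reduction of circles through $\infty$ via $\Mob$-equivariance of $\pi$ and invariance of the form under $\rho(g)$ is fine, as is the law-of-cosines identification of this quantity with $\cos\theta$ at an intersection point.

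However, the sign bookkeeping you postpone to the last sentence is exactly where the write-up breaks: the expression you derive is the \emph{negative} of \eqref{eqn:lawcosines} as printed, yet you assert they coincide. Concretely, take the two positively oriented unit circles centred at $0$ and $3$: their Pedoe vectors are $(-1,1,0,0)^t$ and $(8,1,3,0)^t$, so $\langle v_1,v_2\rangle=-7/2$, which is what your formula gives, whereas the right-hand side of \eqref{eqn:lawcosines} is $+7/2$. Your formula is the one consistent with $G_M$ and with items (1)--(3): externally tangent, positively oriented circles must give $-1$, as is also required by the Descartes Gram matrix $R$ of Section \ref{sec:gaussian} having off-diagonal entries $-1$. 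So the printed display \eqref{eqn:lawcosines} carries the opposite (classical inversive-distance) sign convention, and a proof cannot simply identify the two expressions; you must either prove the corrected display $\langle v_1,v_2\rangle=-\tfrac12\bigl(d^2b_1b_2-b_2/b_1-b_1/b_2\bigr)$ and flag the discrepancy, or make explicit an orientation convention under which the printed sign is intended. Relatedly, since reversing one orientation negates the product, the ``if and only if'' in (1)--(3) only holds with the interiors fixed as in the figures (externally tangent meaning tangent with disjoint interiors, internally tangent meaning nested interiors); your phrase ``orientation appropriate to'' needs to be made precise at that point rather than left as a closing remark, because it is doing real work in the equivalences.
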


\section{The Gaussian integers and Descartes configurations}
\label{sec:gaussian}

The study of Apollonian circle packings depends on the study of the action of the so-called Apollonian group on the space of Descartes configurations.  We will develop a similar theory for $K$-Apollonian packings in other imaginary quadratic fields.  Therefore, we will review briefly the case of Apollonian circle packings for comparison.  See \cite{\gtone} for details.

We will consider Apollonian circle packings formed from circles in $\widehat{\CC}$ (any Apollonian circle packing may be realized this way).  Let $W_D$ be a matrix whose columns are $\pi(C_i)$ for any four oriented circles $D: C_1, C_2, C_3, C_4$.  By Proposition \ref{prop:pedoeproduct}, these four circles are in Descartes configuration, or this is the case once all orientations are reversed, if and only if 
\begin{equation}
        \label{eqn:descmat}
        W_D^t G_M W_D = \begin{pmatrix}
                1 & -1 & -1 & -1 \\
                -1 & 1 & -1 & -1 \\
                -1 & -1 & 1 & -1 \\
                -1 & -1 & -1 & 1 \\
        \end{pmatrix} =: R.
\end{equation}
Let $\Dcal$ denote the space of matrices satisfying this equation, known as the space of Descartes configurations\footnote{Note that these are ordered, oriented configurations in the sense of \cite{\gttwo}.}.  
Since $G_M$, $R$, and the elements of $\Dcal$ are all invertible, we obtain
\[
 W_D R^{-1} W_D^t = G_M^{-1}.
\]
This collection of 16 quadratic equations in the curvatures, co-curvatures and curvature-centres of four circles are the full `Descartes relations' as in \cite{MR1903421}: the circles are in Descartes configuration if and only if these 16 equalities hold.  In particular, the upper left corner of this matrix equality is the relation on curvatures \eqref{eqn:descquad}.

Given four circles in Descartes configuration (write $\bfv_1, \bfv_2, \bfv_3, \bfv_4 \in \MM$), and a chosen subset of three of them, say $\bfv_1, \bfv_2, \bfv_3$, the fourth circle $\bfv_4$ can be swapped out for its alternative $\bfv_4'$, i.e. the unique other circle which forms a Descartes configuration with $\bfv_1, \bfv_2, \bfv_3$.  This takes one configuration in an Apollonian circle packing $\Pcal$ to another in the same packing.  The relation that describes this swap in $\MM$ is very simple:
\[
        \bfv_i + \bfv_i' = 2\left( \sum_{j \neq i} \bfv_j \right).
\]
In particular, if the original curvatures are $a,b,c,d$, then the new curvature is
\[
        d' = 2(a+b+c) - d.
\]
There are four ways to swap out a circle.  This is accomplished by right multiplication on $\Dcal$ by the following four matrices of $O_R(\RR)$ of order two:
\begin{equation}
        \label{eqn:iappgens}
        \begin{pmatrix}
                -1 & 0 & 0 & 0 \\
                2 & 1 & 0 & 0 \\
                2 & 0 & 1 & 0 \\
                2 & 0 & 0 & 1 \\
        \end{pmatrix},
        \begin{pmatrix}
                1 & 2 & 0 & 0 \\
                0 & -1 & 0 & 0 \\
                0 & 2 & 1 & 0 \\
                0 & 2 & 0 & 1 \\
        \end{pmatrix},
        \begin{pmatrix}
                1 & 0 & 2 & 0 \\
                0 & 1 & 2 & 0 \\
                0 & 0 & -1 & 0 \\
                0 & 0 & 2 & 1 \\
        \end{pmatrix},
        \begin{pmatrix}
                1 & 0 & 0 & 2 \\
                0 & 1 & 0 & 2 \\
                0 & 0 & 1 & 2 \\
                0 & 0 & 0 & -1 \\
        \end{pmatrix}.
\end{equation}
Right multiplication by one of the matrices \eqref{eqn:iappgens} corresponds to inversion in a circle orthogonal to three of the four circles of the quadruple, as in Figure \ref{fig:appswap}.  For example, the four swaps on the so-called \emph{base cluster} shown in Figure \ref{fig:baseqqi} are accomplished by the following M\"obius transformations (given in the same order as \eqref{eqn:iappgens}):

\begin{equation}
        \label{eqn:appmob}
        z \mapsto \frac{(2i+1)\overline{z}-2}{2\overline{z}+2i-1}, \quad
        z \mapsto -\overline{z}+2, \quad
        z \mapsto \frac{\overline{z}}{2\overline{z}-1}, \quad
        z \mapsto -\overline{z}.
\end{equation}

\begin{figure}
        \includegraphics[height=2.5in]{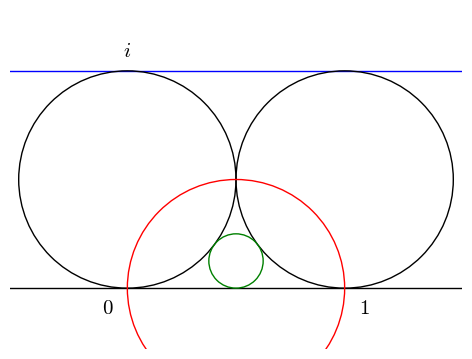}
        \caption[A swap for $\Scal_{\QQ(i)}$]{A swap for $\Scal_{\QQ(i)}$.  The blue and black circles (including two straight lines) form the base quadruple (see Figure \ref{fig:baseqqi}) of the $\QQ(i)$-Apollonian packing containing $\widehat{\RR}$.  If swapping by preserving the black circles, the blue circle is replaced with the green one, by inversion in the red circle.  The new quadruple is formed of the black and green circles.}
        \label{fig:appswap}
\end{figure}

\begin{figure}
        \includegraphics[height=3in]{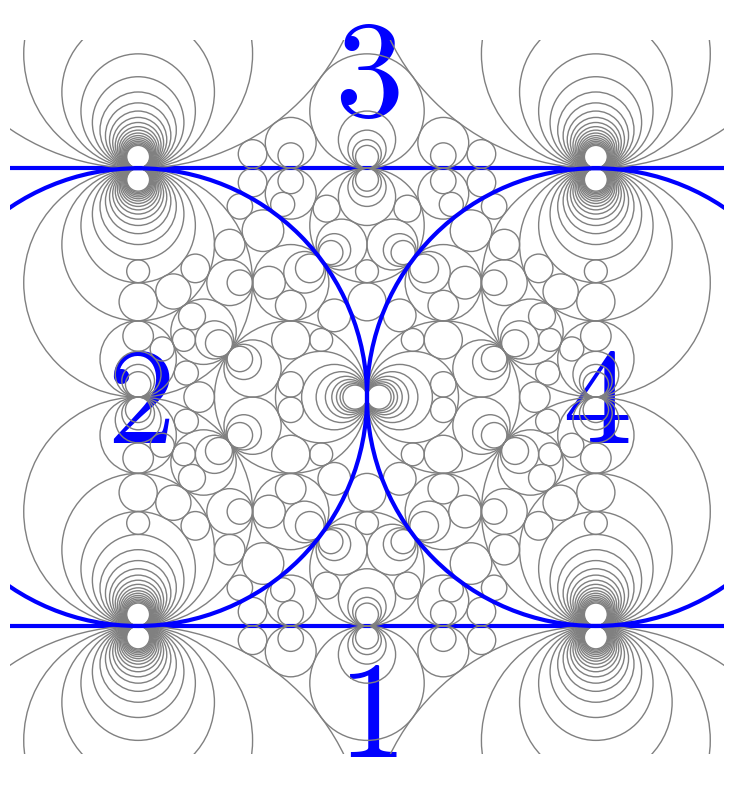} 
        \caption[Base quadruple for $\Scal_{\QQ(i)}$]{Base quadruple for $\Scal_{\QQ(i)}$.
                The coordinates of the four circles in $\MM$ are given, in the labelled order, by the columns of the following matrix:
{\tiny ${\begin{pmatrix}
0 & 0 & 2 & 2 \\ 0 & 2 & 0 & 2 \\ 0 & 0 & 0 & 2 \\ -1 & 1 & 1 & 1 \end{pmatrix}}$}.
                }
\label{fig:baseqqi}
\end{figure}

The group generated by the matrices \eqref{eqn:iappgens} is commonly referred to as the algebraic Apollonian group, which we will denote $\widehat{\Acal_{\QQ(i)}}$.  The group generated by the M\"obius transformations \eqref{eqn:appmob} is commonly referred to as the geometric Apollonian group, which we will denote $\Acal_{\QQ(i)}$.   While the algebraic Apollonian group does not act on individual circles, only ordered quadruples, the geometric Apollonian group is a group of M\"obius transformations acting on circles, unordered or ordered quadruples.  However, both groups have the property that the orbit of the base quadruple gives the full fundamental Apollonian packing.  Furthermore, the two are isomorphic as groups.  The following section will elaborate on the isomorphism.

\begin{theorem}[{Graham, Lagarias, Mallows, Wilks, Yan \cite[Proof of Theorem 4.3]{\gtone}}]
        \label{thm:appfree}
        There are no relations on the matrices of \eqref{eqn:iappgens} besides the fact that they are of order two.  Therefore, the group $\widehat{\Acal_{\QQ(i)}}$, and also $\Acal_{\QQ(i)}$, is a free product of the four copies of $\ZZ/2\ZZ$.
\end{theorem}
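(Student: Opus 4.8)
The plan is to establish freeness by a ping-pong argument for the \emph{geometric} Apollonian group $\Acal_{\QQ(i)}=\langle S_1,S_2,S_3,S_4\rangle$ acting on circles, and then transfer the conclusion to the algebraic group $\widehat{\Acal_{\QQ(i)}}$ via the isomorphism $\widehat{\Acal_{\QQ(i)}}\cong\Acal_{\QQ(i)}$ of the following section. Recall from the discussion around \eqref{eqn:iappgens}--\eqref{eqn:appmob} that $S_i$ is inversion in the circle $D_i$ orthogonal to the three circles $C_j$, $j\neq i$, of the base quadruple $D\colon C_1,C_2,C_3,C_4$.

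The first step is to understand the four circles $D_i$. In $\MM$, the condition $\langle\pi(D_i),\pi(C_j)\rangle=0$ for all $j\neq i$ defines a line, and one checks (using $W_D^{t}G_MW_D=R$ from \eqref{eqn:descmat} and the symmetry of $G_M$, $R$) that it is spanned by the $i$-th column of $W_D R^{-1}$; normalizing to the hyperboloid $M=1$ gives $\pi(D_i)=2\,W_D R^{-1}e_i$, which is a genuine circle since $M(2W_DR^{-1}e_i)=4\,e_i^{t}R^{-1}(W_D^{t}G_MW_D)R^{-1}e_i=4\,e_i^{t}R^{-1}e_i=1$. The same computation with $i\neq j$ gives $\langle\pi(D_i),\pi(D_j)\rangle=4\,e_i^{t}R^{-1}e_j=-1$, because $R^{-1}=\tfrac12 I-\tfrac14 J$ with $J$ the all-ones matrix. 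By Proposition \ref{prop:pedoeproduct}(1) the circles $D_i$ are therefore pairwise externally tangent; in particular, taken with the orientations $\pi(D_i)=2W_DR^{-1}e_i$, their open interiors $U_1,U_2,U_3,U_4$ are pairwise disjoint open disks, with $\overline{U_i}\cap\overline{U_j}$ a single point (the tangency point) for $i\neq j$.

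Now for the ping-pong: the inversion $S_i$ fixes $D_i$ pointwise and maps $\widehat\CC\setminus\overline{U_i}$ onto $U_i$. For $j\neq i$ we have $U_j\cap\overline{U_i}\subseteq\overline{U_j}\cap\overline{U_i}$, a single point lying on $D_j=\partial U_j$ and hence outside the open disk $U_j$; thus $U_j\subseteq\widehat\CC\setminus\overline{U_i}$ and $S_i(U_j)\subseteq U_i$. This is exactly the hypothesis of the ping-pong lemma for free products, applied to the four order-two subgroups $\langle S_i\rangle$ and the pairwise-disjoint nonempty sets $U_i$ (there are four factors, so the $k=2$ caveat does not arise). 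The lemma yields $\Acal_{\QQ(i)}\cong \ZZ/2\ZZ\ast\ZZ/2\ZZ\ast\ZZ/2\ZZ\ast\ZZ/2\ZZ$, i.e.\ the $S_i$ satisfy no relations beyond being involutions.

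Finally, the isomorphism $\widehat{\Acal_{\QQ(i)}}\cong\Acal_{\QQ(i)}$ of the next section carries the matrix generators \eqref{eqn:iappgens} to the M\"obius generators \eqref{eqn:appmob}, so $\widehat{\Acal_{\QQ(i)}}\cong \ZZ/2\ZZ\ast\ZZ/2\ZZ\ast\ZZ/2\ZZ\ast\ZZ/2\ZZ$ as well, which is the stated freeness of the matrices \eqref{eqn:iappgens}. The only delicate point is the orientation bookkeeping of the second step --- checking that one consistent choice of the disks $U_i$ works simultaneously against all the other $D_j$ --- which is precisely the internal-versus-external tangency distinction tracked by the Pedoe product of Proposition \ref{prop:pedoeproduct}; once that is pinned down, the rest is formal. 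An alternative is the reduction-theory route of \cite{\gtone}: assign to a Descartes quadruple the size $a+b+c+d$, show every quadruple is carried to a unique minimal (``root'') quadruple by repeatedly applying the generator that decreases the largest curvature, and deduce that a nonempty reduced word in the $S_i$ moves the root quadruple, hence is nontrivial; there the obstacle is proving that the reduction terminates and that the root is unique.
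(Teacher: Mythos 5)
Your ping-pong argument is correct, and it is genuinely different from the proof the paper relies on: the paper does not prove Theorem \ref{thm:appfree} at all, but quotes it from \cite[Proof of Theorem 4.3]{\gtone}, where freeness is extracted from the analysis of the action on Descartes configurations (reduction to a root quadruple and the orbit count that also yields Theorem \ref{thm:gtoneorbits}) --- essentially the ``alternative route'' you sketch at the end. Your computation is sound: since $W_D^tG_MW_D=R$ and $R^{-1}=\tfrac12 I-\tfrac14 J$, the vectors $2W_DR^{-1}e_i$ do lie on $M=1$, are $G_M$-orthogonal to the $\pi(C_j)$, $j\neq i$, and have pairwise Pedoe products $-1$, so with this single consistent choice of orientations the dual circles $D_i$ are pairwise externally tangent and their interiors $U_i$ are pairwise disjoint; this is exactly the orientation bookkeeping you flag, and it disposes of it. Since each generator of \eqref{eqn:appmob} is precisely the inversion in the corresponding $D_i$ (one can check this directly on the base quadruple of Figure \ref{fig:baseqqi}), the four-factor ping-pong lemma applies and gives $\Acal_{\QQ(i)}\cong(\ZZ/2\ZZ)^{*4}$, and the transfer to $\widehat{\Acal_{\QQ(i)}}$ is legitimate because the algebraic--geometric correspondence at the base quadruple (Proposition \ref{prop:algebraic}, together with the remark in Section \ref{sec:gaussian} that \eqref{eqn:appmob} realises \eqref{eqn:iappgens} in the same order) is an isomorphism matching generators. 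What each approach buys: your argument is self-contained, purely geometric, and needs no reduction theory, but it only gives freeness; the cited GLMWY argument is heavier but simultaneously produces the simply-transitive action on ordered, oriented Descartes configurations (the $48$-orbit statement), which is what the present paper actually uses later, e.g.\ in Theorem \ref{thm:app-union}.
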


This group is the basic tool in arithmetic results concerning curvatures in Apollonian circle packings, because of the following.

\begin{theorem}[{Graham, Lagarias, Mallows, Wilks, Yan \cite[Theorem 4.3]{\gtone}}]
        \label{thm:gtoneorbits}
        Let $\Pcal$ be an Apollonian circle packing.
 \begin{enumerate}
         \item The full set of Descartes configurations contained in $\Pcal$ is a union of $48$ orbits of $\widehat{\Acal_{\QQ(i)}}$. 
         \item Fix a Descartes quadruple $D \in \Pcal$.  Then there are $48$ matrices $W_D \in \Dcal$ (i.e., satisfying \eqref{eqn:descmat}) representing this quadruple.  Each of the $48$ orbits contains exactly one of these $48$ matrices.
 \end{enumerate}
\end{theorem}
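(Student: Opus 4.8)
The plan is to reconstruct the argument of Graham--Lagarias--Mallows--Wilks--Yan. It rests on two geometric facts about a fixed Apollonian packing $\Pcal$: \textbf{(a)} right multiplication by each of the generators \eqref{eqn:iappgens} carries a matrix $W\in\Dcal$ whose columns are images under $\pi$ of oriented circles of $\Pcal$ to another such matrix; and \textbf{(b)} any two such matrices are joined by a finite chain of these swaps. First, the bookkeeping that extracts the number $48$. A matrix $W$ with $W^tG_MW=R$ has, by Proposition \ref{prop:pedoeproduct}, columns $\pi(\gamma_1),\dots,\pi(\gamma_4)$ for four pairwise externally tangent oriented circles, so the underlying unoriented quadruple is a Descartes configuration; and a Descartes configuration of unoriented circles admits exactly two pairwise-externally-tangent orientations, differing by a simultaneous reversal (reversing only some circles turns an external tangency into an internal one). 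Since $R=2I-J$, with $J$ the all-ones matrix, is invariant under simultaneous row and column permutation, every reordering of the columns of $W$ again satisfies \eqref{eqn:descmat}. Hence a fixed unordered, unoriented Descartes quadruple $D\subset\Pcal$ is represented in $\Dcal$ by precisely the $2\cdot 4!=48$ matrices $\pm W_DP$, $P$ a permutation matrix; these are pairwise distinct because $W_D$ is invertible and $-I$ is not a permutation matrix.

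\textbf{Fact (a).} The matrices \eqref{eqn:iappgens} lie in $O_R(\RR)$, so right multiplication preserves \eqref{eqn:descmat} and hence $\Dcal$. Right multiplication by the $i$-th of them fixes the other three columns and replaces the $i$-th column $\pi(C_i)$ by $2\sum_{j\neq i}\pi(C_j)-\pi(C_i)$, which by the swap relation recalled in Section \ref{sec:gaussian} equals $\pi(C_i')$, where $C_i'$ is the second Soddy circle of the mutually tangent triple $\{C_j:j\neq i\}$, oriented consistently with it. An Apollonian packing is closed under adjoining both Soddy circles of any mutually tangent triple of its members, so $C_i'\in\Pcal$, and the new quadruple again consists of circles of $\Pcal$. (The same identity, applied with all orientations reversed, shows a swap preserves whether the configuration is oriented all externally consistent or globally reversed.)

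\textbf{Fact (b).} Let $\mathcal Q_0$ be the set of ordered, oriented Descartes configurations reachable from a fixed base $W_{D_0}$ by swaps. Every circle of $\Pcal$ occurs as a column of some member of $\mathcal Q_0$: by construction $\Pcal$ is the smallest collection of circles containing the base quadruple and closed under adjoining Soddy circles to mutually tangent triples, and adjoining such a Soddy circle is exactly a swap applied to a configuration already in $\mathcal Q_0$, so the claim follows by induction on the generation in which a circle first appears. It remains to see that \emph{every} Descartes configuration of $\Pcal$ lies, up to reordering and reorientation, in $\mathcal Q_0$: given one, let $C$ be its youngest circle; $C$ was born as a Soddy circle of a mutually tangent triple $T$, one argues from the tangency structure of the packing that $T$ must be the remaining three circles of the configuration, and replacing $C$ by its Soddy partner produces a strictly older configuration, so descent on the generation concludes. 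This descent, which is where one really has to analyze how quadruples sit inside a packing, is the step I expect to be the main obstacle; for its details I would appeal to \cite{\gtone}.

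\textbf{Conclusion.} By \textbf{(b)}, every $W\in\Dcal$ with columns in $\Pcal$ lies in the $\widehat{\Acal_{\QQ(i)}}$-orbit of one of the $48$ representatives $\pm W_{D_0}P$ of $D_0$, so there are at most $48$ orbits. Two representatives $W_{D_0}P$ and $\pm W_{D_0}P'$ of $D_0$ lie in a common orbit only if $\pm(P')^{-1}P\in\widehat{\Acal_{\QQ(i)}}$, i.e.\ only if $\widehat{\Acal_{\QQ(i)}}$ contains a nontrivial signed permutation matrix; but it does not, a fact established in \cite{\gtone} alongside the free-product structure recalled in Theorem \ref{thm:appfree}. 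Hence the $48$ representatives of $D_0$ sit in $48$ distinct orbits, proving statement (1). For statement (2), the $48$ matrices in $\Dcal$ representing any prescribed $D\subset\Pcal$ are the $\pm W_DP$; each lies in one of the $48$ orbits, and, by the same absence of nontrivial signed permutation matrices in $\widehat{\Acal_{\QQ(i)}}$, no two of them lie in the same orbit, so they are distributed one per orbit.
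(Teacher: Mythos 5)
This statement is one the paper does not prove at all: it is imported verbatim from Graham--Lagarias--Mallows--Wilks--Yan \cite[Theorem 4.3]{\gtone}, so there is no internal proof to compare against, and the only fair comparison is with the argument in that reference. Your reconstruction follows the same general lines (the count $48=2\cdot 4!$, the fact that the swap matrices preserve the set of configurations coming from $\Pcal$, and the reduction of the orbit count to ``transitivity up to the $48$ symmetries'' plus ``no nontrivial signed permutation matrix in $\widehat{\Acal_{\QQ(i)}}$''), and the bookkeeping parts you do carry out are correct. The problem is that the two load-bearing steps --- your Fact (b) and the absence of nontrivial signed permutation matrices --- are exactly the ones you hand back to \cite{\gtone}. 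Since the theorem \emph{is} a citation of \cite{\gtone}, your proposal as written reduces the statement to its own source rather than proving it; that is a genuine gap, and it sits at the mathematical heart of the theorem (transitivity is what makes the Apollonian group useful at all).

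If you want to close it, the missing ingredients are concrete. For Fact (b): one needs (i) that two circles of the same generation $\ge 1$ in a packing are never tangent (they sit in distinct curvilinear interstices whose closures meet at most in vertices, which inscribed circles avoid), so the youngest circle $C$ of a non-root configuration is unique and the older circles tangent to $C$ are precisely the three circles of its birth interstice, forcing $T$ to be the other three members; and (ii) that the second Soddy circle of $T$ is strictly older than $C$ --- it is the third circle bounding the interstice in which the youngest member of $T$ was itself born --- so the maximal generation strictly decreases and the descent terminates at the root quadruple. (GLMWY organize this differently, via a reduction algorithm that decreases the sum of curvatures to a root quadruple, but the effect is the same.) For the second deferred fact you do not need the reference at all: every generator in \eqref{eqn:iappgens} is congruent to the identity modulo $2$, so any signed permutation matrix in $\widehat{\Acal_{\QQ(i)}}$ is diagonal, hence $\pm I$ by invariance of $R$, and $-I$ is excluded because each swap sends a configuration with pairwise disjoint interiors to another such, while $-I$ reverses all orientations. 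Two small further points: the condition in your last paragraph should read $\pm P^{-1}P' \in \widehat{\Acal_{\QQ(i)}}$, and the assertion that \emph{all} $48$ representatives of an arbitrary $D$ are distributed among the $48$ orbits tacitly uses that conjugation by permutation matrices permutes the four generators (so $\widehat{\Acal_{\QQ(i)}}$ is normalized by signed permutations); this is true and worth a line.
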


The $48$ matrices representing a quadruple are formed by reordering the circles $24$ ways, and reversing the orientation of all four simultaneously.

\section{Cluster spaces and the algebraic-geometric correspondence}
\label{sec:spaces}

\begin{definition}
        A \emph{cluster space} is a set of the form
\[
       S_R := \{ W \in M_{4 \times 4}(\RR) : W^t G_M W = R \},
\]
where $R$ is a fixed invertible matrix.
\end{definition}

The space $\Dcal$ of Descartes quadruples of the last section is such a space.  
When the columns of the $W$ lie on $M=1$ (i.e. the diagonal of $R$ consists of $1$'s), this can be considered the collection of quadruples of circles, considered in $\MM$, which are in a particular configuration (specified by $R$) with respect to the Pedoe product.  In general, we loosen this requirement, and may interpret columns as sums of circles.

The purpose of the next two results is to show that $S_R$ is a principal homogeneous space under left and right actions on $S_R$ by matrix groups isomorphic to $\OM$.  

\begin{definition}
        The left action of $\OM$ on $S_R$ by matrix multiplication on the left is called the \emph{geometric action}.
\end{definition}

\begin{proposition}
        \label{prop:left}
        The set $S_R$ is a principal homogeneous space for the geometric action.
\end{proposition}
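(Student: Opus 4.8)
The plan is to verify the three defining properties of a principal homogeneous space for the left $\OM$-action on $S_R$: that $S_R$ is non-empty, that the action is well-defined and free, and that it is transitive. The one observation that drives the whole argument is that every $W \in S_R$ is an invertible matrix: from $W^t G_M W = R$ we get $\det(W)^2 \det(G_M) = \det(R)$, and since $G_M$ is invertible (being the Gram matrix of a nondegenerate bilinear form) and $R$ is invertible by hypothesis, $\det(W) \neq 0$. Well-definedness of the action is then immediate, since for $g \in \OM$ and $W \in S_R$ we have $(gW)^t G_M (gW) = W^t(g^t G_M g)W = W^t G_M W = R$, so $gW \in S_R$.

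For non-emptiness, note first that $W^t G_M W$ is symmetric for every $W$, so $S_R$ is empty unless $R$ is symmetric; and by Sylvester's law of inertia a solution $W$ to $W^t G_M W = R$ exists precisely when the nondegenerate symmetric matrix $R$ has the same signature $(3,1)$ as $G_M$. This holds in every case used in the paper — for instance the Descartes matrix $R$ of \eqref{eqn:descmat} equals $2I - J$ with $J$ the all-ones matrix, hence has eigenvalues $2,2,2,-2$ — and I would simply take $S_R \neq \emptyset$ as a standing hypothesis, as it is forced by the very assertion that $S_R$ is a principal homogeneous space. Freeness: if $gW = W$ with $W \in S_R$, then right-multiplying by $W^{-1}$ gives $g = I$. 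Transitivity: given $W_1, W_2 \in S_R$, put $g := W_2 W_1^{-1} \in \GL_4(\RR)$; then $gW_1 = W_2$, and $g^t G_M g = (W_1^{-1})^t W_2^t G_M W_2 W_1^{-1} = (W_1^{-1})^t R\, W_1^{-1} = (W_1^{-1})^t (W_1^t G_M W_1) W_1^{-1} = G_M$, so $g \in \OM$. By freeness this $g$ is the unique element of $\OM$ carrying $W_1$ to $W_2$.

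I do not expect a real obstacle here: the content is the standard observation that $\{W : W^t G_M W = R\}$ carries a torsor structure under $\OM$ via $(W_1, W_2) \mapsto W_2 W_1^{-1}$ as soon as one knows all such $W$ are invertible. The only point deserving explicit mention is the non-emptiness/signature bookkeeping for a general invertible $R$, which is why I would flag it rather than leave it tacit.
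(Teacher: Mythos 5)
Your proof is correct and follows essentially the same route as the paper's: the action preserves $S_R$ because elements of $\OM$ preserve the form, and free transitivity comes from invertibility of $W$ (forced by invertibility of $R$) together with $N := W_2 W_1^{-1} \in \OM$. You simply spell out a few steps the paper leaves tacit (that $W_2W_1^{-1}$ actually lies in $\OM$, freeness, and the non-emptiness/signature caveat), which is fine but not a different argument.
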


\begin{proof}
Since $N \in \OM$ preserves the form $M$,
\[
        (N W)^t G_M N W
        = 
        W^t G_M W ,
\]
and so this action preserves $S_R$.  If $R$ is invertible, then $W \in S_R$ are invertible and so the element of $\OM$ taking any $W_1$ to $W_2$, namely $N:= W_2W_1^{-1} \in \OM$,
exists and is unique.  That is, $\OM$ is freely transitive on $S_R$.  
\end{proof}

The reason for the name \emph{geometric} is that, restricting to $\OMp$, the left action can be considered a M\"obius action on circles via the exceptional isomorphism $\rho$. 

The special case of $R=G_M$ gives $S_R = \OM$.

\begin{definition}
        Write $\operatorname{O}_R(\RR)$ for the matrices preserving the quadratic form associated to Gram matrix $R$.
        The right action of $\operatorname{O}_{R}(\RR)$ on $S_R$ by matrix multiplication on the right is called the \emph{algebraic action}.
\end{definition}

\begin{proposition}
        \label{prop:algtorsor}
        The set $S_R$ is a principal homogeneous space for the algebraic action.
        Furthermore, $\operatorname{O}_{R}(\RR) \cong \OM$.
\end{proposition}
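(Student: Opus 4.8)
The plan is to mirror the proof of Proposition \ref{prop:left} and then read off the isomorphism $\operatorname{O}_R(\RR) \cong \OM$ from the fact that both groups act freely transitively on the same set $S_R$. First I would check that the algebraic action is well-defined: for $P \in \operatorname{O}_R(\RR)$, i.e. $P^t R P = R$, and any $W \in S_R$, one has $(WP)^t G_M (WP) = P^t (W^t G_M W) P = P^t R P = R$, so $WP \in S_R$.

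Next, free transitivity. Since $R$ is invertible, every $W \in S_R$ is invertible, and the one identity doing all the work is that $W^t G_M W = R$ rearranges to $(W^{-1})^t R\, W^{-1} = G_M$. Given $W_1, W_2 \in S_R$, put $P := W_1^{-1} W_2$; then $P^t R P = W_2^t (W_1^{-1})^t R\, W_1^{-1} W_2 = W_2^t G_M W_2 = R$, so $P \in \operatorname{O}_R(\RR)$ and $W_1 P = W_2$. Uniqueness of $P$, and hence freeness, is immediate from invertibility of the $W_i$. (Here I take for granted that $S_R$ is non-empty; this is precisely why $R$ should be a legitimate Gram matrix of signature $3,1$, for then Sylvester's law of inertia gives an invertible $W_0$ with $R = W_0^t G_M W_0$.)

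Finally, for the isomorphism, fix any $W_0 \in S_R$ and define $\phi \colon \operatorname{O}_R(\RR) \to \OM$ by $\phi(P) = W_0 P\, W_0^{-1}$. Using $W_0^t G_M W_0 = R$ one computes $\phi(P)^t G_M\, \phi(P) = (W_0^{-1})^t P^t R P\, W_0^{-1} = (W_0^{-1})^t R\, W_0^{-1} = G_M$, so $\phi$ indeed takes values in $\OM$; it is a homomorphism because conjugation is, it is injective because $W_0$ is invertible, and it is surjective because for $N \in \OM$ the matrix $P := W_0^{-1} N W_0$ satisfies $P^t R P = W_0^t N^t G_M N W_0 = W_0^t G_M W_0 = R$, i.e. $P \in \operatorname{O}_R(\RR)$ with $\phi(P) = N$. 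More structurally: $P \mapsto W_0 P$ and $N \mapsto N W_0$ are bijections $\operatorname{O}_R(\RR) \to S_R$ and $\OM \to S_R$ by the two freely transitive actions, and $\phi$ is exactly the resulting comparison map.

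I do not expect a real obstacle here — the argument is bookkeeping around the equivalence $W^t G_M W = R$ $\Longleftrightarrow$ $(W^{-1})^t R\, W^{-1} = G_M$. The only point worth flagging is that $\phi$ depends on the auxiliary choice of $W_0$, and two choices differ by conjugation by an element of $\OM$ (by the torsor property of Proposition \ref{prop:left}); so the isomorphism $\operatorname{O}_R(\RR) \cong \OM$ is canonical only up to an inner automorphism, which is all the statement asserts.
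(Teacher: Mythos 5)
Your proof is correct and follows essentially the same route as the paper: the paper also fixes $W_0 \in S_R$, identifies $\operatorname{O}_R(\RR) = W_0^{-1}\,\OM\, W_0 \cong \OM$ by conjugation, and then repeats the free-transitivity argument of Proposition \ref{prop:left}. You simply spell out the details (well-definedness, the element $P = W_1^{-1}W_2$, and the conjugation computation) that the paper delegates to ``the proof is as for the last proposition,'' plus a reasonable remark about non-emptiness and the $W_0$-dependence of the isomorphism.
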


\begin{proof}
        Let $W_0 \in S_R$.  Then $\operatorname{O}_{R}(\RR) = W_0^{-1} \OM W_0 \cong \OM$ preserves the quadratic form given by Gram matrix $R$, which form is isomorphic to $M$ over $\RR$.  Therefore right multiplication by this group preserves $S_R$.  The proof is as for the last proposition.
\end{proof}

The algebraic action is sometimes called the Apollonian action.  It acts by linear combination on 4-tuples of vectors in $\MM$.  It cannot be thought of as arising from an action on circles or vectors alone; it only acts on $S_R$.  

The isomorphism between $\operatorname{O}_R(\RR)$ and $\OM$ given in Proposition \ref{prop:algtorsor} is called the \emph{algebraic-geometric correspondence} depending on $W_0 \in S_R$, which we will denote
\[
        \sigma_{W_0} : \OM \rightarrow \operatorname{O}_R(\RR),
        \quad M \mapsto W_0^{-1}MW_0 .
\]

Both the geometric and algebraic actions have a manifestion in the action of M\"obius.  To discuss this, we have to interpret $S_R$ as \emph{clusters} of circles.

\begin{definition}
        Let $n \ge 4$, $n \in \ZZ$.  A \emph{cluster type} is a finite-to-one function 
        \[
                f: S_R \rightarrow {\Circ}^n,
        \]
        on a cluster space, with image lying in the collection of $n$-tuples of oriented circles, and having the form
        \[
                f(W) = \left( \pi^{-1}\left( \sum_{i=1}^4 a_{i,j}W_i
                \right) \right)_{j=1}^n,
        \]
        where $a_{i,j} \in \ZZ$ and $W_1, \ldots, W_4$ denote the columns of $W$.  In other words, a cluster type determines a collection of circles by linear combination on the columns of $W$.  A cluster type is given by the data of an invertible matrix $R$ and the $a_{i,j} \in \ZZ$, $1 \le i \le 4$, $1 \le j \le n$.  
        A collection of circles in the image of a given cluster type is called an \emph{ordered cluster} of the given type.  An \emph{unordered cluster} is any collection obtained from an ordered cluster by forgetting ordering.
\end{definition}

The previous definition is motivated by the notion of a \emph{Descartes quadruple}, which is the cluster type given by $n=4$,
\[
        R = \begin{pmatrix} 
                1 & -1 & -1 & -1 \\
                -1 & 1 & -1 & -1 \\
                -1 & -1 & 1 & -1 \\
                -1 & -1 & -1 & 1
        \end{pmatrix},
\]
and $a_{i,j} = \delta_{i,j}$.

It is immediate to verify that under these definitions, the M\"obius action corresponds to the geometric action, which justifies its name.  
The algebraic action can also be interpreted as an action of $\Mob$, via the algebraic-geometric correspondence.  This is complicated by the need for a \emph{base cluster} $W_0$ to define the algebraic-geometric correspondence, and by the need to consider ordered clusters as elements of $\Mob$.  The precise statement is as follows, and is verified by direct computation.

\begin{proposition}
        \label{prop:algebraic}
        Fixing an ordered base cluster $B$ given by $W_B \in S_R$, the left- and right-multiplication action of $\Mob$ on $\Mob$ corresponds to the algebraic and geometric actions of $\operatorname{O}_R(\RR)$ and $\OM$, respectively, on $S_R$, in the sense that the following diagram commutes (where all arrows between sets are bijections, all arrows between groups are isomorphisms, and tightly curved arrows represent group actions):

        \begin{center}\includegraphics[height=2in]{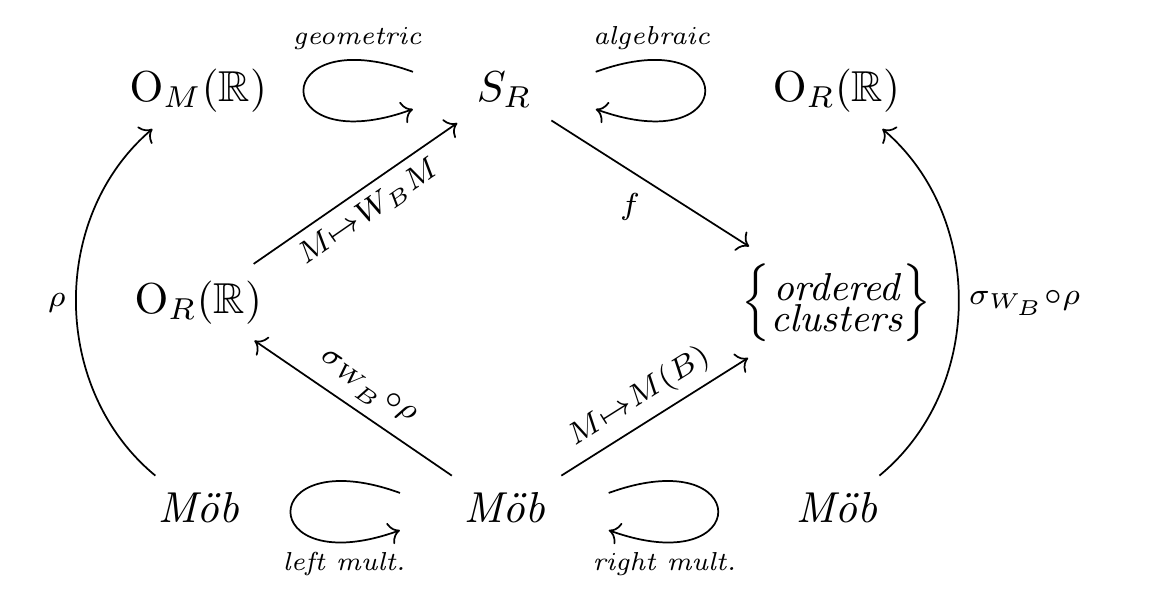}
        \end{center}
\end{proposition}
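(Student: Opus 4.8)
\emph{Setup.} The plan is to use the fixed ordered base cluster $W_B$ to identify $\Mob$ with $S_R$ and then read both correspondences off a single line of algebra. Define
\[
 \Phi_{W_B}\colon \Mob \longrightarrow S_R, \qquad g \longmapsto \rho(g)\,W_B,
\]
where $\rho\colon\Mob\xrightarrow{\cong}\OMp\subseteq\OM$ is the exceptional isomorphism of Section \ref{sec:space}. This lands in $S_R$ because $\OM$ preserves $S_R$ (Proposition \ref{prop:left}), and it is injective, hence a bijection onto its image, since $\rho$ is injective and each $W\in S_R$ is invertible, so $\rho(g)=\Phi_{W_B}(g)W_B^{-1}$ is recovered; concretely, $\Phi_{W_B}^{-1}(W)$ is the M\"obius transformation carrying the base cluster $B$ to the cluster represented by $W$, which is exactly what it means to ``regard ordered clusters as elements of $\Mob$.'' (As usual one passes to $\GM$, equivalently tracks the simultaneous-orientation-reversal element $\pm I\in\OM$, to make this a bijection onto all of $S_R$ rather than onto a single $\OMp$-orbit; this ambiguity is harmless and I suppress it.)

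\emph{The two equivariances.} For $g,h\in\Mob$, functoriality of $\rho$ gives $\Phi_{W_B}(gh)=\rho(g)\rho(h)W_B$, which we read in two ways:
\[
 \Phi_{W_B}(gh)=\rho(g)\bigl(\rho(h)W_B\bigr)=\rho(g)\cdot\Phi_{W_B}(h),
\]
\[
 \Phi_{W_B}(gh)=\bigl(\rho(g)W_B\bigr)\bigl(W_B^{-1}\rho(h)W_B\bigr)=\Phi_{W_B}(g)\cdot\sigma_{W_B}\!\bigl(\rho(h)\bigr),
\]
where $\sigma_{W_B}\colon\OM\to\operatorname{O}_R(\RR)$, $M\mapsto W_B^{-1}MW_B$, is the algebraic-geometric correspondence of Section \ref{sec:spaces}. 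The first identity intertwines one of the regular actions of $\Mob$ on itself with left multiplication by $\rho(g)\in\OM$ on $S_R$, i.e.\ the geometric action; and since $\pi$ is $\Mob$-equivariant via $\rho$ and the cluster-type map $f\colon S_R\to\Circ^{\,n}$ is assembled from $\ZZ$-linear combinations of the columns (which commute with the linear $\OM$-action), left multiplication on $S_R$ is precisely the M\"obius action on the underlying $n$-tuples of oriented circles. The second identity intertwines the other regular action with right multiplication by $\sigma_{W_B}(\rho(h))\in\operatorname{O}_R(\RR)$, i.e.\ the algebraic action, transported to $\Mob$ through the correspondence. (Which of ``left'' and ``right'' on $\Mob$ is matched with which side depends only on the identification convention — replacing $g$ by $g^{-1}$ in $\Phi_{W_B}$ converts a right matrix-multiplication into a left group-action and swaps the two — and is fixed by the orientation of the arrows in the displayed diagram.)

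\emph{Conclusion and main obstacle.} It then remains only to note that every arrow in the diagram is a bijection, respectively an isomorphism: $\rho$ and $\sigma_{W_B}$ by the exceptional isomorphism and Proposition \ref{prop:algtorsor}; $\Phi_{W_B}$ by the computation above; and the transported actions are free and transitive because the regular actions of $\Mob$ on itself are (equivalently, by Propositions \ref{prop:left} and \ref{prop:algtorsor}). A routine chase around each square then gives commutativity. There is no deep point — the statement really is a direct computation — so the genuine work is purely bookkeeping: fixing once and for all the ordering of the $n$ circles and the coefficients $a_{i,j}$ of the cluster type, keeping straight whether a left M\"obius action is implemented by $\rho(g)$, $\rho(g)^{-1}$ or $\rho(g)^{t}$ (the one place a real sign/transpose error could enter), and handling the element $\pm I\in\OM$ (i.e.\ whether to phrase everything with $\Mob$ or with $\GM$, and whether $S_R$ is one $\OMp$-orbit or two) consistently throughout. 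Once the conventions of Sections \ref{sec:space}--\ref{sec:spaces} are pinned down, each commuting square collapses to one of the two displayed identities.
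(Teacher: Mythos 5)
Your argument is correct and is essentially the paper's own: the paper offers no written proof beyond the remark that the proposition ``is verified by direct computation,'' and your identification $g \mapsto \rho(g)W_B$ together with the two readings of $\rho(g)\rho(h)W_B$ is exactly that computation made explicit. You also correctly flag the only real subtleties (the $\pm I$/orientation-reversal bookkeeping needed to hit all of $S_R$, and the left/right convention fixed by the choice of $\Phi_{W_B}$), so nothing is missing.
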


In other words, an element of $\Mob$ can be interpreted as its image on the base cluster, and then the right multiplication action of $\Mob$ on $\Mob$ `is' the algebraic action on clusters.  When the right multiplication of $\Mob$ on $\Mob$ is interpreted in this way, we will simply refer to it as the algebraic action.  A consequence of Proposition \ref{prop:algebraic} is the following.

\begin{proposition}
        \label{prop:leftrightorbit}
        The geometric and algebraic orbits of the base cluster under any subgroup $G < \Mob$ agree.
\end{proposition}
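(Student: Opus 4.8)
The plan is to read this off from the algebraic--geometric correspondence of Proposition \ref{prop:algebraic}; in fact it comes down to a one-line identity. Fix the ordered base cluster $B$ given by $W_B \in S_R$, and recall that $\sigma_{W_B} \colon \OM \to \operatorname{O}_R(\RR)$ is the conjugation $M \mapsto W_B^{-1} M W_B$. For $g \in G$, the geometric action sends $W_B$ to $\rho(g) W_B$ (left multiplication by $\rho(g) \in \OMp \subset \OM$), while the algebraic action of $g$ is, by Proposition \ref{prop:algebraic} and the remark following it, the action of the transported element $\sigma_{W_B}(\rho(g)) \in \operatorname{O}_R(\RR)$, which sends $W_B$ to $W_B \cdot \bigl( W_B^{-1} \rho(g) W_B \bigr) = \rho(g) W_B$. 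Thus each $g \in G$ moves the base cluster to the same point of $S_R$ under either action, and so the geometric and algebraic orbits of $W_B$ under $G$ coincide, both being $\{\, \rho(g) W_B : g \in G \,\}$.

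First I would record this conjugation identity, then package it through the commuting diagram of Proposition \ref{prop:algebraic} for a cleaner statement: that diagram supplies a bijection $\phi \colon \Mob \to S_R$ sending the identity of $\Mob$ to the base cluster $W_B$ and carrying the two regular (one-sided multiplication) actions of $\Mob$ on itself to the algebraic and geometric actions on $S_R$. Under $\phi^{-1}$, the algebraic orbit of $W_B$ under $G$ becomes the orbit of the identity under one-sided multiplication by $G$, namely $G$ itself; and the geometric orbit of $W_B$ under $G$ likewise becomes $G$. Hence both orbits equal $\phi(G)$.

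I do not expect a genuine obstacle here: the only care needed is the bookkeeping --- verifying that the identity of $\Mob$ is the element matched with the base cluster under the bijection of Proposition \ref{prop:algebraic}, and keeping straight which one-sided multiplication action is matched with the geometric action and which with the algebraic one. There is no analytic or combinatorial content; the proposition is a formal consequence of the fact that $S_R$ carries commuting simply transitive left and right actions whose groups are identified by $\sigma_{W_B}$.
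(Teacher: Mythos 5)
Your argument is correct and is exactly the paper's intended route: the paper offers no separate proof, treating the proposition as an immediate consequence of Proposition \ref{prop:algebraic}, and your conjugation identity $W_B\cdot\bigl(W_B^{-1}\rho(g)W_B\bigr)=\rho(g)W_B$ is precisely that consequence written out. Your bookkeeping (identity of $\Mob$ matched to the base cluster, right multiplication matched to the algebraic action) agrees with the paper's clarifying remark following Proposition \ref{prop:algebraic}, so nothing is missing.
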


\section{Apollonian groups, weak and strong, algebraic and geometric}
\label{sec:app}

In this section we define the various flavours of Apollonian groups for a general imaginary quadratic field.  
Recall that $\Pcal_K$ denotes the fundamental packing (Definition \ref{defn:fund}), and let $\overline{\mathcal{P}_K}$ denote its closure in $\widehat{\CC}$.
The \emph{limit set} (or \emph{residual set}) $\Lambda(G)$ of a subgroup $G \subset \Mob$ is the accumulation set of the orbit of the origin.  

\begin{definition}A \emph{weak Apollonian group} for the imaginary quadratic field $K \neq \QQ(\sqrt{-3})$, or \emph{weak $K$-Apollonian group}, is a finitely generated Kleinian group $\Acal_K < \langle \PSL_2(\OK), \mathfrak{c} \rangle  < \Mob$ whose
                         limit set is the closure of the fundamental Apollonian packing for $K$, i.e. $\Lambda(\Acal_K) = \overline{\mathcal{P}_K}$.
\end{definition}

There is no reason to assume that there is a unique weak $K$-Apollonian group for a given field $K$.  It is easy to give examples of weak $K$-Apollonian groups.

\begin{theorem}
        \label{thm:weak}
Let $K$ be an imaginary quadratic field.  Let $\Acal$ be the subgroup of $\Mob$ generated by $\PSL_2(\ZZ)$ and the matrix $V =  \begin{pmatrix} 1 & \tau \\ 0 & -1 \end{pmatrix}$.
        Then $\Acal$ is a weak $K$-Apollonian group.
\end{theorem}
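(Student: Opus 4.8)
The plan is to show that the $\Acal$-orbit of $\widehat{\RR}$, viewed as a set of oriented circles, is exactly the fundamental packing $\Pcal_K$ (Definition \ref{defn:fund}), and then to read off the limit set from this. The routine parts come first: $\PSL_2(\ZZ)$ is finitely generated, so $\Acal$ is; and since $\tau, -1 \in \OK$, the matrix $V$ lies in $\PGL_2(\OK)$, whence $\Acal < \PGL_2(\OK)$, a discrete subgroup of $\Mobplus \cong \PGL_2(\CC)$. Thus $\Acal$ is a finitely generated Kleinian group lying inside the relevant Bianchi-type group (and $V \in \PSL_2(\OK)$ in the case $K = \QQ(i)$). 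It remains to compute the limit set.

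One inclusion of the orbit claim is soft. Every element of $\PGL_2(\OK)$ permutes $\OSK$ and preserves external tangency, disjointness of interiors, and the property of straddling no circle of $\OSK$; hence it acts by automorphisms on the immediate tangency graph, whose connected components are precisely the $K$-Apollonian packings (Proposition \ref{prop:component}). The generators in $\PSL_2(\ZZ)$ fix the oriented circle $\widehat{\RR}$, while $V(\widehat{\RR})$ is, by Proposition \ref{prop:immtang} applied with $M_C = I$ and $x = \infty$, exactly the oriented circle immediately tangent to $\widehat{\RR}$ at $\infty$. So both generators carry $\widehat{\RR}$ into its own component $\Pcal_K$ and therefore, being graph automorphisms, preserve $\Pcal_K$; consequently $\Acal$ preserves $\Pcal_K$ and $\Acal\cdot\widehat{\RR}\subseteq\Pcal_K$.

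The reverse inclusion $\Pcal_K \subseteq \Acal\cdot\widehat{\RR}$ is the heart of the matter, and I would prove it by induction on distance from $\widehat{\RR}$ in the (connected) immediate tangency graph on $\Pcal_K$. Suppose $C = g(\widehat{\RR})$ with $g\in\Acal$ and $C'$ is immediately tangent to $C$; by Proposition \ref{prop:Kintersect} the tangency point $z$ lies in $K$, so $w := g^{-1}(z)$ is a $K$-rational point of $\widehat{\RR}$, i.e.\ $w \in \widehat{\QQ}$. Applying $g^{-1}$, it suffices to realise the unique (Proposition \ref{prop:immtang}) oriented circle immediately tangent to $\widehat{\RR}$ at $w$ as $\gamma(\widehat{\RR})$ with $\gamma \in \Acal$, for then $C' = g\gamma(\widehat{\RR})$. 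Since $\PSL_2(\ZZ)$ is transitive on the cusps $\widehat{\QQ}$, choose $\beta\in\PSL_2(\ZZ)$ with $\beta(\infty) = w$; as a graph automorphism fixing $\widehat{\RR}$ it carries the circle immediately tangent to $\widehat{\RR}$ at $\infty$, namely $V(\widehat{\RR})$, to the one immediately tangent at $w$. Hence $\gamma = \beta V \in \Acal$ works, the induction closes, and $\Acal\cdot\widehat{\RR} = \Pcal_K$.

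Finally, $0 \in \widehat{\RR}$ gives $\Acal\cdot 0 \subseteq \bigcup_{g\in\Acal} g(\widehat{\RR})$, the union of the circles of $\Pcal_K$, so $\Lambda(\Acal) \subseteq \overline{\Pcal_K}$. Conversely $\Lambda(\Acal)$ is closed and $\Acal$-invariant and contains $\Lambda(\PSL_2(\ZZ)) = \widehat{\RR}$ (since $\PSL_2(\ZZ)\cdot 0 = \widehat{\QQ}$ is dense in $\widehat{\RR}$); invariance then yields $g(\widehat{\RR}) \subseteq \Lambda(\Acal)$ for every $g\in\Acal$, hence $\overline{\Pcal_K}\subseteq\Lambda(\Acal)$. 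Therefore $\Lambda(\Acal) = \overline{\Pcal_K}$, and $\Acal$ is a weak $K$-Apollonian group. The one genuinely nontrivial step is the surjectivity $\Acal\cdot\widehat{\RR}\supseteq\Pcal_K$: reducing an arbitrary immediate tangency to one at $\infty$ by a modular transformation and invoking the uniqueness in Proposition \ref{prop:immtang} is precisely what lets the induction propagate through the tangency graph; everything else is either formal (invariance and monotonicity of limit sets) or already recorded in Section \ref{sec:immtang}.
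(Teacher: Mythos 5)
Your proof is correct and follows essentially the same route as the paper: you identify the $\Acal$-orbit of $\widehat{\RR}$ with $\Pcal_K$ via Proposition \ref{prop:immtang} (your induction through the immediate tangency graph simply makes explicit the paper's one-line assertion that the circles immediately tangent to $M(\widehat{\RR})$ are exactly the $M'(\widehat{\RR})$ with $M' \in M\PSL_2(\ZZ)V$), and then deduce $\Lambda(\Acal) = \overline{\Pcal_K}$ exactly as the paper does, from $\PSL_2(\ZZ) < \Acal$ together with the containment of the orbit of $0$ in $\Pcal_K$. One small caveat: for $K = \QQ(i)$ the full group $\PGL_2(\OK)$ does \emph{not} permute $\OSK$ (its orbit of $\widehat{\RR}$ is strictly larger than $\SK$, as recalled in Section 3), but this is harmless here since, as you note, $V \in \PSL_2(\ZZ[i])$ in that case, so every element of $\Acal$ does permute $\OSK$ and your graph-automorphism argument goes through.
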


\begin{proof}
        As a subset of the Bianchi group, $\Acal$ is Kleinian.  Since $\PSL_2(\ZZ)$ is finitely generated, so is $\Acal$.
        We use Proposition \ref{prop:immtang}.  Given a circle $M(\widehat{\RR})$, the immediately tangent circles are exactly those given by $M'(\widehat{\RR})$ where  $M' \in M\PSL_2(\ZZ)V$.  Therefore the orbit of $\widehat{\RR}$ includes all of the fundamental packing.  Since we have $\PSL_2(\ZZ)<\Acal$, all of $\widehat{\QQ}$ is in the orbit of $0$, so that $\widehat{\RR}$ is in the limit set $\Lambda(\Acal)$, and so are all its images, i.e. all of $\Pcal_K$.  On the other hand, the orbit of $0$ lies within $\Pcal_K$.  Therefore $\Lambda(\Acal) = \overline{\mathcal{P}_K}$.
\end{proof}

        The proof illustrates that the fundamental packing is exactly the orbit of $\widehat{\RR}$ under $\Acal$.  All other $K$-Apollonian packings are orbits of left cosets of $\Acal$.

\begin{theorem}
        \label{thm:limit}
        Any $K$-Apollonian packing is of Hausdorff dimension $\delta_K > 1$.
\end{theorem}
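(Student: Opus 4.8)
The plan is to identify $\overline{\mathcal{P}_K}$ as the limit set of a finitely generated non-elementary Kleinian group and then to apply the dichotomy of Bishop and Jones \cite{\BishopJones} for Hausdorff dimensions of such limit sets; the assertion that every packing has the \emph{same} dimension $\delta_K$ is a bookkeeping consequence of the transitivity already established.

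First I would check that $\delta_K := \dim_H \overline{\mathcal{P}_K}$ (measured in the spherical metric on $\widehat{\CC} = S^2$, which agrees locally with the Euclidean one) does not depend on the chosen packing, so that it suffices to treat the fundamental packing $\mathcal{P}_K$. Given two $K$-Apollonian packings, pick a circle of each, oriented as in the packing; by the transitivity of the Bianchi group on oriented $K$-Bianchi circles (Proposition 3.4 of \cite{VisOne}) some $g$ in the Bianchi group carries the first to the second as oriented circles, and since $g$ permutes the $K$-Apollonian packings while each $K$-Bianchi circle, with a prescribed orientation, lies in exactly one of them (Theorem \ref{thm:disjointunion}), $g$ maps the first packing onto the second. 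As a conformal diffeomorphism of the compact manifold $S^2$, $g$ is bi-Lipschitz and hence preserves Hausdorff dimension, so the numbers $\delta_K$ for different packings agree.

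Next, by Theorem \ref{thm:weak} the closure $\overline{\mathcal{P}_K}$ equals the limit set $\Lambda(\Acal)$ of the weak $K$-Apollonian group $\Acal = \langle \PSL_2(\ZZ), V \rangle < \Mob$, which is finitely generated (hence analytically finite, by the Ahlfors finiteness theorem) and non-elementary, since $\Lambda(\Acal) \supseteq \widehat{\RR}$ is infinite. I would then observe that $\Lambda(\Acal)$ is connected --- because $\mathcal{P}_K$ is tangency-connected by definition and the union of a tangency-connected family of circles is path-connected --- and that it properly contains the round circle $\widehat{\RR}$, as $\mathcal{P}_K$ also contains the circle immediately tangent to $\widehat{\RR}$ at $0$ (Proposition \ref{prop:immtang}); thus $\Lambda(\Acal)$ is a connected set which is not a single round circle. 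By the theorem of Bishop and Jones \cite{\BishopJones} --- an analytically finite Kleinian group whose limit set is connected and not a round circle has limit set of Hausdorff dimension strictly greater than $1$ --- we conclude $\delta_K = \dim_H \Lambda(\Acal) > 1$.

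The step I expect to require the most care is the appeal to \cite{\BishopJones}: one must cite the precise statement being used, namely that a finitely generated Kleinian group with connected limit set not a round circle has limit set of dimension exceeding $1$ (which subsumes both the geometrically finite case, where the dimension is the critical exponent, and the geometrically infinite case, where it is $2$). Should a self-contained estimate be preferred, the alternative is a direct lower bound exploiting the self-similar structure of crevices in $\mathcal{P}_K$: near a tangency point of two circles of the packing, repeated application of Proposition \ref{prop:immtang} produces infinitely many circles of $\mathcal{P}_K$ nesting in the crevice, and the curvilinear cells they cut out carry sub-packings at controlled scales, exhibiting inside $\overline{\mathcal{P}_K}$ a Cantor set with enough pieces per unit scale to have Hausdorff dimension $>1$ by Moran's inequality --- in the spirit of the classical estimates for the Apollonian gasket. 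Either route makes the same point: the nested crevice structure of the packing forces genuine fractality.
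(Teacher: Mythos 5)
Your proposal is correct and follows essentially the same route as the paper: reduce to the fundamental packing via the M\"obius action, identify $\overline{\Pcal_K}$ as the limit set of the weak $K$-Apollonian group of Theorem \ref{thm:weak}, and invoke the Bishop--Jones dichotomy for limit sets of finitely generated Kleinian groups (the paper quotes it as ``totally disconnected, a circle, or dimension $>1$,'' while you use the equivalent ``connected and not a round circle'' form, ruling out the circle case because the packing contains circles other than $\widehat{\RR}$). The only quibble is that the transitivity you cite is that of $\PGL_2(\OK)$ (not $\PSL_2(\OK)$ in general) on oriented $K$-Bianchi circles, but this does not affect the argument since any such element is a M\"obius map permuting the packings.
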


\begin{proof}
        It suffices to prove this for the packing $\Pcal_K$, by the M\"obius action.  But $\Pcal_K$ is the limit set of a weak $K$-Apollonian group by Theorem \ref{thm:weak}. 
        For a finitely-generated Kleinian group, the limit set must be one of the following:  totally disconnected, a circle, or of Hausdorff dimension $> 1$ (see, for example, \cite[Corollary 1.8]{\BishopJones} and the citations therein).   However, $\Pcal_K$, since it contains $\widehat{\RR}$ and other circles, is neither totally disconnected, nor a circle.
\end{proof}

Let $X$ be a subgroup of $H(\ZZ)$, where $H$ is a semi-simple Lie group, and let $G=\Zcl(X)$ be its Zariski closure in $H$.  Then $X$ is called \emph{thin} if it is of infinite index in $G(\ZZ)$.  We are interested in the case $H = \operatorname{O}^+_M \cong \Mob$.  We will consider a weak $K$-Apollonian group $\Acal$ to be a subgroup of $\operatorname{O}^+_M(\ZZ) < \operatorname{O}_M(\ZZ)$.

\begin{theorem}
        \label{thm:thin}
        Any weak Apollonian group $\Acal$ for $K$ is thin, and its Zariski closure is either $\operatorname{O}_M$ or $\operatorname{SO}_M$.  
\end{theorem}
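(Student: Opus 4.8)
Identify $\Acal$ with $\rho(\Acal) \le \operatorname{O}_M^+(\ZZ) < \operatorname{O}_M(\ZZ)$ and put $G := \Zcl(\Acal) \le \operatorname{O}_M$. Recall that $\Acal$ is a finitely generated discrete subgroup of $\Mob$ (it lies in the discrete group $\langle \PSL_2(\OK), \mathfrak{c}\rangle$) and that $\Lambda(\Acal) = \overline{\Pcal_K}$. The plan is to show (i) $G^\circ = \operatorname{SO}_M$, whence $G \in \{\operatorname{SO}_M, \operatorname{O}_M\}$ since these are the two connected components of $\operatorname{O}_M$; and (ii) $[G(\ZZ):\Acal] = \infty$.

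\textbf{Step (i): the Zariski closure.} I first record two geometric facts about $\Lambda(\Acal) = \overline{\Pcal_K}$: it is infinite (it contains the circle $\widehat{\RR}$), and it is not contained in any circle of $\widehat{\CC}$ (by Proposition \ref{prop:tangentfamilies}, $\Pcal_K$ contains at least two distinct $K$-Bianchi circles, and two distinct circles, meeting in at most two points, never lie on a common circle). Now suppose $G^\circ \subsetneq \operatorname{SO}_M$. Passing to the finite-index subgroup $\Acal \cap \Mobplus \cap G^\circ(\RR)$ --- which has the same limit set and, since $G/G^\circ$ is finite, is still Zariski-dense in $G^\circ$ --- I may assume $\Acal$ is a discrete subgroup of $\Mobplus \cong \PSL_2(\CC) \cong \mathrm{Isom}^+(\HH^3)$ that is Zariski-dense in a proper connected $\RR$-subgroup. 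Every proper connected Lie subgroup of $\mathrm{Isom}^+(\HH^3)$ is conjugate into one of the three maximal ones: $\operatorname{SO}(3)$, the stabilizer of a point of $\HH^3$; a Borel subgroup, the stabilizer of a point of $\partial\HH^3$; or $\operatorname{SO}(2,1)$, the stabilizer of a totally geodesic $\HH^2$. The first case is impossible since $\Acal$ is infinite and discrete; the second is impossible since a discrete group fixing a point of $\partial\HH^3$ is elementary, with $|\Lambda(\Acal)| \le 2$; and the third is impossible since then $\Acal$ preserves a circle $C_0$, forcing $\overline{\Pcal_K} = \Lambda(\Acal) \subseteq C_0$. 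Hence $G^\circ = \operatorname{SO}_M$.

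\textbf{Step (ii): thinness.} As $\operatorname{SO}_M$ is semisimple, $\operatorname{SO}_M(\ZZ)$ is a lattice in $\operatorname{SO}_M(\RR)$ by Borel--Harish-Chandra, and since $[\operatorname{O}_M:\operatorname{SO}_M]=2$ it follows that $G(\ZZ)$ is a lattice in $G(\RR)$ in either case. If $\Acal$ had finite index in $G(\ZZ)$ it would be a lattice in $G(\RR)$, hence would contain a finite-index subgroup that is a lattice in $\operatorname{SO}_M^+(\RR) \cong \mathrm{Isom}^+(\HH^3)$; every such lattice has limit set all of $\partial\HH^3 = \widehat{\CC}$. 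But $\Lambda(\Acal) = \overline{\Pcal_K} \subsetneq \widehat{\CC}$: by Definition \ref{def:apppack2} the circles of $\Pcal_K$ have pairwise disjoint interiors, and since $K$-Bianchi circles meet only tangently (Proposition \ref{prop:Kintersect}), the nonempty open interior of any bounded disk of $\Pcal_K$ meets no circle of $\Pcal_K$ and hence is disjoint from $\overline{\Pcal_K}$. This contradiction gives $[G(\ZZ):\Acal]=\infty$, so $\Acal$ is thin.

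\textbf{Main obstacle.} The substantive part is Step (i): controlling the Zariski closure of an \emph{arbitrary} weak $K$-Apollonian group. Everything there reduces to the two soft geometric inputs above together with the rank-one subgroup classification, the only technical nuisance being the reduction to a connected, orientation-preserving, Zariski-dense subgroup with the limit set preserved. Given $G^\circ = \operatorname{SO}_M$, thinness is then formal, using only that arithmetic subgroups of $\operatorname{O}(3,1)$ are lattices and that lattices in $\mathrm{Isom}^+(\HH^3)$ have full limit set.
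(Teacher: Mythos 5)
Your argument is correct, and while it shares the paper's overall skeleton for the Zariski closure (classify the possible closures inside $\operatorname{O}_M$ and eliminate all but $\operatorname{SO}_M$ and $\operatorname{O}_M$), both of its key inputs differ from the paper's. For the closure, the paper rules out the intermediate cases (torus/parabolic, stabilizer of a signature $(2,1)$ form) by noting that a finitely generated Kleinian group inside such a subgroup is geometrically finite with limit set of Hausdorff dimension at most $1$, contradicting Theorem \ref{thm:limit} (which rests on Bishop--Jones); you instead use only the soft facts that $\Lambda(\Acal)=\overline{\Pcal_K}$ is infinite and not contained in a circle, together with the classification of elementary Kleinian groups (a discrete group fixing a boundary point has at most two limit points) and the observation that a group preserving a circle has limit set inside that circle -- so your version of this step needs neither Theorem \ref{thm:limit} nor geometric finiteness. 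For thinness, the paper argues directly that $\Acal$ has infinite index in $\operatorname{O}_M(\ZZ)$ because it stabilizes $\overline{\Pcal_K}$ and there are infinitely many $K$-Apollonian packings (strip packings in disjoint horizontal strips), whereas you deduce infinite index from Borel--Harish-Chandra plus the fact that a lattice in $\mathrm{Isom}^+(\HH^3)$ has full limit set, using $\overline{\Pcal_K}\subsetneq\widehat{\CC}$; your bounded-disk justification of this properness is right, modulo noting explicitly that $\Pcal_K$ contains bounded circles and that their packing interiors must be their bounded disks (forced by disjointness from the half-plane interior of $\widehat{\RR}$). So you trade the paper's elementary packing count for reduction theory, and in exchange your Zariski-closure step is more elementary and self-contained. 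The one point to tighten is the appeal to the classification of maximal connected subgroups of $\mathrm{Isom}^+(\HH^3)$: you apply it to $G^{\circ}(\RR)$, which, as the real points of a Zariski-connected group, need not be connected as a Lie group; the standard fix is to note that a proper Zariski-connected subgroup of $\operatorname{SO}_M$ preserves a timelike, isotropic, or spacelike line in $\MM$, so the corresponding fixed interior point, fixed boundary point, or invariant circle is preserved by \emph{all} of $G^{\circ}(\RR)$, and your three-case analysis then applies verbatim.
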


In any given situation, to determine which Zariski closure is obtained, it suffices to check whether the generators of $\Acal$ all lie in $\operatorname{SO}_M$.

\begin{proof}
        First, we show that $\Acal$ is of infinite index in $O_M(\ZZ)$.  For, its index is equal to the number of $K$-Apollonian packings in $\Scal_K$, which is infinite (for example, there is a strip packing in every horizontal strip $k \le (z-\overline{z})i/\sqrt{-\Delta} \le k+1$ for $k \in \ZZ$). 
        On the other hand, $\Acal$ is infinite, as its limit set is infinite.

Let $\Gcal$ be the Zariski closure of $\Acal$.  It is necessarily an algebraic subgroup of $O_M$ defined over $\ZZ$.  Therefore $\Gcal(\RR)$ must be a Lie subgroup of $O_M(\RR)$.  Our proof imitates that in \cite[Lemma 1.6(ii)]{MR2832824}.  The classification of the a priori possibilities for $\Gcal$ are:
\begin{enumerate}
        \item A finite subgroup.
        \item A torus or parabolic subgroup.
        \item A subgroup fixing a form of signature $(1,2)$.
        \item $O_M$.
        \item $SO_M$.
\end{enumerate}

We eliminate the first three possibilities in turn.  First, $\Gcal(\ZZ)$ is not finite as $\Acal$ is not finite.  The second two possibilities are subgroups of dimension $\le 2$.  Any finitely generated Kleinian subgroup in those dimensions is geometrically finite.  Therefore the limit set has Hausdorff dimension at most $1$. 
However, the residual set of $\Acal$ has Hausdorff dimension $>1$, by Theorem \ref{thm:limit}.  Therefore $\Gcal = O_M$ or $SO_M$.  
In either case, as a subgroup of infinite index in $O_M(\ZZ)$, $\Acal$ is of infinite index in $\Gcal(\ZZ)$.  Therefore it is thin.
\end{proof}

We now observe that the methods above also provide a proof that the subgroup $E_2(\OK)$ of $\PSL_2(\OK)$ generated by elementary matrices is thin whenever $\OK$ is non-Euclidean.

\begin{theorem}
        \label{thm:Ethin}
        When $\OK$ is non-Euclidean, the groups $E_2(\OK)$ are thin.
\end{theorem}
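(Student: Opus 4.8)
The plan is to run the proof of Theorem~\ref{thm:thin} essentially verbatim, with $E_2(\OK)$ in place of a weak $K$-Apollonian group. Two things must be checked: that $E_2(\OK)$ is Zariski-dense in the ambient group, so that its Zariski closure is the full group $\operatorname{SO}_M$; and that it is of infinite index in the integer points of that closure. First I would record that $E_2(\OK)$ is a finitely generated Kleinian group: it is a subgroup of the discrete group $\PSL_2(\OK)$, and since $\OK = \ZZ \oplus \ZZ\tau$ it is generated by $\PSL_2(\ZZ)$ together with the single elementary matrix $\begin{pmatrix} 1 & \tau \\ 0 & 1 \end{pmatrix}$ (the lower elementary matrices being $\begin{pmatrix} 0 & -1 \\ 1 & 0 \end{pmatrix}$-conjugate to upper ones). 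In particular $\PSL_2(\ZZ) \le E_2(\OK)$, so $E_2(\OK)$ is infinite and its limit set contains $\widehat{\RR}$.

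For the Zariski closure: the limit set of $E_2(\OK)$ also contains $\begin{pmatrix} 1 & \tau \\ 0 & 1 \end{pmatrix}(\widehat{\RR}) = \widehat{\RR} + \tau$, which is a circle (line) distinct from $\widehat{\RR}$. Hence the limit set is neither totally disconnected nor a single circle, so by the trichotomy for finitely generated Kleinian groups invoked in the proof of Theorem~\ref{thm:limit} it has Hausdorff dimension $> 1$. Feeding this into the classification argument in the proof of Theorem~\ref{thm:thin} rules out every possibility for the Zariski closure except $\operatorname{O}_M$ and $\operatorname{SO}_M$; since every element of $E_2(\OK)$ is orientation-preserving (it lies in $\rho(\Mobplus) \le \operatorname{SO}_M$), the closure is $\operatorname{SO}_M$. (Alternatively, Zariski density can be obtained directly: $E_2(\OK)$ contains both full unipotent subgroups $\left\{\begin{pmatrix} 1 & t \\ 0 & 1 \end{pmatrix}\right\}$ and $\left\{\begin{pmatrix} 1 & 0 \\ t & 1 \end{pmatrix}\right\}$, $t \in \OK$, each Zariski-dense in the corresponding unipotent radical because $\OK$ is a rank-two lattice, and two opposite unipotent radicals generate the connected group $\operatorname{SO}_M$.)

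The remaining, and main, point is infinite index, and here I would prove the clean statement that $[\PSL_2(\OK) : E_2(\OK)]$ equals the number of tangency-components of $\SK$. Since $\operatorname{Stab}_{\PSL_2(\OK)}(\widehat{\RR}) = \PSL_2(\OK) \cap \PSL_2(\RR) = \PSL_2(\ZZ)$ is contained in $E_2(\OK)$, it suffices to show that the tangency-component of $\SK$ containing $\widehat{\RR}$ is exactly the orbit $E_2(\OK) \cdot \widehat{\RR}$; granting this, $\PSL_2(\OK)$ permutes the tangency-components transitively (being transitive on $K$-Bianchi circles), the stabilizer of the component of $\widehat{\RR}$ is precisely $E_2(\OK)$, and the count follows. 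For the orbit-equals-component claim, each generator $g$ of $E_2(\OK)$ sends $\widehat{\RR}$ either to itself (if $g \in \PSL_2(\ZZ)$) or to a $K$-Bianchi circle sharing a point with $\widehat{\RR}$ — namely $\widehat{\RR} + \tau$ through $\infty$, or $\begin{pmatrix} 1 & 0 \\ \tau & 1 \end{pmatrix}(\widehat{\RR})$ through $0$ — which by Proposition~\ref{prop:Kintersect} must be tangent to $\widehat{\RR}$; applying the earlier factors of a word then shows $E_2(\OK) \cdot \widehat{\RR}$ is tangency-connected. Conversely, any $K$-Bianchi circle tangent to $\widehat{\RR}$ meets it at a $K$-rational point of $\widehat{\RR}$, i.e.\ at a point of $\PP^1(\QQ)$; moving this point to $\infty$ by an element of $\PSL_2(\ZZ)$ turns the circle into a horizontal $K$-Bianchi line, necessarily of the form $\widehat{\RR} + \gamma = \begin{pmatrix} 1 & \gamma \\ 0 & 1 \end{pmatrix}(\widehat{\RR})$ with $\gamma \in \OK$, hence in the orbit; induction along a tangency-chain extends this to the whole component. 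Finally, for non-Euclidean $\OK$ (equivalently $\Delta \le -15$), $\SK$ has infinitely many components by \cite[Theorem~7.5]{VisOne}, so $[\PSL_2(\OK) : E_2(\OK)] = \infty$, and therefore $E_2(\OK)$ is of infinite index in $\Zcl(E_2(\OK))(\ZZ) = \operatorname{SO}_M(\ZZ) \supseteq \PSL_2(\OK)$ and is thin. The step I expect to require the most care is this identification of $E_2(\OK) \cdot \widehat{\RR}$ with the tangency-component of $\widehat{\RR}$ — in particular the reverse inclusion, which rests on knowing that the horizontal $K$-Bianchi lines are exactly the $\widehat{\RR} + \gamma$, $\gamma \in \OK$; this is elementary but should be pinned down carefully (it is implicit in \cite{VisOne}). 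Everything else is a transcription of the proof of Theorem~\ref{thm:thin}.
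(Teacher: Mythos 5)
Your proposal is correct and follows essentially the same route as the paper: identify $E_2(\OK)\cdot\widehat{\RR}$ with the tangency-connected component of $\widehat{\RR}$ via the description of tangent circles (Proposition~\ref{prop:tangentfamilies}), invoke the infinitude of components of $\SK$ from \cite{VisOne} to get infinite index in $\PSL_2(\OK)$, and then run the limit-set/Hausdorff-dimension trichotomy and the classification argument of Theorem~\ref{thm:thin}. Your extras (the exact index count via orbit--stabilizer, which is fine since $\operatorname{Stab}_{\PSL_2(\OK)}(\widehat{\RR})=\PSL_2(\ZZ)$ in the non-Euclidean case, and the pinning of the Zariski closure to $\operatorname{SO}_M$) are harmless refinements of the same argument.
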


\begin{proof}
        The group $E_2(\OK)$ is generated by $\PSL_2(\ZZ)$ and the matrix
        \[
                W = \begin{pmatrix}
                        1 & \tau \\
                        0 & 1
                \end{pmatrix}.
        \]

        First we show that $\Lambda(E_2(\OK))$ is the tangency-connected component of $\widehat{\RR}$ in $\SK$.  Consider a circle given by $M(\widehat{\RR})$.  Then the circles $MNW^n(\widehat{\RR})$, where $N \in \PSL_2(\ZZ)$ are exactly those tangent to $\widehat{\RR}$ in $\SK$ (see Proposition \ref{prop:tangentfamilies}).  Therefore the orbit of $\widehat{\RR}$ under $E_2(\OK)$ is the tangency-connected component of $\SK$ containing $\widehat{\RR}$.  Since $\PSL_2(\ZZ) < E_2(\OK)$, all of $\widehat{\RR}$ is in the limit set, so the entire tangency-connected component of $\SK$ is $\Lambda(E_2(\OK))$.
       
        By \cite[Theorem 7.1]{VisOne}, there are infinitely many tangency-connected components in $\SK$.  Hence $E_2(\OK)$ is of infinite index in $\PSL_2(\OK)$. 
        By the same method as Theorem \ref{thm:limit}, this component has Hausdorff dimension exceeding $1$.  The rest of the proof of thinness is as in Theorem \ref{thm:thin}. 
\end{proof}

Next we define \emph{strong} $K$-Apollonian groups, also called simply \emph{$K$-Apollonian groups}.  These are weak Apollonian groups with extra structure captured in their relationship to a certain {cluster type}.  We are motivated by the traditional Apollonian group, which is described by its relationship to Descartes quadruples.
\begin{definition}
        \label{defn:algapp}
        Let $\Acal < \Mob$ and fix a cluster space and corresponding cluster type.  Suppose that $\Acal$ is a weak Apollonian group for $K$ for which the set of unordered clusters in $\Pcal_K$ is a principal homogeneous space for $\Acal$.  Then we say that $\Acal$ is a \emph{strong $K$-Apollonian group with respect to the cluster type} or simply a \emph{$K$-Apollonian group} when no confusion will occur.
        If a base cluster is fixed, then $\Acal$ corresponds under the algebraic-geometric correspondence to a subgroup $\widehat{\Acal}$ of $O_R(\ZZ)$.  In this case, $\widehat{\Acal}$ is called an \emph{algebraic $K$-Apollonian group}, and $\Acal$ is called a \emph{geometric} $K$-Apollonian group when distinction is necessary.
\end{definition}

The group $\widehat{\Acal}$ acts on \emph{ordered} clusters but not on unordered clusters or circles.

\begin{theorem}
        \label{thm:app-union}
        The group $\Acal_{\QQ(i)}$ defined in Section \ref{sec:gaussian} is a geometric $\QQ(i)$-Apollonian group, and $\widehat{\Acal_{\QQ(i)}}$ is an algebraic $\QQ(i)$-Apollonian group.
\end{theorem}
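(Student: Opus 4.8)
The plan is to prove the geometric half first and then read off the algebraic half from the algebraic-geometric correspondence. Everything hinges on one claim: the orbit $P := \Acal_{\QQ(i)}\cdot B$ of the base quadruple $B$ of Figure~\ref{fig:baseqqi} equals, as an oriented collection of circles, the fundamental packing $\Pcal_{\QQ(i)}$. Granting this, the remaining requirements are quick. Each generator in \eqref{eqn:appmob} is $\mathfrak c$ followed by an element of $\PSL_2(\ZZ[i])$ (after rescaling the matrix by a unit to have determinant $1$), so $\Acal_{\QQ(i)} < \langle\PSL_2(\OK),\mathfrak c\rangle$ is discrete, hence Kleinian, and it is finitely generated by construction; and since $\infty$ lies on two circles of $B$ (the two lines, tangent at $\infty$), every point of the $\Acal_{\QQ(i)}$-orbit of $\infty$ is a tangency point of $P = \Pcal_{\QQ(i)}$, so $\Lambda(\Acal_{\QQ(i)}) \subseteq \overline{\Pcal_{\QQ(i)}}$, the reverse inclusion being the classical fact that the limit set of the Apollonian group is the closure of its packing (cf.\ \cite{\ntone, \FuchsBulletin}). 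Hence $\Acal_{\QQ(i)}$ is a weak $\QQ(i)$-Apollonian group.

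To prove $P = \Pcal_{\QQ(i)}$, recall from \cite{\gtone} that $P$ is a classical Apollonian packing containing $\widehat{\RR}$, and from \cite[Theorem 6.3]{\gttwo} that $P \subset \Scal_{\QQ(i)}$. By Propositions~\ref{prop:twodefs} and~\ref{prop:component} it suffices to show $P$ is the connected component of $\widehat{\RR}$ in the immediate tangency graph of $\OSK$; this comes down to (i) any two tangent circles of $P$ being immediately tangent, and (ii) every $K$-rational point of a circle of $P$ being a tangency point with another circle of $P$, so that $P$ is closed under immediate tangency. Both reduce to one local statement: for $K=\QQ(i)$, the circle immediately tangent to a $\QQ(i)$-Bianchi circle $C = M_C(\widehat{\RR})$ at a $K$-rational point --- which by Proposition~\ref{prop:immtang} is the image of $\widehat{\RR}$ under $M_C\bigl(\begin{smallmatrix}1 & i\\ 0 & -1\end{smallmatrix}\bigr)$ --- is one of the two Soddy circles completing a mutually tangent triple of $P$ through that point, and conversely the two Soddy circles of any mutually tangent triple of $\QQ(i)$-Bianchi circles are each immediately tangent to the other three. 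Using the transitivity of $\PSL_2(\ZZ[i])$ on oriented $\QQ(i)$-Bianchi circles and on tangent pairs of such at a common point, this reduces to a finite check at the base quadruple $B$, whose coordinates in $\MM$ are listed in Figure~\ref{fig:baseqqi}: one compares the swap matrices \eqref{eqn:iappgens}, equivalently the Soddy relation $d+d'=2(a+b+c)$, with the immediate-tangency rule of Proposition~\ref{prop:immtang} on the four circles of $B$, controlling curvatures within each tangent $\ZZ$-family by Proposition~\ref{prop:tangentfamilies}.

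For the strong statement, Theorem~\ref{thm:gtoneorbits} gives that the ordered, oriented Descartes configurations lying in $P$ form $48$ orbits of $\widehat{\Acal_{\QQ(i)}}$, each containing exactly one of the $48$ representatives of any given unordered quadruple. Since $\Dcal$ is a principal homogeneous space for $\operatorname{O}_R(\RR) \cong \OM$ (Proposition~\ref{prop:algtorsor}; the action is in particular free, consistent with Theorem~\ref{thm:appfree}), each such orbit is a principal homogeneous space for $\widehat{\Acal_{\QQ(i)}}$; picking one orbit and using that it meets each unordered quadruple of $P$ exactly once, the set of unordered Descartes quadruples contained in $P = \Pcal_{\QQ(i)}$ acquires the structure of a principal homogeneous space for $\widehat{\Acal_{\QQ(i)}}$. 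Transporting along the algebraic-geometric correspondence $\sigma_{W_B}$ attached to the base cluster $W_B \in \Dcal$ and invoking Propositions~\ref{prop:algebraic} and~\ref{prop:leftrightorbit}, this same set is a principal homogeneous space for $\Acal_{\QQ(i)}$ under the geometric (M\"obius) action; by Definition~\ref{defn:algapp} this is precisely that $\Acal_{\QQ(i)}$ is a geometric $\QQ(i)$-Apollonian group with respect to the Descartes cluster type. Finally, by construction the matrices \eqref{eqn:iappgens} and the M\"obius maps \eqref{eqn:appmob} realize the same four swaps of the base cluster $W_B$, so $\sigma_{W_B}$ carries the generators \eqref{eqn:appmob} of $\Acal_{\QQ(i)}$ onto the generators \eqref{eqn:iappgens} of $\widehat{\Acal_{\QQ(i)}}$; thus $\widehat{\Acal_{\QQ(i)}} = \sigma_{W_B}(\Acal_{\QQ(i)})$ is the corresponding algebraic $\QQ(i)$-Apollonian group.

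The crux, and the only place where the arithmetic of $\QQ(i)$ genuinely intervenes, is the identification $P = \Pcal_{\QQ(i)}$ --- matching the classical ``adjoin the two Soddy circles'' rule with closure under immediate tangency inside $\Scal_{\QQ(i)}$, and in particular ruling out that a $\QQ(i)$-Bianchi circle outside $P$ passes through a tangency point of $P$ and is straddled by it. Everything else is assembly of results already established in the excerpt.
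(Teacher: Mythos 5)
Your argument for the strong half is, in substance, the paper's own proof: the paper simply observes that Theorem \ref{thm:gtoneorbits}, transported through Proposition \ref{prop:leftrightorbit} (equivalently the correspondence $\sigma_{W_B}$ of Proposition \ref{prop:algebraic}), gives transitivity of the geometric action of $\Acal_{\QQ(i)}$ on unordered Descartes quadruples together with the absence of elements fixing a quadruple, and then disposes of the weak condition with the remark that the limit set being the strip packing is well known. Where you go beyond the paper is in actually arguing what the paper treats as known: that the orbit $P$ of the base quadruple (the classical strip packing, lying in $\Scal_{\QQ(i)}$ by \cite{\gttwo}) is literally the fundamental packing $\Pcal_{\QQ(i)}$ of Definition \ref{def:apppack}, and that the limit set is $\overline{\Pcal_{\QQ(i)}}$. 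That extra material is welcome, but one step in it fails as stated: $\PSL_2(\ZZ[i])$ is \emph{not} transitive on pairs of Bianchi circles tangent at a common point --- by Proposition \ref{prop:tangentfamilies} the circles tangent to a fixed circle at a fixed $K$-point form a $\ZZ$-family, and the stabilizer of the pair (circle, tangency point) preserves curvature there, so distinct members of the family are inequivalent; moreover a general element of $\PSL_2(\ZZ[i])$ does not preserve $P$, so it cannot be used to reduce a statement about $P$ to a check at the base quadruple.

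The repair is available inside your own toolkit. Immediate tangency is an intrinsic property of a pair of circles of $\OSK$, and by Theorem \ref{thm:gtoneorbits} the group $\Acal_{\QQ(i)}$ (which does preserve $P$, $\Scal_{\QQ(i)}$, tangency and immediate tangency) is transitive on the Descartes quadruples of $P$; since every circle of $P$ and every tangent pair of circles of $P$ lies in some quadruple of $P$, you may move any such configuration, and any pair (circle of $P$, $K$-rational point on it), into the base quadruple, and carry out your local comparison there: on $\widehat{\RR}$ the circles of $P$ tangent at $p/q$ are the Ford-type circles of curvature $2q^2$, which by Propositions \ref{prop:tangentfamilies} and \ref{prop:immtang} are exactly the immediately tangent ones, and the remaining base circles are handled by explicit symmetries of $P$ inside $\langle \PSL_2(\ZZ[i]), \mathfrak{c}\rangle$ (for instance $z \mapsto \overline{z}+i$). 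With that substitution your identification $P=\Pcal_{\QQ(i)}$, and hence the whole proof, goes through, and the rest of your write-up (discreteness, finite generation, the limit-set inclusions via the orbit of $\infty$, and the transport of the principal homogeneous space structure along $\sigma_{W_B}$) is sound.
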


\begin{proof}
        Theorem \ref{thm:gtoneorbits}, via Proposition \ref{prop:leftrightorbit}, tells us that the geometric action of $\Acal_{\QQ(i)}$ is transitive on the set of unordered Descartes quadruples.  Furthermore, since the algebraic action is induced by the action of $\operatorname{O}_R(\RR)$ on $S_R$, we also know from Theorem \ref{thm:gtoneorbits} that there are no automorphisms of a cluster in $\Acal_{\QQ(i)}$'s algebraic action, hence no elements fixing the base cluster in its geometric action.  This proves that the set of unordered quadruples is a principle homogeneous space for $\Acal_{\QQ(i)}$'s geometric action.  That its limit set is the strip packing is well known.
\end{proof}

A general method of proving groups are Apollonian will be developed in Section \ref{sec:suff}.

\section{Topographical Groups}
\label{sec:topographical}

For this section, we concern ourselves with certain special subgroups of $\PGL_2(\ZZ)$.

\begin{definition}
        \label{defn:superbasis}
        A \emph{superbasis} is a triple $(a,b,c)$ of points of $\widehat{\QQ}$ which are pairwise distinct modulo all primes.  
       \end{definition}

The use of the terminology \emph{superbasis} is borrowed from Conway and Fung \cite[The First Lecture]{\Conway}:  for them, $\mathbf{a}= [a_1,a_2], \mathbf{b} = [b_1, b_2], \mathbf{c}=[c_1,c_2] \in \ZZ^2$ form a superbasis if they are primitive vectors (i.e. $\gcd(a_1,a_2)=\gcd(b_1,b_2)=\gcd(c_1,c_2)=1$), and each pair forms a basis for $\ZZ^2$.  It is evident that the two definitions are naturally in bijection, where the vectors $\mathbf{a}, \mathbf{b}, \mathbf{c} \in \PP^1(\ZZ)$ represent the elements $a = a_1/a_2,b = b_1/b_2,c=c_1/c_2 \in \widehat{\QQ}$.  

Conway shows that the graph whose vertices are unordered superbases, where an edge indicates that two superbases share a basis, is a single tree of valence three, shown in Figure \ref{fig:topograph}.  Conway calls this graph the \emph{topograph}.

A matrix of $\PGL_2(\ZZ)$ with columns $\mathbf{a}$ and $\mathbf{b}$ can be interpreted as the superbasis $\mathbf{a}$, $\mathbf{b}$, $\mathbf{a+b}$ as above.  This gives a six-to-one map
\[
        \phi: \PGL_2(\ZZ) \rightarrow \{ \mbox{unordered superbases} \}.
\]
The matrices which are interpreted as the superbasis $0, 1, \infty$, in some order, are
\[
      S :=  \left \{ 
        \begin{pmatrix} 1 & 0 \\ 0 & 1 \end{pmatrix},
        \begin{pmatrix} 0 & 1 \\ 1 & 0 \end{pmatrix},
        \begin{pmatrix} 1 & -1 \\ 0 & -1 \end{pmatrix},
        \begin{pmatrix} 0 & -1 \\ 1 & -1 \end{pmatrix},
        \begin{pmatrix} 1 & -1 \\ 1 & 0 \end{pmatrix},
        \begin{pmatrix} 1 & 0 \\ 1 & -1 \end{pmatrix}
      \right \} < \PGL_2(\ZZ).
\]
The set $S$ is a group isomorphic to $S_3$.  Left cosets of $S$ inside $\PGL_2(\ZZ)$ are in bijection with unordered superbases.  

\begin{definition}\label{defn:topgp}
        A \emph{topographical group} is a subgroup of $\PGL_2(\ZZ)$ for which $\phi$ gives an isomorphism from its Cayley graph under right multiplication (with respect to suitable generators) to Conway's topograph.
\end{definition}

By the Cayley graph under right multiplication, it is meant that the edges $g$ and $gs$ (not $sg$) are joined for each generator $s$.

There are only finitely many topographical groups.  For, suppose $H$ has a Cayley graph isomorphic to Conway's topograph under $\phi$.  In particular, it is generated by three elements of order $2$ which take $\{0,1,\infty\}$ to $\{0, -1, \infty\}$, $\{0,1,1/2\}$ and $\{1,2,\infty\}$, respectively.  There are only finitely many possibilities mapping $\{ 0, 1, \infty \}$ to each of these sets, and most possibilities are not of order $2$.  The remaining elements are:
\begin{gather*}
\gamma_1 = \begin{pmatrix} -1 & 2 \\ -1 & 1 \end{pmatrix}, \quad
\gamma_2 = \begin{pmatrix} 1 & -1 \\ 2 & -1 \end{pmatrix}, \quad
 \gamma_3 = \begin{pmatrix} 0 & 1 \\ -1 & 0 \end{pmatrix}, \\
\rho_1 = \begin{pmatrix} -1 & 0 \\ 0 & 1 \end{pmatrix}, \quad
\rho_2 = \begin{pmatrix} -1 & 2 \\ 0 & 1 \end{pmatrix}, \quad
\rho_3 = \begin{pmatrix} -1 & 0 \\ 2 & 1 \end{pmatrix}.
\end{gather*}
 
Write $\Pi = \PGL_2(\ZZ)$ and $\Gamma = \PSL_2(\ZZ)$.  Write $\Pi(N)$ and $\Gamma(N)$ for their congruence subgroups of level $N$, respectively.

\begin{theorem}\label{thm:topo-equiv}
               The only topographical groups which are normal in $\Pi$ are $G := \Gamma^3 = \langle \gamma_1, \gamma_2, \gamma_3 \rangle$ and $P := \Pi(2) = \langle \rho_1, \rho_2, \rho_3 \rangle$.
\end{theorem}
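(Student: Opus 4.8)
The plan is to build on the discussion preceding the statement, which shows that a topographical group has the form $H=\langle x_1,x_2,x_3\rangle$, where $x_1,x_2,x_3$ are involutions, one for each of the three edges of the topograph incident to the base superbasis $\{0,1,\infty\}$, and each $x_k$ is one of the two listed matrices realizing that edge: its $\gamma$-type realizer, which lies in $\SL_2(\ZZ)$, or its $\rho$-type realizer, which has determinant $-1$. Since determinant is a conjugacy invariant and the two realizers of a fixed edge have opposite determinant, the heart of the proof is a short symmetry argument showing that normality in $\Pi$ forces $x_1,x_2,x_3$ to be pairwise $\Pi$-conjugate, hence all of one type; as the $\gamma$-type (resp.\ $\rho$-type) realizers of the three edges are exactly $\gamma_1,\gamma_2,\gamma_3$ (resp.\ $\rho_1,\rho_2,\rho_3$), this leaves only $G$ and $P$. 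I will then check that $G$ and $P$ really are topographical and normal, identifying them with $\Gamma^3$ and $\Pi(2)$.

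For the forward direction, let $H=\langle x_1,x_2,x_3\rangle$ be topographical and normal in $\Pi$. Recall that $\phi$ sends the identity matrix to $\{0,1,\infty\}$, that $\phi(gM)=g\cdot\phi(M)$, and that $\phi$ restricts to a bijection of $H$ onto the set of superbases which is a graph isomorphism from the Cayley graph of $H$ to the topograph, carrying each $x_k$ to a superbasis adjacent to $\{0,1,\infty\}$. The subgroup $S\cong S_3$ is precisely the stabilizer of the unordered superbasis $\{0,1,\infty\}$, and it permutes the three incident edges as the full symmetric group on three letters (each such edge is labelled by which of $0,1,\infty$ is dropped across it). Given $i,j$, choose $s\in S$ carrying edge $i$ to edge $j$. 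Since $s^{-1}\cdot\{0,1,\infty\}=\{0,1,\infty\}$ we get $\phi(sx_is^{-1})=s\cdot\phi(x_i)$, which is the superbasis adjacent to $\{0,1,\infty\}$ across edge $j$, i.e.\ $\phi(x_j)$; as $H$ is normal, $sx_is^{-1}\in H$, it is an involution, and $\phi|_H$ is injective, so $sx_is^{-1}=x_j$. Hence $\det x_1=\det x_2=\det x_3$, so all three $x_k$ are $\gamma$-type, giving $H=\langle\gamma_1,\gamma_2,\gamma_3\rangle$, or all $\rho$-type, giving $H=\langle\rho_1,\rho_2,\rho_3\rangle$.

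It remains to show $G$ and $P$ are themselves topographical and normal. Put $\Pi(2)=\ker(\Pi\to\PGL_2(\FF_2))$, normal of index $6$; the reduction map $S\to\PGL_2(\FF_2)$ is injective (an element of $S$ reducing to the identity acts trivially on $\{0,1,\infty\}=\PP^1(\FF_2)$, hence equals $I$), so $S\cap\Pi(2)=\{I\}$, and with $[\Pi:\Pi(2)]=6=|S|$ this shows $S$ meets each right coset of $\Pi(2)$ exactly once, so $\phi$ restricts to a bijection $\Pi(2)\to\{\text{superbases}\}$. The $\rho_i$ realize the three edges at $\{0,1,\infty\}$ and generate $\Pi(2)$ (because the topograph is connected), and the induced map from the Cayley graph of $\Pi(2)$ is a graph isomorphism onto the topograph; thus $\Pi(2)$ is topographical and $P=\Pi(2)$. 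The same argument, with $\Gamma^3:=\ker(\Gamma\to\ZZ/3\ZZ)$ in place of $\Pi(2)$, works: $\Gamma^3$ has index $3$ in $\Gamma$ and $6$ in $\Pi$, meets $S$ trivially (the nontrivial elements of $S\cap\Gamma$ have order $3$, hence cannot lie in $\Gamma^3$), contains the $\gamma_i$ (involutions die in $\Gamma^{\mathrm{ab}}\cong\ZZ/6\ZZ$), and is generated by them; so $\Gamma^3$ is topographical and $G=\Gamma^3$. Finally $\Pi(2)\triangleleft\Pi$ as a kernel, and $\Gamma^3\triangleleft\Pi$ since it is the preimage of the unique index-$3$ subgroup of the cyclic group $\Gamma^{\mathrm{ab}}$, hence characteristic in $\Gamma\triangleleft\Pi$.

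The symmetry argument of the second paragraph is the conceptual core and is short. I expect the real work to be the bookkeeping of the third paragraph: checking that $\langle\gamma_1,\gamma_2,\gamma_3\rangle$ and $\langle\rho_1,\rho_2,\rho_3\rangle$ are genuinely all of $\Gamma^3$ and $\Pi(2)$ rather than proper subgroups, which relies on the index computations, the trivial-intersection-with-$S$ facts, and the connectedness of the topograph (so that the three edge-involutions already generate).
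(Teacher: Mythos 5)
Your argument is correct, but it reaches the classification by a genuinely different route than the paper. The paper never uses the six candidate generators in the classification step: it observes that a normal topographical group $H$ has index $6$ in $\Pi$, invokes Newman's classification of normal subgroups of small index in $\PSL_2(\ZZ)$ to restrict $H\cap\Gamma$ to $\Gamma(2)$, $\Gamma'$ or $\Gamma^3$, and then cites the classification of normal subgroups of $\Pi$ of index $6$ containing $\Gamma(12)$ to conclude $H\in\{\Gamma^3,\Pi(2)\}$; the generators enter only afterwards, to identify $\langle\gamma_1,\gamma_2,\gamma_3\rangle=\Gamma^3$ (via $\gamma_1\gamma_2\gamma_3=T^3$ and normality of $\Gamma^3$) and $\langle\rho_1,\rho_2,\rho_3\rangle=\Pi(2)$, with topographicality left as a Cayley-graph computation. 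You instead exploit normality directly: conjugation by the stabilizer $S\cong S_3$ of the base superbasis permutes the three edge realizers, injectivity of $\phi|_H$ forces $sx_is^{-1}=x_j$, and the determinant character makes the generating triple homogeneous of $\gamma$- or $\rho$-type. This avoids the external classification results entirely, at the price of leaning on the preceding discussion's assertion that the generators are three involutions drawn from the listed six (the paper's index-$6$ argument needs no such assertion); since the paper states that beforehand, the reliance is legitimate, and your $S$-conjugation trick would in fact also exclude a non-involutive generator under normality, should one want full self-containment. Your verification that $\Gamma^3$ and $\Pi(2)$ are topographical (trivial intersection with $S$, transversality, generation via connectedness of the topograph) is more explicit than the paper's. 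Two small patches: involutions do not die in $\Gamma^{\mathrm{ab}}\cong\ZZ/6\ZZ$ (they map to its order-$2$ element); what you need is that they die in the quotient $\ZZ/3\ZZ$ defining $\Gamma^3$, which is immediate. And ``order $3$, hence not in $\Gamma^3$'' needs one more clause --- for instance, that order-$3$ elements of $\Gamma$ are conjugate to $(ST)^{\pm1}$ and hence map onto generators of $\ZZ/3\ZZ$, or simply a direct computation of the images of the two $3$-cycles of $S$.
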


Note that these two groups are normal but not characteristic:  the outer automorphism of $\PGL_2(\ZZ)$ maps one to the other (see \cite{MR0460483, Jimm} for more on the outer automorphism).  One is a congruence subgroup of $\PGL_2(\ZZ)$ and the other is not; this is a general phenomenon with regards to the outer automorphism \cite{MR859146}.

\begin{proof}
        Suppose a group $H$ is normal and unordered bases form a principal homogeneous space for $H$ under the M\"obius action.  Then $H$ is normal of index $6$ in $\Pi = \PGL_2(\ZZ)$.  From the classification of normal subgroups of small index in $\Gamma = \PSL_2(\ZZ)$ due to Newman \cite{MR0204375}, we immediately observe that $H \cap \Gamma$ is one of $\Gamma(2)$, $\Gamma'$, or $\Gamma^3$, where $\Gamma'$ represents the commutatator subgroup, and $\Gamma(2)$ the congruence subgroup of level $2$.  The first two of these groups are of index $6$ in $\Gamma$ and the latter is of index $3$.  All of these groups contain $\Gamma(12)$.  But the only normal subgroups of $\Pi$ of index $6$ containing $\Gamma(12)$ are $G = \Gamma^3$ and $P = \Pi(2)$ (see \cite{MR2254377}), whose intersections with $\Gamma$ are exactly $\Gamma^3$ and $\Gamma(2)$, respectively.

        Let
\[
        G_0 = \langle \gamma_1, \gamma_2, \gamma_3 \rangle,
\]
where
\[
\gamma_1 = \begin{pmatrix} -1 & 2 \\ -1 & 1 \end{pmatrix}, \quad
\gamma_2 = \begin{pmatrix} 1 & -1 \\ 2 & -1 \end{pmatrix}, \quad
 \gamma_3 = \begin{pmatrix} 0 & 1 \\ -1 & 0 \end{pmatrix},
\] 
and
\[
        P_0 = \langle \rho_1, \rho_2, \rho_3 \rangle,
\]
where
\[
\rho_1 = \begin{pmatrix} -1 & 0 \\ 0 & 1 \end{pmatrix}, \quad
\rho_2 = \begin{pmatrix} -1 & 2 \\ 0 & 1 \end{pmatrix}, \quad
\rho_3 = \begin{pmatrix} -1 & 0 \\ 2 & 1 \end{pmatrix}.
\]
The generators $\gamma_i$ and $\rho_i$ are each of order $2$.   Next we verify that $G_0 = G$ and $P_0 = P$.
If we set, as usual,
\[
S = \begin{pmatrix} 0 & 1 \\ -1 & 0 \end{pmatrix},
T = \begin{pmatrix} 1 & 1 \\ 0 & 1 \end{pmatrix} \in \PGL_2(\ZZ),
\]
then 
$
\gamma_1 = TST^{-1}$,
$\gamma_2 = T^{-1}ST$, and
 $\gamma_3 = S$.
 Since $\gamma_1\gamma_2\gamma_3 = T^3$, $G = \Gamma^3 < G_0$.  But $\Gamma^3$ is normal, so it contains all conjugates of the element $S$, hence $G_0 < \Gamma^3 = G$.  That the $\rho_i$ generate $P = \Pi(2)$ is immediate. 
 
 Finally, to verify that $G$ and $P$ are topographical is now a simple computation using the Cayley graph on the generators above.
\end{proof}

\begin{figure}
        \includegraphics[height=3in]{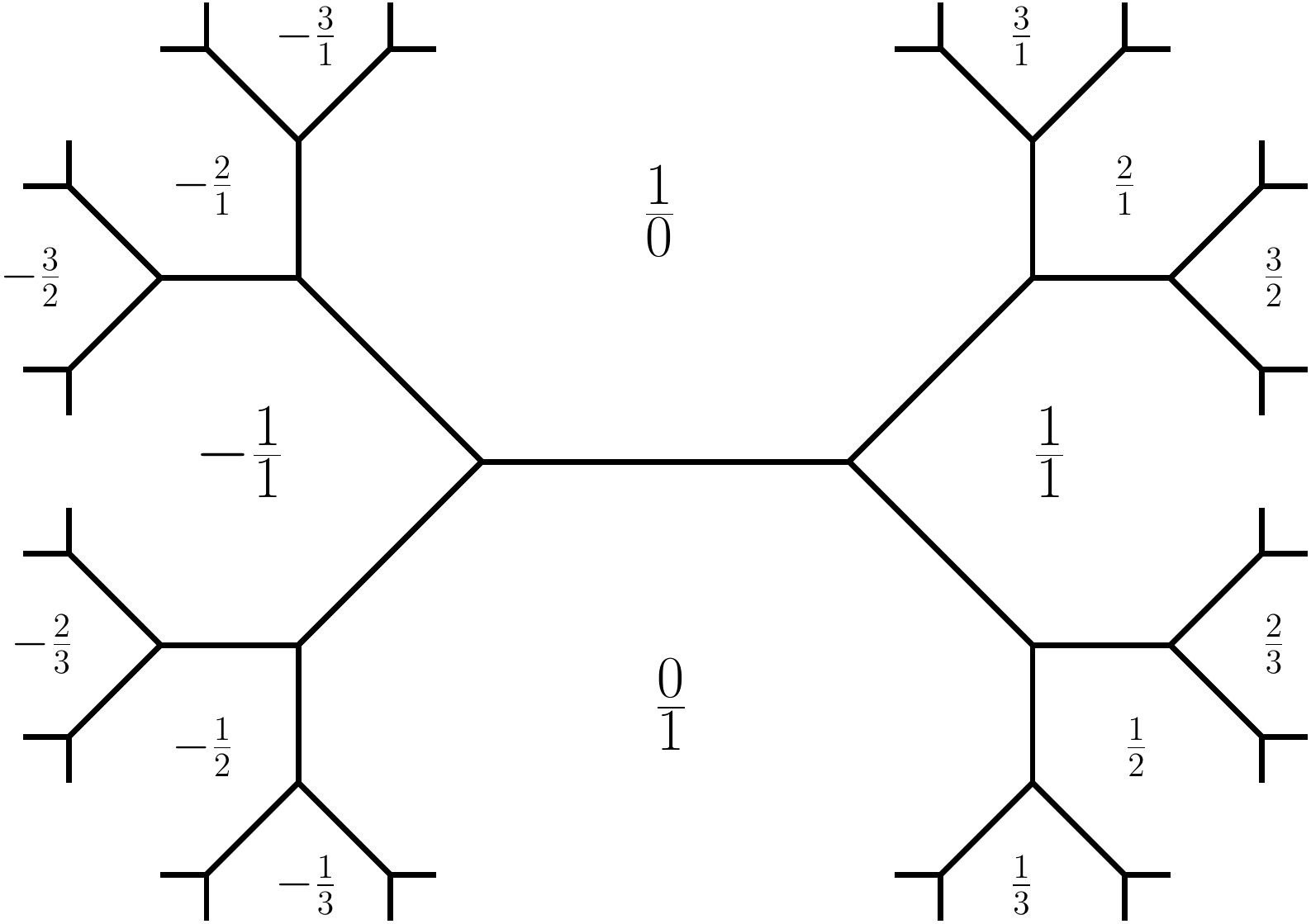} 
        \caption[Conway's Topograph]{Conway's topograph is a valence-three tree embedded in the plane such that it breaks up the plane into many infinite regions, each labelled with an element of $\PP^1(\ZZ)$.  The vertices represent unordered superbases $a,b,c$ where $a,b,c,$ are given by the three regions adjacent to the vertex.  For example, the right-central vertex corresponds to the superbasis $0,1, \infty$.}
\label{fig:topograph}
\end{figure}

We will need the following lemma.  Let 
\[
B = \left\{ \begin{pmatrix} 1 & n \\ 0 & 1 \end{pmatrix} : n \in \ZZ \right\},
\]
which is the stabilizer of $\infty$.

\begin{lemma}
        \label{lemma:gamma-borel}
        \[
                G \cap B = \left\langle \gamma_1\gamma_2\gamma_3 \right\rangle = \left\{ \begin{pmatrix} 1 & 3n \\ 0 & 1 \end{pmatrix} : n \in \ZZ
                \right\} \cong \ZZ.
\]
\end{lemma}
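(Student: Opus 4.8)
The plan is to exploit the structural description of $G$ already in hand: $G=\Gamma^3$ is the unique normal subgroup of index $3$ in $\Gamma=\PSL_2(\ZZ)$, and in the proof of Theorem~\ref{thm:topo-equiv} it was computed that $\gamma_1\gamma_2\gamma_3 = T^3$, where $T=\begin{pmatrix}1&1\\0&1\end{pmatrix}$ is a generator of $B$. Since $B=\langle T\rangle\cong\ZZ$, the whole question reduces to deciding which powers $T^n$ lie in $G$.

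First I would pass to the quotient homomorphism $q\colon \Gamma\to\Gamma/G$, a cyclic group of order $3$. The order of $q(S)$ divides the order of $S=\gamma_3$ in $\Gamma$, which is $2$, and also divides $3$; hence $q(S)$ is trivial, i.e.\ $S\in G$ (consistent with $\gamma_3=S$). Since $\Gamma=\langle S,T\rangle$ and $q$ is surjective, $q(T)$ must generate $\Gamma/G\cong\ZZ/3\ZZ$, so $T^n\in G$ if and only if $3\mid n$. (Equivalently and more directly: $G\cap B$ is a subgroup of the infinite cyclic group $\langle T\rangle$ which contains $T^3=\gamma_1\gamma_2\gamma_3$, so it equals $\langle T^d\rangle$ with $d\in\{1,3\}$; and $d=1$ is impossible, because $T\in G$ together with $S=\gamma_3\in G$ and $\langle S,T\rangle=\Gamma$ would force $G=\Gamma$, contradicting $[\Gamma:G]=3$.)

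This gives $G\cap B=\{T^n:3\mid n\}=\langle T^3\rangle=\langle\gamma_1\gamma_2\gamma_3\rangle$, and writing $T^3=\begin{pmatrix}1&3\\0&1\end{pmatrix}$ identifies this explicitly with $\{\begin{pmatrix}1&3n\\0&1\end{pmatrix}:n\in\ZZ\}$, which is infinite cyclic since $T^3$ has infinite order. I do not expect any real obstacle here; the only point needing care is the verification that $T\notin G$, for which the index-$3$ observation above is exactly what is needed.
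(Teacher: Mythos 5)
Your argument is correct, but it proceeds by a genuinely different route from the paper. The paper proves this lemma geometrically: it uses the identification (from Theorem \ref{thm:topo-equiv}) of the Cayley graph of $G$ under right multiplication with Conway's topograph, interprets the stabilizer of $\infty$ as the elements moving the base superbasis along the ``shoreline'' of the region labelled $\infty$ while keeping $\infty$ in first position, and reads off by explicit computation along that shore that this stabilizer is $\langle \gamma_1\gamma_2\gamma_3\rangle$, finishing with the matrix identity $\gamma_1\gamma_2\gamma_3 = \left(\begin{smallmatrix}1&3\\0&1\end{smallmatrix}\right)$. You instead argue purely algebraically: $B=\langle T\rangle\cong\ZZ$, the subgroup $G\cap B$ contains $T^3=\gamma_1\gamma_2\gamma_3$ and hence is $\langle T\rangle$ or $\langle T^3\rangle$, and the first option is excluded because $S=\gamma_3\in G$ and $T\in G$ would give $G\supseteq\langle S,T\rangle=\Gamma$, contradicting $[\Gamma:\Gamma^3]=3$ (equivalently, your quotient argument: in $\Gamma/G\cong\ZZ/3\ZZ$ the image of $S$ dies, so the image of $T$ generates, and $T^n\in G$ iff $3\mid n$). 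Both proofs lean only on facts already established in the proof of Theorem \ref{thm:topo-equiv} ($\gamma_1\gamma_2\gamma_3=T^3$, $\gamma_3=S$, and the index of $\Gamma^3$ in $\Gamma$), so your version is self-contained and arguably shorter and cleaner for this particular statement; the paper's version has the virtue of staying inside the topograph picture that is reused immediately afterwards (e.g.\ in Lemma \ref{lemma:stab} and Theorem \ref{thm:general-free}), illustrating how stabilizers are read off from regions of the topograph, which is the viewpoint the later Bass--Serre argument exploits.
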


We will use the notation $G_B := G \cap B$.

\begin{proof}
        We use the identification of the Cayley graph of $G$ under right multiplication with the topograph, as in the last theorem.  One can draw the topograph in the plane so that the infinite `regions' between branches are labelled with elements of $\widehat{\QQ}$, and the `shoreline' consists of those superbases and bases containing that element, in the sense of Conway (see \cite[The First Lecture]{MR1478672} for details).  The stabilizer of $\infty$ is exactly those elements of $G$ which map $\infty, 0, -1$ to a superbasis surrounding the region $\infty$, \emph{and keep $\infty$ in first position}.  Travelling along the `shore' of the $\infty$ region by explicit computation, we see that this is exactly the group generated by $\gamma_1 \gamma_2 \gamma_3$.  Finally,
\[
\gamma_1\gamma_2\gamma_3 = \begin{pmatrix} 1 & 3 \\ 0 & 1 \end{pmatrix}.
\]
\end{proof}

The structure of $\Acal_{\QQ(i)}$ and the collection of quadruples has much in common with the structure of topographical groups and the collection of superbases.  In particular, the collection of (unordered) Descartes quadruples can be considered the vertices of a graph, where edges indicate that two quadruples share a subset of three circles.  This graph is a tree of valence four which is the Cayley graph of $\Acal_{\QQ(i)}$ with respect to the four given generators.  Each generator swaps out one circle of a quadruple, changing it into an adjacent quadruple.  For more on this, see \cite{AppPrevious}.  We will imitate this structure for other fields in later sections.

Finally, it is worth remarking that superbases can be interpreted as the tangency points of a triple of intervals covering $\widehat{\RR}$; such intervals can be put in bijection with elements of $\RR^3$ lying on a hypersuface; this develops a story analogous to that of Section \ref{sec:space} with the exceptional isomorphism $\operatorname{O}^+_{2,1}(\RR) \cong \PGL_2(\RR)$ in place of $\operatorname{O}^+_{3,1}(\RR) \cong \PGL_2(\CC)$, and quadratic forms in place of Hermitian forms.  This has pleasing connections to the rational parametrisation of Pythagorean triples.

\section{Sufficient conditions for Apollonian groups}
\label{sec:suff}

In this section we provide a set of sufficient conditions guaranteeing a group is Apollonian.  The main theorem, Theorem \ref{thm:sufficient-clusters}, will be used to verify all the example Apollonian groups in the remainder of the paper.

Let $\Mob_{\widehat{\RR}}$ be the set of M\"obius transformations fixing $\widehat{\RR}$.  Define
\[
        \eta: \Mob_{\widehat{\RR}} \rightarrow \PGL_2(\RR)
\]
to be the restriction of the action of such a M\"obius transformation to $\widehat{\RR}$.  This map is surjective.

We will need the following technical term for the theorem.
\begin{definition}
        The \emph{base prong} for $\SK$ is the collection of four $K$-Bianchi circles consisting of $\widehat{\RR}$ and three circles immediately tangent to $\widehat{\RR}$ at $0$, $1$ and $\infty$.
        A \emph{three-prong} is a collection of four circles that has the same pairwise Pedoe products as the \emph{base prong}.  Equivalently, the \emph{three-prongs} are the clusters in the cluster space containing the base prong.  A three-prong is said to be \emph{centred on} any of its circles which is tangent to the other three.  
                \end{definition}

                In the case of $\QQ(i)$, three-prongs are exactly Descartes quadruples and a prong is centred on all of its circles. In other cases, the central circle is unique. 
                Evidently, the base prong exists and is unique for a particular Schmidt arrangement $\SK$.  The matrix $R$ for which the base prongs correspond to the cluster space $S_R$ is given explicitly in the next section, but is not needed at the moment.

                \begin{lemma}
                        \label{lemma:3prong}
                        Three-prongs centred on $\widehat{\RR}$ are in bijection with superbases.
                \end{lemma}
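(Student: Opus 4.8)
The plan is to build the bijection by hand. Write a three-prong centred on $\widehat{\RR}$ as $(\widehat{\RR},C_1,C_2,C_3)$, ordered as the circles of the base prong (so $C_1,C_2,C_3$ are $K$-Bianchi circles tangent to $\widehat{\RR}$), and send it to the triple $(a_1,a_2,a_3)$ of points of $\widehat{\RR}$ where the $C_i$ touch $\widehat{\RR}$. This is well defined: a three-prong has the pairwise Pedoe products of the base prong, in which $\widehat{\RR}$ is externally tangent to the other three, so $\langle\pi(\widehat{\RR}),\pi(C_i)\rangle=-1$; by Proposition~\ref{prop:pedoeproduct} each $C_i$ is then tangent to $\widehat{\RR}$, which forces the orientations (the $C_i$ have interiors disjoint from that of $\widehat{\RR}$) and, by Proposition~\ref{prop:Kintersect}, puts $a_i\in\widehat{\QQ}$. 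Writing $a_i=p_i/q_i$ in lowest terms (with $\infty=1/0$): the family of $K$-Bianchi circles tangent to $\widehat{\RR}$ at $a_i$ that contains $\widehat{\RR}$ has, by Proposition~\ref{prop:tangentfamilies}(1), reduced curvatures exactly $q_i^2\ZZ$, and $C_i$—being tangent, not orthogonal, to $\widehat{\RR}$—lies in it, so its reduced curvature is $k_iq_i^2$ for a positive integer $k_i$. (When $a_i=\infty$, $C_i$ is a line parallel to $\widehat{\RR}$ at height $-k_i\sqrt{-\Delta}/2$ for a positive integer $k_i$; this case behaves as ``$q_i=0,\ p_i=1$'' in what follows.)

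The crux is the computation of the remaining Pedoe products. Using the law-of-cosines form of Proposition~\ref{prop:pedoeproduct}, one finds, uniformly with the line convention above,
\[
        \langle\pi(C_i),\pi(C_j)\rangle \;=\; 1+\tfrac{\Delta}{2}\,k_ik_j\,(p_iq_j-p_jq_i)^2 .
\]
Evaluating on the base prong (all $k_i=1$, each $|p_iq_j-p_jq_i|=1$) identifies the off-diagonal entry of its Gram matrix among $C_1,C_2,C_3$ as $1+\Delta/2$. Hence requiring $\langle\pi(C_i),\pi(C_j)\rangle=1+\Delta/2$ for all $i\ne j$ is the same as requiring $k_ik_j(p_iq_j-p_jq_i)^2=1$; as these are products of positive integers, this forces $k_i=1$ for every $i$—so each $C_i$ is the \emph{unique} circle immediately tangent to $\widehat{\RR}$ at $a_i$ (Proposition~\ref{prop:immtang})—and $|p_iq_j-p_jq_i|=1$ for all $i\ne j$, i.e.\ $a_1,a_2,a_3$ are pairwise distinct modulo every prime. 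Thus $(a_1,a_2,a_3)$ is a superbasis.

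Bijectivity is then quick. Injectivity: from $(a_1,a_2,a_3)$ one recovers each $C_i$ as the unique circle immediately tangent to $\widehat{\RR}$ at $a_i$. Surjectivity: given a superbasis $(a_1,a_2,a_3)$, take $C_i$ to be that immediately tangent circle, oriented with interior disjoint from $\widehat{\RR}$'s; then $\langle\pi(\widehat{\RR}),\pi(C_i)\rangle=-1$, and since $k_i=1$ and $|p_iq_j-p_jq_i|=1$ (the superbasis condition already forcing $q_i=1$ whenever some $a_j=\infty$, so the line case is consistent), the displayed formula gives $\langle\pi(C_i),\pi(C_j)\rangle=1+\Delta/2$. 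So $(\widehat{\RR},C_1,C_2,C_3)$ realises exactly the Gram matrix of the base prong—it is a three-prong—and $\widehat{\RR}$ is tangent to each $C_i$, hence it is centred on $\widehat{\RR}$ and maps back to $(a_1,a_2,a_3)$. (Quotienting by the $S_3$-action on $C_1,C_2,C_3$ gives the unordered form of the statement.)

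The step I expect to be the real work is the displayed Pedoe identity together with the integrality input from Proposition~\ref{prop:tangentfamilies}: the substance is not that $\langle\pi(C_i),\pi(C_j)\rangle$ is \emph{some} function of the tangency points and the reduced curvatures, but that the equality with the base value $1+\Delta/2$ collapses to the clean Diophantine relation $k_ik_j(p_iq_j-p_jq_i)^2=1$, which is exactly recognisable as the superbasis condition. Once that is in place, assembling the bijection is routine; the only bookkeeping is the uniform treatment of a tangency point at $\infty$, where a line through $\infty$ and its height play the roles of $q_i$ and $q_i^2$.
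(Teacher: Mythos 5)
Your proof is correct and takes essentially the same route as the paper's: both rest on the computation that two circles tangent to $\widehat{\RR}$ at $p_i/q_i$ and $p_j/q_j$ with reduced curvatures $k_iq_i^2$, $k_jq_j^2$ have Pedoe product $1+\tfrac{\Delta}{2}k_ik_j(p_iq_j-p_jq_i)^2$, so that matching the base-prong value $1+\tfrac{\Delta}{2}$ forces $k_i=1$ (immediate tangency) and $|p_iq_j-p_jq_i|=1$ (the superbasis condition). The paper reaches the identical identity via the matrix parametrization of Proposition \ref{prop:tangentfamilies} together with Proposition \ref{prop:curvature}, whereas you work directly with curvature, co-curvature and curvature-centre coordinates and spell out the orientation, the tangency point at $\infty$, and the inverse map that the paper compresses into ``the result follows.''
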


                \begin{proof}
                        Any $K$-Bianchi circle tangent to $\widehat{\RR}$ at $\alpha/\beta$ (in lowest terms) is given by
                        \[
                                \begin{pmatrix}
                                        \alpha & -\gamma + \alpha k\tau \\
                                        \beta & -\delta + \beta k \tau
                                \end{pmatrix}
                        \]
                        where $\alpha\delta-\beta\gamma = 1$, $\alpha,\delta,\beta,\gamma \in \ZZ$, and where $k=1$ if and only if the circle is immediately tangent (Proposition \ref{prop:tangentfamilies}).  Therefore its Pedoe embedding into $\MM$ is given by Proposition \ref{prop:curvature}:
                        \[
                                b = k(\overline{\tau}-\tau)\beta^2i, \quad
                                b' = k(\overline{\tau}-\tau)\alpha^2i, \quad
                                a = i(-(\alpha\delta-\beta\gamma) + k(\overline{\tau}-\tau)\alpha\beta) = i(k(\overline{\tau}-\tau)-1).
                        \]
                        Then the Pedoe product of two $K$-Bianchi circles tangent to $\widehat{\RR}$ at $\alpha_1/\beta_1$ and $\alpha_2/\beta_2$ (in lowest terms), is
                        \[
                                1+\frac{1}{2}k_1k_2(\overline{\tau}-\tau)^2(\alpha_1\beta_2 - \alpha_2\beta_1)^2
                        \]
                        where $k_1, k_2 \in \ZZ$.  Therefore, we obtain
                        \[
                                1+\frac{1}{2}(\overline{\tau}-\tau)^2
                        \]
                        if and only if $\alpha/\beta$ and $\gamma/\delta$ satisfy $\alpha\delta-\beta\gamma=\pm 1$ and the circles are immediately tangent.  The result follows.
                \end{proof}

\begin{theorem}
        \label{thm:sufficient-clusters}
        Let $\Acal < \langle \PSL_2(\OK), \mathfrak{c} \rangle < \Mob$ is a finitely generated Kleinian group.  Suppose that $\Acal$ takes circles in $\Pcal_K$ to circles in $\Pcal_K$.
        Suppose further that there is a cluster type, and a cluster (call it the \emph{base cluster}), for which the following conditions hold:
        \begin{enumerate}[(i)]
                \item \label{item:basecontained} The base cluster is contained in $\Pcal_K$,
                \item \label{item:baseunique} The base cluster is the unique cluster containing the base prong,
                \item \label{item:nearbysuper} In the orbit of the base cluster, there are clusters containing the three base prongs centred on $\widehat{\RR}$ with tangencies at $(0,1/2,1)$, $(1,2,\infty)$ and $(-1,0,\infty)$.
                \item \label{item:tangentstep} There are elements of $\Acal$ taking a cluster with three-prong centred on $\widehat{\RR}$ to a cluster with three-prong centred on the circles immediately tangent to $\widehat{\RR}$ at $0$, $1$ and $\infty$.
                \item \label{item:noauto} No automorphism of the base cluster is contained in $\Acal$,
                \item \label{item:limit} If $\Bcal_1$ and $\Bcal_2$ are two three-prongs centred on the same circle $C$, but otherwise sharing no circles, and if $\Ccal_1$ and $\Ccal_2$ are clusters containing $\Bcal_1$ and $\Bcal_2$, respectively, then $\Ccal_1$ and $\Ccal_2$ have no circles in common besides $C$.
                \item \label{item:connected} The base cluster is tangency connected.
        \end{enumerate}
        Then, $\Acal$ is an Apollonian group for $K$.
\end{theorem}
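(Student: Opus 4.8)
We must establish two things: that $\Acal$ is a weak $K$-Apollonian group, i.e.\ $\Lambda(\Acal)=\overline{\Pcal_K}$, and that the set of unordered clusters of the fixed type contained in $\Pcal_K$ is a principal homogeneous space for the $\Mob$-action of $\Acal$, i.e.\ $\Acal$ acts freely and transitively on it. The plan is to prove transitivity on clusters first; freeness will then fall out of condition~\ref{item:noauto}, and the limit-set equality from transitivity, so the real content is transitivity.

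First I would record the cheap reductions. Granting transitivity on clusters, if $g\in\Acal$ stabilizes an unordered cluster $\Ccal\subset\Pcal_K$ then, choosing $h\in\Acal$ with $h(\Ccal)$ equal to the base cluster, the element $hgh^{-1}\in\Acal$ permutes the circles of the base cluster, hence is an automorphism of it, hence is trivial by condition~\ref{item:noauto}; so $g=1$ and the action is free. For the limit set, transitivity on clusters forces $\Acal\cdot\widehat\RR$ to be \emph{all} the $K$-Bianchi circles of $\Pcal_K$, so $\Acal$ preserves $\Pcal_K$ and is non-elementary (its orbit of circles is too rich to fix a point, a pair of points, or a circle); then $\Lambda(\Acal)$, being the minimal nonempty closed $\Acal$-invariant set, is contained in $\overline{\Pcal_K}$, while choosing for each circle $C$ of $\Pcal_K$ some $h_C\in\Acal$ with $h_C(\widehat\RR)=C$ shows the orbit $\Acal\cdot 0$ meets every circle of $\Pcal_K$, and since arbitrarily small circles of the packing accumulate at every point of $\overline{\Pcal_K}$, this orbit is dense there, whence $\overline{\Pcal_K}\subseteq\Lambda(\Acal)$ --- the same Bishop--Jones-flavoured argument used for Theorems~\ref{thm:limit} and~\ref{thm:weak}.

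The heart is transitivity, which I would split into transitivity on circles and then on clusters. For circles: condition~\ref{item:tangentstep} supplies elements ``hopping'' from $\widehat\RR$ to the $K$-Bianchi circles immediately tangent to it at $0,1,\infty$, and condition~\ref{item:nearbysuper} supplies elements moving the base cluster among the clusters sitting at the topograph-neighbours of the base prong; conjugating the hops by the latter and iterating along the immediate-tangency graph --- every circle of $\Pcal_K$ is joined to $\widehat\RR$ by a finite chain of immediate tangencies, by Proposition~\ref{prop:immtang} and the tangency-connectedness of $\Pcal_K$ --- gives that $\Acal$ is transitive on the circles of $\Pcal_K$, with condition~\ref{item:basecontained} keeping everything inside $\Pcal_K$. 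For clusters: any cluster $\Ccal\subset\Pcal_K$ contains a three-prong (a M\"obius transport of the base prong), which is centred on one of its circles; by circle-transitivity we may conjugate so that this circle is $\widehat\RR$, so it suffices to reach every cluster of the form ``the cluster through a three-prong centred on $\widehat\RR$''. Here condition~\ref{item:baseunique} together with the $\Mob$-transitivity on three-prongs (Proposition~\ref{prop:left} applied to the three-prong cluster space) makes ``the cluster through a given three-prong'' a well-defined $\Mob$-equivariant assignment, and by Lemma~\ref{lemma:3prong} three-prongs centred on $\widehat\RR$ are exactly superbases, which sit on Conway's topograph, a single tree of valence three (Figure~\ref{fig:topograph}). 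Condition~\ref{item:nearbysuper} puts the clusters through the three neighbours of the base prong in the orbit of the base cluster, and propagating this along the tree with the group law yields every cluster through a three-prong centred on $\widehat\RR$; condition~\ref{item:limit} is what guarantees that a cluster through a three-prong on a circle $C$ meets a cluster through a ``disjoint'' three-prong on $C$ only in $C$ itself, so the assignment never folds back on itself and the propagation really reaches every node, while condition~\ref{item:connected} keeps the clusters honest tangency-connected subsets of the packing. Combining the two steps, the orbit of the base cluster is all of the clusters in $\Pcal_K$.

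The step I expect to be the main obstacle is exactly that propagation: turning the finitely many elements furnished by conditions~\ref{item:nearbysuper} and~\ref{item:tangentstep} into a genuinely transitive action, i.e.\ verifying that the orbit of the base cluster --- a topograph at each circle, glued along the immediate-tangency graph --- closes up with no missing clusters and no accidental identifications. The subtleties to watch are the automorphism group of the base cluster permuting the three-prongs it contains (so that a ``move'' element need not respect the topograph structure on the nose) and, in the Euclidean cases, loops in the tangency graph; conditions~\ref{item:baseunique}, \ref{item:noauto}, \ref{item:limit} and~\ref{item:connected} are precisely the inputs that control these. Once that combinatorial picture is secured, the freeness and limit-set assertions are routine.
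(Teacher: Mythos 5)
There is a genuine gap, and it sits exactly where you predicted the difficulty would be, but in a different form: your argument repeatedly invokes transitivity of $\Acal$ on the \emph{circles} of $\Pcal_K$ (``by circle-transitivity we may conjugate so that this circle is $\widehat\RR$'', and in the limit-set step ``choosing for each circle $C$ of $\Pcal_K$ some $h_C\in\Acal$ with $h_C(\widehat\RR)=C$''), and this does not follow from the hypotheses. Condition \eqref{item:tangentstep} only produces elements carrying a \emph{cluster} with three-prong centred on $\widehat\RR$ to a \emph{cluster} with three-prong centred on an immediately tangent circle; it says nothing about which circle of the cluster maps to which (in the $\QQ(\sqrt{-7})$ and $\QQ(\sqrt{-11})$ examples the swaps genuinely permute the belt circles). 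Likewise, knowing that some cluster in the orbit of the base cluster contains a three-prong centred on $C$ only tells you that $C$ is the image of \emph{some} circle of the base cluster, not of $\widehat\RR$; clusters in general contain several circles and several three-prongs (e.g.\ eight in a $\QQ(\sqrt{-2})$-cube), so cluster-coverage of $\Pcal_K$ does not yield $\Acal\cdot\widehat\RR=\Pcal_K$. The paper's proof is structured precisely to avoid this claim: cluster-transitivity is obtained from the uniqueness of the cluster through a given three-prong (conditions \eqref{item:basecontained}, \eqref{item:baseunique}) together with the fact that the orbit contains a cluster with a three-prong centred on every circle, reached by induction along chains of immediate tangency; at no point is circle-transitivity asserted or needed.

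The second symptom of the same problem is that you never use conditions \eqref{item:limit} and \eqref{item:connected} where they actually matter. In the paper they are the engine of the limit-set computation: one takes a sequence of pairwise disjoint superbases approaching $0$; condition \eqref{item:limit} forces the clusters through them to share no circle besides $\widehat\RR$, so (since only finitely many circles have curvature below any bound) their curvatures blow up, and condition \eqref{item:connected} then forces \emph{all} tangency points of these clusters to converge to $0$. Since $0\in\Lambda(\Acal)$ and hence every cluster $g\cdot(\text{base})$ carries the limit point $g(0)$ among its tangency points, M\"obius symmetry gives that every tangency point of $\Pcal_K$ lies in the closed set $\Lambda(\Acal)$, and density of tangency points in each circle (Proposition \ref{prop:immtang}) finishes $\overline{\Pcal_K}\subseteq\Lambda(\Acal)$. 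Your proposal instead assigns \eqref{item:limit} and \eqref{item:connected} a vague role in the topograph propagation (``never folds back on itself''), where they are not needed --- for transitivity it is harmless if the orbit revisits clusters --- and replaces the accumulation argument by the unproven circle-transitivity plus an unproved (though repairable) ``arbitrarily small circles accumulate everywhere'' claim. The freeness step via conjugation to an automorphism of the base cluster and condition \eqref{item:noauto} is fine and agrees with the paper; but as written the transitivity and limit-set steps need to be rebuilt along the lines above rather than through an orbit of $\widehat\RR$.
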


\begin{proof}
        First, we will show that all clusters are in the orbit of the base cluster.  The orbit of the base cluster under the geometric (left multiplication) or algebraic (right multiplication) actions is the same (see Proposition \ref{prop:leftrightorbit}).  This allows us to focus on the algebraic action when needed.
        
        To begin, we will show that any cluster in $\Pcal_K$ which contains a three-prong centred on $\widehat{\RR}$ is in the orbit of the base cluster.  It suffices to observe that
  \begin{enumerate}
          \item \label{item:onecluster} there is exactly one cluster in $\Pcal_K$ containing any given three-prong centred on any fixed circle (this is by assumptions \eqref{item:basecontained} and \eqref{item:baseunique}, and the M\"obius action),
          \item the three-prongs centred on $\widehat{\RR}$ are in bijection with superbases (Lemma \ref{lemma:3prong}), and 
          \item the orbit of the base cluster contains a cluster centred on $\widehat{\RR}$ at any superbasis.  This follow by induction from \eqref{item:nearbysuper}.  More precisely, the topograph is connected, and the three superbases specified are the three superbases adjacent to $(0,1,\infty)$ in the topograph.
  \end{enumerate}

  Let $C$ be the circle immediately tangent to $\widehat{\RR}$ at $0$, $1$ or $\infty$.  By assumptions \eqref{item:tangentstep} and \eqref{item:basecontained}, there is, in the orbit of the base cluster, a cluster containing a three-prong centred on $C$.  But then by the last observation, we see that in the orbit of the base cluster there is a cluster containing a three-prong centred on any circle immediately tangent to $\widehat{\RR}$.

  By induction we see that in fact for any circle $C$ in $\Pcal_K$, since it is obtained from $\widehat{\RR}$ by a finite chain of immediate tangencies, there is in the orbit of the base cluster, a cluster containing a three-prong centred on $C$.  Every cluster contains some three-prong centred on some circle, since the base cluster does by assumption, and that all clusters have the same pairwise Pedoe products.  So by observation \eqref{item:onecluster}, every cluster is in the orbit of the base cluster.  Hence all clusters are in the orbit of any cluster, by the algebraic action.

  That $\Acal$ acts freely and transitively on the clusters of $\Pcal_K$ now follows from assumption \eqref{item:noauto}.

  Now we must show that $\Lambda(\Acal) = \overline{\Pcal_K}$.  
  First, the immediate tangency points of any fixed circle $C \in \Pcal_K$ with other circles of $\Pcal_K$ are dense in $C$, by Proposition \ref{prop:immtang}.
  From this observation, it suffices to show that all tangency points of all clusters lie in $\Lambda(\Acal)$ to conclude that $\Pcal_K \subset \Lambda(\Acal)$.  Since the orbit of $0$ is contained in $\Pcal_K$, it would follow that $\Lambda(\Acal) = \overline{\Pcal_K}$.

  First, $0$ is in $\Lambda(\Acal)$, and therefore by the action of $\Acal$, every cluster contains at least one point of $\Lambda(\Acal)$ among its tangency points.  

  Next we will show that there is a sequence of clusters whose tangency points all approach $0$.  First observe that there is a sequence of disjoint superbases approaching $0$.  To finish the proof, we will show that the tangency points of the clusters containing the superbases also approach $0$.  Consider any two superbases in this sequence which are disjoint; then the clusters they define may not share any circles besides $\widehat{\RR}$ by \eqref{item:limit}.  Since there are only finitely many circles of curvature below any fixed bound, the curvatures in the cluster (besides that of $\widehat{\RR}$) must be approaching $\infty$.  For a tangency connected cluster (item \eqref{item:connected}), this implies the tangency points are all approaching $0$.

        By the observation of the previous paragraph, there is a sequence of clusters whose tangency points all approach $0$.  By M\"obius symmetry, for any tangency point $x$ of $\Pcal_K$, there is a sequence of clusters whose tangency points all approach $x$.  Since every cluster carries a point of $\Lambda(\Acal)$ among its tangency points, this shows that $x \in \Lambda(\Acal)$ and we are done.
\end{proof}

In the following sections we define example Apollonian groups for each imaginary quadratic field.  These definitions are not unique, and the author has endeavoured to find pleasing choices.

\section{$K$-clusters for imaginary quadratic fields}

In this section, we develop a general theory that yields an Apollonian group for any imaginary quadratic $K$ (save $\QQ(\sqrt{-3})$, as usual).  In the Euclidean cases, it is possible to replace this group with one which is freely generated by elements of order two, and this is the purpose of later sections.  We begin, however, taking $K$ generally.

\begin{definition}
        \label{def:WD}
Given a set, $D$, of four oriented circles $C_1, \ldots, C_4$ corresponding to vectors $\bfv_1, \ldots, \bfv_4$ in $\MM$, define the matrix $W_D$ formed of the columns
\begin{equation}
        \label{eqn:cluster0}
        \bfv_1, \bfv_2, \bfv_3, \bfv_4.
\end{equation}
We say that $C_1, \ldots, C_4$ form a \emph{$K$-cluster}, or \emph{$K$-quadruple}, if $W_D$
satisfies the relationship
        $W_D^t G_M W_D = R$,
where the matrix $R = R_0$ is defined to
\[
        R_0 := \begin{pmatrix}
                1 & -1 & -1 & -1 \\
                -1 & 1 & 1+\frac12\Delta & 1+\frac12\Delta \\
                -1 & 1+\frac12\Delta & 1 & 1+\frac12\Delta \\
                -1 & 1+\frac12\Delta & 1+\frac12\Delta & 1\\
        \end{pmatrix}.
\]
\end{definition}

\begin{definition}
In the case that $\Delta \equiv 1 \pmod 4$, it will be convenient to change variables so that $W_D$ is the matrix represented by the columns
\begin{equation}
        \label{eqn:cluster1}
        \bfv_1, \frac{1}{2}(-\bfv_2 + \bfv_3+\bfv_4), \frac{1}{2}(\bfv_2 - \bfv_3 + \bfv_4), \frac{1}{2}(\bfv_2+\bfv_3-\bfv_4).
\end{equation}
Then, $R$ becomes
\[
        R_1 := \begin{pmatrix}
                1 & -1/2 & -1/2 & -1/2 \\
                -1/2 & \frac{1-\Delta}{4} & \frac{1+\Delta}{4} & \frac{1+\Delta}{4} \\
                -1/2 & \frac{1+\Delta}{4} & \frac{1-\Delta}{4} & \frac{1+\Delta}{4} \\
                -1/2 & \frac{1+\Delta}{4} & \frac{1+\Delta}{4} & \frac{1-\Delta}{4} \\
        \end{pmatrix}.
\]
\end{definition}

The change of variables is given by $R_0 = S^t R_1 S$ where
\[
        S = \begin{pmatrix}
                1 & 0 & 0 & 0 \\
                0 & 0 & 1 & 1 \\
                0 & 1 & 0 & 1 \\
                0 & 1 & 1 & 0 \\
        \end{pmatrix}.
\]
The interpretation of the cluster space as a space of clusters of circles is unchanged.
This is done to simplify the generators in the next definition.

We remark that 
\[
        R_0^{-1} = \frac{1}{\Delta}
        \begin{pmatrix}
                \Delta + 3 & 1 & 1 & 1 \\
                1 & -1 & 1 & 1 \\
                1 & 1 & -1 & 1 \\
                1 & 1 & 1 & -1 \\
        \end{pmatrix}\quad \mbox{ and } \quad
        R_1^{-1} = \frac{1}{\Delta}
        \begin{pmatrix}
                \Delta + 3 & 2 & 2 & 2 \\
                2 & 0 & 2 & 2 \\
                2 & 2 & 0 & 2 \\
                2 & 2 & 2 & 0 \\
        \end{pmatrix}.
\]
In particular, that tells us the ``Descartes equation'' for $K$-clusters: the curvatures $a,b,c,d$ of four circles in a $K$-cluster satisfy $2(a^2 + b^2 + c^2 + d^2) - (a+b+c+d)^2 - (\Delta+4)a^2 = 0$.  

\begin{figure}
        \includegraphics[height=3in]{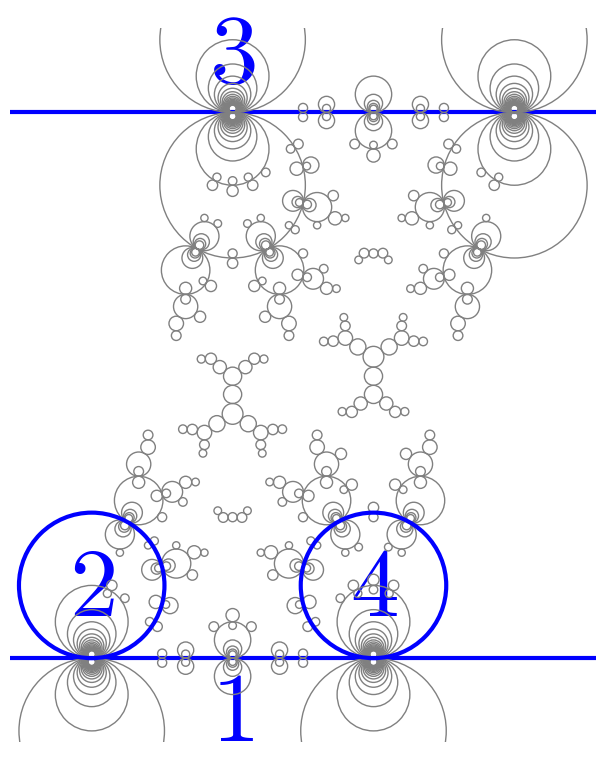} 
        \caption[The base $K$-cluster]{The base $K$-cluster, here shown for $\Scal_{\QQ(\sqrt{-15})}$.} 
\label{fig:kcluster}
\end{figure}

In particular, a $K$-cluster always consists of a three-prong as defined in the last section, and $S_R$ is the cluster space of three-prongs.  In the case $\Delta \neq -4$, these three circles are disjoint from one another, as in Figure \ref{fig:kcluster}.  The definition above reduces to the usual Descartes quadruple (where they are all mutually tangent) when $\Delta = -4$.

\begin{definition}
        \label{def:Ak}
We define the matrix group $\widehat{\Acal_K^0} \subset O_{R}(\RR)$ to be the group generated by the following generators.  For the case $\Delta \equiv 0 \pmod 4$:
\begin{equation}
        \label{eqn:kgens0}
        \begin{pmatrix}
                1 & 2 & 0 & 0 \\
                0 & -1 & 0 & 0 \\
                0 & 2 & 0 & 1 \\
                0 & 2 & 1 & 0
        \end{pmatrix},
        \begin{pmatrix}
                1 & 0 & 2 & 0 \\
                0 & 0 & 2 & 1 \\
                0 & 0 & -1 & 0 \\
                0 & 1 & 2 & 0
        \end{pmatrix},
        \begin{pmatrix}
                1 & 0 & 0 & 2 \\
                0 & 0 & 1 & 2 \\
                0 & 1 & 0 & 2 \\
                0 & 0 & 0 & -1 
        \end{pmatrix},
        \begin{pmatrix}
                0 & 1+\Delta/4 & 1 & 1+\Delta/4 \\
                0 & 1 & 0 & 0 \\
                1 & -1-\Delta/4 & 0 & -1-\Delta/4  \\
                0 & 0 & 0 & 1 
        \end{pmatrix},
\end{equation}
while for the case $\Delta \equiv 1 \pmod 4$:
\begin{equation}
        \label{eqn:kgens1}
        \begin{pmatrix}
                1 & -1 & 1 & 1 \\
                0 & -1 & 2 & 2 \\
                0 & 0 & 0 & 1 \\
                0 & 0 & 1 & 0
        \end{pmatrix},
        \begin{pmatrix}
                1 & 1 & -1 & 1 \\
                0 & 0 & 0 & 1 \\
                0 & 2 & -1 & 2 \\
                0 & 1 & 0 & 0
        \end{pmatrix},
        \begin{pmatrix}
                1 & 1 & 1 & -1 \\
                0 & 0 & 1 & 0 \\
                0 & 1 & 0 & 0 \\
                0 & 2 & 2 & -1 
        \end{pmatrix},
        \begin{pmatrix}
                0 & 1 & \frac{\Delta+3}{4} & 0 \\
                1 & 1 & -\frac{\Delta-1}{4} & -1 \\
                0 & 0 & 1 & 0 \\
                1 & 0 & -\frac{\Delta+3}{4} & 0 
        \end{pmatrix}.
\end{equation}

\end{definition}
We wish to associate to this a group of M\"obius transformations via the algebraic-geometric correspondence.
 Define the \emph{base cluster} $D$ to be given by the images of $\widehat{\RR}$ under
\[
        \begin{pmatrix}
                1 & 0 \\ 0 & 1
        \end{pmatrix}, \quad
        \begin{pmatrix}
                -1 & \tau \\ 0 & 1
        \end{pmatrix}, \quad
        \begin{pmatrix}
                0 & -1 \\ -1 & \tau-1 
        \end{pmatrix}, \quad
        \begin{pmatrix}
                1 & 1 - \tau \\ 1 & -\tau
        \end{pmatrix}.
\]
The associated matrix $W_{{D}}$ to this $K$-cluster is, for $\Delta \equiv 0 \pmod 4$,
\[
        W_{{D}} = W_{{D}}^0 := 
        \begin{pmatrix}
                0 &  0 & \eta & \eta \\
                0 & \eta & 0 & \eta \\
                0 & 0 & 0 & \eta \\
                -1 & 1 & 1 & 1
        \end{pmatrix},
\]
and for $\Delta \equiv 1 \pmod 4$,
\[
        W_{{D}} = W_{{D}}^1 := 
        \begin{pmatrix}
                0 &  \eta & 0 & 0 \\
                0 & 0 & \eta & 0 \\
                0 & \eta/2 & \eta/2 & -\eta/2 \\
                -1 & 1/2 & 1/2 & 1/2
        \end{pmatrix},
\]
where $\eta = i(\overline{\tau}-\tau) =  \sqrt{-\Delta}$.  Note that $W_{{D}}^0 = W_{{D}}^1 S$.

The four generators of $\widehat{\Acal_K^0}$, under the algebraic-geometric correspondence via the base quadruple, become
\begin{equation}
        \label{eqn:sixmob}
        \begin{pmatrix}
                -1 & 2 \\
                -1 & 1 
        \end{pmatrix}, 
        \begin{pmatrix}
                1 & -1 \\
                2 & -1 
        \end{pmatrix}, 
        \begin{pmatrix}
                0 & 1 \\
                -1 & 0 
        \end{pmatrix}, 
        \begin{pmatrix}
                1 & 1-\tau \\
                0 & -1 
        \end{pmatrix}.
\end{equation}
The reader will recognise the first three matrices as $\gamma_1,\gamma_2,\gamma_3$, so that $G < \widehat{\Acal_K^0}$.
The last matrix takes $\widehat{\RR}$ to the circle tangent to $\widehat{\RR}$ at $\infty$.  
The group generated by these M\"obius transformations will be called $\Acal_K^0$.
Note that $\Acal_{\QQ(i)} \neq \Acal_{\QQ(i)}^0$, where $\Acal_{\QQ(i)}$ is as defined in Section \ref{sec:gaussian}.

\begin{theorem}
        \label{thm:general-free}
        Suppose $\Delta \leq -15$.   
        Then $\Acal_K^0$ (or $\widehat{\Acal_K^0}$) has the presentation
        \[
                \langle s_1, s_2, s_3, r : s_1^2= s_2^2= s_3^2 = r^2 = 1, rs_1s_2s_3 = s_3s_2s_1r \rangle.
        \]
        In fact, this presentation may be realised by taking $s_1, s_2, s_3, r$ to be the matrices \eqref{eqn:kgens0} or \eqref{eqn:kgens1}, depending on whether $\Delta \equiv 0$ or $1 \pmod 4$.
\end{theorem}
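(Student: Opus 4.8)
The plan is to recognise the displayed presentation, via an elementary Tietze transformation, as the standard presentation of an amalgamated free product, and then to prove that $\Acal_K^0$ genuinely realises that amalgam by exhibiting an action on a tree with the expected vertex and edge stabilisers, invoking Serre's structure theorem for groups acting on trees. Since $\widehat{\Acal_K^0}$ and $\Acal_K^0$ are isomorphic via the algebraic--geometric correspondence $\sigma_{W_{D}}$ of Proposition~\ref{prop:algtorsor}, it suffices to treat the geometric group $\Acal_K^0 = \langle \gamma_1,\gamma_2,\gamma_3, V\rangle < \Mob$, where I write $s_i = \gamma_i$ and $r = V = \left(\begin{smallmatrix} 1 & 1-\tau \\ 0 & -1 \end{smallmatrix}\right)$ for the four M\"obius generators of \eqref{eqn:sixmob}. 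First I would verify the relations in $\Acal_K^0$: the identities $s_i^2 = 1$ and $r^2 = 1$ are immediate, and since $s_3 s_2 s_1 = (s_1 s_2 s_3)^{-1}$ holds formally in any group generated by involutions, the relation $r s_1 s_2 s_3 = s_3 s_2 s_1 r$ amounts to $V T^3 V = T^{-3}$ with $T = \left(\begin{smallmatrix}1&1\\0&1\end{smallmatrix}\right)$ and $s_1 s_2 s_3 = \gamma_1\gamma_2\gamma_3 = T^3$ (Lemma~\ref{lemma:gamma-borel}), a one-line matrix computation. This produces a surjection from the abstractly presented group $\Gamma^\ast$ onto $\Acal_K^0$. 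Next, introducing the generator $t := s_1 s_2 s_3$ by a Tietze move exhibits $\Gamma^\ast$ as the amalgam $\langle s_1, s_2, s_3\rangle \ast_{\langle t\rangle} \langle t, r\rangle$: the left factor is the topographical group $G = \Gamma^3$, which by Theorem~\ref{thm:topo-equiv} has the valence-three topograph as its Cayley graph with respect to $\gamma_1,\gamma_2,\gamma_3$ and is therefore the free product $\ZZ/2 \ast \ZZ/2 \ast \ZZ/2$; the right factor is infinite dihedral $D_\infty$, since $t = T^3$ has infinite order, $r^2 = 1$ and $rtr = t^{-1}$; and $\langle t\rangle \cong \ZZ$ embeds in both. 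Thus the theorem reduces to the claim that the surjection $\Gamma^\ast = G \ast_{\langle t\rangle} D_\infty \twoheadrightarrow \Acal_K^0$ is an isomorphism.

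For faithfulness I would let $\Acal_K^0$ act on the tree $X$ obtained by subdividing once each edge of the tangency graph $\mathcal{I}$ of the fundamental packing $\Pcal_K$ (equivalently, its immediate tangency graph), whose vertices are the circles of $\Pcal_K$ and whose edges are the immediate tangencies. By Theorem~\ref{thm:forest} --- this is the \emph{only} place the hypothesis $\Delta \le -15$ is used --- the graph $\mathcal{I}$, and hence $X$, is a tree. The action of $\Acal_K^0$ on $X$ is transitive on the circles of $\Pcal_K$ (which form the $\Acal_K^0$-orbit of $\widehat{\RR}$), transitive on immediate tangencies (since $\operatorname{Stab}(\widehat{\RR})$ contains $G$, which is transitive on $\widehat{\QQ}$, and each $K$-point of $\widehat{\RR}$ carries a unique immediate tangency by Proposition~\ref{prop:immtang}), and $V$ flips the edge $\{\widehat{\RR}, V(\widehat{\RR})\}$; so the quotient graph of groups is a single edge. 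It remains to identify stabilisers, for which I would use that $\Acal_K^0$ is a $K$-Apollonian group, i.e.\ acts freely on the set of unordered clusters of $\Pcal_K$ (verified by checking the hypotheses of Theorem~\ref{thm:sufficient-clusters}). Because $G$ acts transitively on the clusters centred on $\widehat{\RR}$ --- these being in bijection with superbases by Lemma~\ref{lemma:3prong}, on which $G$ is transitive --- and because the cluster action is free, any element fixing $\widehat{\RR}$ must agree on the base cluster with some element of $G$, hence lie in $G$; so $\operatorname{Stab}(\widehat{\RR}) = G$. The same freeness, together with Lemma~\ref{lemma:gamma-borel} identifying $\langle t\rangle$ with the subgroup of $G$ that fixes the tangency point $\infty$, shows that the stabiliser of the midpoint of $\{\widehat{\RR}, V(\widehat{\RR})\}$ equals $\langle t, V\rangle = D_\infty$, that $G \cap D_\infty = \langle t\rangle$, and that the edge stabiliser is $\langle t\rangle$. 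Serre's theorem then yields $\Acal_K^0 \cong G \ast_{\langle t\rangle} D_\infty$; transporting along $\sigma_{W_{D}}$ gives the identical presentation for $\widehat{\Acal_K^0}$, realised by the matrices \eqref{eqn:kgens0} or \eqref{eqn:kgens1}.

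The main obstacle, and the place where the hypothesis $\Delta \le -15$ is indispensable, is the acyclicity of $\mathcal{I}$: a potential cycle would either live inside the topograph of a single circle, which is impossible since a topograph is a tree, or project onto a cycle in the immediate tangency graph of $\Pcal_K$, which is excluded precisely by Theorems~\ref{thm:16noloops} and \ref{thm:forest}; for Euclidean discriminants $\mathcal{I}$ does contain loops, $\Acal_K^0$ acquires further relations, and a different (still explicit) presentation is obtained. A secondary, purely bookkeeping matter is to pin the vertex and edge stabilisers down exactly, not merely up to finite index, and to check that no orientation subtlety --- the reflection $\mathfrak{c}$, or the two orientations of $\widehat{\RR}$ --- enlarges $\operatorname{Stab}(\widehat{\RR})$ beyond $G$; this is handled by the freeness of the cluster action together with the observation that an element of $\Acal_K^0 < \PGL_2(\OK)$ fixing $\widehat{\RR}$ lies in $\PGL_2(\OK) \cap \PGL_2(\RR) = \PGL_2(\ZZ)$.
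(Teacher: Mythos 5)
Your overall strategy is the same as the paper's: run Bass--Serre theory on the immediate tangency tree of $\Pcal_K$ (acyclic for $\Delta\le -15$ by Theorem \ref{thm:16noloops}), with vertex group $G$, edge group $G_B=\langle\gamma_1\gamma_2\gamma_3\rangle\cong\ZZ$, and the fourth generator inverting the edge from $\widehat{\RR}$ to its immediate tangent at $\infty$, so that $\Acal_K^0\cong G*_{G_B}\langle G_B,r\rangle$; your edge subdivision is just the standard substitute for Serre's theorem with inversion, and your verification of the relations, the identification $\gamma_1\gamma_2\gamma_3=T^3$, and the recognition of the abstractly presented group as this amalgam are all fine.

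The gap is in the identification of the vertex stabiliser. You deduce $\operatorname{Stab}_{\Acal_K^0}(\widehat{\RR})=G$ from freeness of the action of $\Acal_K^0$ on unordered clusters, which you propose to obtain ``by checking the hypotheses of Theorem \ref{thm:sufficient-clusters}.'' But the only non-routine hypothesis there is \eqref{item:noauto}, that no nontrivial automorphism of the base cluster lies in $\Acal_K^0$, and that is exactly the freeness-at-the-base-cluster you then use; in the paper this hypothesis is verified (in the proof of Theorem \ref{thm:gen-union}) precisely by appealing to Lemma \ref{lemma:stab}, i.e.\ to the very stabiliser statement you are trying to derive. Your closing observation that an element of $\Acal_K^0$ fixing $\widehat{\RR}$ lies in $\PGL_2(\ZZ)$ does not suffice: the whole difficulty is to exclude words in the generators whose restriction to $\widehat{\RR}$ lies in $\PGL_2(\ZZ)\setminus G$ (for instance a nontrivial permutation of the superbasis $0,1,\infty$, which would be an automorphism of the base cluster). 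So as written the argument is circular. What is needed, and what the paper supplies as Lemma \ref{lemma:stab}, is an independent argument: given a word $w$ in $\gamma_1,\gamma_2,\gamma_3,\rho$ lying in $\PGL_2(\RR)$, apply it letter by letter to the base cluster and track the central circles of the successive three-prongs; acyclicity of the immediate tangency tree forces this sequence of circles to backtrack, and an innermost subword $\rho\gamma_{i_1}\cdots\gamma_{i_j}\rho$ fixing a circle can be eliminated because $\rho G\rho\cap\PGL_2(\RR)=G_B\subset G$; induction then gives $w\in G$. Once that lemma is in place (it also pins down the directed-edge and midpoint stabilisers as $G_B$ and $\langle G_B,r\rangle$), the remainder of your proof closes and coincides with the paper's.
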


To prove this, we need a result from the Bass-Serre theory of groups acting on trees (in our case, the tree will be a tangency-connected component of the graph of immediate tangencies of $\mathcal{P}$; see Definition \ref{def:imm-tang-graph}).

        \begin{theorem}[Serre {\cite[Section 1.4]{Trees}}]
                \label{thm:bass-serre}
                Suppose that a group $G$ acts on a tree $X$ with inversion (i.e. there exists $g \in G$ and $e$ an edge of $X$ such that $g \cdot e$ is again $e$, but with orientation reversed).  Suppose the action is transitive on vertices and transitive on edges.  Let $v$ be a vertex of $X$ and $e$ be an adjacent edge.  Let $G_v$ be the stabilizer of $v$, $G_e$ be the stabilizer of $e$, and let $G'$ be the stabilizer of $v$ and $e$ (hence preserving the direction of $e$).  Then $G'$ is of index two in $G_e$ and
                \[
                        G \cong G_v *_{G'} G_e,
                \]
                where $*$ denotes the free product with amalgamation.
        \end{theorem}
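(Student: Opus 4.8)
The plan is to prove the two structural assertions—that $[G_e:G']=2$ and that $G\cong G_v *_{G'} G_e$—by exploiting the defining features of a tree (connectedness and uniqueness of geodesics) together with the transitivity hypotheses. First I would fix the vertex $v$ and the adjacent edge $e$, writing $w$ for the other endpoint of $e$. Because the action admits an inversion and is transitive on edges, I can conjugate an inverting element onto $e$ to obtain $s\in G$ with $s\cdot e=e$ but $s\cdot v=w$; such an $s$ lies in $G_e$ but not in $G'$. Any element of $G_e$ either fixes both endpoints of $e$ (hence lies in $G'$) or swaps them (hence lies in $G's$), so $[G_e:G']=2$ follows at once, with $s$ representing the nontrivial coset. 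The same reasoning identifies $G_v\cap G_e=G'$: an element fixing $v$ and stabilizing $e$ must also fix $w$, hence fixes the oriented edge. I would also record that the hypotheses together make $G$ transitive on \emph{flags} (pairs of a vertex and an incident edge): edge-transitivity moves any edge to $e$, and $s$ repairs the orientation if needed. In particular $G_v$ acts transitively on the edges incident to $v$.

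Next I would show $G=\langle G_v,G_e\rangle$. Since $G_e=G'\sqcup G's$ with $G'\subset G_v$, this subgroup $H:=\langle G_v,G_e\rangle$ equals $\langle G_v,s\rangle$, and it suffices to prove that the $H$-orbit of $v$ is the full vertex set $X^0$. The vertex $w=sv$ lies in $Hv$; and every neighbour $u$ of $v$ is the far endpoint of some edge incident to $v$, which by transitivity of $G_v$ on the edges at $v$ has the form $g\cdot e$ with $g\in G_v\subset H$, whence $u=gsv\in Hv$. Connectedness of the tree propagates this to all vertices by induction on distance from $v$, and vertex-transitivity of $G$ then upgrades $Hv=X^0$ to $H=G$: any $g\in G$ satisfies $gv=hv$ for some $h\in H$, so $h^{-1}g\in G_v\subset H$.

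The heart of the theorem is the \emph{faithfulness} of the amalgam, namely that the canonical surjection $\Phi:G_v *_{G'} G_e\to G$ assembled from the inclusions is injective; this is the step I expect to be the main obstacle. My plan is the standard normal-form argument on the tree, set up cleanly by passing to the barycentric subdivision $X'$ of $X$ (inserting the midpoint $m$ of each edge). On $X'$ the group $G$ acts \emph{without} inversion—the two endpoints of each subdivided edge lie in distinct orbits (original vertices versus midpoints)—with $G_v$ the stabilizer of $v$, $G_e$ the stabilizer of $m$, and $G'$ the stabilizer of the edge $\{v,m\}$. A nontrivial reduced word is an alternating product $g_1g_2\cdots g_n$ whose letters come alternately from $G_v\setminus G'$ and $G_e\setminus G'$; a letter of the first kind fixes $v$ but sends $m$ to a different neighbour, while a letter of the second kind fixes $m$ but sends $v$ to a different neighbour. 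Applying the letters successively therefore traces a path in $X'$ with no backtracking, and since $X'$ is a tree this path is a geodesic whose endpoints differ. Consequently $g_1\cdots g_n$ displaces $v$ or its incident edge and is nontrivial in $G$, so $\ker\Phi$ is trivial. Combining injectivity with the surjectivity established in the generation step yields $G\cong G_v *_{G'} G_e$, completing the proof.
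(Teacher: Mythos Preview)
The paper does not prove this theorem; it is quoted from Serre's \emph{Trees} and used as a black box in the proof of Theorem~\ref{thm:general-free}. Your outline is the standard argument one finds in Serre: conjugate an inverting element onto $e$ to get the index-two statement, use connectedness of the tree together with transitivity of $G_v$ on the star of $v$ to obtain generation, and then pass to the barycentric subdivision so that the action is without inversion and the quotient is a single segment, after which the normal-form/geodesic argument gives injectivity of the map from the amalgam. All of this is correct, and since the paper offers no proof of its own, there is nothing further to compare.
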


        We will also need a lemma.

        \begin{lemma}
                \label{lemma:stab}
                The stabilizer of $\widehat{\RR}$ in $\Acal_K^0$ is $G$.
        \end{lemma}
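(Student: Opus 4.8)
The plan is to prove the two inclusions separately. The inclusion $G\subseteq\operatorname{Stab}_{\Acal_K^0}(\widehat\RR)$ is immediate, since $G=\langle\gamma_1,\gamma_2,\gamma_3\rangle$ consists of matrices of $\PSL_2(\ZZ)\subset\PSL_2(\RR)$, and $\PSL_2(\RR)$ fixes $\widehat\RR$ even as an oriented circle. The real work is the reverse inclusion.

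For that I would first note that $\Acal_K^0$ preserves $\Pcal_K$: each $\gamma_i$ fixes $\widehat\RR$ and, lying in $\PGL_2(\OK)$, preserves $\SK$ and the relation of immediate tangency, hence preserves the tangency-connected component $\Pcal_K$, while $r$ takes $\widehat\RR$ to the circle immediately tangent to it at $\infty$ and so again takes the component of $\widehat\RR$ to itself. Now let $g\in\Acal_K^0$ with $g(\widehat\RR)=\widehat\RR$. Because $g$ preserves $\Pcal_K$, it fixes $\widehat\RR$ together with its $\Pcal_K$-orientation, so $g$ lies in the orientation-preserving stabiliser of $\widehat\RR$ in $\PGL_2(\CC)$, namely $\PSL_2(\RR)$. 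On the other hand $g\in\PGL_2(\OK)$ preserves $\SK$ (note $K\neq\QQ(i)$, as $\Delta\le-15$), hence permutes the tangency points of $\widehat\RR$ with $\SK$, which are exactly the points of $\widehat\QQ$; thus $g$ restricts on $\widehat\RR$ to an element of $\PGL_2(\QQ)$. A matrix representative of $g$ is therefore simultaneously proportional over $\RR$ and over $K$ — hence over $\QQ$ — to an integral matrix, and comparing a primitive integral representative with a primitive $\OK$-representative of unit determinant forces $g\in\PSL_2(\ZZ)$.

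Next I would use that $G$ is a topographical group (Theorem~\ref{thm:topo-equiv}), so that unordered superbases form a principal homogeneous space for the M\"obius action of $G$; by Lemma~\ref{lemma:3prong} these are exactly the three-prongs centred on $\widehat\RR$ lying in $\Pcal_K$, and the base cluster $D$ corresponds to the superbasis $\{0,1,\infty\}$. The cluster $g(D)$ is again a three-prong centred on $g(\widehat\RR)=\widehat\RR$ inside $\Pcal_K$, and such a three-prong is determined by its unordered triple of tangency points, since each of these carries a unique immediately tangent circle of $\Pcal_K$ (Proposition~\ref{prop:immtang}); so there is a unique $h\in G$ with $h(D)=g(D)$. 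Replacing $g$ by $h^{-1}g$, it remains to show that an element $g\in\Acal_K^0\cap\PSL_2(\ZZ)$ fixing $D$ is trivial.

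Such a $g$ fixes $\widehat\RR$ and permutes the three circles of $D$ tangent to $\widehat\RR$, hence permutes their tangency points $0,1,\infty$; being in $\PSL_2(\ZZ)$ it therefore lies in the order-three cyclic group generated by $z\mapsto 1/(1-z)$. So everything comes down to showing that $\Acal_K^0$ contains no nontrivial element fixing $D$ — equivalently, that $z\mapsto 1/(1-z)$ is not in $\Acal_K^0$. I expect this last point to be the main obstacle. It is precisely the assertion that $\Acal_K^0$ acts freely on the clusters of $\Pcal_K$, i.e. that $\Acal_K^0$ is a $K$-Apollonian group in the sense of Definition~\ref{defn:algapp}, and so it follows from the verification that $\Acal_K^0$ satisfies the hypotheses of Theorem~\ref{thm:sufficient-clusters} (in particular hypothesis~\ref{item:noauto}). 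Failing that, one checks directly that the only candidate automorphism — the three-cycle permutation matrix on the columns of a $K$-cluster — is not contained in the matrix group $\widehat{\Acal_K^0}$ generated by \eqref{eqn:kgens0} and \eqref{eqn:kgens1}, for instance by reduction modulo a suitable prime of $\OK$. In either case $g=1$, hence $g=h\in G$, which gives $\operatorname{Stab}_{\Acal_K^0}(\widehat\RR)=G$.
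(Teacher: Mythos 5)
Your opening reductions are fine, and they follow a genuinely different route from the paper's: you show the stabiliser of $\widehat{\RR}$ lies in $\PSL_2(\ZZ)$, then use simple transitivity of the topographical group $G$ on superbases (via Lemma \ref{lemma:3prong}) to reduce the whole lemma to the single claim that $\Acal_K^0$ contains no nontrivial automorphism of the base cluster, i.e.\ neither of the order-three elements of $\PSL_2(\ZZ)$ permuting $0,1,\infty$ cyclically. The problem is that this last claim is precisely the hard core of the lemma, and your proposal does not prove it. Your primary justification --- hypothesis \eqref{item:noauto} of Theorem \ref{thm:sufficient-clusters} as verified for $\Acal_K^0$ --- is circular in the logical structure of the paper: the verification of that hypothesis, in the proof of Theorem \ref{thm:gen-union}, is carried out by appealing to Lemma \ref{lemma:stab} itself (``the automorphisms of the base cluster must stabilize $\widehat{\RR}$; this stabilizer is exactly $G$ by Lemma \ref{lemma:stab}''). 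So that citation cannot be used here.

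Your fallback --- ``check directly'' that the column three-cycle is not in $\widehat{\Acal_K^0}$, for instance by reduction modulo a suitable prime --- is a hope rather than an argument. The group $\widehat{\Acal_K^0}$ is infinite and Zariski-dense (thin), and certifying that a specific element does \emph{not} belong to such a finitely generated group is exactly the difficult part; a congruence obstruction would require exhibiting a modulus at which the reduction of $\widehat{\Acal_K^0}$ misses the reduction of the permutation matrix, and you produce no such modulus (nor is it evident that one exists --- the determinant, for example, does not separate them, since the three-cycle has determinant $1$ and the generators \eqref{eqn:kgens0}, \eqref{eqn:kgens1} generate elements of both determinants). The paper supplies the missing ingredient by an entirely different mechanism: writing an element of the stabiliser as a word in the generators \eqref{eqn:sixmob}, using the fact that for $\Delta\le-15$ the immediate tangency graph is a tree (Theorem \ref{thm:16noloops}) to force the sequence of central circles of the associated three-prongs to backtrack, and then collapsing the word via the explicit computation that $\rho G\rho\cap\PGL_2(\RR)=G_B$ (with $\rho$ the fourth generator of \eqref{eqn:sixmob} and $G_B$ as in Lemma \ref{lemma:gamma-borel}). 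Your outline contains no substitute for this tree/word-reduction step, so as written there is a genuine gap at the decisive point.
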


        \begin{proof}
                Write $\gamma_1, \gamma_2, \gamma_3, \rho$ for the generators \eqref{eqn:sixmob} of $\Acal_K^0$.  It is clear that $G$ is in the stabilizer, since $G = \langle \gamma_1, \gamma_2, \gamma_3 \rangle$.

                To show the opposite containment, let $w$ be a word in the $\gamma_i$ and $\rho$ which lies in $\PGL_2(\RR)$.  Build up the word $w$ character-by-character from the left, applying the algebraic right action to the base cluster, to produce a series of three-prongs.  A character changes the central circle of the three-prong only if it is equal to $\rho$.  When it changes, it changes to an immediately tangent circle.  Since the immediate tangency graph is a tree (Theorem \ref{thm:16noloops}), the sequence of central circles must be of the form
                \[
                        \widehat{\RR}, C_1, C_2, \ldots, C_{n-1}, C_n, C_{n-1}, \ldots, C_2, C_1, \widehat{\RR}.
                \]
                Define the middle substring $s = \rho \gamma_{i_1} \cdots \gamma_{i_j} \rho$ of the word corresponding to the central circles
                \[
                        C_{n-1}, C_n, C_{n-1}.
                \]
                Then this word, acting on the right, fixes $C_{n-1}$ and therefore is an element of $\PGL_2(\RR)$.  
                
                There are two possible cases:
                        $s \in G$
                        or $s \notin G$.
                In the first case, we can replace this part of the word with a string formed of $\gamma_i$'s.  Repeating this process, we either eliminate all $\rho$'s, so that $w \in G$, or we arrive in the second case.  Therefore it suffices to show that whenever $\gamma \in G$ and $\rho\gamma \rho \in \PGL_2(\RR)$, then $\rho \gamma \rho \in G$.

                Let $G' = \rho G \rho$.  Then it is a direct computation that $G' \cap \PGL_2(\RR) = G_B$, and we are done.
        \end{proof}

        \begin{proof}[Proof of Theorem \ref{thm:general-free}]
                The group $\widehat{\Acal_K^0}$ is isomorphic to the group ${\Acal_K^0}$ of M\"obius transformations given by generators \eqref{eqn:sixmob}.  As M\"obius transformations act on circles, preserving tangencies, and ${\Acal_K^0}$ takes a $K$-Apollonian packing $\mathcal{P}$ back to itself, it acts on the (immediate) tangency tree of $\mathcal{P}$ (it is a tree by Theorem \ref{thm:16noloops}).  The fourth generator of \eqref{eqn:sixmob}, call it $r$,
                reverses the edge between circle $\widehat{\RR}$ and $\widehat{\RR} + \tau$.  Note that the topographical group $G$ is a subgroup of ${\Acal_K^0}$.

                The orbit of $r G$ acting on $\widehat{\RR}$ is the collection of $K$-Bianchi circles immediately tangent to $\widehat{\RR}$.  Thus ${\Acal_K^0}$ maps $\widehat{\RR}$ to all immediately tangent circles.  Therefore the action is transitive on vertices.
               The orbit of $\infty$ under $G$ is $\widehat{\QQ}$.  This implies that the action is transitive on edges (which correspond to tangency points). 

               The stabilizer of $\infty$ is $\langle G_B, r \rangle$ (see Lemma \ref{lemma:gamma-borel} for the definition of $G_B$).  Since conjugation by $r$ acts as the non-trivial automorphism of $G_B \cong \ZZ$ (a simple direct computation), we have that this stabilizer is isomorphic to the non-trivial semi-direct product $\ZZ \rtimes (\ZZ/2\ZZ)$.
               The stabilizer of $\widehat{\RR}$ is $G$ by Lemma \ref{lemma:stab}.
               The stabilizer of the directed edge $\widehat{\RR} \xrightarrow{\infty} (\widehat{\RR} + \tau)$ is $G_B$.

               With these data, Theorem \ref{thm:bass-serre} describes the structure of $\Acal_K^0$ as
       $G *_{G_B} \langle G_B, r \rangle$.
       Recalling that $G$ has presentation $\langle \gamma_1, \gamma_2, \gamma_3 \rangle$ and with reference to Lemma \ref{lemma:gamma-borel}, the presentation is now a direct computation.
        \end{proof}

\begin{theorem}
        \label{thm:gen-union}
        Suppose $\Delta \le -15$.  Then the group $\Acal_K^0$ is a geometric Apollonian group for $K$, and $\widehat{\Acal_K^0}$ is an algebraic Apollonian group for $K$.
\end{theorem}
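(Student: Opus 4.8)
The plan is to check that $\Acal_K^0$ satisfies the seven hypotheses of Theorem~\ref{thm:sufficient-clusters}, for the $K$-cluster type and base cluster of Definition~\ref{def:Ak}. Granting that, $\Acal_K^0$ is a geometric Apollonian group for $K$, and the statement for $\widehat{\Acal_K^0}$ is then immediate from Definition~\ref{defn:algapp}: by construction $\widehat{\Acal_K^0}$ is the image of $\Acal_K^0$ under the algebraic-geometric correspondence attached to $W_{D}$, its generators \eqref{eqn:kgens0}, resp.\ \eqref{eqn:kgens1}, being exactly the images of the M\"obius generators \eqref{eqn:sixmob}. As preliminary input, $\Acal_K^0$ is finitely generated and Kleinian (four generators inside the Bianchi group), it lies in $\langle \PSL_2(\OK), \mathfrak{c}\rangle$, and it maps $\Pcal_K$ into $\Pcal_K$: the first three generators lie in $\PSL_2(\ZZ) < \operatorname{Stab}(\widehat{\RR})$, while the fourth sends $\widehat{\RR}$ to a circle immediately tangent to it; since immediate tangency is $\PGL_2(\OK)$-equivariant, all four generators permute the components of the immediate-tangency graph and fix the component $\Pcal_K$ (this is already recorded in the proof of Theorem~\ref{thm:general-free}).

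I would then treat the seven conditions in order. Conditions (i) and (vii) follow from a direct inspection of the four matrices in Definition~\ref{def:Ak}: the base cluster consists of $\widehat{\RR}$ together with the three circles immediately tangent to $\widehat{\RR}$ at $0$, $1$, $\infty$, so it is precisely the base prong, it lies in $\Pcal_K$, and its tangency graph is a star, hence connected. For condition (ii), a cluster of this type is a set of exactly four circles, and $W_{D}$ --- together with the change-of-variables matrix $S$ --- is invertible, so the base prong, with its orientations, determines $W_{D}$ up to column permutation, hence the unordered cluster uniquely. Conditions (iii) and (iv) are where the explicit generators do their work: the first three generators generate the topographical group $G$, which by Lemma~\ref{lemma:3prong} acts on three-prongs centred on $\widehat{\RR}$ exactly as it acts on superbases, and $(0,\tfrac12,1)$, $(1,2,\infty)$, $(-1,0,\infty)$ are the three superbases adjacent to $(0,1,\infty)$ in the topograph, so $\gamma_1,\gamma_2,\gamma_3$ carry the base cluster to clusters realizing (iii); meanwhile $\Acal_K^0$ is transitive on the circles of $\Pcal_K$ (as in the proof of Theorem~\ref{thm:general-free}, since the orbit $G\,r(\widehat{\RR})$ exhausts the circles immediately tangent to $\widehat{\RR}$, and one then induces along chains of immediate tangencies), so conjugates of $r$ by suitable elements of $G$ provide elements moving a cluster centred on $\widehat{\RR}$ to one centred on the circle immediately tangent to $\widehat{\RR}$ at each of $0$, $1$, $\infty$, which is (iv).

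The remaining two conditions are where the hypothesis $\Delta \le -15$ enters. Condition (vi) is vacuous here: since $\Delta \ne -4$, every cluster of this type is a three-prong with a \emph{unique} central circle, so in the notation of that condition $\Ccal_i = \Bcal_i$ and the conclusion is simply the hypothesis. For condition (v), suppose $g \in \Acal_K^0$ stabilizes the base cluster as a collection of oriented circles; since $\widehat{\RR}$ is its unique central circle, $g$ fixes $\widehat{\RR}$, hence $g \in \operatorname{Stab}_{\Acal_K^0}(\widehat{\RR}) = G$ by Lemma~\ref{lemma:stab}; but then $g$ permutes the three outer circles and so fixes the unordered superbasis $\{0,1,\infty\}$ of their tangency points with $\widehat{\RR}$, and $G$, being topographical, acts \emph{freely} on unordered superbases, forcing $g = 1$. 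With all seven conditions verified, Theorem~\ref{thm:sufficient-clusters} gives that $\Acal_K^0$ is a geometric Apollonian group for $K$.

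I expect the main obstacle to be the bookkeeping in conditions (iii) and (iv) --- confirming that the concrete generators \eqref{eqn:sixmob} effect the stated moves on clusters and reach every three-prong centred on $\widehat{\RR}$ --- but the transitivity facts established while proving Theorem~\ref{thm:general-free}, together with the topographical description of $G$ and its free action on superbases used for (v), reduce this to routine verification. The other steps --- (i), (ii), (vi), (vii) and the Kleinian/ambient-group checks --- are short, so (v) is the only genuinely new point, and it is handled as above.
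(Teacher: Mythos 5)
Your proposal is correct and follows essentially the same route as the paper: verify the hypotheses of Theorem~\ref{thm:sufficient-clusters} for the $K$-cluster type and base cluster of Definition~\ref{def:Ak}, using the topographical group $G$ for condition (iii), the fourth generator $r$ and its $G$-conjugates for (iv), and Lemma~\ref{lemma:stab} together with the absence of superbasis automorphisms in $G$ for (v). The only difference is that you spell out the items the paper calls ``immediate'' ((i), (ii), (vi), (vii)), which is consistent with the paper's argument.
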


\begin{proof}
        This proof uses Theorem \ref{thm:sufficient-clusters}.  That $\Acal_K^0$ is a finitely generated Kleinian group is evident.  Since its generators take $\widehat{\RR}$ to itself or immediately tangent circles, it takes elements of $\Pcal_K$ to $\Pcal_K$.  It now suffices to verify some details about the base cluster and $\Acal_K^0$.  Items \eqref{item:basecontained} and \eqref{item:baseunique} are immediate.  
        
        It was noted above that $\Acal_K^0$ contains the topographical group $G$, which implies item \eqref{item:nearbysuper}.  
        
        For item \eqref{item:tangentstep}, it suffices to exhibit such elements: the last element of \eqref{eqn:sixmob}, together with its conjugations by $\gamma_1$ and $\gamma_3$, will suffice.
        
        To verify \eqref{item:noauto}, we must appeal to the tangency tree.  In particular, the automorphisms of the base cluster must stabilize $\widehat{\RR}$; this stabilizer is exactly $G$ by Lemma \ref{lemma:stab}.  But $G$ contains no automorphisms of superbases, so there are no automorphisms of the base cluster in $\Acal_K^0$.

        Finally, items \eqref{item:limit} and \eqref{item:connected} are immediate.
\end{proof}

\section{Cubes and cubicles in $K = \QQ(\sqrt{-2})$}

\newcommand{\Qtwo}{\QQ(\sqrt{-2})}

In the case of $K = \QQ(\sqrt{-2})$, there is a very pretty Apollonian group (an alternative to the general one in the last section), and we will elaborate on it somewhat here.

In the graph of tangencies, we find cubes, as Figure \ref{fig:basecube}.
A cube is defined to be eight circles in the following arrangement specified by the Pedoe products. Here $W_C$ is a $4 \times 8$ matrix which has as columns the eight $\pi(C_i)$:
\begin{equation}
        \label{eqn:qtwomat}
        W_C^t G_M W_C = \begin{pmatrix}
                1 & -1 & -3 & -1 & -5 & -3 & -1 & -3\\
               -1 & 1 & -1 & -3 & -3 & -5 & -3 & -1\\
               -3 & -1 & 1 & -1 & -1 & -3 & -5 & -3\\
               -1 & -3 & -1 & 1 & -3 & -1 & -3 & -5\\
               -5 & -3 & -1 & -3 & 1 & -1 & -3 & -1\\
               -3 & -5 & -3 & -1 & -1 & 1 & -1 & -3\\
               -1 & -3 & -5 & -3 & -3 & -1 & 1 & -1\\
               -3 & -1 & -3 & -5 & -1 & -3 & -1 & 1\\
        \end{pmatrix}
\end{equation}
This matrix represents the following arrangement of tangencies:
\[
        \xymatrix@R=1em@C=1em{
                & \bfv_1 \ar@{-}[rr] \ar@{-}[dd] \ar@{-}[ld] & & \bfv_2 \ar@{-}[dd]\ar@{-}[ld] \\
   \bfv_4 \ar@{-}[rr] \ar@{-}[dd] & & \bfv_3 \ar@{-}[dd]\\
                & \bfv_7 \ar@{-}[rr] \ar@{-}[ld] &&\bfv_8  \ar@{-}[ld]\\
              \bfv_6 \ar@{-}[rr] && \bfv_5 
        }
\]
It is evident that a cube contains eight $\QQ(\sqrt{-2})$-clusters as defined in the previous section, corresponding to the eight vertices of the cube (considered up to reordering the three circles tangent to a central one).  Any such $\QQ(\sqrt{-2})$-cluster defines a unique cube (this follows from verifying the fact for the base cluster).

We define a cubicle to be a subset of four circles of a given cube so that no two are tangent.  There are two cubicles in a cube.  A cubicle satisfies this arrangement\footnote{Incidentally, this gives a relation on the curvatures of a cubicle: $8(a^2+b^2 + c^2 + d^2) = 3(a+b+c+d)^2$ which we do not need here.}:
\begin{equation}
        \label{eqn:qtwomatcubicle}
        W_D^t G_M W_D = \begin{pmatrix}
                1 & -3 & -3 & -3 \\
                -3 & 1 & -3 & -3 \\
                -3 & -3 & 1 & -3 \\
                -3 & -3 & -3 & 1 \\
        \end{pmatrix}.
\end{equation}
A cubicle is contained in a unique cube.  By considering a single cubicle in $\QQ(\sqrt{-2})$ (we use Figure \ref{fig:basecube}), one can obtain equations that determine the cube from the cubicle\footnote{It is also nice to observe that all body diagonal sums agree: $\bfv_1 + \bfv_5 = 
        \bfv_2 + \bfv_6 = 
        \bfv_3 + \bfv_7 = 
        \bfv_4 + \bfv_8,$ 
and that on each individual face, diagonal sums again agree: $\bfv_1 + \bfv_3 = \bfv_2 + \bfv_4$ etc.}:
\begin{align*}
        2\bfv_2 &=\bfv_1 + \bfv_3 - \bfv_6 + \bfv_8, \\
        2\bfv_4 &=\bfv_1 + \bfv_3 + \bfv_6 - \bfv_8, \\
        2\bfv_5 &=-\bfv_1 + \bfv_3 + \bfv_6 + \bfv_8, \\
        2\bfv_7 &=\bfv_1 - \bfv_3 + \bfv_6 + \bfv_8.
\end{align*}

Any individual face of a cube consists of four $\Qtwo$-Bianchi circles satisfying the following relations (in particular, they are tangent in a loop):
\begin{equation}
        \label{eqn:qtwomatface}
        W_D^t G_M W_D = \begin{pmatrix}
                1 & -1 & -3 & -1 \\
                -1 & 1 & -1 & -3 \\
                -3 & -1 & 1 & -1 \\
                -1 & -3 & -1 & 1 \\
        \end{pmatrix}.
\end{equation}
Then there are exactly two ways to complete a face to a cube of $\Qtwo$-Bianchi circles (again, this need only be verified on the base cube).  This implies that given a cube, there are six \emph{swaps} one can perform which fix one side and swap out the remaining four circles.  Geometrically, this is accomplished by reflecting in the circle $C$ orthogonal to all circles in the fixed side, as shown in Figure \ref{fig:2swap}.

\begin{figure}
        \includegraphics[height=3.5in]{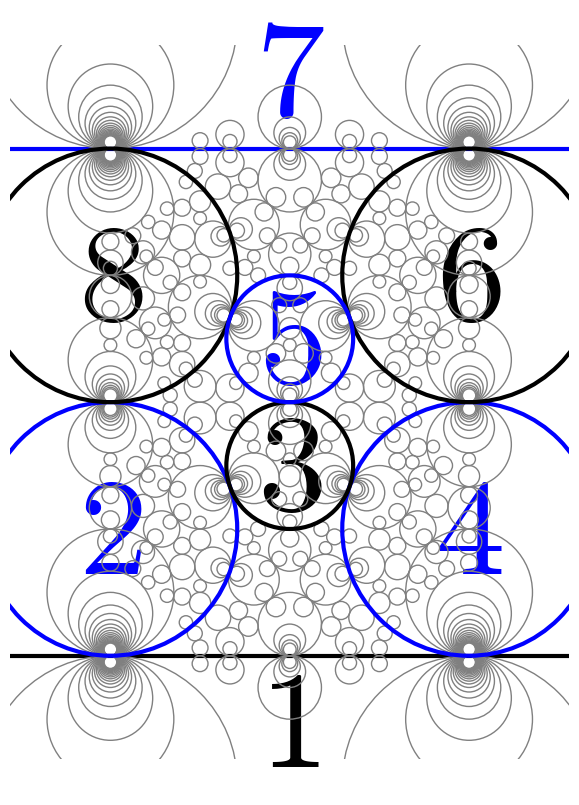} 
        \caption[Base cube for $\Scal_{\QQ(\sqrt{-2})}$]{Base cube for $\Scal_{\QQ(\sqrt{-2})}$.  The coordinates of the eight circles in $\MM$ are given, in the labelled order, by the columns of the following matrix:
                {\tiny 
                $$\begin{pmatrix}
                        0 &  0 & 2\sqrt{2}  & 2\sqrt{2} & 4\sqrt{2} & 4\sqrt{2} & 2\sqrt{2} & 2\sqrt{2} \\
                        0 &  2\sqrt{2} & 4\sqrt{2}  & 2\sqrt{2} & 4\sqrt{2} & 2\sqrt{2} & 0 & 2\sqrt{2}  \\
                        0 &  0 & 2\sqrt{2}  & 2\sqrt{2} & 2\sqrt{2} & 2\sqrt{2} & 0 & 0 \\
                        -1 & 1 & 3  & 1 & 5 & 3 & 1 & 3\\
        \end{pmatrix}.$$
} The two cubicles forming the cube are shown in blue and black, respectively.}
\label{fig:basecube}
\end{figure}

\begin{figure}
        \includegraphics[height=2.5in]{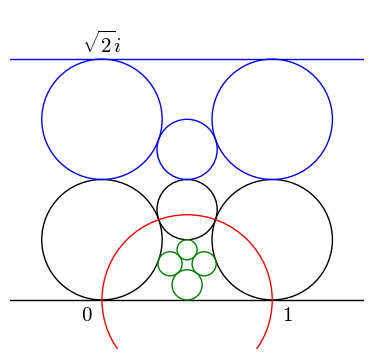}
        \caption{The blue and black circles (including two straight lines) form the base cube of the $\QQ(\sqrt{-2})$-Apollonian packing containing $\widehat{\RR}$.  If swapping through the side represented by black circles, the blue circles are replaced with green ones, by inversion in the red circle.  The new cube is formed of the black and green circles.}
        \label{fig:2swap}
\end{figure}

\begin{figure}
        \includegraphics[height=2.5in]{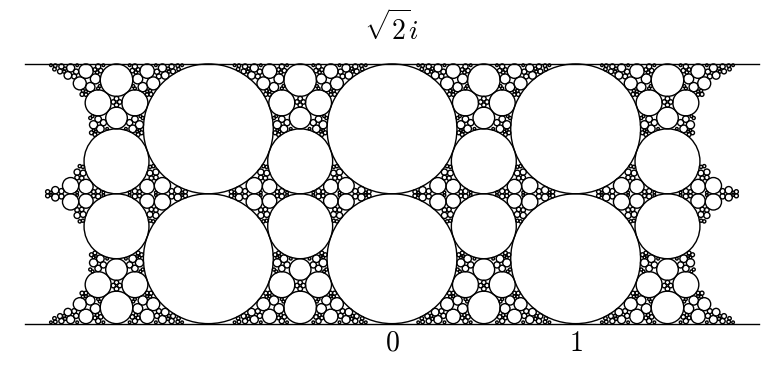} 
        \caption[A $\QQ(\sqrt{-2})$-Apollonian packing.]{The $\QQ(\sqrt{-2})$-Apollonian packing containing $\widehat{\RR}$.}
\label{fig:2strip}
\end{figure}

These six swaps can be described in terms of the cube as follows.  
Fixing $\bfv_1,\ldots, \bfv_4$, the two ways to complete to a cube containing this side are to add $\bfv_5, \ldots, \bfv_8$ or $\bfv_5',\ldots,\bfv_8'$ where
\begin{align*}
        \bfv_5 + \bfv_5' &= -2\bfv_1 +4\bfv_2 + 4\bfv_4 \\
        \bfv_6 + \bfv_6' &= 4\bfv_1 - 2\bfv_2 + 4\bfv_3 \\
        \bfv_7 + \bfv_7' &= 4\bfv_2 - 2\bfv_3 + 4\bfv_4 \\
        \bfv_8 + \bfv_8' &= 4\bfv_1 +4\bfv_3 -2\bfv_4
\end{align*}

If we express a cube as a real $4 \times 4$ matrix whose columns are the cubicle $\bfv_1, \bfv_3, \bfv_6, \bfv_8$, then these swaps correspond to right multiplication by
\begin{equation}
        \label{eqn:Aqtwogens}
        \begin{pmatrix}
                1 & 0 & 3 & 3 \\
                0 & 1 & 3 & 3 \\
                0 & 0 & 0 & -1 \\
                0 & 0 & -1 & 0 \\
        \end{pmatrix},
       \begin{pmatrix}
                1 & 3 & 0 & 3 \\
                0 & 0 &  0 & -1 \\
                0 & 3 & 1 & 3 \\
                0 & -1 & 0 & 0 \\
        \end{pmatrix},
       \begin{pmatrix}
                0 & 0 & 0 & -1 \\
                3 & 1 & 0 & 3 \\
                3 & 0 & 1 & 3 \\
                -1 & 0 & 0 & 0 \\
        \end{pmatrix},
\end{equation}
\[
       \begin{pmatrix}
                1 & 3 & 3 & 0 \\
                0 & 0 & -1 & 0 \\
                0 & -1 & 0 & 0 \\
                0 & 3 & 3 & 1 \\
        \end{pmatrix},
       \begin{pmatrix}
                0 & 0 & -1 & 0 \\
                3 & 1 & 3 & 0 \\
                -1 & 0 & 0 & 0 \\
                3 & 0 & 3 & 1 \\
        \end{pmatrix},
       \begin{pmatrix}
                0 & -1 & 0 & 0 \\
                -1 & 0 & 0 & 0 \\
                3 & 3 & 1 & 0 \\
                3 & 3 & 0 & 1 \\
        \end{pmatrix}.
\]

\begin{definition}
        \label{def:AQtwo}
        Call the group generated by these matrices $\widehat{\Acal_{\Qtwo}}$.
\end{definition}

The M\"obius transformations which realise the six swaps on the base cube of Figure \ref{fig:basecube} are given by (in the same order as the generators \eqref{eqn:Aqtwogens}),
\begin{gather}
        \label{eqn:2mob}
        z \mapsto \frac{\overline{z}}{2\overline{z}-1}, \quad
        z \mapsto -\overline{z}+2, \quad
        z \mapsto \frac{(3+2\sqrt{-2})\overline{z}-4}{4\overline{z} - 3 + 2\sqrt{-2}}, \\
        z \mapsto -\overline{z}, \quad
        z \mapsto \frac{(1+2\sqrt{-2})\overline{z}-2}{4\overline{z} - 1 + 2\sqrt{-2}}, \quad
        z \mapsto \frac{(1+2\sqrt{-2})\overline{z} - 4}{2\overline{z} - 1 + 2\sqrt{-2}}. \notag
\end{gather}

\begin{definition}
Call the group generated by these transformations $\Acal_{\QQ(\sqrt{-2})}$.
\end{definition}

\begin{definition}
        \label{def:cube-graph}
        The \emph{cube graph} is the graph whose vertices are all cubes (considered without regard to the ordering of the constituent circles); and whose edges indicate cubes sharing a side.
\end{definition}

\begin{theorem}
        \label{thm:qtwotree}
        The cube graph is a forest of trees of valence $6$.  Consequently,
        $A_{\Qtwo}$ is freely generated by the generators \eqref{eqn:Aqtwogens}.  In particular, it is a free product of six copies of $\ZZ/2\ZZ$.
\end{theorem}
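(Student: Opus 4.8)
The plan is to recognize the cube graph, component by component, as a Cayley graph of $\Acal_{\Qtwo}$, and then to establish freeness by a ping-pong (positivity) argument on curvatures, in close analogy with the proof that the ordinary Apollonian group is a free product of four copies of $\ZZ/2\ZZ$ (Theorem \ref{thm:appfree}).

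First I would set up the dictionary between $\Acal_{\Qtwo}$ and the cube graph. Every cube is tangency-connected and so lies inside a single $\Qtwo$-Apollonian packing; since $\PSL_2(\OK)$ is transitive on $\OSK$ and preserves tangency, cubes, and $\Scal_K$, it permutes the $\Qtwo$-Apollonian packings transitively and acts transitively on cubes, so it is enough to understand the cube graph inside the fundamental packing $\Pcal_{\Qtwo}$. The six M\"obius transformations \eqref{eqn:2mob} each carry the base cube $Q_0$ to one of its six face-neighbours, so by M\"obius-equivariance each cube $Q = g\cdot Q_0$ in the orbit of $Q_0$ has the six distinct face-neighbours $g s_1 Q_0,\dots,g s_6 Q_0$ (distinctness is a finite check on $Q_0$ using Figure \ref{fig:basecube}); and a cube has at most six face-neighbours because it has six faces, each lying in exactly two cubes. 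Hence the orbit $\Acal_{\Qtwo}\cdot Q_0$ is a $6$-regular connected subgraph of the cube graph in which every vertex has all of its neighbours, so it is a whole component. Finally, passing to the cluster space $S_R$ with $R$ the cubicle Gram matrix \eqref{eqn:qtwomatcubicle}, the matrix group $\widehat{\Acal_{\Qtwo}}<\operatorname{O}_R(\RR)$ acts on $S_R$, and since $S_R$ is a principal homogeneous space for $\operatorname{O}_R(\RR)$ (Proposition \ref{prop:algtorsor}) this action — hence the action of $\Acal_{\Qtwo}$ on its orbit of cubes — is automatically free. Via the algebraic-geometric correspondence (Propositions \ref{prop:algebraic}, \ref{prop:leftrightorbit}), the map $g\mapsto g\cdot Q_0$ is therefore a graph isomorphism from the right Cayley graph of $\Acal_{\Qtwo}$ with respect to $s_1,\dots,s_6$ onto the component of $Q_0$. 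So ``the component of $Q_0$ is a valence-$6$ tree'' is equivalent to ``$\Acal_{\Qtwo}$ is the free product of the six $\langle s_i\rangle\cong\ZZ/2\ZZ$,'' and by the homogeneity above the same holds for every component; this is exactly the content of the theorem.

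It remains to show that no nontrivial reduced word $s_{i_1}\cdots s_{i_n}$ (with $i_j\ne i_{j+1}$) is trivial, equivalently that the corresponding product of the matrices \eqref{eqn:Aqtwogens} is not the identity. For this I would run a ping-pong on the curvature coordinates of a cubicle. Writing a cubicle as the $4\times4$ matrix with columns $\bfv_1,\bfv_3,\bfv_6,\bfv_8$ and recording the row vector $\mathbf{b}$ of curvatures, a short computation from \eqref{eqn:Aqtwogens} identifies the six generators with the six maps $M_{\{k,l\}}$, one for each pair $\{k,l\}$, which fix the coordinates $b_k,b_l$ and send the remaining pair by $(b_m,b_n)\mapsto(3(b_k+b_l)-b_n,\ 3(b_k+b_l)-b_m)$. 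One then wants a region $\Rcal$ of cubicles and a height function $h$ so that the base cubicle lies in $\Rcal$, and applying $M_{\{k,l\}}$ to a cubicle of $\Rcal$ whose ``dominant'' coordinate pair is not $\{k,l\}$ returns a cubicle of $\Rcal$ with strictly larger $h$ and with a new dominant pair that again forces the next letter of a reduced word to be admissible. A reduced word then strictly increases $h$ after its first letter, so it moves the base cubicle and hence is not the identity matrix — which, by the previous paragraph, proves both assertions of the theorem.

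The step I expect to be the main obstacle is the sign bookkeeping hiding inside the ping-pong. For $\Delta\le-15$ the norm-versus-covolume comparison of Lemma \ref{lemma:onebigger} keeps all curvatures on one side and powers a clean argument (Lemma \ref{lemma:graphtheory}, Theorem \ref{thm:16noloops}); but $\Qtwo$ is Euclidean, so its packings genuinely contain circles of curvature $0$, and $M_{\{k,l\}}$ can produce a negative curvature precisely when $\{k,l\}$ is the pair of smallest circles, so the naive ``largest curvature strictly increases'' invariant does not literally apply (this is the same phenomenon that makes the freeness proof for the ordinary Apollonian group delicate). I expect one must either choose $h$ more intrinsically — e.g. work with the Minkowski vectors $\bfv_i$ and the positive-definite part of the form, or with a horoball height in $\HH^3$ attached to the cube, rather than with a single curvature — or else check directly that the troublesome disjoint-pair step never occurs along a reduced word issuing from the base cubicle. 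Once $\Rcal$ and $h$ are pinned down the verification is a finite matrix computation with \eqref{eqn:Aqtwogens}. An alternative route, which sidesteps the signs at the cost of the connectivity input that $\Acal_{\Qtwo}$ is an Apollonian group for $\Qtwo$ (provable via Theorem \ref{thm:sufficient-clusters} exactly as in Theorem \ref{thm:gen-union}), is to prove directly that the cube graph has no cycles by applying the principle of Lemma \ref{lemma:graphtheory} to a suitable height function on cubes for which at most one face-neighbour fails to strictly increase it; freeness of $\Acal_{\Qtwo}$ then follows as above.
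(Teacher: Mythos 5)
Your proposal inverts the paper's logic --- you want to prove freeness of $\Acal_{\Qtwo}$ first (by ping-pong on curvatures) and deduce that the cube graph is a forest, whereas the paper proves acyclicity of the cube graph directly by a planarity argument (the tangency graph of a packing embeds in $\widehat{\CC}$ by placing each vertex at its circle's centre; two cubes sharing a face lie on opposite sides of the shared $4$-cycle, so a closed chain of cubes would have to join an ``inside'' cube to an ``outside'' cube through a second cycle, which is impossible in a planar graph), and only afterwards (in Theorem \ref{thm:2union}) extracts freeness and the absence of cube automorphisms from the tree structure. As written, your argument has a genuine gap at exactly the step you flag: the ping-pong is not constructed. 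You correctly observe that the naive invariant ``the largest curvature strictly increases'' fails because $\Qtwo$ is Euclidean --- the packing contains circles of reduced curvature $0$, and the swap $M_{\{k,l\}}$ fixing the two smallest circles of a cubicle can fail to increase (indeed can negate) the remaining curvatures --- but you then only speculate about a replacement height function $h$ and region $\Rcal$ (``positive-definite part of the form'', ``horoball height'', ``check the bad step never occurs''), without exhibiting one or verifying the required inequalities against the matrices \eqref{eqn:Aqtwogens}. Since this is the entire content of the freeness claim (the analogue for $\QQ(i)$, Theorem \ref{thm:appfree}, is itself a nontrivial theorem of Graham--Lagarias--Mallows--Wilks--Yan), the proof is incomplete at its core; the same criticism applies to your alternative route via Lemma \ref{lemma:graphtheory}, where the needed height function on cubes is again left unspecified.

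There is a second, more structural flaw in your reduction. To convert ``$\Acal_{\Qtwo}$ is a free product'' into ``the component of the \emph{unordered} cube graph is a tree'' you need the map $g\mapsto g\cdot Q_0$ to be injective, i.e.\ that no nontrivial element of $\Acal_{\Qtwo}$ maps the base cube to itself while permuting its eight circles. You assert this is ``automatic'' from Proposition \ref{prop:algtorsor}, but that proposition gives freeness of the $\operatorname{O}_R(\RR)$-action on the space of \emph{ordered} configurations $S_R$: an element realising a symmetry of the unordered cube sends $W_0$ to a different matrix representing the same cube, and is not excluded. The paper is careful about precisely this point --- in Theorem \ref{thm:2union} the absence of such automorphisms in $\Acal_{\Qtwo}$ is \emph{deduced from} Theorem \ref{thm:qtwotree}, so you cannot borrow it in the other direction without a new argument. (A correctly executed ping-pong whose height is a function of the unordered cube, e.g.\ the curvature multiset, would repair both defects at once, but that is exactly the part you have not supplied.) Your transitivity claim for $\PSL_2(\OK)$ on cubes is also asserted rather than proved --- transitivity on circles does not by itself give transitivity on cubes, though this can be patched via three-prongs and the fact that a $\Qtwo$-cluster determines a unique cube. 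If you want a complete proof along independent lines, the paper's planarity argument is considerably shorter and avoids all sign bookkeeping.
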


\begin{proof}
        Let $H$ be the cube graph.  Any one component of the graph of ordered cubes under the six available swaps is the Cayley graph of $\Acal_{\QQ(\sqrt{-2})}$ under right multiplication and generators \eqref{eqn:Aqtwogens}.  We wish to demonstrate that any component of $H$ is a tree.  If so, then the Cayley graph cannot have a loop, as it would reduce to a loop in the cube graph under forgetting orientation.  The theorem follows.
        
        Let $G$ be the tangency graph of a single packing, i.e. the graph whose vertices are circles, and whose edges represent tangencies.  This graph is the same for any packing, so assume the packing is bounded away from $\infty$.  This graph can be embedded on the sphere $\widehat{\CC}$ by placing each vertex at the center of the corresponding circle, so that edges pass through tangency points (the exception being the bounding circle, where we can take its center to be $\infty$).  Therefore it is planar.

        Suppose that $H$ has a loop.  This loop is a finite loop of $n\ge2$ cubes lying in one packing, so its circles create an induced subgraph $L$ of $G$ (for that packing).  We will demonstrate that $L$ cannot exist inside the planar graph $G$.

        To do so, we will consider building the graph $L$ cube-by-cube.  Begin with one adjacent pair of cubes in $L$, sharing four vertices in a cycle $C$.  The cycle breaks the plane up into two regions.  The two cubes lie in the two different regions (except that they both include the boundary).  (By the M\"obius action, verification on the base cube suffices.)  Continuing to add cubes to form $L$, at each stage we are adding edges and vertices of $G$ inside an existing face of the construction.  In particular, we are either adding a cube inside the cycle $C$ or outside it.  Finally, to complete the loop $L$, we must join one cube inside $C$ to one outside $C$, using another cycle $C' \neq C$.  This is impossible inside the planar graph $G$.
\end{proof}

An alternate method of proof is to show that $L$ contains $4n$ vertices and $8n$ edges, but no cycles of length $3$, which is impossible for a planar graph.  Interestingly, a similar count works for $\QQ(i)$ and $\QQ(\sqrt{-7})$, but fails for $\QQ(\sqrt{-11})$.  The proof above generalises to all cases with little modification.

\begin{theorem}
        \label{thm:2union}
        The group $\Acal_{\QQ(\sqrt{-2})}$ is a geometric Apollonian group for $\QQ(\sqrt{-2})$ and $\widehat{\Acal_{\QQ(\sqrt{-2})}}$ is an algebraic Apollonian group for $\QQ(\sqrt{-2})$.
\end{theorem}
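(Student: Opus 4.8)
The plan is to deduce Theorem~\ref{thm:2union} from the general criterion of Theorem~\ref{thm:sufficient-clusters}, in exact parallel with the proof of Theorem~\ref{thm:gen-union}, but now taking the cluster type to be the \emph{cube}: one lets $S_R$ be the cluster space with $R$ the $4\times4$ matrix~\eqref{eqn:qtwomatcubicle}, records a cube by the cubicle $\bfv_1,\bfv_3,\bfv_6,\bfv_8$ (the remaining four circles being recovered from the displayed linear formulas preceding Definition~\ref{def:AQtwo}), and takes the base cluster to be the base cube of Figure~\ref{fig:basecube}. The base cube furnishes a matrix $W_0\in S_R$ and hence an algebraic--geometric correspondence $\sigma_{W_0}$ carrying $\Acal_{\Qtwo}$ isomorphically onto $\widehat{\Acal_{\Qtwo}}$, so it suffices to verify that $\Acal_{\Qtwo}$ meets the hypotheses of Theorem~\ref{thm:sufficient-clusters}; the assertion about $\widehat{\Acal_{\Qtwo}}=\sigma_{W_0}(\Acal_{\Qtwo})$ then follows formally.

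The ambient hypotheses are immediate: $\Acal_{\Qtwo}$ is generated by the finitely many transformations~\eqref{eqn:2mob}, each lying in $\langle\PSL_2(\OK),\mathfrak{c}\rangle$, hence it is a finitely generated Kleinian group; and each generator carries $\widehat{\RR}$ to a circle of $\Pcal_K$ --- the three generators $z\mapsto\bar z/(2\bar z-1)$, $z\mapsto-\bar z+2$, $z\mapsto-\bar z$ fix $\widehat{\RR}$ together with its orientation, and the other three carry $\widehat{\RR}$ to one of the circles of the base cube --- so, just as in Theorem~\ref{thm:gen-union}, $\Acal_{\Qtwo}$ maps $\Pcal_K$ to $\Pcal_K$. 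Of the seven conditions, \eqref{item:basecontained}, \eqref{item:baseunique}, \eqref{item:limit}, \eqref{item:connected} are handled as before: the base cube lies in $\Pcal_K$ and is tangency-connected by inspection of Figure~\ref{fig:basecube}; it is the unique cube containing the base prong because the base prong is the vertex $\{\bfv_1,\bfv_2,\bfv_4,\bfv_7\}$ of the base cube and its Pedoe matrix is exactly $R_0$ for $\Delta=-8$, so the observation that "any $\QQ(\sqrt{-2})$-cluster determines a unique cube'' applies; and \eqref{item:limit} is checked on one base configuration via the M\"obius action. Condition~\eqref{item:nearbysuper} is witnessed directly by the three $\widehat{\RR}$-fixing generators above, which carry $\{0,1,\infty\}$ to $\{0,1/2,1\}$, $\{1,2,\infty\}$, $\{-1,0,\infty\}$ respectively; the corresponding translates of the base cube contain the three required three-prongs centred on $\widehat{\RR}$. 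Condition~\eqref{item:tangentstep} needs no nontrivial group element whatsoever: the circles immediately tangent to $\widehat{\RR}$ at $0$, $1$, $\infty$ are precisely $\bfv_2$, $\bfv_4$, $\bfv_7$, and the base cube already contains the three-prongs centred on each of these (its vertices at those circles).

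The one condition where the argument genuinely departs from Theorem~\ref{thm:gen-union} --- and the main obstacle --- is \eqref{item:noauto}, that $\Acal_{\Qtwo}$ contains no nontrivial automorphism of the base cube. The proof of Theorem~\ref{thm:gen-union} obtained this from Lemma~\ref{lemma:stab}, whose proof relies on the immediate tangency graph being a tree (Theorem~\ref{thm:16noloops}); for the Euclidean field $\QQ(\sqrt{-2})$ that graph has loops --- the cube faces are $4$-cycles --- so that route is unavailable. Instead I would use Theorem~\ref{thm:qtwotree}: $\Acal_{\Qtwo}$ is a free product of six copies of $\ZZ/2\ZZ$, so its Cayley graph on the six swap generators --- which is the $\Acal_{\Qtwo}$-orbit of the ordered base cube in the graph of ordered cubes under swaps --- is a tree. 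Forgetting the ordering of constituent circles maps this Cayley tree onto the orbit of the unordered base cube, which is exactly the connected component of the base cube in the (unordered) cube graph, and that component is a tree by Theorem~\ref{thm:qtwotree}. Since the six swaps out of a fixed cube produce six distinct unordered neighbours (two distinct swaps giving the same neighbour would make that neighbour share two faces, hence too many circles, with the original cube), the forgetful map is locally bijective, hence a covering of a tree, hence an isomorphism. Therefore $\Acal_{\Qtwo}$ acts simply transitively on that component, the stabiliser of the unordered base cube is trivial, and in particular no nontrivial element of $\Acal_{\Qtwo}$ permutes the circles of the base cube. With \eqref{item:noauto} in hand, Theorem~\ref{thm:sufficient-clusters} shows $\Acal_{\Qtwo}$ is a geometric $\QQ(\sqrt{-2})$-Apollonian group, and $\widehat{\Acal_{\Qtwo}}$ is the corresponding algebraic one.
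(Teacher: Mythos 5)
Your argument is the paper's own: both verify the hypotheses of Theorem \ref{thm:sufficient-clusters} for the cube cluster type with the base cube of Figure \ref{fig:basecube}, using the three $\widehat{\RR}$-preserving generators (numbers 1, 2 and 4 of \eqref{eqn:2mob}) for condition \eqref{item:nearbysuper}, the base cube itself for \eqref{item:tangentstep}, and Theorem \ref{thm:qtwotree} for \eqref{item:noauto}; your covering-of-a-tree phrasing of that last point is just a more explicit rendering of the paper's remark that an automorphism of the base cube would project to a loop in the cube graph.

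One incidental claim in your verification of the ambient hypothesis is false, though harmless once repaired: generators 3, 5 and 6 of \eqref{eqn:2mob} do \emph{not} send $\widehat{\RR}$ to a circle of the base cube. Each of the six swaps fixes, circle by circle, the four circles of one face and replaces the other four; $\widehat{\RR}$ lies in the replaced half for exactly these three generators, and its images are circles of the adjacent (swapped) cubes, of curvatures $16\sqrt{2}$, $16\sqrt{2}$ and $8\sqrt{2}$ respectively, none of which occur among the eight circles of Figure \ref{fig:basecube}. The conclusion you need --- that the generators carry circles of $\Pcal_K$ to circles of $\Pcal_K$ --- still holds, with the corrected justification that each generator is inversion in a circle orthogonal to the four circles of one face of the base cube (Figure \ref{fig:2swap}), hence fixes those four oriented circles of $\Pcal_K$ and therefore maps the packing $\Pcal_K$ to itself; with that repair your proof matches the paper's.
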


\begin{proof}
        Our cluster type is that given above (cubes), and the base cube is as in Figure \ref{fig:basecube}.  We will verify the hypotheses of Theorem \ref{thm:sufficient-clusters}; many of the verifications are simple and similar to those in the proof of Theorem \ref{thm:gen-union}.  Item \eqref{item:nearbysuper} is verified by generators 1, 2 and 4 of \eqref{eqn:2mob}.  Item \eqref{item:tangentstep} is verified by identity transformation, since the base cube contains a three-prong centred on any of its circles.
        
        That no automorphism of the base cluster is contained in $\Acal_{\QQ(\sqrt{-2})}$ (item \eqref{item:noauto}) is a consequence of Theorem \ref{thm:qtwotree}:  such an automorphism would indicate a nontrivial path in the Cayley graph whose endpoints were two different orderings of the same cube.  Forgetting orientation, this would give a loop in the cube graph.
\end{proof}

\section{Tents and belts in $K = \QQ(\sqrt{-7})$}
\newcommand{\Qseven}{\QQ(\sqrt{-7})}

In this case, and for $\QQ(\sqrt{-11})$, we will be somewhat more brief in our description.
We define a $\Qseven$-Descartes configuration, called a \emph{tent}, to be five circles in the arrangement specified by the relation
\[
        W_D^t G_M W_D = \begin{pmatrix}
                1 & -1 & -5/2 & -1 & - 5/2 \\
               -1 & 1 & -1 & -5/2 & -1  \\
             -5/2 & -1 & 1 & -1 & - 5/2 \\
               -1 & -5/2 & -1 & 1  & -1 \\
             -5/2 & -1 & -5/2 & -1 & 1  \\
        \end{pmatrix},
\]
on the matrix $W_D$ whose columns are the five circles.  The graph of tangencies looks like this:
\[
        \xymatrix@R=1em@C=1em{
                & \bfv_2 \ar@{-}[d] \ar@{-}[dl] \ar@{-}[dr] & \\
 \bfv_1 \ar@{-}[dr] & \bfv_5 \ar@{-}[d]  & \bfv_3 \ar@{-}[dl] & \\
                & \bfv_4 & \\
        }
\]

There is one relation among these circles:
\[
        \bfv_1 + \bfv_3 + \bfv_5 = 2(\bfv_2 + \bfv_4).
\]
Three of the five circles ($\bfv_1, \bfv_3, \bfv_5$) are distinguished as being tangent to fewer other circles in the tent (2 instead of 3).  Call such a circle a \emph{peak}.  Removing a peak leaves four circles in a cycle, called a \emph{belt}.  Given a belt, there are exactly two tents containing it.  Given a belt $\bfv_1, \bfv_2, \bfv_3, \bfv_4$, the two peaks $\bfv_5$ and $\bfv_5'$ that complete it to a tent satisfy
\[
        \bfv_5 + \bfv_5' = \bfv_1 + \bfv_2 + \bfv_3 + \bfv_4.
\]
The other peak $\bfv_5'$ is tangent to $\bfv_1$ and $\bfv_3$, thus:
\[
        \xymatrix@R=1em@C=1em{
                & \bfv_2  \ar@{-}[dl] \ar@{-}[dr] & \\
 \bfv_1 \ar@{-}[dr] \ar@{-}[r] & \bfv_5'  \ar@{-}[r] & \bfv_3 \ar@{-}[dl] & \\
                & \bfv_4 & \\
        }
\]
Therefore $\bfv_2$ and $\bfv_4$ become new peaks.  It is appropriate to renumber the resulting tent so we have
\begin{equation*}
        \bfv_1' = \bfv_2, \quad \bfv_2' = \bfv_1, \quad 
        \bfv_3' = \bfv_4, \quad \bfv_4' = \bfv_3, \quad \bfv_5' = \bfv_1 + \bfv_2 + \bfv_3 + \bfv_4 - \bfv_5.
\end{equation*}
An example is shown in Figure \ref{fig:7-swap}.

\begin{figure}
        \includegraphics[height=3in]{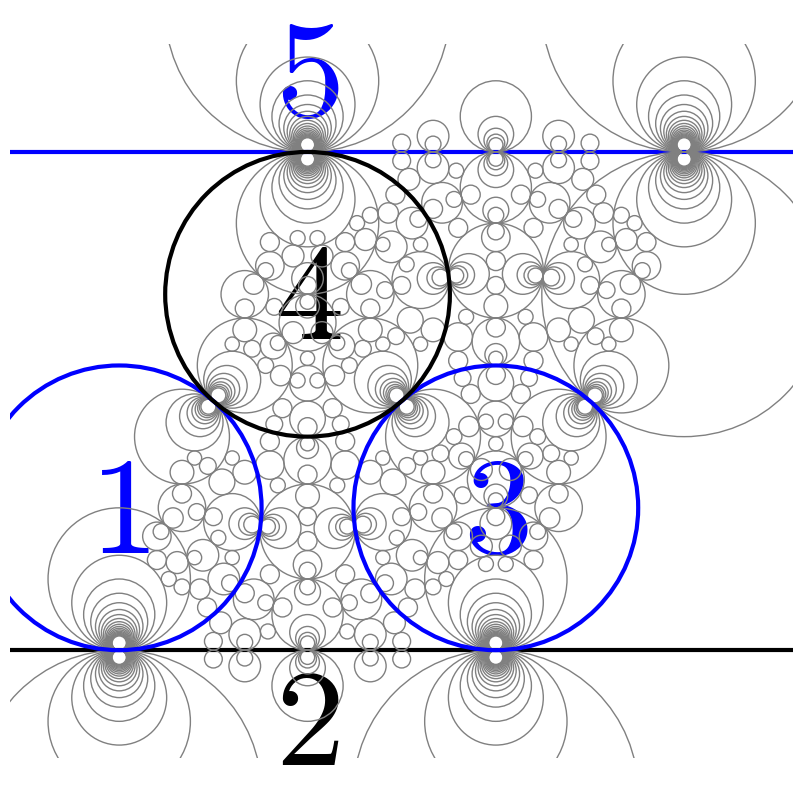} 
        \caption[Base tent for $\Scal_{\QQ(\sqrt{-7})}$]{Base tent for $\Scal_{\QQ(\sqrt{-7})}$.  The coordinates of the five circles in $\MM$ are given, in the labelled order, by the columns of the following matrix: \tiny
                \[
                        \begin{pmatrix}
                                0 & 0 & \sqrt{7} & \sqrt{7} & \sqrt{7} \\
                                \sqrt{7} & 0 & \sqrt{7} & \sqrt{7} & 0 \\
                                0 & 0 & \sqrt{7} & \sqrt{7}/2 & 0 \\
                                1&-1&1&5/2&1\\
                        \end{pmatrix}
                \]
\normalsize Circles tangent to three others in the tent are shown in black, while those tangent to two others are shown in blue.}
\label{fig:7basetent}
\end{figure}

\begin{figure}
        \includegraphics[height=2.5in]{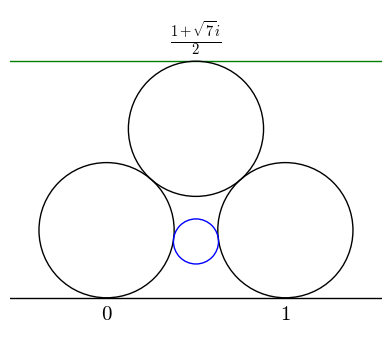}
        \caption{The green and black circles (including two straight lines) form the base tent of the $\QQ(\sqrt{-7})$-Apollonian packing containing $\widehat{\RR}$.  If swapping out the peak given by the green circle, the green circle is replaced with the blue one.  The new tent is formed of the black and blue circles.  Note that the black circles are not individually fixed; they are permuted.}
        \label{fig:7-swap}
\end{figure}

\begin{figure}
        \includegraphics[height=2.5in]{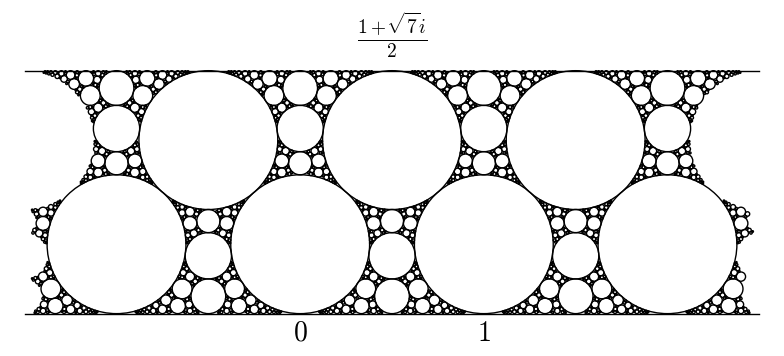} 
        \caption[A $\QQ(\sqrt{-7})$-Apollonian packing.]{The $\QQ(\sqrt{-7})$-Apollonian packing containing $\widehat{\RR}$.}
\label{fig:7strip}
\end{figure}

We will write a tent as a $4 \times 4$ matrix whose columns are $\bfv_1, \bfv_2, \bfv_3, \bfv_4$; call this a \emph{tentbase}.  Four circles form a tentbase if and only if the matrix of their columns, $W_D$, satisfies
\[
        W_D^t G_M W_D = \begin{pmatrix}
                1 & -1 & -5/2 & -1  \\
               -1 & 1 & -1 & -5/2   \\
             -5/2 & -1 & 1 & -1 \\
               -1 & -5/2 & -1 & 1   \\
        \end{pmatrix} =: R.
\]
There are three moves that will replace a tent with another that shares a belt:  each of the three peaks can be swapped out. These correspond to multiplying the tentbase on the right by these three matrices of order two:
\begin{equation}
        \label{eqn:qsevengens}
        \begin{pmatrix}
                0 & 1 & 0 & 0 \\
                1 & 0 & 0 & 0 \\
                0 & 0 & 0 & 1 \\
                0 & 0 & 1 & 0 \\
        \end{pmatrix},
        \begin{pmatrix}
                -2 & 0 & 0 & -1 \\
                  3 & 0 & 1 & 2 \\
                0 & 1 & 0 & -1 \\
                  3 & 0 & 0 & 2 \\
        \end{pmatrix},
        \begin{pmatrix}
                0 & -1 &  0 & 1 \\
                0 & 2 & 3 & 0  \\
                0 & -1 & -2 &  0  \\
                1 & 2 & 3 & 0  \\
        \end{pmatrix}.
\end{equation}

\begin{definition}
        Denote the subgroup of $O_R(\RR)$ generated by the matrices \eqref{eqn:qsevengens} by $\widehat{\Acal_{\Qseven}}$.  
\end{definition}
The three swaps \eqref{eqn:qsevengens} are not realised as inversions in circles as in the case of $\QQ(i)$ and $\QQ(\sqrt{-2})$.  On the base tent of Figure \ref{fig:7basetent}, they correspond, via the algebraic-geometric correspondence, to the following M\"obius transformations, respectively:
\begin{equation}
        \label{eqn:7mob}
        z \mapsto \frac{\frac{1 + \sqrt{-7}}{2}z +  \frac{1-\sqrt{-7}}{2}}{\frac{3+\sqrt{-7}}{2}z + \frac{-1-\sqrt{-7}}{2}}, \quad
        z \mapsto -z- \frac{-3-\sqrt{-7}}{2}, \quad
        z \mapsto \frac{z}{
        \frac{1 - \sqrt{-7}}{2}z-1}.
\end{equation}

\begin{definition}
        The \emph{$7$-tent graph} is the graph whose vertices are all tents (considered without regard to ordering); and whose edges indicate tents sharing a belt.
\end{definition}

\begin{theorem}
        \label{thm:7free}
        The $7$-tent graph is a forest of trees of valence 3.  Consequently,
        $A_{\Qseven}$ is freely generated by the generators \eqref{eqn:qsevengens}.  In particular, it is a free product of three copies of $\ZZ/2\ZZ$.
\end{theorem}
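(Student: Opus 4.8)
The plan is to mirror the proof of Theorem~\ref{thm:qtwotree} almost verbatim, with \emph{tent} in place of \emph{cube} and \emph{belt} (a $4$-cycle of circles) in place of \emph{shared side}. First I would observe that any connected component of the graph of \emph{ordered} tents whose edges are the three peak-swaps is precisely the Cayley graph of $\widehat{\Acal_{\Qseven}}$ (equivalently, via the algebraic--geometric correspondence, of the M\"obius group $A_{\Qseven}$ of the transformations \eqref{eqn:7mob}) under right multiplication with respect to the generators \eqref{eqn:qsevengens}; this is Proposition~\ref{prop:algebraic} applied to the cluster type ``tent''. The map forgetting the ordering of the circles carries edges of this Cayley graph to edges of the $7$-tent graph, and since each peak-swap genuinely changes the underlying unordered tent, a non-backtracking closed walk in the Cayley graph would descend to a loop in the $7$-tent graph. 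Hence, once the $7$-tent graph is shown to be a forest, the Cayley graph admits no non-backtracking closed walk, so it is a tree; consequently $A_{\Qseven}$ (equivalently $\widehat{\Acal_{\Qseven}}$) is the free product of three copies of $\ZZ/2\ZZ$, freely generated by \eqref{eqn:qsevengens}. The valence-$3$ assertion is immediate from the geometry: a tent has exactly three peaks; swapping a peak $\bfv_5$ produces the distinct circle $\bfv_5'$ with $\bfv_5 + \bfv_5' = \bfv_1 + \bfv_2 + \bfv_3 + \bfv_4$, hence a tent distinct from the original, and since the three swaps alter three different circles the three neighbours are pairwise distinct.

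The substance is proving the $7$-tent graph has no loops, and here I would run the planarity argument of Theorem~\ref{thm:qtwotree}. Suppose for contradiction that tents $Q_1, \dots, Q_n$ with $n \ge 2$ form a loop, consecutive ones (cyclically) sharing a belt. Since tents lying in distinct $K$-Apollonian packings share no circles, all $Q_i$ lie in one packing $\Pcal$; taking a representative of $\Pcal$ bounded away from $\infty$, its tangency graph $G$ embeds in the plane by placing each vertex at the centre of its circle and routing edges through tangency points, so $G$ is planar. Let $L \subseteq G$ be the induced subgraph on the circles occurring in the chain. The belt $C = Q_1 \cap Q_2$ is a $4$-cycle of $G$, separating the sphere into two closed disks $A$ and $B$; by verifying on the base tent and its swap and transporting by the M\"obius action, $Q_1$'s peak lies in the interior of $A$ and $Q_2$'s peak in the interior of $B$. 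Building $L$ one tent at a time around the chain, each $Q_i$ for $i \ge 3$ is glued along a belt of $Q_{i-1}$ distinct from $Q_{i-1} \cap Q_{i-2}$; that belt bounds, inside whichever of $A, B$ currently contains the most recently added circles, a sub-disk into which the new peak and the new edges of $L$ are forced. By induction $Q_3, \dots, Q_n$ all lie on one side of $C$. But $Q_n$ shares a belt with $Q_1$, whose peak lies inside $A$: this either forces that belt to equal $C$ (collapsing the chain), or forces a $4$-cycle of $G$ with vertices on both sides of $C$, impossible in a planar graph. The contradiction shows $L$ cannot embed in $G$, so the $7$-tent graph is a forest and the theorem follows.

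I expect the main obstacle to be making the inductive ``one side of $C$'' step rigorous: one must verify that when a tent is glued to $Q_{i-1}$ along a belt lying entirely in one of the disks $A, B$, the region of that belt's complement receiving the new peak and the new edges of $L$ is again a disk on the \emph{same} side of $C$, rather than a region wrapping around to meet the separating cycle. Concretely this reduces to local checks on the base tent --- that each belt $4$-cycle of a tent bounds a disk containing exactly that tent's peak, that the two tents completing a given belt occupy the two different sides of that belt, and that a glued-on tent nests inside the previously free side --- after which the global contradiction is identical to the one in Theorem~\ref{thm:qtwotree}. As noted after that theorem, one could instead argue by a vertex--edge count, showing $L$ would be a triangle-free planar graph with more edges than $2|V| - 4$ (once one checks no tangency triangle arises among the circles of the chain); this is shorter but less uniform across fields than the planarity argument.
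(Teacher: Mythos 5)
Your proposal is correct and follows essentially the same route as the paper, which proves Theorem \ref{thm:7free} simply by invoking the planarity argument of Theorem \ref{thm:qtwotree} with tents in place of cubes and belts in place of shared sides (``any two tents lie in the two regions created by their shared belt''). Your added detail on the Cayley-graph reduction and the inductive ``one side of $C$'' step is a faithful elaboration of what the paper leaves implicit, not a different method.
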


\begin{proof}
        The proof is exactly as in Theorem \ref{thm:qtwotree}:  any two tents lie in the two regions created by their shared belt.
\end{proof}

\begin{theorem}
        \label{thm:7union}
        The group $\Acal_{\QQ(\sqrt{-7})}$ is a geometric Apollonian group for $\QQ(\sqrt{-7})$ and the group $\widehat{\Acal_{\QQ(\sqrt{-7})}}$ is an algebraic Apollonian group for $\QQ(\sqrt{-7})$.
\end{theorem}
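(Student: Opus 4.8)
The plan is to invoke Theorem~\ref{thm:sufficient-clusters}, with cluster type the tent configuration of this section and base cluster the base tent of Figure~\ref{fig:7basetent}; the algebraic statement is then automatic, since $\widehat{\Acal_{\QQ(\sqrt{-7})}}$ is by construction the image of the geometric group under the algebraic-geometric correspondence through the base tent. Most hypotheses are routine and handled as in the proofs of Theorems~\ref{thm:gen-union} and~\ref{thm:2union}: $\Acal_{\QQ(\sqrt{-7})}$ is a finitely generated Kleinian group; it takes circles of $\Pcal_K$ to circles of $\Pcal_K$ because its generators realize peak-swaps on the base tent and so carry $\widehat{\RR}$ (a hub of the base tent) to a hub of a once-swapped base tent, which is always one of the circles immediately tangent to $\widehat{\RR}$ at $0$, $1$ or $\infty$; hypotheses \eqref{item:basecontained}, \eqref{item:baseunique}, \eqref{item:limit}, \eqref{item:connected} are immediate (the base tent is tangency connected and contains $\widehat{\RR}$, so lies in $\Pcal_K$, and a tent containing the base prong has the prong among its five circles with the fifth forced by the tent relation $\bfv_1+\bfv_3+\bfv_5=2(\bfv_2+\bfv_4)$); and hypothesis \eqref{item:noauto} follows as in Theorem~\ref{thm:2union} from the fact that, by Theorem~\ref{thm:7free}, the Cayley graph of $\Acal_{\QQ(\sqrt{-7})}$ is a tree component of the $7$-tent graph, so an automorphism of the base tent inside the group would force a loop in a forest.

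The content lies in hypotheses \eqref{item:tangentstep} and \eqref{item:nearbysuper}, which I would verify by tracking the unordered pair of hubs of a tent through the three peak-swaps, carrying out the computation on the base tent of Figure~\ref{fig:7basetent} and transporting it by the M\"obius action. For \eqref{item:tangentstep}: swapping out the peak at $\infty$ yields a tent whose two hubs are the circles immediately tangent to $\widehat{\RR}$ at $0$ and at $1$, and swapping out the peak at $0$ yields one with hubs the circles immediately tangent to $\widehat{\RR}$ at $1$ and at $\infty$; together these give, in the orbit of the base tent, clusters with three-prongs centred on each of the three circles immediately tangent to $\widehat{\RR}$ at $0$, $1$, $\infty$. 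For \eqref{item:nearbysuper}: swapping out the peak at $\infty$ makes $\widehat{\RR}$ a peak, and then swapping out the old fourth hub $\bfv_4$ restores $\widehat{\RR}$ to a hub whose legs are now tangent to $\widehat{\RR}$ at $0$, $1$, and a third point which --- being forced to form a superbasis with $0$ and $1$ yet distinct from $\infty$ --- must be the Farey mediant $1/2$; so this length-two word produces the cluster carrying the three-prong $(0,1/2,1)$ at $\widehat{\RR}$, and the analogous length-two words beginning with the peak-swaps at $0$ and at $1$ produce the clusters carrying $(1,2,\infty)$ and $(-1,0,\infty)$. With all hypotheses of Theorem~\ref{thm:sufficient-clusters} checked, it follows that $\Acal_{\QQ(\sqrt{-7})}$ is a geometric Apollonian group and $\widehat{\Acal_{\QQ(\sqrt{-7})}}$ an algebraic Apollonian group for $\QQ(\sqrt{-7})$.

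The main obstacle is the verification of \eqref{item:tangentstep} and \eqref{item:nearbysuper}: one must determine exactly how the two hubs of a tent are replaced under each peak-swap, and then confirm that the two-step words above pivot the three-prong at $\widehat{\RR}$ across the correct Farey edge onto the correct adjacent superbasis. The key structural input making this bookkeeping unambiguous is that the $7$-tent graph is a tree (Theorem~\ref{thm:7free}): the two tents sharing a given belt lie on opposite sides of it, so there is no choice about which tent a swap produces, and the newly appearing tangency point on $\widehat{\RR}$ is forced to be the mediant rather than the just-removed point.
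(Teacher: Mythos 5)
Your proposal is correct and follows essentially the same route as the paper: invoke Theorem~\ref{thm:sufficient-clusters} with tents as the cluster type and the base tent of Figure~\ref{fig:7basetent} as base cluster, dispose of the routine hypotheses as in Theorems~\ref{thm:gen-union} and~\ref{thm:2union}, get \eqref{item:noauto} from the tree structure of Theorem~\ref{thm:7free}, and verify \eqref{item:tangentstep} and \eqref{item:nearbysuper} using single swaps and length-two words in the generators \eqref{eqn:7mob}. The only (harmless) difference is that where the paper settles \eqref{item:nearbysuper} by direct computation with the six pairwise compositions, you pin down the new tangency point structurally via Lemma~\ref{lemma:3prong} and the uniqueness of the tent containing a given three-prong, which is a valid substitute for that computation.
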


\begin{proof}
        Tents are the clusters, and the base tent is as in Figure \ref{fig:7basetent}.  Most of the verifications of the hypotheses of Theorem \ref{thm:sufficient-clusters} are simple or similar to those in the proof of Theorems \ref{thm:gen-union} and \ref{thm:2union}; we consider only \eqref{item:nearbysuper}, \eqref{item:tangentstep} and \eqref{item:noauto}.
        
        Composing each pair of generators \eqref{eqn:7mob} in each possible order, we obtain 6 transformations.  It is a computation to verify that, applying these 6 transformations to the base cluster, we obtain among the results clusters containing the three-prongs on $(0,1/2,1)$, $(1,2,\infty)$ and $(-1,0,\infty)$.  This verifies \eqref{item:nearbysuper}.
        
        Item \eqref{item:tangentstep} is verified by the three swaps \eqref{eqn:7mob}.  Finally, \eqref{item:noauto} is verified, as in the proof of Theorem \ref{thm:2union}, by Theorem \ref{thm:7free}.
\end{proof}

\section{Tents and belts in $K = \QQ(\sqrt{-11})$}
\label{sec:11}

\newcommand{\Qeleven}{\QQ(\sqrt{-11})}

This case is similar to the case $\Qseven$.  A \emph{tent} $D$ consists of $10$ circles, and contains four \emph{belts} (loops of tangent circles) of $6$ circles each, in the following arrangement:
 \[
        \xymatrix@C=1em@R=1em{
                \bfv_1 \ar@{-}[dr]  & & \bfv_2  \ar@{-}[ll]\ar@{-}[rr] & & \bfv_3 \ar@{-}[dl] \ar@{-}[ddl] \\
                                  & \bfv_{10} \ar@{-}[dr] & & \bfv_8 \ar@{-}[dl]  & \\
                                  & \bfv_6 \ar@{-}[ddr] \ar@{-}[luu] & \bfv_7 \ar@{-}[d] &  \bfv_4 \ar@{-}[ddl] & \\
                                  &  & \bfv_9 \ar@{-}[d] & & \\
                 & & \bfv_5 & & \\
        }
\]

Let $W_D$ denote the matrix whose columns are the vectors in $\MM$ corresponding to these circles, say $\bfv_i$, $i=1,\ldots,10$ of $\MM$.  We have
 \[\scriptsize
         W_D^t G_M W_D = 
         \begin{pmatrix}
                 1 & -1 & -9/2 & -13/2 & - 9/2 & - 1 & -9/2 & - 13/2 & -13/2 & -1\\
                 -1 & 1 & -1 & -9/2 & -13/2 & - 9/2  & -13/2&-9/2&-10&-9/2\\
                 - 9/2 & - 1 & 1 & -1 & -9/2 & -13/2 & -9/2 & -1 & -13/2 & -13/2  \\
                 -13/2 & - 9/2 & - 1 & 1 & -1 & -9/2 & -13/2 & -9/2 & -9/2 & -10 \\
                 - 9/2 & -13/2 & - 9/2 & - 1 & 1 & -1 & -9/2 & -13/2 & -1 & -13/2 \\
                 -1 & - 9/2 & -13/2 & - 9/2 & - 1 & 1 & -13/2 & -10 & -9/2 & -9/2 \\
                 -9/2 & -13/2 & -9/2 & -13/2 & -9/2 & -13/2 & 1 & -1 & -1 & -1 \\
                 -13/2 & -9/2 & -1 & -9/2 & -13/2 & -10 & -1 & 1 & -9/2 & -9/2 \\
                 -13/2 & -10 & -13/2 & -9/2 & -1 & -9/2 & -1 & -9/2 & 1 & -9/2 \\
                 -1 & -9/2 & -13/2 & -10 & -13/2 & -9/2 & -1 & -9/2 & -9/2 & 1 \\
         \end{pmatrix}.
         \normalsize
 \] 
 The ten circles of a tent span a vector space of dimension $4$.  A presentation of the relations is:
\begin{align*}
         5\bfv_2 &= - 4 \bfv_1 - 4 \bfv_3 + \bfv_5 + \bfv_7\\
         5\bfv_4 &=  \bfv_1 - 4 \bfv_3 -4 \bfv_5 + \bfv_7\\
         5\bfv_6 &= - 4 \bfv_1 + \bfv_3 -4 \bfv_5 + \bfv_7\\
         5\bfv_8 &=  \bfv_1 - 4 \bfv_3 + \bfv_5 -4 \bfv_7\\
         5\bfv_9 &=  \bfv_1 + \bfv_3 -4 \bfv_5 -4 \bfv_7\\
      5\bfv_{10} &= - 4 \bfv_1 + \bfv_3 + \bfv_5 -4 \bfv_7
 \end{align*}
 A tent contains four belts.  Within a belt, the sum of opposite circles is invariant, e.g.
 \[
         \bfv_1 + \bfv_4 = \bfv_2 + \bfv_5 = \bfv_3 + \bfv_6.
 \]
 Since there are four belts, one obtains four vectors; these are independent, and we therefore represent a tent as a matrix with these four columns, say
 \[
         \bfv_1 + \bfv_4, \bfv_1 + \bfv_8, \bfv_1 + \bfv_9, \bfv_3 + \bfv_9.
 \]
 There are exactly two tents containing a single belt.  Therefore there are four swaps one can perform.  Each swap preserves one belt and changes three belts.  An example is shown in Figure \ref{fig:11-swap}.  The resulting matrices, in terms of the representation above, are
\begin{equation}
        \label{eqn:qelevengens}
        \begin{pmatrix}
                1 & 3 & 3 & 3 \\
                0 & -1 & 0 & 0 \\
                0 & 0 & -1 & 0 \\
                0 & 0 & 0 & -1 \\
        \end{pmatrix},
        \begin{pmatrix}
                -1 & 0 & 0 & 0 \\
                3 & 1 & 3 & 3 \\
                0 & 0 & -1 & 0 \\
                0 & 0 & 0 & -1 \\
        \end{pmatrix},
        \begin{pmatrix}
                -1 & 0 & 0 & 0 \\
                0 & -1 & 0 & 0 \\
                3 & 3 & 1 & 3 \\
                0 & 0 & 0 & -1 \\
        \end{pmatrix},
        \begin{pmatrix}
                -1 & 0 & 0 & 0 \\
                0 & -1 & 0 & 0 \\
                0 & 0 & -1 & 0 \\
                3 & 3 & 3 & 1 \\
        \end{pmatrix}.
\end{equation}
Note that the circles of the belt are permuted; for example, the first of these has the effect
\[
        \mathbf{v}_1' = \mathbf{v}_4, \mathbf{v}_2' = \mathbf{v}_5, \mathbf{v}_3' = \mathbf{v}_6, \mathbf{v}_4' = \mathbf{v}_1, \mathbf{v}_5' = \mathbf{v}_2, \mathbf{v}_6' = \mathbf{v}_3.
\]

\begin{figure}
        \includegraphics[height=3in]{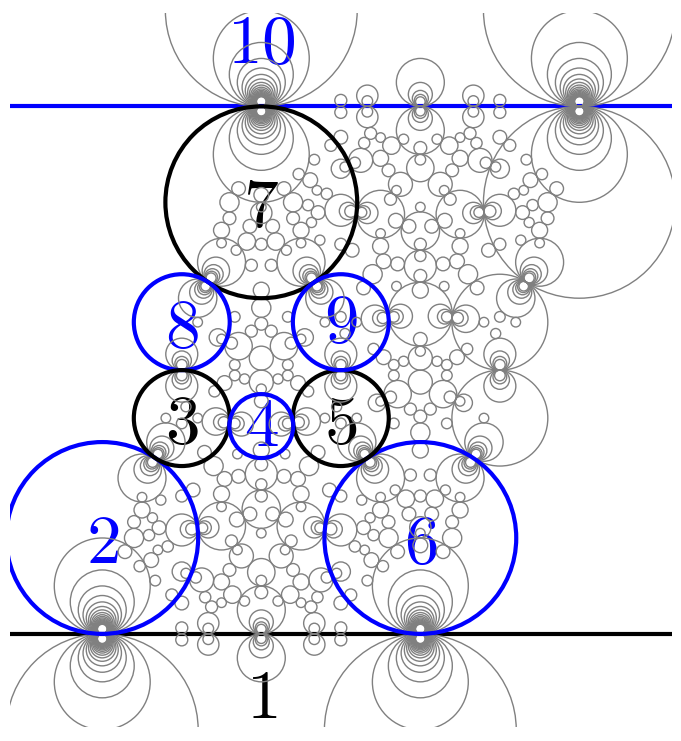} 
        \caption[Base tent for $\Scal_{\QQ(\sqrt{-11})}$]{Base tent for $\Scal_{\QQ(\sqrt{-11})}$.  The coordinates of the ten circles in $\MM$ are given, in the labelled order, by the columns of the following matrix: \tiny
                \[
                        \begin{pmatrix}
                                0&0&\sqrt{11}&2\sqrt{11}&2\sqrt{11}&\sqrt{11}&2\sqrt{11}&2\sqrt{11}&3\sqrt{11}&\sqrt{11} \\
                                0&\sqrt{11}&2\sqrt{11}&3\sqrt{11}&2\sqrt{11}&\sqrt{11}&\sqrt{11}&2\sqrt{11}&2\sqrt{11}&0 \\
                                0&0&\sqrt{11}/2&3\sqrt{11}/2&3\sqrt{11}/2&\sqrt{11}&\sqrt{11}/2&\sqrt{11}/2&3\sqrt{11}/2&0 \\
                                -1&1&9/2&13/2&9/2&1&9/2&13/2&13/2&1 \\
                        \end{pmatrix}
                \]
\normalsize Circles tangent to three others in the tent are shown in black, while those tangent to two others are shown in blue.}
\label{fig:11basetent}
\end{figure}

\begin{figure}
        \includegraphics[height=2.5in]{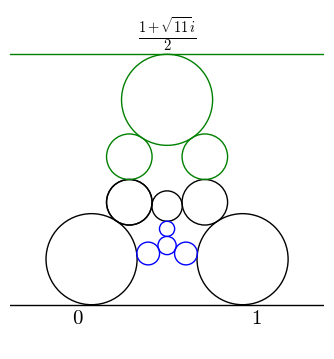}
        \caption{The green and black circles (including two straight lines) form the base tent of the $\QQ(\sqrt{-11})$-Apollonian packing containing $\widehat{\RR}$.  If swapping out the peak given by the green circles, the green circles are replaced with the blue ones.  The new tent is formed of the black and blue circles.  Note that the black circles are not individually fixed; they are cyclically permuted.}
        \label{fig:11-swap}
\end{figure}

\begin{figure}
        \includegraphics[height=2.5in]{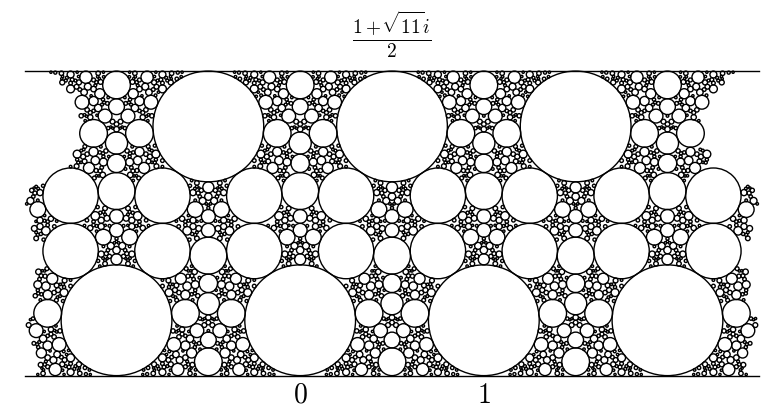} 
        \caption[A $\QQ(\sqrt{-11})$-Apollonian packing.]{The $\QQ(\sqrt{-11})$-Apollonian packing containing $\widehat{\RR}$.}
\label{fig:11strip}
\end{figure}

\begin{definition}
        Denote the subgroup of $O_R(\RR)$ generated by these four matrices by $\widehat{\Acal_{\Qeleven}}$.  
\end{definition}

The four M\"obius maps performing these swaps on the base tent of Figure \ref{fig:11basetent} (in the same order as \eqref{eqn:qelevengens}) are:
\begin{equation}
        \label{eqn:11mob}
        z \mapsto \frac{ \frac{3+\sqrt{-11}}{2} \overline{z} - 2 }{3 \overline{z} + \frac{-3+\sqrt{-11}}{2} }, \quad
        z \mapsto \frac{ \frac{1+\sqrt{-11}}{2} \overline{z} - 2 }{2 \overline{z} + \frac{-1+\sqrt{-11}}{2} }, \quad
        z \mapsto \frac{ \frac{3+\sqrt{-11}}{2} \overline{z} - 3 }{2 \overline{z} + \frac{-3+\sqrt{-11}}{2} }, \quad
        z \mapsto \frac{ (2+\sqrt{-11}) \overline{z} - 4 }{4 \overline{z} -2+\sqrt{-11} }.
\end{equation}

\begin{definition}
        Denote the subgroup of $\Mob$ generated by these four matrices by ${\Acal_{\Qeleven}}$.  
\end{definition}

\begin{definition}
        The \emph{$11$-tent graph} is the graph whose vertices are all tents (considered without regard to ordering); and whose edges indicate tents sharing a belt.
\end{definition}

\begin{theorem}
        \label{thm:11free}
        The $11$-tent graph is a forest of trees of valence 4.  Consequently,
        $A_{\Qeleven}$ is freely generated by the generators \eqref{eqn:qelevengens}.  In particular, it is a free product of four copies of $\ZZ/2\ZZ$.
\end{theorem}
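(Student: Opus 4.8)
The plan is to reproduce, essentially verbatim, the argument used for the cube graph in Theorem~\ref{thm:qtwotree} and for the $7$-tent graph in Theorem~\ref{thm:7free}: the combinatorics are identical, a belt being a loop of six tangent $\Qeleven$-Bianchi circles, every belt lying in exactly two tents, and a tent being freely recovered from any one of its four belts together with a choice of which of the two containing tents is meant.

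First I would split the statement into a graph-theoretic core and a bookkeeping step. For the bookkeeping: a connected component of the graph whose vertices are \emph{ordered} tents and whose edges are the four swaps is, by the definition of the swaps, the Cayley graph of $\Acal_{\Qeleven}$ with respect to the four involutions \eqref{eqn:qelevengens} (equivalently the M\"obius maps \eqref{eqn:11mob}); forgetting the ordering of the constituent circles gives a graph map from this Cayley graph onto a component of the $11$-tent graph. Because each swap preserves precisely the belt it is indexed by and leaves that belt in the same position of the resulting tent, a cyclically reduced relation among the four involutions would project to a non-backtracking closed walk in the $11$-tent graph. Hence, if every component of that graph is a tree, the Cayley graph contains no cycle, so $\Acal_{\Qeleven}$ is freely generated by the four involutions and is the free product of four copies of $\ZZ/2\ZZ$; the valence assertion is then immediate, each tent having exactly four belts and each belt lying in one further tent.

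For the graph-theoretic core --- the $11$-tent graph is a forest --- I would invoke planarity of the tangency graph exactly as in Theorem~\ref{thm:qtwotree}. The tangency graph $G$ of a $\Qeleven$-Apollonian packing embeds on $\widehat{\CC}$ by placing each vertex at the centre of its circle (the bounding circle's centre at $\infty$), with edges running through tangency points, so $G$ is planar. Verifying on the base tent of Figure~\ref{fig:11basetent} and transporting by the M\"obius action, one checks that the two tents sharing a common belt $C$ lie on opposite sides of the Jordan curve formed by that six-circle loop. A hypothetical loop of $n\ge 2$ tents in the $11$-tent graph then builds a subgraph $L\subset G$ tent by tent: the first adjacent pair straddles the Jordan curve of its shared belt; every tent added afterwards lies inside a single existing face of the construction, hence entirely on one side of that curve; and closing the loop forces one to join a tent on the inside to one on the outside through a second, disjoint Jordan curve --- impossible in a planar graph.

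The one point requiring genuine care is the geometric verification that a belt separates its two tents on the sphere. For $\Qeleven$ this must be carried out honestly, because --- in contrast to $\QQ(i)$, $\QQ(\sqrt{-2})$ and $\QQ(\sqrt{-7})$ --- the cheap Euler-characteristic count (a loop of $n$ tents would contribute $4n$ vertices and $8n$ edges with no triangles, which a planar graph cannot contain) no longer closes, so the Jordan-curve argument is the one that generalises. Everything else --- that the listed matrices and M\"obius maps really realise the four swaps and have order two, and the standard dictionary between trees of involutions and free products of $\ZZ/2\ZZ$'s --- is routine and parallels the corresponding passages of the earlier sections directly.
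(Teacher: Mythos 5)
Your proposal follows exactly the paper's route: the paper's proof of Theorem \ref{thm:11free} simply invokes the planar Jordan-curve argument of Theorems \ref{thm:qtwotree} and \ref{thm:7free}, which is precisely what you reproduce (including the correct observation, made in the remark after Theorem \ref{thm:qtwotree}, that the Euler-characteristic count fails for $\QQ(\sqrt{-11})$ and the separation argument is the one that generalises). Your write-up is correct and, if anything, more detailed than the paper's own one-line proof.
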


\begin{proof}
        The proof is exactly as in Theorem \ref{thm:qtwotree} and Theorem \ref{thm:7free}.
\end{proof}

\begin{theorem}
        \label{thm:11union}
        The group ${\Acal_{\QQ(\sqrt{-11})}}$ is a geometric Apollonian group for $\QQ(\sqrt{-11})$, and
        the group $\widehat{\Acal_{\QQ(\sqrt{-11})}}$ is an algebraic Apollonian group for $\QQ(\sqrt{-11})$.
\end{theorem}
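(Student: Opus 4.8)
The plan is to verify the hypotheses \eqref{item:basecontained}--\eqref{item:connected} of Theorem \ref{thm:sufficient-clusters}, taking the cluster type to be that of tents as defined above in this section, with base cluster the base tent of Figure \ref{fig:11basetent}. This parallels the proofs of Theorems \ref{thm:gen-union}, \ref{thm:2union} and \ref{thm:7union}, so most items are quick, and the algebraic statement for $\widehat{\Acal_{\Qeleven}}$ then follows automatically once a base cluster is fixed, via the algebraic--geometric correspondence (Definition \ref{defn:algapp}). I will concentrate on the three items that require genuine work, namely \eqref{item:nearbysuper}, \eqref{item:tangentstep} and \eqref{item:noauto}.

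First, $\Acal_{\Qeleven}$ is generated by the four explicit M\"obius transformations \eqref{eqn:11mob}, each of which lies in $\langle \PSL_2(\OK), \mathfrak{c}\rangle$, so it is a finitely generated Kleinian group; and since each generator sends $\widehat{\RR}$ either to itself or to a circle immediately tangent to it, $\Acal_{\Qeleven}$ maps $\Pcal_K$ into $\Pcal_K$. Items \eqref{item:basecontained} and \eqref{item:baseunique} hold because the base tent lies in $\Pcal_K$ and, as remarked above, any $\Qeleven$-cluster inside a tent determines that tent uniquely (checked on the base tent, then transported by the M\"obius action). Items \eqref{item:limit} and \eqref{item:connected} are immediate from the combinatorics of a tent: distinct belts of a single tent share at most one circle, and a tent is tangency connected.

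For item \eqref{item:tangentstep} it suffices to exhibit elements of $\Acal_{\Qeleven}$ carrying a three-prong centred on $\widehat{\RR}$ to three-prongs centred on the circles immediately tangent to $\widehat{\RR}$ at $0$, $1$ and $\infty$; the four swaps \eqref{eqn:11mob}, together with a few of their conjugates by the others, accomplish this by the same explicit check as in the $\QQ(\sqrt{-7})$ case. For item \eqref{item:nearbysuper}, one composes the generators \eqref{eqn:11mob} pairwise in both orders, applies the resulting finitely many M\"obius maps to the base tent, and verifies --- a direct matrix computation using the matrix $W_D$ recorded in Figure \ref{fig:11basetent} --- that among the resulting clusters there appear ones containing the three-prongs centred on $\widehat{\RR}$ with tangency points $(0,1/2,1)$, $(1,2,\infty)$ and $(-1,0,\infty)$; these are exactly the three superbases of the topograph adjacent to $(0,1,\infty)$, so connectedness of the topograph propagates the orbit to every superbasis, as in the proof of Theorem \ref{thm:sufficient-clusters}. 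Finally, item \eqref{item:noauto} is the crucial structural input, and it follows from Theorem \ref{thm:11free}: a nontrivial automorphism of the base tent contained in $\Acal_{\Qeleven}$ would yield a nontrivial closed path in the Cayley graph of $\Acal_{\Qeleven}$ (with generators \eqref{eqn:qelevengens}) whose endpoints are two orderings of the same tent, which upon forgetting orderings gives a loop in the $11$-tent graph, contradicting its being a forest.

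The main obstacle is the bookkeeping in item \eqref{item:nearbysuper}: with four generators there are twelve ordered pairs to compose, and for each one must identify the unordered collection of circles comprising the image tent and decide whether it contains the required three-prong on $\widehat{\RR}$. This is finite but fiddly, and is best organized by working in $\MM$ with the matrix $W_D$ of Figure \ref{fig:11basetent} and the algebraic generators \eqref{eqn:qelevengens} rather than with the M\"obius maps directly; once the correct composites are singled out, the verification of each reduces to a single matrix multiplication. Everything else is routine given Theorem \ref{thm:11free}.
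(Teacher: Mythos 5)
Your proposal is correct and follows essentially the same route as the paper: both proofs verify the hypotheses of Theorem \ref{thm:sufficient-clusters} for the tent cluster type with the base tent of Figure \ref{fig:11basetent}, using pairwise compositions of the generators \eqref{eqn:11mob} for item \eqref{item:nearbysuper}, the swaps themselves for item \eqref{item:tangentstep}, and Theorem \ref{thm:11free} (the $11$-tent graph is a forest) for item \eqref{item:noauto}. One small inaccuracy: in the $\QQ(\sqrt{-11})$ case the swaps cyclically permute a belt, so a generator can send $\widehat{\RR}$ to a circle of the base tent that is \emph{not} immediately tangent to it (e.g.\ the circle of reduced curvature $3$); this does not affect your conclusion that $\Acal_{\QQ(\sqrt{-11})}$ preserves $\Pcal_K$, since the image of $\widehat{\RR}$ still lies in the base tent, hence in $\Pcal_K$.
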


\begin{proof}
        The proof is very similar to Theorem \ref{thm:7union}.  As before, compositions of generators verify \eqref{item:nearbysuper}, and the single generators verify \eqref{item:tangentstep}.
\end{proof}

\section{Curvatures in $K$-Apollonian packings}
\label{sec:curvatures}

This section is devoted to some computational data supporting Conjecture \ref{conj:new}.  We first record a basic result on curvatures.

\begin{table}
\caption[Obstruction to curvatures at the prime $2$]{For each discriminant $\Delta$, the table shows the smallest power of $2$, $M$, that explains the obstructions at the prime $2$, and the observed residue sets $S_M$ modulo $M$.}
\begin{tabular}{|l|l|l|}
        \hline
        $\Delta$ &$M$ & $S_M$ \\
        \hline
        $\Delta \equiv 0,4,16 \pmod{32}$ & $2$ & $S_2=\{ 0,1 \}, \{ 1\}$ \\
        $\Delta \equiv 8,24 \pmod{32}$ &  $4$ &$S_4=\{ 0,2,3 \}, \{ 0,1,2 \}$ \\
        $\Delta \equiv 12 \pmod{32}$ &  $4$ &$S_4=\{ 0,1 \}, \{ 1,2\}, \{ 2,3 \}, \{0,3\}$ \\
        $\Delta \equiv 20 \pmod{32}$ &  $4$ &$S_4=\{ 1 \}, \{ 3 \}, \{0,1,2,3\}$ \\
        $\Delta \equiv 28 \pmod{32}$ &  $8$ &$S_8=\{ 0,1,4 \}, \{ 2,3,6,7\}, \{ 0,4,5 \}$ \\
        otherwise & none & none \\
        \hline
\end{tabular}
        \label{table2}
\end{table}

\begin{table}
\caption[Obstruction to curvatures at the prime $3$]{For each discriminant $\Delta$, the obstruction at the prime $3$ is explained by the first power, $3$.  The table shows the observed residue sets $S_3$ modulo $3$.}
\begin{tabular}{|l|l|}
        \hline
        $\Delta$ & $S_3$ \\
        \hline
        $\Delta \equiv 5,8 \pmod{12}$ & $S_3=\{ 0,1 \}, \{ 0,2 \}$ \\
        otherwise & none \\
        \hline
\end{tabular}
        \label{table3}
\end{table}

\begin{theorem}
        Let $\Pcal$ be a $K$-Apollonian circle packing.  Then the reduced curvatures of $\Pcal$ have no common factor.
\end{theorem}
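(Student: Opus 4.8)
The plan is to exploit the transitivity of the extended Bianchi group on oriented $K$-Bianchi circles together with the explicit tangency families of Proposition \ref{prop:tangentfamilies}. Since any $K$-Apollonian packing $\Pcal$ contains a circle $C$, and since $\PGL_2(\OK)$ (or $\PSL_2(\OK)$ in the case $K=\QQ(i)$) acts transitively on oriented $K$-Bianchi circles, we may move $C$ to $\widehat{\RR}$ by a M\"obius transformation; this is an isometry of $\MM$ and hence scales all curvatures of $\Pcal$ by the same algebraic factor, in particular does not change whether the reduced curvatures share a common factor. So it suffices to treat the fundamental packing $\Pcal_K$, which contains $\widehat{\RR}$; the reduced curvature of $\widehat{\RR}$ is $0$, which is unhelpful, so instead I would work with the circles immediately tangent to $\widehat{\RR}$.

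First I would write down, using Proposition \ref{prop:immtang}, the circles immediately tangent to $\widehat{\RR}=M_C(\widehat{\RR})$ at the rational points $0$, $1$, $\infty$: these arise from $M_C\begin{pmatrix}1&\tau\\0&-1\end{pmatrix}$ and its conjugates by elements of $\PSL_2(\ZZ)$ fixing $\widehat{\RR}$. Applying the curvature formula of Proposition \ref{prop:curvature} (with $M_C=\mathrm{Id}$, so $\alpha=\delta=1$, $\beta=\gamma=0$), the circle immediately tangent to $\widehat{\RR}$ at $\infty$ is the image of $\widehat{\RR}$ under $\begin{pmatrix}1&\tau\\0&-1\end{pmatrix}$, whose reduced curvature is $\pm 1$ (its curvature is $i(\beta\overline{\delta}-\overline{\beta}\delta)$ with the new bottom row $(0,-1)$... one must recompute after the translation, but the upshot is that $\widehat{\RR}+\tau$ has reduced curvature $1$). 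Concretely, the circle $\widehat{\RR}+\tau$ lies in $\Pcal_K$, has radius $\mathrm{Im}(\tau)=\sqrt{-\Delta}/2$ hence curvature $2/\sqrt{-\Delta} = \sqrt{-\Delta}/(-\Delta)\cdot(-2)$; tidying this, its reduced curvature is $\pm 1$. Since $\gcd$ of a set containing $\pm 1$ is $1$, we are done.

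The main obstacle, and the only point requiring care, is to pin down exactly which element of $\PGL_2(\OK)$ (or $\PSL_2(\OK)$) puts an arbitrary circle of $\Pcal$ onto $\widehat{\RR}$ and to confirm that the reduced-curvature-$1$ circle $\widehat{\RR}+\tau$ genuinely lies in the fundamental packing: this is immediate from Proposition \ref{prop:immtang}, since $\begin{pmatrix}1&\tau\\0&-1\end{pmatrix}=\begin{pmatrix}1&\tau\\0&1\end{pmatrix}\cdot\begin{pmatrix}1&0\\0&-1\end{pmatrix}$ realizes the immediate tangency of $\widehat{\RR}$ with $\widehat{\RR}$ at $\infty$, so $\widehat{\RR}+\tau\in\Pcal_K$. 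One should also double-check that the translation $z\mapsto z+\tau$ is induced by an element of $\PSL_2(\OK)$ (it is, namely $\begin{pmatrix}1&\tau\\0&1\end{pmatrix}$) and that the radius computation gives curvature with reduced part a unit. With the circle of reduced curvature $\pm 1$ exhibited in $\Pcal_K$, transitivity transports such a circle (up to the common algebraic scaling) into any $\Pcal$, whence the reduced curvatures of $\Pcal$ have trivial $\gcd$.
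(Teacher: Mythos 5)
Your reduction to the fundamental packing is where the argument genuinely breaks down. You assert that the transformation moving a circle of $\Pcal$ onto $\widehat{\RR}$ ``scales all curvatures of $\Pcal$ by the same algebraic factor,'' but this is false: under the exceptional isomorphism an element of $\PGL_2(\OK)$ acts linearly on the whole vector $(b',b,r,m)$ of co-curvature, curvature and curvature-centre, so the curvature of an image circle is a linear combination of all four coordinates of the original, not a scalar multiple of its curvature. Hence primitivity of $\Pcal_K$ does not transport to an arbitrary packing; whether the image packing has coprime reduced curvatures is exactly what the theorem asserts, so your transport step is circular at the main point. The failure is not cosmetic: for $K=\QQ(i)$ the packings inside $\Scal_{\QQ(i)}$ realise, up to normalisation, \emph{all} primitive integral Apollonian packings (e.g.\ the one with root quadruple $(-6,10,15,19)$), which are not rescalings of the fundamental strip packing and need not contain any circle of reduced curvature $\pm 1$ --- so even the weaker transported statement ``every packing contains a unit reduced curvature'' is false. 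A secondary slip: your exhibited circle $\widehat{\RR}+\tau$ is a line, so by Proposition \ref{prop:curvature} (with $\beta=0$) its curvature is $0$, not $\pm 1$; a correct unit example in $\Pcal_K$ is the circle immediately tangent to $\widehat{\RR}$ at $0$, the image of $\widehat{\RR}$ under $\left(\begin{smallmatrix}0&1\\1&\tau\end{smallmatrix}\right)$, of reduced curvature $1$ --- but that only settles the fundamental packing.

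The paper avoids any reduction and argues inside the given packing: for immediately tangent $C_1,C_2\in\Pcal$ the (reduced) curvatures satisfy $\operatorname{curv}(C_1)+\operatorname{curv}(C_2)=N(x)$, where $x$ is the denominator of the tangency point (Theorem 4.7 of \cite{VisOne}); writing $C_1=M(\widehat{\RR})$ with $M\in\PSL_2(\OK)$ having lower row $(\beta,\delta)$, the denominators at the points of immediate tangency of $C_1$ run over the lattice $\beta\ZZ+\delta\ZZ$ with $(\beta)+(\delta)=\OK$ (Proposition \ref{prop:tangentfamilies}). If a rational prime $p$ divided every reduced curvature of $\Pcal$, it would divide $N(\beta)$, $N(\delta)$ and $N(\beta+\delta)$, forcing two of $\beta,\delta,\beta+\delta$ into a common prime ideal above $p$, contradicting their coprimality. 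To salvage your strategy you would have to prove, not assume, that primitivity is invariant under the Bianchi action, and that is not easier than this direct local argument.
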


\begin{proof}
        $\Pcal$ is generated by immediate tangency.  Theorem 4.7 of \cite{VisOne} states that if $C_1$ and $C_2$ are immediately tangent, then 
        \[
                \operatorname{curv}(C_1) + \operatorname{curv}(C_2) = N(x)
        \]
        where $x$ is the denominator of the point of tangency.  If all the curvatures of the packing are divisible by some prime $p$, then $p \mid N(x)$.
        Fixing $C_1$, the value of $x$ for all immediately tangent $C_2$ are exactly those 
        $x$ in a rank two $\ZZ$-lattice $\beta \ZZ + \delta \ZZ \subset \OK$.  Here, $\beta$ and $\delta$ can be taken to be the lower row of some matrix $M \in \PSL_2(\OK)$ taking $\widehat{\RR}$ to $C_1$ (in particular, $(\beta, \delta) = \OK$).
       
        Therefore, $p$ divides $N(\beta)$, $N(\delta)$ and $N(\beta + \delta)$.  If $p \mid N(x)$ then $x \in \mathfrak{p}$ for some prime ideal $\mathfrak{p}$ above $p$.  Therefore at least two of $\beta$, $\delta$ and $\beta+\delta$ lie in the same prime ideal; but this is a contradiction to the fact that any pair of them generates $\OK$.
\end{proof}

Using Sage Mathematics Software \cite{Sage}, some computer experiments were performed.  The author computed the complete set of reduced curvatures in various $K$-Apollonian packings modulo various moduli $n$.  The results suggest Conjecture \ref{conj:new}.  In particular, each Apollonian packing was observed to omit certain modular equivalence classes.  The behaviour of individual primes is independent, so that we can discuss the \emph{obstruction at a prime $p$} as all equivalence classes modulo powers of $p$ that cannot occur.  The obstruction is \emph{explained} by $p^k$ if $p^k$ is the largest power of $p$ needed to describe the obstruction.  In experiments, the only obstructions that occurred were at $2$ (always explained by $2$, $4$ or $8$) and at $3$ (always explained by $3$).  This explains the number $24$ in Conjecture \ref{conj:new}.  Tables \ref{table2} and \ref{table3} give the observed allowable sets of residues.  It is conjectured that these tables are complete.

\bibliography{app-pack-bib}{}
\bibliographystyle{plain}

\end{document}